\theoremstyle{plain}
\newtheorem{theorem}{Theorem}[section]
\newtheorem{corollary}[theorem]{Corollary} 
\newtheorem{lemma}[theorem]{Lemma} 
\newtheorem{proposition}[theorem]{Proposition} 
\theoremstyle{definition}
\newtheorem{definition}[theorem]{Definition}
\newtheorem{notation}[theorem]{Notation}
\theoremstyle{remark}
\renewcommand{\qed}{\hfill $\Box$\par}
\newcounter{num}
\newcommand{\Rnum}[1]{\setcounter{num}{#1} \Roman{num}}
\newcommand{\bC}{\mathbb{C}}
\newcommand{\bF}{\mathbb{F}}
\newcommand{\bQ}{\mathbb{Q}}
\newcommand{\bR}{\mathbb{R}}
\newcommand{\bT}{\mathbb{T}}
\newcommand{\bZ}{\mathbb{Z}}
\newcommand{\cB}{\mathcal{B}}
\newcommand{\cC}{\mathcal{C}}
\newcommand{\cE}{\mathcal{E}}
\newcommand{\cK}{\mathcal{K}}
\newcommand{\cO}{\mathcal{O}}
\newcommand{\cV}{\mathcal{V}}
\newcommand{\cW}{\mathcal{W}}
\newcommand{\cX}{\mathcal{X}}
\newcommand{\fD}{\mathfrak{D}}
\newcommand{\fm}{\mathfrak{m}}
\newcommand{\rO}{\mathrm{O}}
\newcommand{\id}{\mathrm{id}}
\newcommand{\GL}{\mathrm{GL}}
\newcommand{\ob}{\mathrm{ob}}
\newcommand{\Br}{\operatorname{Br}}
\newcommand{\ch}{\operatorname{char}}
\newcommand{\Cor}{\operatorname{Cor}}
\newcommand{\diag}{\operatorname{diag}}
\newcommand{\disc}{\operatorname{disc}}
\newcommand{\idx}{\operatorname{idx}}
\newcommand{\Idx}{\operatorname{Idx}}
\newcommand{\im}{\operatorname{im}}
\newcommand{\inv}{\operatorname{inv}}
\newcommand{\N}{\operatorname{N}}
\newcommand{\Res}{\operatorname{Res}}
\newcommand{\sn}{\operatorname{sn}}
\newcommand{\Supp}{\operatorname{Supp}}
\newcommand{\Tr}{\operatorname{Tr}}
\newcommand{\Tw}{\operatorname{Tw}}
\newcommand{\la}{\langle}
\newcommand{\ra}{\rangle}
\title{Lattice Isometries and K3 Surface Automorphisms:\\
Salem Numbers of Degree $20$\footnote{MSC(2010): 11H56, 14J28.
Keywords: lattice; automorphism; K3 surface; entropy; Salem number.}}
\author{Yuta Takada\thanks{Department of Mathematics, Graduate School of Science, 
Hokkaido University, Kita 10, Nishi 8, Kita-ku, Sapporo 060-0810 Japan; 
JSPS Research Fellow. \texttt{takada@math.sci.hokudai.ac.jp}}}
\date{February 15, 2023}
\begin{document}
\maketitle

\begin{abstract}
This article extends Bayer-Fluckiger's theorem on characteristic polynomials of
isometries on an even unimodular lattice to the case where the
isometries have determinant $-1$. As an application, we show that the logarithm of every
Salem number of degree $20$ is realized as the topological entropy of 
an automorphism of a nonprojective K3 surface.
\end{abstract}

\section{Introduction}
Let non-negative integers $r$ and $s$ be given. Which polynomial $F(X)\in\bZ[X]$ can 
occur as the characteristic polynomial of an isometry on an even unimodular lattice 
of signature $(r,s)$? 
It is known that if $(r,s)$ is the signature of an even unimodular lattice then 
$r\equiv s \bmod 8$. In the following we assume this congruence. 
For a monic polynomial $F$ with $F(0)\neq 0$, we define
$F^*(X):= F(0)^{-1} X^{\deg F} F(X^{-1})$
and say that $F$ is \textit{$*$-symmetric} if $F = F^*$.
In this case, the constant term $F(0)$ is $1$ or $-1$, so
we say that $F$ is \textit{$+1$-symmetric} or \textit{$-1$-symmetric}
according to the value $F(0)$
(see \S \ref{ss:symm_poly} for a slightly more formal definition).
Gross and McMullen \cite{GM02} raised the above question and 
gave the following necessary conditions essentially: 
If $F(X)\in\bZ[X]$ is the characteristic polynomial of an isometry on an even 
unimodular lattice of signature $(r,s)$ then 
$F$ is a $*$-symmetric polynomial of even degree;
\begin{equation}\label{eq:Sgcd}
\text{$r,s\geq m(F)$ and if $F(1)F(-1)\neq 0$ then 
$r \equiv s \equiv m(F) \bmod 2$,} \tag{Sign} 
\end{equation}
where $m(F)$ is the number of roots $\lambda$ of $F$ with $|\lambda|>1$
counted with multiplicity; and
\begin{equation}\label{eq:Sqcd}
\text{$|F(1)|, |F(-1)|$ and $(-1)^{(\deg F)/2} F(1) F(-1)$ are all squares.} \tag{Square}
\end{equation}
For an \textit{irreducible} $*$-symmetric polynomial $F$ of even degree, 
they speculated that these conditions are sufficient and showed that 
if the assumption \eqref{eq:Sqcd} is replaced
by the assumption $|F(1)|=|F(-1)| = 1$, then these are sufficient.
Afterwards, Bayer-Fluckiger and Taelman \cite{BT20} 
showed that the speculation is correct using a local-global theory.

Bayer-Fluckiger \cite{Ba20,Ba21,Ba22} proceeded to the case where
polynomials are reducible and $+1$-symmetric. 
In this case, the above conditions are not sufficient 
as pointed out in \cite{GM02}.
She showed that the condition \eqref{eq:Sqcd} is necessary and sufficient for
the existence of an even unimodular $\bZ_p$-lattice having a semisimple
isometry with characteristic polynomial $F$ for each prime $p$.
Moreover, she gave a necessary and sufficient condition for the local-global 
principle to hold.

Let $F\in\bR[X]$ be a $*$-symmetric polynomial with the condition \eqref{eq:Sgcd}. 
If $t$ is an isometry with characteristic polynomial $F$ on 
an $\bR$-inner product space $V$ of signature $(r,s)$, then $V$ decomposes as
$V = \bigoplus_{f}V(f;t)$, where 
$V(f;t) := \{ v\in V \mid f(t)^N v = 0 \text{ for some $N\geq0$} \}$ and 
$f$ ranges over the irreducible factors of $F$ in $\bR[X]$. 
The \textit{index} $\idx_t$ of $t$ is a map from the set of irreducible $*$-symmetric 
factors of $F$ to $\bZ$ defined by $ \idx_t(f) = r_f - s_f$ where $(r_f,s_f)$ is the  
signature of $V(f;t)$. 
The set of maps expressed as $\idx_t$ for some $t$ as above is denoted by 
$\Idx_{r,s}(F)$, and a map in $\Idx_{r,s}(F)$ are called an \textit{index map} 
(see \S\ref{ss:IPSoverR} for the precise definition). 
We refer to an isometry with characteristic polynomial $F$ and index 
$\iota\in\Idx_{r,s}(F)$ as an $(F,\iota)$-isometry for short. 

Now, let $F\in \bZ[X]$ be a $+1$-symmetric polynomial of even degree with the conditions 
\eqref{eq:Sgcd} and \eqref{eq:Sqcd}, and let $\iota\in\Idx_{r,s}(F)$ be an index map.
Bayer-Fluckiger introduced a group $\Omega$ and a homomorphism 
$\ob:\Omega\to \bZ/2\bZ$, 
which are determined by $F$ and $\iota$, and showed that there exists an even 
unimodular lattice of signature $(r,s)$ having a semisimple $(F, \iota)$-isometry  
if and only if the map $\ob$ is the zero map.

This article extends her work to the case where $F$ is $*$-symmetric, 
which covers the $-1$-symmetric case. 
We will redefine the group $\Omega$ and the map $\ob$ 
to work in the general case and show the following theorem.
We will call the map $\ob$ the \textit{obstruction map}. 

\begin{theorem}\label{th:main1}
Let $F\in \bZ[X]$ be a $*$-symmetric polynomial of even degree with the conditions 
\eqref{eq:Sgcd} and \eqref{eq:Sqcd}, and let $\iota\in\Idx_{r,s}(F)$ be an index map.
Then there exists an even unimodular lattice of signature $(r,s)$ having a 
semisimple $(F, \iota)$-isometry  
if and only if the obstruction map $\ob:\Omega\to \bZ/2\bZ$ vanishes. 
\end{theorem}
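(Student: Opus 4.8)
The plan is to follow the local-to-global strategy of Bayer-Fluckiger (and of Bayer-Fluckiger and Taelman in the irreducible case), now carried out uniformly over all $*$-symmetric $F$. Since $t$ is semisimple, its minimal polynomial is the product $F_{0}$ of the distinct irreducible factors of $F$, so $L\otimes\bQ$ is a module over the \'etale $\bQ$-algebra $E=\bQ[X]/(F_{0})$, and a trace-form construction translates ``even unimodular lattice with a semisimple $(F,\iota)$-isometry'' into ``Hermitian lattice over a suitable order in $E$ with the involution induced by $X\mapsto X^{-1}$''. Each irreducible $*$-symmetric factor $g=g^{*}$ contributes a Hermitian block over the quadratic algebra $K_{g}/K_{g}^{+}$; the factors $X\mp 1$ contribute ordinary quadratic forms on the $\pm 1$-eigenspaces (the case $K_{g}^{+}=\bQ$ with trivial involution); and each reciprocal pair $\{h,h^{*}\}$ with $h\neq h^{*}$ contributes a block that is hyperbolic, hence essentially rigid. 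The first step is to reduce the theorem to the question of whether these local blocks --- which will be shown to exist at every place --- can be chosen so as to glue to a global even unimodular lattice.

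Local existence: at the archimedean place the statement is exactly the hypothesis $\iota\in\Idx_{r,s}(F)$, which records that some real inner product space of signature $(r,s)$ carries an isometry of characteristic polynomial $F$ and index $\iota$; the index is an archimedean invariant and imposes no condition at finite primes. At a finite prime $p$, the existence of an even unimodular $\bZ_{p}$-lattice with a semisimple isometry of characteristic polynomial $F$ reduces, exactly as in Bayer-Fluckiger's $+1$-symmetric treatment, to condition~\eqref{eq:Sqcd}. In the $-1$-symmetric case the identity $F=F^{*}$ forces $(X-1)(X+1)\mid F$, so here one must check directly that the unavoidable $\pm 1$-eigenspace blocks can be taken even and glued $p$-adically to the remaining blocks; the delicate primes are $p=2$ and the primes dividing $F(1)F(-1)\disc(F)$, handled via the explicit $p$-adic classification of even lattices with a prescribed semisimple isometry.

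Given local existence everywhere, a global lattice exists if and only if the local blocks can be chosen coherently, i.e.\ so that their discriminants, spinor norms and Hasse--Witt-type invariants satisfy the pertinent product formulae; this is precisely what the group $\Omega$ and the obstruction map $\ob$ encode, with $\Omega$ assembled from the local data at all places and $\ob$ measuring the single $\bZ/2\bZ$ defect arising from Hilbert reciprocity on the quadratic blocks (the $\pm 1$-eigenspaces and the factors $g$ with $K_{g}^{+}=\bQ$). For the ``only if'' direction I would take a given even unimodular lattice with a semisimple $(F,\iota)$-isometry, read off its local invariants and feed them into the reciprocity law to get $\ob=0$. For the ``if'' direction, assuming $\ob=0$ I would exhibit a coherent family of local blocks and assemble it: the Hermitian blocks over genuine quadratic CM extensions satisfy a Hasse principle and are pinned down by rank and discriminant, the hyperbolic blocks are determined, and the quadratic blocks over $\bQ$ are determined by signature, discriminant and the now-compatible Hasse invariants via Hasse--Minkowski; gluing the resulting pieces along their discriminant forms --- in the spirit of Nikulin's theory, as used by Bayer-Fluckiger in the reducible case --- produces the desired global even unimodular lattice with its isometry.

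The main obstacle, as already in the $+1$-symmetric setting, is the bookkeeping that collapses the a priori independent local discrepancies --- at $p=2$, at ramified and wildly ramified primes, and across the interface between the Hermitian and the quadratic blocks --- into the single reciprocity defect measured by the homomorphism $\ob\colon\Omega\to\bZ/2\bZ$, and then verifying that its vanishing really does permit a choice of local blocks with matching discriminant forms. The feature specific to the present generalisation is the systematic presence of the $\pm 1$-eigenspaces in the $-1$-symmetric case and their interaction with the even unimodular condition; incorporating their $2$-adic behaviour correctly into the definitions of $\Omega$ and $\ob$ is where the argument needs the most care.
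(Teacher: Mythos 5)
Your proposal identifies the right high-level strategy (trace-form dictionary, local existence from \eqref{eq:Sqcd} plus Theorem~\ref{th:localcdforUL}/\ref{th:detailedQ2}, reduction to a coherence problem for local Hermitian and quadratic blocks), and it rightly flags the $\pm 1$-eigenspaces at $p=2$ as the delicate new feature. But as a proof of Theorem~\ref{th:main1} it has a genuine gap: you treat the claim ``this is precisely what $\Omega$ and $\ob$ encode'' as a given, when it is the content of the theorem. The hard part is not stating that some reciprocity defect obstructs gluing, but proving that the group $\Omega=C_\sim(I)/\{\text{constants}\}$ and the map $\ob$, as actually defined from the graph $G(F)$ and the prime sets $\Pi_{f,g}$, detect exactly this obstruction. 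Your description of $\Omega$ as ``assembled from the local data at all places'' also misidentifies it: $\Omega$ is a quotient of the $\bZ/2\bZ$-vector space on the set $I$ of irreducible $*$-symmetric factors, and its relations are combinatorial, coming from whether two factors $f,g$ share a common mod-$p$ behaviour (the nonemptiness of $\Pi_{f,g}$).

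Concretely, the proof must establish that the image of the map $\eta:\cB\to C(I)$, $\{b_v\}_v\mapsto\bigl(f\mapsto\sum_v\epsilon_v(b_v^f)\bigr)$, is a \emph{single equivalence class} under the relation generated by adding $\bm{1}_{\{f,g\}}$ when $\Pi_{f,g}\neq\emptyset$ (Theorem~\ref{th:equivalenceclass}). This requires two nontrivial directions. One is a surgery argument: given $\beta\in\cB$ and a prime $p\in\Pi_{f,g}$, one must swap the Hermitian twisting parameters at the places of $E^f$ and $E^g$ above $p$ simultaneously, and check via Theorem~\ref{th:imtheofTw_to_W} and Proposition~\ref{prop:NScdforZ2EUL} that the result still lies in $\cB$; the choice of $\Pi_{f,g}$ is engineered precisely so the two modified boundary classes cancel in $W_\Gamma(\bF_p)$. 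The other direction is that any two $\beta,\widehat\beta\in\cB$ are equivalent under $\sim$, which needs the rigidity statement that once $\partial$-boundaries agree the local Hasse--Witt invariants agree (Lemma~\ref{lem:partial_inv}), plus a careful induction at $p=2$ using the $\pm 1$-eigenspaces. Only after this does the equivalence ``$\ob=0$ iff $\bm{0}\in\im\eta$'' follow, and only then does Theorem~\ref{th:LGP} finish the job. Your appeal to ``gluing the resulting pieces along their discriminant forms in the spirit of Nikulin'' is also not the mechanism the paper uses; the paper globalises the rational form $M$ via the exact sequence on twisting groups and the corestriction formula for Hasse--Witt invariants, and then produces a $\Gamma$-stable even unimodular lattice inside it, rather than gluing eigenspace sublattices.
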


As an application, we show:

\begin{theorem}\label{th:mainapp2}
Let $F\in\bZ[X]$ be a $*$-symmetric polynomial of even degree $2n$ with 
the condition \eqref{eq:Sqcd}. 
If neither the multiplicity of $X-1$ nor that of $X+1$ in $F$ is $1$, 
then there exists a semisimple isometry on
an even unimodular lattice of signature $(n,n)$ with characteristic
polynomial $F$.
\end{theorem}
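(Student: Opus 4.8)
The plan is to deduce Theorem~\ref{th:mainapp2} from Theorem~\ref{th:main1} applied with $(r,s)=(n,n)$, the real task being to exhibit an index map $\iota\in\Idx_{n,n}(F)$ for which the obstruction map vanishes. First one needs $\Idx_{n,n}(F)\neq\emptyset$, i.e.\ that \eqref{eq:Sgcd} holds for the signature $(n,n)$. Since the roots of the $*$-symmetric polynomial $F$ lying off the unit circle occur in reciprocal pairs, exactly $m(F)$ of its $2n$ roots have absolute value greater than $1$, so $n\geq m(F)$. If moreover $F(1)F(-1)\neq 0$ (equivalently, neither $X-1$ nor $X+1$ divides $F$), write $F(1)F(-1)=\prod_i(1-\lambda_i^2)$ over the roots $\lambda_i$ of $F$ counted with multiplicity: a reciprocal pair of real roots off the unit circle contributes a negative factor, while each quadruple $\{\lambda,\bar\lambda,\lambda^{-1},\bar\lambda^{-1}\}$ of non-real roots off the unit circle, and each conjugate pair on the unit circle, contributes a positive one; since the non-real roots of absolute value $>1$ come in conjugate pairs, this gives $\sgn F(1)F(-1)=(-1)^{m(F)}$. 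On the other hand \eqref{eq:Sqcd} makes $(-1)^nF(1)F(-1)$ a nonzero square, hence positive, so $\sgn F(1)F(-1)=(-1)^n$ and therefore $n\equiv m(F)\bmod 2$. Thus \eqref{eq:Sgcd} holds, $\Idx_{n,n}(F)\neq\emptyset$, and by Theorem~\ref{th:main1} it remains only to find $\iota\in\Idx_{n,n}(F)$ with $\ob=0$.

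Let $a$ and $b$ be the multiplicities of $X-1$ and $X+1$ in $F$; by hypothesis $a,b\in\{0\}\cup\{2,3,\dots\}$. An index map prescribes, among other data, the value $\iota(X-1)$, subject to $\iota(X-1)\equiv a\bmod 2$ and $|\iota(X-1)|\leq a$ (and similarly for $X+1$), so when $a\geq 2$ there are at least three admissible values, and when $a=0$ the factor $X-1$ does not appear. The argument then unwinds the definitions of $\Omega$ and $\ob$: the obstruction is assembled from local invariants at the real place and at the finite set of primes dividing (roughly) twice the pertinent resultants and discriminants, each of which poses a local lattice-with-isometry problem that is solvable thanks to \eqref{eq:Sqcd} --- the local half of Bayer-Fluckiger's analysis, transported to the $*$-symmetric setting. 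Changing $\iota$ on the summands $V(X\pm1;t)$ --- say by $\iota(X-1)\mapsto\iota(X-1)+2$ compensated by $\iota(X+1)\mapsto\iota(X+1)-2$ --- affects $\ob$ in a controlled way, and such moves are available precisely because $a$ and $b$ are $0$ or at least $2$; one shows that they suffice to cancel whatever part of $\ob$ is not already forced to vanish by the three square conditions in \eqref{eq:Sqcd} together with the balanced signature $(n,n)$. Choosing $\iota$ accordingly yields $\ob=0$. (If the multiplicity of $X-1$ or $X+1$ were $1$, the relevant value of $\iota$ would be rigid and the obstruction could genuinely fail to vanish, which is why that case is excluded.)

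The main obstacle is exactly the last claim --- that, after the flexibility at $X\pm 1$ is used up, the residual obstruction vanishes. This is not a formality: it needs an explicit description of $\Omega$ and of each local component of $\ob$, a check that every local problem is solvable under \eqref{eq:Sqcd}, and a global compatibility computation in which the square conditions and the hyperbolicity of the target are played off against the product of the local terms. (One could instead try to induct on $\deg F$ by splitting hyperbolic planes $U$, with $t=\pm\id$, off the summands $V(X\pm 1;t)$; but then one must avoid creating a factor $X\pm 1$ of multiplicity $1$ --- as happens when $a$ or $b$ equals $3$ --- so the index-adjustment route is cleaner.) Once $\ob=0$ is secured for a suitable $\iota$, Theorem~\ref{th:main1} produces an even unimodular lattice of signature $(n,n)$ carrying a semisimple isometry with characteristic polynomial $F$; for $n\geq 1$ this lattice is in fact the unique one of its signature, namely $U^n$.
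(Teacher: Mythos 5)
Your first paragraph is correct: it verifies that condition (Sign) holds at signature $(n,n)$, and the sign computation for $(-1)^n F(1)F(-1)$ reaches the same conclusion that the paper obtains (via trace polynomials) in Lemma \ref{lem:prolong}. However, the second half of your argument — the part you yourself flag as "not a formality" — is not a proof; it is a wish. You assert that moves of the form $\iota(X-1)\mapsto\iota(X-1)+2$, $\iota(X+1)\mapsto\iota(X+1)-2$ "affect $\ob$ in a controlled way" and that they "suffice to cancel whatever part of $\ob$ is not already forced to vanish," but you do not establish either claim, and the underlying mental model is not how the obstruction behaves in this setting.

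The actual mechanism is a decomposition by connected components of the graph $G(F)$ from \S\ref{ss:OGandOM}. Since $\Omega = C_\sim(I)/\{\text{constant maps}\}$ and $C_\sim(I)$ consists of maps constant on each connected component of $G(F)$, the obstruction group of a \emph{connected} polynomial (one whose $G$-graph is connected) is automatically trivial. The content of Theorem \ref{th:mainapp2} is then carried by Proposition \ref{prop:ECC_Sq}: under (Square) with $n_\pm\neq 1$, each connected component $H$ of $G(F)$ itself satisfies (Square), and by Lemma \ref{lem:prolong} it satisfies (Sign) at signature $(\deg H/2,\deg H/2)$. Theorem \ref{th:main1} then realizes each $H$ separately on an even unimodular lattice of balanced signature, and taking the orthogonal direct sum realizes $F$. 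The role of $\iota(X\mp 1)$ is not to cancel residual terms of $\ob$ but to control the classes $D_\pm\in\bQ_2^\times/\bQ_2^{\times2}$, hence whether $\Pi_{f,X\mp1}$ is nonempty when $n_\pm=2$, hence the connectivity of $G(F)$ — and the paper spends most of the proof running through the cases $|F_{12}(\pm1)|\in\{1,-1\}$ mod squares to choose $\iota_0$ so that $X-1$ and $X+1$ land in good components. The hypothesis $n_\pm\neq 1$ is used because when $n_\pm=1$ every $\Pi_{f,X\mp1}$ is empty by definition, so $X\mp1$ is an \emph{isolated} vertex of $G(F)$ for every $\iota$; it is the resulting rigidity of the graph, not merely of the single value $\iota(X\mp1)\in\{\pm1\}$, that blocks the argument. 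Without Lemma \ref{lem:nosymmfactor} and Proposition \ref{prop:ECC_Sq} — the genuinely new inputs — your proof proposal does not close.
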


The question mentioned at the begining is related to the study of 
automorphisms on K3 surfaces.
For any K3 surface $\cX$, that is, a simply connected compact complex surface
with a nowhere vanishing holomorphic $2$-form, 
the middle cohomology group $H^2(\cX, \bZ)$ 
with the intersection form is an even unimodular lattice of signature $(3,19)$.
Such a lattice is called a \textit{K3 lattice}, which is uniquely determined 
up to isomorphism. Let $\Lambda$ be a K3 lattice. 
We can define an additional structure on $\Lambda$
which is called a \textit{K3 structure}.
An isometry $t$ on $\Lambda$ preserving a given K3 structure `lifts' 
to an automorphism on a K3 surface. In other words, 
there exists an automorphism $\varphi$ on a K3 surface $\cX$ such that
the induced homomorphism $\varphi^*:H^2(\cX,\bZ)\to H^2(\cX,\bZ)$
can be identified with the isometry $t$.
This is a consequence of the Torelli theorem and surjectivity of the period
mapping.

There are many studies of dynamics on K3 surfaces using the lifting, 
see \cite{Mc02,Og10,Mc11,Mc16,IT22,IT23} for instance.
This article focuses on the topological entropy. 
It is known that the topological entropy of a K3 surface automorphism $\varphi$
coincides with the logarithm of the spectral radius $\lambda(\varphi^*)$ of 
$\varphi^*: H^2(\cX, \bC) \to H^2(\cX, \bC)$,  
and that $\lambda(\varphi^*)$ is a Salem number (see \S\ref{ss:symm_poly}) unless
$\lambda(\varphi^*) = 1$ (or the entropy equals $0$).
Let us say that a Salem number $\lambda$ is 
\textit{projectively} (resp.~\textit{nonprojectively}) \textit{realizable}
if there exists an automorphism on a projective (resp. nonprojective)
K3 surface with entropy $\log\lambda$.
We remark that the degree of such a Salem number is an even integer
between $2$ and $20$ (resp. $4$ and $22$).  

McMullen proved that the Lehmer number $\approx 1.17628$, which is 
the smallest known Salem number, is nonprojectively realizable in \cite{Mc11},
and projectively realizable in \cite{Mc16}. 
Moreover, Bayer-Fluckiger and Taelman showed in \cite{BT20} that a Salem number of 
degree $22$ is nonprojectively realizable if and only if its minimal polynomial
satisfies the condition \eqref{eq:Sqcd}, and Bayer-Fluckiger proved in \cite{Ba21, Ba22} 
that all Salem numbers of degree $4,6,8,12,14$ or $16$ are nonprojectively realizable, 
using her theorems on characteristic polynomials of isometries 
on an even unimodular lattice. Along this line, we show:

\begin{theorem}\label{th:mainK3}
All Salem numbers of degree $20$ are nonprojectively realizable.
\end{theorem}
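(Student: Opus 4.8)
The plan is to realize $\lambda$ as the spectral radius of a semisimple isometry of the K3 lattice $\Lambda$ with a controlled index, and then to promote that isometry to an automorphism of a nonprojective K3 surface by McMullen's realization theorem (see \cite{Mc02,Mc11}); in this way the existence question is reduced to a single application of Theorem \ref{th:main1}, namely to the vanishing of the obstruction map. Let $S\in\bZ[X]$ be the minimal polynomial of $\lambda$, an irreducible Salem polynomial of degree $20$. The real roots of $S$ are $\lambda>1>\lambda^{-1}>0$ and the others lie on the unit circle and differ from $\pm1$, so $S(1)<0<S(-1)$; in particular $(-1)^{10}S(1)S(-1)<0$ is not a square, which is exactly why $S$ by itself cannot be the characteristic polynomial of an isometry of $\Lambda$ and a cyclotomic factor of degree $2$ must be adjoined (recall $\Lambda$ has rank $22$).

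Accordingly, I would set $F=C\cdot S$, where $C$ is a product of cyclotomic polynomials of degree $2$ chosen according to $S$: take $C=(X+1)^2$ if $|S(1)|$ is a perfect square, $C=(X-1)^2$ if $|S(-1)|$ is a perfect square, and $C=X^2-1$ otherwise. Then $F$ is $*$-symmetric of degree $22$ with $F(1)F(-1)=0$, so \eqref{eq:Sqcd} holds trivially and \eqref{eq:Sgcd} holds since $3,19\geq m(F)=m(S)=1$ and the parity condition is vacuous. Over $\bR$, $S$ factors as $(X-\lambda)(X-\lambda^{-1})$ times nine rotation blocks $X^2-2(\cos\theta_j)X+1$; putting on $V(S)$ the hyperbolic plane together with exactly one positive-definite and eight negative-definite rotation blocks, and taking the $\pm1$-eigenspace attached to $C$ (of dimension $1$ or $2$, on which the isometry acts as $\pm1$) to be negative-definite, produces total signature $(3,19)$, the $S$-part contributing $(3,17)$ and the $C$-part $(0,1)+(0,1)$ or $(0,2)$. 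Hence the map $\iota$ with $\iota(S)=3-17=-14$, negative on the irreducible factor(s) of $C$, lies in $\Idx_{3,19}(F)$. The reason for forcing the $C$-part to be negative-definite is that, since $S$ is irreducible, for the K3 surface we will build the Néron--Severi lattice equals the sublattice of $L$ on which $t$ acts through $C$, so its negative-definiteness is precisely what makes the surface nonprojective; the chosen positive-definite rotation block singles out the eigenvalue $\delta=e^{i\theta}$ of the isometry on the period, with $\la\omega,\bar\omega\ra>0$ for a generator $\omega$ of the $\delta$-eigenspace and $\la\omega,\omega\ra=0$ automatically since $\delta^2\neq1$.

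By Theorem \ref{th:main1} it now suffices to prove that for this $F$ and $\iota$ the obstruction map $\ob:\Omega\to\bZ/2\bZ$ vanishes, and this is the crux of the argument. The group $\Omega$ and the map $\ob$ are assembled from local data, and the only places of $\bQ$ that can contribute are the infinite place, the prime $2$ (forced both by the $2$-adic nature of even unimodularity and, when $C=X^2-1$, by $\Res(X-1,X+1)=2$), and the primes dividing $S(1)S(-1)\disc(S)$. At the infinite place the obstruction is governed by $\iota$ and vanishes by our choice; at a prime dividing none of $2,S(1),S(-1),\disc(S)$ the required local even unimodular lattice with the prescribed semisimple isometry exists with no obstruction. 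For the remaining primes one expresses the local term of $\ob$ through Hilbert symbols and Hasse--Witt invariants and shows that the sum over all places is $0$ by the product formula, the essential input being \eqref{eq:Sqcd}. When $C\neq X^2-1$ the multiplicities of $X-1$ and $X+1$ in $F$ are $0$ and $2$ in some order -- never $1$ -- and the computation is in the spirit of the proof of Theorem \ref{th:mainapp2}; the genuinely new and hardest case is $C=X^2-1$ with $|S(1)|$ and $|S(-1)|$ both non-squares, where the local obstructions at the primes dividing $S(1)$, respectively $S(-1)$, need not vanish individually and only the reciprocity argument forces their total to be zero. I expect this case to be the main obstacle.

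Granting $\ob=0$, Theorem \ref{th:main1} produces an even unimodular lattice $L$ of signature $(3,19)$ -- hence isomorphic to $\Lambda$ -- carrying a semisimple isometry $t$ with characteristic polynomial $F$ and index $\iota$. Its spectral radius is $\lambda$; it has a simple eigenvalue $\delta=e^{i\theta}$, a non-real root of $S$ and therefore not a root of unity, with $\la\omega,\bar\omega\ra>0$ and $\la\omega,\omega\ra=0$ for a generator $\omega$ of the $\delta$-eigenspace; and the sublattice of $L$ on which $t$ acts through $C$ is negative-definite. By McMullen's realization theorem (see \cite{Mc02,Mc11}) there is a nonprojective K3 surface $\cX$ and an automorphism $\varphi$ of $\cX$ with $\varphi^*$ identified with $t$, so the topological entropy of $\varphi$ equals $\log\lambda(\varphi^*)=\log\lambda$. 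Since $\lambda$ was an arbitrary Salem number of degree $20$, the theorem follows.
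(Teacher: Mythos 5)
Your overall strategy — realize an $(F,\iota)$-isometry on the K3 lattice with $F=C\cdot S$ and then lift via the Torelli theorem — is the right one and is essentially the paper's Proposition~\ref{prop:nonproj_realizable}. But the case split on $C$ is not just unnecessary, it inverts the difficulty: the case you flag as ``the main obstacle'' ($C=X^2-1$, i.e.\ $F=(X-1)(X+1)S$) is in fact the \emph{easy} case and is the one the paper uses uniformly for \emph{every} Salem number of degree $20$, regardless of whether $|S(\pm1)|$ is a square. The whole point of extending the theory to $-1$-symmetric polynomials is that with $n_+=n_-=1$ the components $M^+$ and $M^-$ are $1$-dimensional over $\bQ$, so every local Hasse--Witt invariant $\epsilon_v(b_v^{\pm})$ is automatically $0$; the global parity constraint $\eta(\beta)\cdot\bm1_I=0$ of Proposition~\ref{prop:etabeta_has_even1} then forces $\eta(\beta)(S)=0$ as well, so the obstruction vanishes with no Hilbert-symbol computation at all. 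This is exactly Theorem~\ref{th:mainapp}.

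Concretely, your proof has a real gap: you never establish $\ob=0$. For $C=(X\pm1)^2$ the two-dimensional $M^{\pm}$ can carry nontrivial Hasse--Witt data, and the argument ``in the spirit of Theorem~\ref{th:mainapp2}'' cannot be cited as is, because that theorem is stated only for signature $(n,n)$, not $(3,19)$, and is not a vanishing statement about $\ob$ for a prescribed index map $\iota_\delta$. And for $C=X^2-1$ you explicitly write that you expect this to be the hardest step and do not carry it out. So the crux of the argument is missing. Replacing the entire case analysis by the single choice $F=(X-1)(X+1)S(X)$ and invoking Theorem~\ref{th:mainapp} (which you cite only indirectly) closes the gap immediately: $F$ is $-1$-symmetric, \eqref{eq:Sqcd} holds trivially because $F(1)=F(-1)=0$, \eqref{eq:Sgcd} holds since $m(F)=1\le 3,19$, and the obstruction vanishes for any index map $\iota\in\Idx_{3,19}(F)$ by the $1$-dimensionality of $M^\pm$. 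Your lifting step (via Theorem~\ref{th:lift} and the Weyl-group correction) is sound.

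One small additional inaccuracy worth noting: you claim ``\eqref{eq:Sqcd} holds trivially since $F(1)F(-1)=0$,'' but \eqref{eq:Sqcd} also requires $|F(1)|$ and $|F(-1)|$ individually to be squares. If, say, $C=(X+1)^2$, then $F(-1)=0$ but $F(1)=4S(1)\neq0$, and \eqref{eq:Sqcd} holds exactly when $|S(1)|$ is a square — which is the assumption under which you chose that $C$, so the conclusion stands, but the stated justification is wrong. With $C=X^2-1$ this problem disappears entirely, which is another reason to use the paper's uniform choice.
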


To prove this theorem, we will use Theorem \ref{th:main1} to show that the polynomial 
of the form $(X-1)(X+1)S(X)$ for each Salem polynomial $S(X)$ of degree $20$ is 
realizable as the characteristic polynomial of an isometry on a K3 lattice. 
See Theorem \ref{th:mainapp} for a more general consequence of 
Theorem \ref{th:main1}.
We remark that the polynomial $(X-1)(X+1)S(X)$ is $-1$-symmetric, 
so Theorem \ref{th:mainK3} is a benefit of extending Bayer-Fluckiger's theory. 
The cases of degree $10$ and degree $18$ are still open, 
but there are some criteria for realizablity and 
it is known that the smallest Salem number of degree $18$ is not nonprojectively 
realizable, see \cite{Ba22}. 
Note that the lattice theoretic approach brings more subtle problems
in the projective case, see \cite{Mc16}.
We refer to \cite{Br20} for other results on entropy spectra.
\smallskip

The organization of this article is as follows.
We review fundamental facts on inner product spaces in \S \ref{sec:IPandI}
and on lattice theory in \S\ref{sec:ULandEWG}. These sections contain no 
new results. 
In \S \ref{sec:LocalTheory}, we reproduce local theory on 
even unimodular lattices and characteristic polynomials of isometries, 
which is given by Bayer-Fluckiger in \cite{Ba21}.
Moreover, we extend her theory to isometries of determinant $-1$
on an even unimodular $\bZ_2$-lattice.
For this, Theorem \ref{th:detailedQ2} is crucial.
In \S\ref{sec:LGP}, we give a necessary and sufficient condition for a pair 
$(F, \iota)$ of a polynomial $F$ and an index map $\iota$ to be realized on an 
even unimodular lattice.
In \S\ref{sec:LGO}, we reformulate the obstruction group and map, 
and establish Theorem \ref{th:main1}, extending her theory.
Theorem \ref{th:mainapp2} is also proved in this section.
In the last section \S\ref{sec:Aut_of_K3}, 
we deal with entropy problems for K3 surface automorphisms, and 
prove Theorem \ref{th:mainK3}.

\section{Inner products and isometries}\label{sec:IPandI}
Let $K$ be a field of characteristic $\neq 2$.  
An \textit{inner product space} $(V, b)$ over $K$ is a pair of
a finite dimensional $K$-vector space $V$ and an \textit{inner product} 
$b:V\times V \to K$, that is, a nondegenerate symmetric bilinear form.
We may write either of $V$ or $b$ for $(V, b)$. 
Two inner product spaces $(V, b)$ and $(V', b')$ are \textit{isomorphic} 
if there exists a linear isomorphism $\varphi:V\to V'$ satisfying 
$b(x,y) = b'(\varphi(x), \varphi(y))$ for all $x,y\in V$.
The group of isometries on $V$ is denoted by $\rO(V)$.

\subsection{Symmetric polynomials}\label{ss:symm_poly}
For a polynomial $F(X) \in K[X]$, define 
$F^\vee(X) :=\allowbreak X^{\deg F} F(X^{-1}) \in K[X]$.
A polynomial $F$ is \textit{$\epsilon$-symmetric} if $F(X) = \epsilon F^\vee(X)$ 
where $\epsilon = \pm 1$. Such a polynomial occurs naturally as the 
characteristic polynomial of an isometry (see \S\ref{ss:isometries}).
For a monic polynomial $F(X) \in K[X]$ with $F(0)\neq 0$, define
\[ F^*(X):= F(0)^{-1} X^{\deg F} F(X^{-1}) = F(0)^{-1} F^\vee(X), \]
and we say that $F$ is \textit{$*$-symmetric} if $F = F^*$.
A monic polynomial $F(X) \in K[X]$ with $F(0)\neq 0$ 
is $*$-symmetric if and only if $F$ is $+1$-symmetric or $-1$-symmetric.
Following \cite{Ba15} we say that a $*$-symmetric polynomial $F$ is of
\begin{itemize}
\item \textit{type $0$} if $F$ is a product of powers of $(X-1)$ and of $(X+1)$;
\item \textit{type $1$} if $F$ is a product of powers of $+1$-symmetric 
irreducible monic polynomials of even degrees;
\item \textit{type $2$} if $F$ is a product of polynomials of the form $GG^*$, where
$G$ is monic, irreducible and $G^* \neq G$.
\end{itemize}
Every $*$-symmetric polynomial $F$ admits a unique factorization
$F = F_0F_1F_2$, where $F_i$ is of type $i$ for $i=0,1,2$ 
(see \cite[Proposition 1.3]{Ba15}). This is true even if $\ch K = 2$. 
We refer to the factor $F_i$ as the \textit{type $i$ component} of $F$. 
For a $*$-symmetric polynomial $F$, we have for example:
\begin{itemize}
\item If $F$ has no type $0$ component, that is, $F(1)F(-1)\neq 0$, 
then $F$ is of even degree and $+1$-symmetric.
\item $F$ is $-1$-symmetric if and only if $F$ has the factor $X-1$ with 
odd multiplicity.
\end{itemize}
We remark that if $K = \bQ$ and $F\in \bZ[X]$ then the factorization  
$F = F_0 F_1 F_2$ in $\bQ[X]$ is the same as that in $\bZ[X]$, 
thanks to Gauss's lemma. 

Typical examples of $*$-symmetric polynomials
which appear in this article are 
cyclotomic polynomials and Salem polynomials.
A cyclotomic polynomial is the minimal polynomial of a root of unity, 
and a \textit{Salem polynomial} is the minimal polynomial of a 
\textit{Salem number}, that is, a real algebraic unit $\lambda > 1$ 
whose conjugates other than $\lambda^{\pm 1}$ lie on the unit circle in $\bC$.
We allow Salem numbers of degree $2$ following \cite{Mc02}, 
see \cite[p.~26]{Sa63} for Salem's definition.

\subsection{Inner products}
Let $(V, b)$ be an inner product space over $K$.
If $e_1, \ldots, e_d$ is a basis of $V$,
the matrix $(b(e_i, e_j))_{ij}$ is called the \textit{Gram matrix} of 
$e_1, \ldots, e_d$. A Gram matrix of $V$ is the Gram matrix of some basis.
The \textit{determinant} of $(V, b)$ is the element of $K^\times/K^{\times 2}$
represented by $\det G$, where $G$ is any Gram matrix of $V$.
Here $K^\times$ is the group of invertible elements and
$K^{\times 2} := \{ a^2 \mid a\in K^\times \}$.
The determinant of $(V, b)$ is denoted by $\det b$, and we define
the \textit{discriminant} $\disc b$ of $(V,b)$ to be 
$(-1)^{d(d-1)/2}\det b \in K^\times/K^{\times 2}$, where $d$ is the 
dimension of $V$.

It is well known that every inner product space has an orthogonal basis, that is, 
a basis of which the Gram matrix is diagonal. Based on this fact, 
the Hasse-Witt invariant is defined as follows. 
First of all, we denote by $\Br(K)$ the Brauer group of $K$, and by
$(a,b)\in \Br(K)$ the Brauer class of the quaternion 
algebra defined by $a, b\in K^\times$ 
(see e.g. \cite[Chapter 8]{Sc85} for Brauer groups).
Let $e_1, \ldots, e_d$ be an orthogonal basis of $(V,b)$ 
with the Gram matrix $\diag(a_1, \ldots ,a_d)$. Then we define
\[ \epsilon(b) := \sum_{i<j} (a_i, a_j) \in \Br(K), \]
where we regard $\Br(K)$ as an additive group.
The element $\epsilon(b)$ does not depend on the choice of the orthogonal
basis. We call $\epsilon(b)$ the \textit{Hasse-Witt invariant} of $b$.  
The Hasse-Witt invariant of any $1$-dimensional inner product space is defined to be $0$.

If $(V,b)$ and $(V',b')$ are two inner product spaces, we have
$\epsilon(b\oplus b') = \epsilon(b) + \epsilon(b') +\allowbreak 
(\det b, \det b')$.
This formula implies the following lemma. 

\begin{lemma}\label{lem:HWsumFormula}
Let $V_1, \ldots, V_k$ be $K$-vector spaces, and let 
$b_j$ and $b_j'$ be inner products on $V_j$ for $j = 1, \ldots, k$.
If $\det b_j = \det b_j'$ for all $j$, then we have
\[ \epsilon\left( \bigoplus_{j=1}^k b_j \right)
- \sum_{j=1}^k \epsilon(b_j) 
= \epsilon\left( \bigoplus_{j=1}^k b_j' \right)
- \sum_{j=1}^k \epsilon(b_j'). \]
\end{lemma}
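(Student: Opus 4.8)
The plan is to prove the identity by induction on $k$, peeling off one summand at a time and using only two standard inputs: the two-term Hasse--Witt formula recalled just before the lemma, namely $\epsilon(b\oplus b') = \epsilon(b) + \epsilon(b') + (\det b, \det b')$, and the fact that the determinant is multiplicative under orthogonal direct sums, so that $\det\bigl(\bigoplus_{j} b_j\bigr) = \prod_{j} \det b_j$ in $K^\times/K^{\times 2}$ (block-diagonal Gram matrices). The hypothesis $\det b_j = \det b_j'$ then forces all the relevant determinants, and hence all the quaternion symbols that appear, to agree between the primed and unprimed systems, and the result drops out.

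Concretely, for $k = 1$ both sides are $0$, so there is nothing to prove. For the inductive step, write $\bigoplus_{j=1}^{k} b_j = \bigl(\bigoplus_{j=1}^{k-1} b_j\bigr) \oplus b_k$ and apply the two-term formula, then subtract $\sum_{j=1}^{k}\epsilon(b_j)$, to get
\[ \epsilon\Bigl(\bigoplus_{j=1}^{k} b_j\Bigr) - \sum_{j=1}^{k}\epsilon(b_j) = \left[\epsilon\Bigl(\bigoplus_{j=1}^{k-1} b_j\Bigr) - \sum_{j=1}^{k-1}\epsilon(b_j)\right] + \Bigl(\textstyle\prod_{j=1}^{k-1}\det b_j,\ \det b_k\Bigr). \]
By the induction hypothesis the bracketed term equals its primed analogue, and since $\det b_j = \det b_j'$ for every $j$ the symbol $\bigl(\prod_{j=1}^{k-1}\det b_j, \det b_k\bigr)$ equals $\bigl(\prod_{j=1}^{k-1}\det b_j', \det b_k'\bigr)$; adding the two equalities gives the claim for $k$.

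Alternatively, one can avoid the induction by establishing the closed form $\epsilon\bigl(\bigoplus_{j} b_j\bigr) - \sum_{j}\epsilon(b_j) = \sum_{1\le j<l\le k}(\det b_j, \det b_l)$ directly: concatenate orthogonal bases of the $b_j$ to obtain an orthogonal basis of $\bigoplus_j b_j$, expand $\epsilon$ as the double sum of symbols over pairs of basis vectors, and collapse each cross-block contribution using bilinearity of $(\cdot,\cdot)$ in $\Br(K)$; the right-hand side then visibly depends only on the determinants. Either way, there is no real obstacle here — the argument is bookkeeping on top of well-known properties of $\det$ and of the quaternion-symbol pairing. The only point that warrants a moment's care is keeping track of the classes in $K^\times/K^{\times 2}$, i.e. noting that every determinant and every symbol appearing is insensitive to replacing $b_j$ by any form with the same determinant; once that is made explicit the conclusion is immediate.
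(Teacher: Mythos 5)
Your induction on $k$, using the two-term formula $\epsilon(b\oplus b') = \epsilon(b) + \epsilon(b') + (\det b, \det b')$ and multiplicativity of the determinant, is exactly the argument the paper has in mind (it simply states that the lemma follows from that formula). The proof is correct and matches the paper's approach.
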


If $K$ is a local field, Hasse-Witt invariants will often be considered to 
take values in $\{0,1\} = \bZ/2\bZ$ because the image of $\epsilon$ is a subgroup 
of order $2$ in $\Br(K)$.
Suppose that $K$ is the field of rational numbers $\bQ$, and let $v$ be a place.
In this case Hasse-Witt invariants over the $v$-adic field $\bQ_v$ 
or over $\bQ_\infty = \bR$ will be denoted 
by $\epsilon_v:\{\text{inner products over $\bQ_v$}\} \to \bZ/2\bZ$.
Here $\infty$ denotes the infinite place of $\bQ$. 
Furthermore, for an inner product $b$ over $\bQ$, we will write 
$\epsilon_v(b)$ for $\epsilon_v(b\otimes \bQ_v)$.

\subsection{Isometries}\label{ss:isometries}
Let $(V, b)$ be an inner product space and $t\in \rO(V)$ an isometry
with characteristic polynomial $F$. 
As mentioned earlier, the polynomial $F$ is $*$-symmetric 
(see \cite[Proposition A.1]{GM02} and its proof).
Let us define 
\[ V(f;t) := \{ v\in V \mid f(t)^Nv = 0 \quad\text{for some $N\geq 0$} \} \]
for $f\in K[X]$. Then we have an orthogonal direct sum
decomposition
\[ V = \bigoplus_{f\in I_0\cup I_1} V(f;t) \oplus 
\bigoplus_{\{f,f^*\}\subset I_2} [V(f;t)\oplus V(f^*;t)],  \]
where $I_i$ denotes the set of irreducible factors of the type $i$ component of $F$ 
for $i = 0,1,2$.
Furthermore, for each $f\in I_2$, the component $V(f;t)\oplus V(f^*;t)$ is 
\textit{split}, i.e., 
$V(f;t)\oplus V(f^*;t)$ has a Gram matrix of the form
\[\begin{pmatrix}
  0 & I \\ I & 0
\end{pmatrix}, \]
where $I$ denotes the identity matrix, see \cite[\S 3]{Mi69}.

There is a relation between the determinant $\det b$ and the polynomial $F$.

\begin{lemma}\label{lem:det_iso_formula}
If $F(1)F(-1) \neq 0$ then $\det b = F(1)F(-1)$ in $K^\times/K^{\times 2}$. 
In particular $\det b|_{V(f;t)\oplus V(f^*;t)} = (-1)^{\deg f}$ in 
$K^\times/K^{\times 2}$ for $f\in I_2$.
\end{lemma}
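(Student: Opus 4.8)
Here is the plan. I would prove the displayed identity by producing on $V$ a nondegenerate \emph{alternating} form built from $t$ and $b$, and then computing the determinant of its Gram matrix in two ways.

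First I would record the elementary consequences of the hypothesis $F(1)F(-1)\neq 0$. By the discussion of $*$-symmetric polynomials in \S\ref{ss:symm_poly}, $F$ is then $+1$-symmetric of even degree, so $d:=\dim_K V=\deg F$ is even and $F(0)=1$. Since $F(X)=\det(XI-t)$ we get $\det(-t)=F(0)$, hence $\det t=(-1)^dF(0)=1$; in particular $t$ is invertible. Moreover $\det(I-t)=F(1)\neq 0$, and $\det(I+t)$ differs from $F(-1)$ by the sign $(-1)^d=1$ and so is nonzero; thus $I-t$ and $I+t$ are invertible, and therefore $t-t^{-1}=t^{-1}(t-I)(t+I)$ is invertible as well.

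Now the main point: since $t\in\rO(V)$, its adjoint with respect to $b$ is $t^{-1}$, so $(t-t^{-1})^{\ast}=t^{-1}-t=-(t-t^{-1})$; that is, $S:=t-t^{-1}$ is skew-adjoint. Hence $\omega(x,y):=b(Sx,y)$ is an alternating bilinear form, and it is nondegenerate because $S$ and $b$ are. As $d$ is even, the determinant of any Gram matrix of $\omega$ (which is well defined in $K^\times/K^{\times 2}$) equals the square of a Pfaffian, hence lies in $K^{\times 2}$. On the other hand, if $G$ is a Gram matrix of $b$ in some basis and $[S]$ is the matrix of $S$ in that basis, then the Gram matrix of $\omega$ is $[S]^{\mathrm T}G$, so that determinant is $\det(S)\det(b)$ in $K^\times/K^{\times 2}$. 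Comparing the two expressions and using $a^{-1}\equiv a \pmod{K^{\times 2}}$ gives $\det b = \det(t-t^{-1})$ in $K^\times/K^{\times 2}$. Finally $\det(t-t^{-1})=\det(t)^{-1}\det(t-I)\det(t+I)$, and from $\det t=1$, $\det(t-I)=(-1)^dF(1)=F(1)$, $\det(t+I)=(-1)^dF(-1)=F(-1)$ we obtain $\det(t-t^{-1})=F(1)F(-1)$, proving the first assertion. For the ``in particular'' part, fix $f\in I_2$ of multiplicity $m$ in $F$ and apply the first assertion to the inner product space $V(f;t)\oplus V(f^{\ast};t)$ with the restricted isometry: its characteristic polynomial is $f^{m}(f^{\ast})^{m}=hh^{\ast}$ with $h:=f^{m}$, and $h^{\ast}(X)=h(0)^{-1}X^{\deg h}h(1/X)$ yields $(hh^{\ast})(1)(hh^{\ast})(-1)\equiv(-1)^{\deg f}\pmod{K^{\times 2}}$; alternatively this can be read off directly from the fact recalled above that $V(f;t)\oplus V(f^{\ast};t)$ is split, its Gram matrix being $\left(\begin{smallmatrix}0&I\\ I&0\end{smallmatrix}\right)$.

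The step I expect to be the real obstacle is spotting that $t-t^{-1}$ is the operator to use: it must simultaneously be invertible — which is exactly where $F(1)F(-1)\neq 0$ enters, through the factorization $t^{-1}(t-I)(t+I)$ — and skew-adjoint, and it is these two facts together that pin $\det b$ down to the class of $\det(t-t^{-1})$. An alternative route through the orthogonal decomposition $V=\bigoplus_f V(f;t)\oplus\bigoplus[V(f;t)\oplus V(f^{\ast};t)]$ would instead make the delicate case a generalized eigenspace $V(f;t)$ with $f$ of type $1$ and $t$ not semisimple, requiring an extra filtration argument; the alternating-form approach sidesteps this completely.
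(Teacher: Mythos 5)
Your proof of the main assertion is correct and complete. The paper itself only cites \cite[Corollary 5.2]{Ba15} here, so what you have produced is a genuine self-contained argument. The key steps all check out: the hypothesis $F(1)F(-1)\neq 0$ forces $F$ to be $+1$-symmetric of even degree $d$ with $F(0)=1$, hence $\det t = (-1)^dF(0)=1$, and $S:=t-t^{-1}=t^{-1}(t-I)(t+I)$ is invertible; $S$ is skew-adjoint because $t^*=t^{-1}$, so $\omega(x,y)=b(Sx,y)$ is a nondegenerate alternating form; its Gram matrix in any basis is $[S]^{\mathrm T}G$, which is skew-symmetric of even size, so $\det(S)\det(b)\in K^{\times 2}$; and $\det(S)=\det(t)^{-1}\det(t-I)\det(t+I)=F(1)F(-1)$. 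This is the standard Pfaffian/skew-adjoint route to the identity, and it is cleaner than the alternative of descending through the decomposition $V=\bigoplus_f V(f;t)$.

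One bookkeeping point deserves a flag. In the ``in particular'' clause, both of your two arguments actually yield $(-1)^{m\deg f}$, where $m$ is the multiplicity of $f$ in $F$: applying the first assertion to $hh^*$ with $h:=f^m$ gives $(hh^*)(1)(hh^*)(-1)\equiv(-1)^{\deg h}=(-1)^{m\deg f}\pmod{K^{\times 2}}$ (not $(-1)^{\deg f}$ as you wrote), and the split Gram matrix $\left(\begin{smallmatrix}0&I\\I&0\end{smallmatrix}\right)$ has determinant $(-1)^{\dim V(f;t)}=(-1)^{m\deg f}$. Your displayed conclusion $(-1)^{\deg f}$ reproduces the paper's statement but is inconsistent with your own computation whenever $m$ is even; since both routes point to $(-1)^{m\deg f}$, you should trust your calculation and note the apparent discrepancy with the statement rather than silently adjusting the exponent to match it.
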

\begin{proof}
See \cite[Corollary 5.2]{Ba15}.
\end{proof}

\subsection{Algebras with involution and hermitian forms}
In this article a $K$-algebra means an associative unital $K$-algebra.
We always assume that $K$-algebras are finite dimensional over $K$.
Moreover, $K$-algebras will often be equipped with an involution.
If $A = (A,\sigma)$ is a $K$-algebra with an involution $\sigma: A\to A$,
define the fixed subalgebra $A^\sigma$ to be
$\{ x \in A \mid  \sigma (x) = x \}$.
Two $K$-algebras $A = (A, \sigma)$ and $A' = (A', \sigma')$, 
where $\sigma$ and $\sigma'$ are involutions, 
are \textit{isomorphic} if 
there exists an isomorphism $\varphi:A\to A'$ as $K$-algebra satisfying 
$\varphi\circ \sigma = \sigma' \circ \varphi$.

Let $A = (A, \sigma)$ be a $K$-algebra with involution and $M$ an $A$-module. 
A map $h:M\times M \to A$ is called a \textit{hermitian form}
on $M$ if $h$ is $A$-linear in the first variable and 
$h(x,y) = \sigma h(y,x)$ for all $x, y\in M$.
If $M$ is free over $A$, a \textit{Gram matrix} and the \textit{determinant} of 
$h$ are defined in the same way as for an inner product. 
The determinants of hermitian forms take values in 
\[ \Tw(A,\sigma) := (A^\sigma)^\times / \N_{A/A^\sigma} (A^\times), \]
which will be called the \textit{twisting group} of $(A, \sigma)$,
where $\N_{A/A^\sigma}:A^\times \to (A^\sigma)^\times$ is the norm map.

Let $f\in K[X]$ be a $*$-symmetric polynomial of even degree
and suppose that $f$ is irreducible or of the form $gg*$ for some irreducible
monic polynomial $g$ satisfying $g \neq g^*$.
The $K$-algebra $E:=K[X]/(f)$ has an involution $\sigma$ defined
by $\alpha\mapsto \alpha^{-1}$, where $\alpha = X + (f)\in E$ is the image of $X$.
Let $M$ be a direct product of finite copies of $E$.
We can regard $M$ as an $E$-module and also as a $K$-vector space. 
Hermitian forms on $M$ over $E$ and inner products on $M$ over $K$
which make $\alpha:M \to M$ (multiplication by $\alpha$) an isometry are 
related as follows.

\begin{lemma}\label{lem:tr_hermitian}
Let $f$ and $M$ be as above, and assume that $f$ is separable. 
If $b$ is an inner product on the $K$-vector space $M$ 
such that $\alpha$ becomes an isometry, then there exists one and only one 
hermitian form $h$ on $M$ over $E$ such that $b = \Tr_{E/K}\circ h$.
Here $\Tr_{E/K}$ is the trace map.
\end{lemma}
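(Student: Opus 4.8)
The statement to prove is Lemma~\ref{lem:tr_hermitian}: for separable $*$-symmetric $f$ of even degree (irreducible or of the form $gg^*$), and $M$ a free $E$-module with $E = K[X]/(f)$, every inner product $b$ on $M$ making multiplication by $\alpha$ an isometry arises as $\Tr_{E/K}\circ h$ for a unique hermitian form $h$ on $M$ over $E$.

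\begin{proof}[Proof proposal]
The plan is to reduce to a statement about the trace pairing on $E$ itself and then invoke nondegeneracy. First I would note that since $f$ is separable, the $K$-algebra $E$ is étale, so the trace form $(x,y)\mapsto \Tr_{E/K}(xy)$ on $E$ is nondegenerate; consequently, for \emph{any} $K$-bilinear map, composing with $\Tr_{E/K}$ interacts well with the $E$-module structure. The key algebraic observation is the following adjunction: given the inner product $b$ on $M$ (viewed as a $K$-vector space), for each fixed $y\in M$ the functional $x\mapsto b(x,y)$ is $K$-linear, and one wants to produce $h(x,y)\in E$ with $\Tr_{E/K}(\lambda h(x,y)) = b(\lambda x, y)$ for all $\lambda\in E$. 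Existence and uniqueness of such an $h(x,y)$ follow immediately from nondegeneracy of the trace form on $E$ together with the fact that $x\mapsto b(\lambda x,y)$, as $\lambda$ ranges over $E$, exhausts the relevant functionals. This forces the definition of $h$ and gives uniqueness for free, so the content is really in checking that the $h$ so defined is $E$-linear in the first variable and satisfies $h(x,y)=\sigma h(y,x)$.

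The $E$-linearity in the first variable is built into the construction: $\Tr_{E/K}(\lambda h(\mu x,y)) = b(\lambda\mu x,y) = \Tr_{E/K}(\lambda\mu\, h(x,y))$ for all $\lambda$, so by nondegeneracy $h(\mu x,y) = \mu\, h(x,y)$. The hermitian symmetry is where the isometry hypothesis on $\alpha$ enters. I would compute, for all $\lambda\in E$,
\[
\Tr_{E/K}\!\bigl(\lambda\, \sigma h(y,x)\bigr) = \Tr_{E/K}\!\bigl(\sigma(\sigma(\lambda))\, \sigma h(y,x)\bigr) = \Tr_{E/K}\!\bigl(\sigma(\sigma(\lambda)\, h(y,x))\bigr) = \Tr_{E/K}\!\bigl(\sigma(\lambda)\, h(y,x)\bigr),
\]
using that $\Tr_{E/K}\circ\sigma = \Tr_{E/K}$ (the involution $\sigma$ permutes the $K$-embeddings of $E$). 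Now $\Tr_{E/K}(\sigma(\lambda)\,h(y,x)) = b(\sigma(\lambda)y, x)$. Since $\sigma(\lambda) = \lambda^{-1}$ corresponds to applying $\alpha^{-1}$ appropriately and $\alpha$ (hence every element of $E^\times$, and by linearity every element of $E$ acting) is compatible with $b$ via the isometry property $b(\alpha u, \alpha v) = b(u,v)$, one gets $b(\sigma(\lambda) y, x) = b(y, \lambda x) = b(\lambda x, y) = \Tr_{E/K}(\lambda\, h(x,y))$. Comparing, $\Tr_{E/K}(\lambda\,\sigma h(y,x)) = \Tr_{E/K}(\lambda\, h(x,y))$ for all $\lambda$, so $h(x,y) = \sigma h(y,x)$ by nondegeneracy.

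The main obstacle I anticipate is handling the case $f = gg^*$ with $g \neq g^*$ cleanly: here $E \cong K[X]/(g) \times K[X]/(g^*)$ is a product of two fields swapped by $\sigma$, so $E$ is not a field and one must be careful that "nondegeneracy of the trace form" and the adjunction argument still go through componentwise — which they do, since $\Tr_{E/K}$ is the sum of the traces of the two factors and $\sigma$ interchanges them. A secondary technical point is justifying $\Tr_{E/K}\circ\sigma = \Tr_{E/K}$ and the identity $b(\lambda u, v) = b(u, \sigma(\lambda) v)$ for all $\lambda \in E$ (not just $\lambda$ a power of $\alpha$): the first follows because $\sigma$ permutes the geometric points of $\operatorname{Spec} E$, and the second follows from $b(\alpha u, \alpha v) = b(u,v)$ by taking $K$-linear combinations and using that $E$ is generated over $K$ by $\alpha$. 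Once these are in place the proof is the formal nondegeneracy argument sketched above; no genuinely hard step remains.
\end{proof}
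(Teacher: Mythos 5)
Your proposal is correct and takes essentially the same route as the paper: the paper simply cites Milnor's Lemma 1.1 for irreducible $f$ (noting separability gives a nonzero trace) and says the same proof works for $f = gg^*$, and Milnor's proof is exactly the nondegenerate-trace-form adjunction argument you spell out, including the extension from $b(\alpha u,\alpha v)=b(u,v)$ to $b(\lambda u,v)=b(u,\sigma(\lambda)v)$ and the componentwise check in the split case.
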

\begin{proof}
This statement for an irreducible $f$ is in \cite[Lemma 1.1]{Mi69} and 
the same proof is valid for $f = gg*$. Note that separablility of $f$ 
guarantees that the trace map is non-zero.
\end{proof}

\subsection{Inner product spaces over $\bR$}\label{ss:IPSoverR}
In this subsection we introduce an invariant of an isometry on a real inner product 
space called the index (cf. \cite[Section 6]{Ba21}). 
Let $(V,b)$ be an inner product space over $\bR$ of signature $(r,s)$.
The \textit{index of $(V, b)$} is $r - s$ and we denote this by 
$\idx V$ or $\idx b$.
Notice that $r = (\dim V + \idx V)/2$ and $s = (\dim V - \idx V)/2$. 
Let $t\in \rO(V)$ be an isometry with characteristic polynomial 
$F \in \bR[X]$.
The symbol $I_i(\bR)$ denotes the set of irreducible factors (in $\bR[X]$) of the 
type $i$ component of $F$ for $i = 0,1,2$.
Note that any $f\in I_1(\bR)$ is expressed as 
$f(X) = X^2 - (\delta + \delta^{-1})X + 1$ for some $\delta \in \bT\setminus\{\pm 1\}$, 
where $\bT := \{ \delta\in\bC \mid |\delta| = 1 \}$. 
The \textit{index of $t$}, denoted by $\idx_t$, is a map from 
$I_0(\bR)\cup I_1(\bR)$ to $\bZ$ defined by 
$\idx_t(f) = \idx V(f;t)$.
The reason for not including $I_2(\bR)$ in the domain of definition is in 
\textup{(ii)} of the following proposition.

\begin{proposition}\label{prop:Sgcd}
Let $(r_f, s_f)$ be the signature of $V(f;t)$ for $f\in I_1(\bR)$, and
that of $V(f;t)\oplus V(f^*;t)$ for $f\in I_2(\bR)$.
Then we have
\begin{enumerate}
\item $r_f \equiv s_f \equiv 0 \bmod 2$ for each $f\in I_1(\bR)$; and
\item $r_f = s_f$ (i.e. $\idx (V(f;t)\oplus V(f^*;t)) = 0$) for each $f\in I_2(\bR)$.
\end{enumerate}
In particular, we have
\begin{equation*}
\text{$r,s\geq m(F)$ and if $F(1)F(-1)\neq 0$ then 
$r \equiv s \equiv m(F)\bmod 2$} \tag{Sign}
\end{equation*}
where $m(F)$ is the number of roots $\lambda$ of $F$ with $|\lambda|>1$
counted with multiplicity.
\end{proposition}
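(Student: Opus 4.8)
The plan is to analyze the real inner product space $V$ with isometry $t$ by decomposing it according to the irreducible factors of $F$ in $\bR[X]$, and to pin down the signature of each piece $V(f;t)$.

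First I would recall the orthogonal decomposition from \S\ref{ss:isometries}: $V = \bigoplus_{f\in I_0(\bR)\cup I_1(\bR)} V(f;t) \oplus \bigoplus_{\{f,f^*\}} [V(f;t)\oplus V(f^*;t)]$, where the last sum runs over conjugate pairs from $I_2(\bR)$. Since signatures add over orthogonal direct sums, both claims (i) and (ii) are local statements about a single generalized eigenspace (or pair thereof), and the ``In particular'' clause will follow by summing.

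For part (ii): for $f\in I_2(\bR)$ the component $V(f;t)\oplus V(f^*;t)$ is split, i.e. admits a Gram matrix of the shape $\begin{pmatrix} 0 & I \\ I & 0 \end{pmatrix}$, again by \S\ref{ss:isometries}. Such a form is hyperbolic, so its signature is $(k,k)$ where $2k = \dim(V(f;t)\oplus V(f^*;t))$; hence $r_f = s_f$ and the index vanishes. (One can also see this directly: complex-conjugation pairs the eigenspaces for $f$ and $f^*$ and forces the form to be totally isotropic on each.) This is why $I_2(\bR)$ is excluded from the domain of $\idx_t$ — its contribution carries no information.

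For part (i): an $f\in I_1(\bR)$ has the form $f(X) = X^2 - (\delta+\delta^{-1})X + 1$ with $\delta\in\bT\setminus\{\pm1\}$, so $E := \bR[X]/(f) \cong \bC$ and the involution $\sigma$ (sending $\alpha\mapsto\alpha^{-1}$) is complex conjugation on $\bC$. By Lemma~\ref{lem:tr_hermitian}, the restriction of $b$ to $M := V(f;t)$ is $\Tr_{\bC/\bR}\circ h$ for a unique hermitian form $h$ on the $\bC$-vector space $M$. A hermitian form over $\bC$ is diagonalizable with real diagonal entries; if $h$ has signature $(p,q)$ as a hermitian form (so $p+q = \dim_\bC M$), then each positive (resp. negative) entry contributes a $2$-dimensional positive-definite (resp. negative-definite) real subspace under $\Tr_{\bC/\bR}$ — concretely $\Tr_{\bC/\bR}$ of the rank-one form $\la c\ra$ over $\bC$, $c\in\bR^\times$, is the real form $\diag(c,c)$ up to scaling by a square, which is $2c$ times the standard form, hence definite of the sign of $c$ and of rank $2$. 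Therefore $r_f = 2p$ and $s_f = 2q$ are both even. The main technical point to get right here is the precise behavior of the trace form on a rank-one hermitian space, but this is the standard computation underlying Lemma~\ref{lem:tr_hermitian} and is routine.

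Finally, for the ``In particular'' clause: among the real irreducible $*$-symmetric factors, the ones with a root $\lambda$ satisfying $|\lambda|>1$ are exactly the factors $gg^*$ for $g$ a real irreducible with a root outside the closed unit disk — these are the $I_2(\bR)$ factors, since $I_0(\bR)$ and $I_1(\bR)$ have all roots on the unit circle. On a split component of dimension $2\deg g = 2d$ contributed by such a factor, every vector pairs into the isotropic piece, so the component needs a $(d,d)$-hyperbolic summand; summing over all factors contributing roots off the unit circle gives at least $m(F)$ positive and $m(F)$ negative directions, i.e. $r, s \ge m(F)$. For the congruence when $F(1)F(-1)\neq 0$: in that case $I_0(\bR)=\varnothing$, so $V = \bigoplus_{f\in I_1(\bR)} V(f;t) \oplus \bigoplus_{\{f,f^*\}} [\cdots]$, and modulo $2$ the contributions of the $I_1(\bR)$-pieces vanish to both $r$ and $s$ by (i), while the $I_2(\bR)$-pieces contribute equally to $r$ and $s$; hence $r \equiv s \bmod 2$, and both are congruent to the number of $I_2(\bR)$-contributed directions off the unit circle, which is $m(F)$, modulo $2$. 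I expect the trace-form sign computation in part (i) to be the only place demanding any care; the rest is bookkeeping on the orthogonal decomposition.
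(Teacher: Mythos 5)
Your proposal is correct and follows essentially the same route as the paper: part (ii) comes from the split decomposition of $V(f;t)\oplus V(f^*;t)$, part (i) from Lemma~\ref{lem:tr_hermitian} with the explicit trace-form signature computation over $E=\bR[X]/(f)\cong\bC$, and the ``in particular'' clause from summing signatures over the orthogonal decomposition. One small caveat worth flagging: Lemma~\ref{lem:tr_hermitian} applies directly to $V(f;t)$ only when $t$ acts there with minimal polynomial exactly $f$, i.e.\ semisimply; the proposition as stated allows arbitrary $t$, and the paper handles the general case by pointing to \cite[Proposition~8.1(a)]{Ba15}, where a filtration reduces to the semisimple situation without changing the Witt class (hence, over $\bR$, the signature), so you should either cite that reduction or note that the lemma's hypothesis forces $V(f;t)$ to be a free $E$-module.
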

\begin{proof}
\textup{(i)} is a consequence of Lemma \ref{lem:tr_hermitian}, see the proof of 
\cite[Proposition 8,1 (a)]{Ba15}.
\textup{(ii)} follows from the fact that $V(f;t)\oplus V(f^*;t)$ is split for 
$f\in I_2(\bR)$.
\end{proof}

Conversely, let $F\in \bR[X]$ be a $*$-symmetric polynomial of degree $d$, and 
$r, s$ non-negative integers with $r + s = d$, and assume that
the condition \eqref{eq:Sgcd} holds.
We denote by $n_+,n_-$ and $n_f$ the multiplicities of $X-1, X+1$ and $f\in I_1(\bR)$ 
in $F$ respectively. Let us define $\Idx_{r,s}(F)$ to be the set of maps 
$\iota: I_0(\bR)\cup I_1(\bR)\to \bZ$ such that
\begin{equation}\label{eq:idxmap0}
\text{$\iota(X\mp 1)\equiv n_\pm$ mod $2$ and 
$-n_\pm \leq \iota(X\mp 1) \leq n_\pm$;}
\end{equation}
\begin{equation}\label{eq:idxmap1}
\begin{split}
&\text{$\iota(f)$ is even, $-2n_f \leq \iota(f) \leq 2n_f$, and} \\
&\text{$(2n_f + \iota(f))/2 \equiv (2n_f - \iota(f))/2 \equiv 0$ mod $2$
for $f\in I_1(\bR)$; and}
\end{split}
\end{equation}
\begin{equation}\label{eq:idxmapsum}
\sum_{f\in I_0(\bR)\cup I_1(\bR)} \iota(f) = r-s.
\end{equation}
Proposition \ref{prop:Sgcd} implies that $\idx_t$ belongs to $\Idx_{r,s}(F)$ 
for any isometry $t$ with characteristic polynomial $F$ on 
a $d$-dimensional inner product space over $\bR$ of signature $(r,s)$.
We call a map in $\Idx_{r,s}(F)$ an \textit{index map}.

\begin{proposition}\label{prop:realize_idx}
Let $F$ and $r,s$ be as above with the condition \eqref{eq:Sgcd}. 
For any index map $\iota \in \Idx_{r,s}(F)$, 
there exists an inner product space having a semisimple isometry
with characteristic polynomial $F$ and index $\iota$. 
\end{proposition}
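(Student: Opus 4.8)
The plan is to construct the desired inner product space over $\bR$ piece by piece, handling each type of irreducible factor separately and then taking an orthogonal direct sum. Since the characteristic polynomial and the index are multiplicative/additive over such a decomposition, it suffices to realize each ``local at $f$'' piece with prescribed signature, and then check that the prescribed signatures can be glued together to give total signature $(r,s)$.

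First I would treat the type $2$ factors. For each pair $\{f, f^*\} \subset I_2(\bR)$ with $f$ of degree $\deg f$ and multiplicity $m$, take the $K$-algebra $E = \bR[X]/(ff^*)$ with its involution and the free $E$-module $M = E^m$; by the structure recalled before Lemma~\ref{lem:tr_hermitian}, any hermitian form transfers via $\Tr_{E/K}$ to an inner product making multiplication by $\alpha$ an isometry, and since $V(f;t)\oplus V(f^*;t)$ must be split, its signature is forced to be $(\tfrac{1}{2}m\deg(ff^*), \tfrac{1}{2}m\deg(ff^*))$, contributing index $0$ as required by \eqref{eq:idxmapsum} (these factors are not in the domain of $\iota$). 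For semisimplicity I would simply take $f$ separable pieces — but since $F$ itself need not be separable, I instead use the module $E^m$ so that $\alpha$ acts semisimply and the characteristic polynomial of $\alpha$ on $M$ is $(ff^*)^m$; separability of the single factor $ff^*$ (as opposed to $F$) is automatic over $\bR$, so Lemma~\ref{lem:tr_hermitian} applies.

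Next, the type $1$ factors. Fix $f(X) = X^2 - (\delta+\delta^{-1})X + 1 \in I_1(\bR)$ with $\delta \in \bT \setminus \{\pm 1\}$ and multiplicity $n_f$. Here $E = \bR[X]/(f) \cong \bC$, the involution is complex conjugation, $A^\sigma = \bR$, and a hermitian form on $\bC^{n_f}$ over $\bC$ has a signature $(p, n_f - p)$ for any $0 \le p \le n_f$; transferring by $\Tr_{\bC/\bR}$ doubles everything, so $V(f;t)$ acquires signature $(2p, 2(n_f-p))$ and index $\idx V(f;t) = 2(2p - n_f) \in \{-2n_f, -2n_f+4, \dots, 2n_f\}$. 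The constraint \eqref{eq:idxmap1} says exactly that $\iota(f)$ is an even integer in $[-2n_f, 2n_f]$ with $(2n_f \pm \iota(f))/2$ even, i.e. $\iota(f) \equiv 0 \bmod 4$ when — wait, more precisely $(2n_f + \iota(f))/2$ and $(2n_f - \iota(f))/2$ both even forces $\iota(f) = 2(2p - n_f)$ for a suitable integer $p$; so by choosing the hermitian signature $p$ with $2(2p - n_f) = \iota(f)$ I realize the prescribed index exactly. Multiplication by $\alpha$ is diagonalizable over $\bC$, hence semisimple.

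Finally, the type $0$ factors $X \mp 1$ with multiplicities $n_\pm$: on the fixed/anti-fixed spaces the isometry acts as $\pm\id$, which is automatically semisimple and any inner product works, so $V(X\mp 1; t)$ can be given any signature $(p_\pm, n_\pm - p_\pm)$, yielding $\idx V(X \mp 1; t) = 2p_\pm - n_\pm$, an integer of the same parity as $n_\pm$ lying in $[-n_\pm, n_\pm]$ — precisely the range allowed by \eqref{eq:idxmap0}, so I pick $p_\pm$ with $2p_\pm - n_\pm = \iota(X \mp 1)$. Assembling all these pieces into $V = \bigoplus_f V(f;t)$ and $t = \bigoplus_f \alpha_f$, the characteristic polynomial is $F$ by construction, $t$ is semisimple, and $\idx V = \sum_{f \in I_0(\bR)\cup I_1(\bR)} \iota(f) + \sum_{\{f,f^*\}\subset I_2(\bR)} 0 = r - s$ by \eqref{eq:idxmapsum}, while $\dim V = \deg F = r + s$; hence the signature is $(r,s)$. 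The main thing to be careful about is bookkeeping: verifying that the arithmetic constraints defining $\Idx_{r,s}(F)$ match exactly the signatures attainable by transferred hermitian forms — in particular the ``doubling'' effect of $\Tr_{E/K}$ for type $1$ and the forced splitness for type $2$ — rather than any deep obstruction; once the individual ranges are pinned down, the assembly is immediate.
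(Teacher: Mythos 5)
Your proof is correct and follows essentially the standard direct construction that the paper delegates to \cite[Proposition 8.1(b)]{Ba15} and \cite[Proposition 7.1]{Ba21}: realize each type-0 piece with an arbitrary diagonal form of the prescribed signature, each type-1 piece via a hermitian form over $\bC$ of hermitian signature $p$ (whose trace transfer has real signature $(2p,2(n_f-p))$, matching exactly the parity constraint in \eqref{eq:idxmap1}), and each type-2 piece as a split space of index $0$, then sum and use \eqref{eq:idxmapsum}. The bookkeeping you carry out — in particular identifying $\iota(f)=2(2p-n_f)$ as precisely the values permitted by \eqref{eq:idxmap1} — is right, and semisimplicity of the assembled isometry is immediate.
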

\begin{proof}
See the proof of \cite[Proposition 8,1 (b)]{Ba15} and \cite[Proposition 7.1]{Ba21}.
\end{proof}

\section{Unimodular lattices and equivariant Witt groups}\label{sec:ULandEWG}
In this section, we review some terms and known results of lattice theory.

\subsection{Lattices}
Let $R$ be a Dedekind domain and $K$ its field of fractions. 
A \textit{lattice} over $R$ (or \textit{$R$-lattice}) is a pair $(\Lambda, b)$ of 
a finitely generated free $R$-module $\Lambda$ and an inner product 
$b:\Lambda\times \Lambda\to K$. 
Let $\Lambda = (\Lambda, b)$ be a lattice over $R$. The lattice 
$\Lambda^\vee 
:= \{ y\in \Lambda\otimes_R K \mid b(x,y)\in R \text{ for all $x \in \Lambda$} \}$
is called the \textit{dual lattice} of $\Lambda$. 
The lattice $(\Lambda, b)$ is said to be \textit{$R$-valued} if $b$ takes values 
in $R$. It is obvious that $\Lambda$ is 
$R$-valued if and only if $\Lambda \subset \Lambda^\vee$.
We say that $\Lambda$ is \textit{unimodular} if $\Lambda^\vee = \Lambda$, and
\textit{even} if $b(x,x)\in 2R$ for all $x\in \Lambda$ and \textit{odd} otherwise.
Note that if $2$ is a unit of $R$ then any lattice is even.

Let $\bZ_2$ denote the ring of $2$-adic integers.
We will use the following facts, see e.g. \cite[\S 106 A]{OM73}.

\begin{proposition}\label{prop:Z2-EUL}
Any even unimodular $\bZ_2$-lattice is of even rank $2n$ and isomorphic to
$ U^{\oplus n}$ or $U^{\oplus n-1}\oplus V$,
where $U$ and $V$ are $\bZ_2$-lattices of rank $2$ which have Gram matrices 
\[ \begin{pmatrix}
  0 & 1 \\ 1 & 0
\end{pmatrix}
\quad \text{ and } \quad
\begin{pmatrix}
  2 & 1 \\ 1 & 2
\end{pmatrix} \]
respectively. In particular, its discriminant equals $1$ or $-3$
in $\bQ_2^\times / \bQ_2^{\times 2}$. 
\end{proposition}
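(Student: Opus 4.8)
The plan is to prove the three assertions — even rank, the isometry classification, and the value of the discriminant — in that order, doing the elementary bookkeeping by hand and reducing the finer integral classification of rank-$2$ dyadic lattices to the standard theory recorded in \cite{OM73}.

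First I would show the rank is even. Let $(\Lambda,b)$ be even unimodular over $\bZ_2$ of rank $d$, and reduce $b$ modulo $2$ to get a bilinear form $\bar b$ on $\Lambda/2\Lambda$ over $\bF_2$. Evenness means every Gram matrix has diagonal entries in $2\bZ_2$, so $\bar b$ is alternating, while unimodularity means the determinant of a Gram matrix is a unit, so $\bar b$ is nondegenerate. A nondegenerate alternating form over a field exists only in even dimension, so $d = 2n$.

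Next, the structural claim by induction on $n$. For $n\ge 1$ choose a primitive vector $x\in\Lambda$ (that is, $x\notin 2\Lambda$); since $b$ induces an isomorphism $\Lambda\xrightarrow{\sim}\Hom_{\bZ_2}(\Lambda,\bZ_2)$ by unimodularity, the functional $b(x,-)$ is primitive, hence there is $y\in\Lambda$ with $b(x,y) = 1$. The sublattice $W = \bZ_2 x + \bZ_2 y$ has Gram matrix $\left(\begin{smallmatrix} 2a & 1 \\ 1 & 2c\end{smallmatrix}\right)$ with $a,c\in\bZ_2$ and determinant $4ac-1$, a unit, so $W$ is even unimodular of rank $2$ and $\Lambda = W\oplus W^\perp$ with $W^\perp$ again even unimodular of rank $2(n-1)$. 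By the classification of binary even unimodular $\bZ_2$-lattices (a special case of the dyadic classification in \cite{OM73}), $W\cong U$ or $W\cong V$ according as $\det W = 4ac-1$ is, modulo $\bZ_2^{\times 2}$, equal to $-1$ (when $ac$ is even, so $4ac-1\equiv 7\bmod 8$) or $3$ (when $ac$ is odd, so $4ac-1\equiv 3\bmod 8$). The inductive hypothesis applied to $W^\perp$ yields $\Lambda\cong W_1\oplus\cdots\oplus W_n$ with each $W_i\in\{U,V\}$, and it remains to collapse several $V$'s. For this it suffices to verify $V\oplus V\cong U\oplus U$: the form $x^2+xy+y^2$ represents every unit of $\bZ_2$ (check residues modulo $8$ and apply Hensel's lemma), so $V\oplus V$ contains a primitive isotropic vector and hence splits off a copy of $U$, whose orthogonal complement is even unimodular of rank $2$ with determinant $\equiv(\det V)^2(\det U)^{-1}\equiv -1$, so is isometric to $U$. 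Therefore $\Lambda\cong U^{\oplus n}$ or $\Lambda\cong U^{\oplus n-1}\oplus V$.

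Finally the discriminant: since $\det U = -1$, $\det V = 3$, and every summand has even rank, the determinant is multiplicative with no sign correction, so $\det U^{\oplus n} = (-1)^n$ and $\det(U^{\oplus n-1}\oplus V) = (-1)^{n-1}\cdot 3$; with $d = 2n$ one has $d(d-1)/2\equiv n\bmod 2$, so $\disc = (-1)^n\det$ gives $\disc U^{\oplus n} = 1$ and $\disc(U^{\oplus n-1}\oplus V) = -3$ in $\bQ_2^\times/\bQ_2^{\times 2}$. The main obstacle is the rank-$2$ input together with the identity $V\oplus V\cong U\oplus U$: proving these from scratch needs a somewhat delicate change of basis over $\bZ_2$ exploiting that $\bZ_2^\times/\bZ_2^{\times 2}$ is represented by odd residues modulo $8$ and that units $\equiv 1\bmod 8$ are squares. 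Since \cite{OM73} supplies the full dyadic classification, I would invoke it for these two facts and carry out only the elementary steps — parity of the rank, residues modulo $8$, multiplicativity of the discriminant — explicitly.
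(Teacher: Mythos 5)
Your proposal is correct. The paper itself offers no proof and simply cites \cite[\S 106 A]{OM73} for the classification, so there is no argument of the paper's to compare against; your sketch is a standard and accurate elaboration of that reference, carrying out the mod-$2$ alternating-form argument for even rank, the inductive splitting via a primitive vector and its dual partner, and the relation $V\oplus V\cong U\oplus U$ (via a primitive isotropic vector found using Hensel's lemma), while correctly deferring the binary dyadic classification to O'Meara. The residue and discriminant computations (parity of $ac$, $d(d-1)/2\equiv n\bmod 2$, $\disc U^{\oplus n}=1$, $\disc(U^{\oplus n-1}\oplus V)=-3$) all check out.
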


\begin{proposition}
Let $r$ and $s$ be non-negative integers. There exists an even unimodular
$\bZ$-lattice of signature $(r,s)$ if and only if $r\equiv s \bmod 8$.
\end{proposition}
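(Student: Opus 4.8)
The plan is to establish the two implications separately.

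\emph{Necessity.} Suppose $L$ is an even unimodular $\bZ$-lattice of signature $(r,s)$ and set $n:=r+s$. Since $L\otimes\bZ_2$ is an even unimodular $\bZ_2$-lattice, Proposition~\ref{prop:Z2-EUL} shows that $n$ is even, say $n=2m$, and that $L\otimes\bZ_2$ is isomorphic either to $U^{\oplus m}$, of determinant $(-1)^m$, or to $U^{\oplus m-1}\oplus V$, of determinant $(-1)^{m-1}3$, the latter class lying in $\{3,-3\}$ inside $\bQ_2^\times/\bQ_2^{\times2}$. On the other hand $\det L=\pm1$, and comparing the sign of the determinant over $\bR$ (where a Gram matrix of $L$ is congruent to $\diag(1,\dots,1,-1,\dots,-1)$ with $s$ entries equal to $-1$) gives $\det L=(-1)^s$; since $3\not\equiv\pm1$ in $\bQ_2^\times/\bQ_2^{\times2}$, the second case is impossible, so $L\otimes\bZ_2\cong U^{\oplus m}$ and $(-1)^m\equiv(-1)^s$ modulo squares, i.e.\ $m\equiv s\bmod 2$. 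The next step is to compute Hasse--Witt invariants place by place. At each odd prime $p$ the lattice $L\otimes\bZ_p$ is unimodular, hence diagonalizable over $\bZ_p$ with unit diagonal, so $\epsilon_p(L)=0$. At $p=2$, since $U\cong\langle 1,-1\rangle$ over $\bQ_2$ we get $L\otimes\bQ_2\cong\langle 1,-1\rangle^{\oplus m}$, whence $\epsilon_2(L)=\binom{m}{2}$ in $\bZ/2\bZ$, as only the pairs of diagonal entries both equal to $-1$ contribute and the quaternion class $(-1,-1)$ over $\bQ_2$ is nonzero. At the infinite place, $L\otimes\bR\cong\langle 1^r,(-1)^s\rangle$, so similarly $\epsilon_\infty(L)=\binom{s}{2}$. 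By the Hilbert reciprocity law $\sum_v\epsilon_v(L)=0$ we obtain $\binom{m}{2}+\binom{s}{2}\equiv 0\bmod 2$. Writing $m=s+2j$ and expanding, $\binom{m}{2}-\binom{s}{2}\equiv j\bmod 2$, so $j$ is even; hence $r-s=n-2s=2m-2s=4j\equiv 0\bmod 8$.

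\emph{Sufficiency.} Suppose $r\equiv s\bmod 8$. Replacing the inner product by its negative if necessary, we may assume $r\geq s$ and write $r-s=8k$ with $k\geq 0$. Let $U$ be the hyperbolic plane, i.e.\ the $\bZ$-lattice with the Gram matrix of Proposition~\ref{prop:Z2-EUL} now regarded over $\bZ$; it is even, unimodular and of signature $(1,1)$. Let $E_8$ be the positive definite even unimodular $\bZ$-lattice of rank $8$, of signature $(8,0)$, whose existence is classical (see e.g.\ \cite{Sc85}). Then
\[ L:=U^{\oplus s}\oplus E_8^{\oplus k} \]
is even and unimodular, being an orthogonal direct sum of even unimodular lattices, and its signature is $(s+8k,\,s)=(r,s)$.

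Everything of substance is in the necessity. The divisibility of $r-s$ by $2$, and indeed by $4$, follows already from the rank and determinant constraints imposed by $L\otimes\bZ_2$ and $L\otimes\bR$; the improvement to divisibility by $8$ is the only delicate point and genuinely requires the Hasse--Witt invariants together with Hilbert reciprocity, so the local computations at $2$, at the odd primes and at $\infty$ must all be carried out correctly and the product formula invoked. For sufficiency the sole non-formal input is the classical existence of the rank-$8$ positive definite even unimodular lattice $E_8$.
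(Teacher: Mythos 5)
The paper states this proposition without proof, treating it as a classical fact (it is due essentially to Minkowski, and is discussed in, e.g., \cite{Sc85}, \cite{Se73}, or \cite{OM73}). Your proof is correct and is the standard local argument. For necessity: the identification $L\otimes\bZ_2\cong U^{\oplus m}$ (ruling out the $U^{\oplus m-1}\oplus V$ case via the determinant) together with the resulting congruence $m\equiv s\bmod 2$ already yields $4\mid(r-s)$, and the Hasse--Witt computation $\epsilon_2(L)=\binom{m}{2}$, $\epsilon_\infty(L)=\binom{s}{2}$, $\epsilon_p(L)=0$ for odd $p$, combined with Hilbert reciprocity $\sum_v\epsilon_v(L)=0$, sharpens this to $8\mid(r-s)$; the arithmetic $\binom{m}{2}-\binom{s}{2}\equiv(m-s)/2\bmod 2$ when $m\equiv s\bmod 2$ is correct. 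The facts you invoke in passing are all standard but worth flagging: diagonalizability of unimodular $\bZ_p$-lattices for odd $p$ (and triviality of the Hilbert symbol on $\bZ_p^\times\times\bZ_p^\times$), nontriviality of $(-1,-1)$ over $\bQ_2$, and the product formula for Hasse--Witt invariants. For sufficiency, $U^{\oplus s}\oplus E_8^{\oplus k}$ is the textbook construction and needs no more than the existence of $E_8$. An alternative route for necessity in the literature proves $8\mid n$ first in the definite case via modular forms (the theta series of a definite even unimodular lattice is a weight $n/2$ form for $\mathrm{SL}_2(\bZ)$) and then reduces the indefinite case to the definite one by adding copies of $U$; your argument is more uniform and more in the spirit of the local methods used throughout this paper.
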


\subsection{Equivariant Witt groups}
In this subsection we review equivariant Witt groups, see \cite{BT20} for more details.
Let $K$ be a field and $G$ a group. 
The group ring $K[G]$ has a $K$-linear involution $\sigma$ induced by $g\mapsto g^{-1}$
for all $g\in G$. 
If $G$ is commutative then $\sigma$ is a $K$-algebra involution.
We will mostly consider the case where $G$ is the infinite cyclic group. 
A \textit{$K[G]$-bilinear form} or just \textit{$K[G]$-form} is a pair $(V,b)$ of
a $K[G]$-module $V$ which is finite dimensional over $K$ and an inner product $b$
on $V$ over $K$ satisfying 
\[ b(ax,y) = b(x, \sigma(a)y) \quad \text{for all $x,y \in V$ and $a\in K[G]$}. \]
If $(V,b)$ is a $K[G]$-form then each $g\in G$ acts on $V$ as an isometry.
Hence, a $K[G]$-form $(V, b)$ can be regarded as a triple $(V, b, \rho)$ 
consisting of a $K$-vector space $V$, an inner product $b$ on $V$, and 
an `orthogonal' representation $\rho: G\to \rO(V)$ of $G$. 
In this case, the $K[G]$-form $(V, b)$ is also denoted by $(V, b, \rho)$. 

The \textit{equivariant Witt group} $W_G(K)$ is defined as an analog of the usual 
Witt group $W(K)$ of $K$ (see \cite[Definition 3.3]{BT20}). 
The Witt class represented by a $K[G]$-form $(V, b)$ (or $(V, b, \rho)$) is denoted 
by $[V,b]$ (or $[V, b, \rho]$).
For an irreducible representation $\rho: G\to \GL(V)$
on a finite dimensional vector space $V$, we define
$W_G(K;\rho)$ as the subgroup of $W_G(K)$ generated by classes which can be
written as $[V,b,\rho]$ for some inner product $b$ on $V$.
Then we have a decomposition
\[ W_G(K) = \bigoplus_{\rho} W_G(K;\rho) \]
where $\rho$ ranges over the isomorphism classes of finite dimensional 
irreducible representations of $G$.

The theory of (equivariant) Witt groups can be used to discuss the existence of a 
unimodular lattice in an inner product space over a discrete valuation field.
Assume that $K$ is a discrete valuation field with valuation $v$, valuation 
ring $\cO$, and residue class field $k$.
A $K[G]$-form $V$ is \textit{bounded} if $V$ contains a 
\textit{$G$-stable lattice}, that is, a lattice $\Lambda$ over $\cO$ in $V$ 
such that $K\Lambda = V$ and $g\Lambda = \Lambda$ for all $g\in G$.
We denote by $W_G^\mathrm{b}(K)$ the subgroup of $W_G(K)$ generated by Witt 
classes of bounded $K[G]$-forms.

Let $(V, b)$ be a bounded $K[G]$-form. Then there exists an 
\textit{almost unimodular} lattice $\Lambda$ in $V$, that is, a lattice in $V$
satisfying $\pi \Lambda^\vee \subset \Lambda \subset \Lambda^\vee$, 
where $\pi$ is a uniformizer of $K$.
For an almost unimodular lattice $\Lambda$ we can define a $k[G]$-form 
$(\Lambda^\vee/\Lambda, \bar{b})$ by
\[ \bar{b} : \Lambda^\vee/\Lambda \times \Lambda^\vee/\Lambda
\stackrel{b}{\to} \pi^{-1}\cO/\cO
\stackrel{\times \pi}{\longrightarrow} \cO/\pi \cO = k. \]
Let $\partial:W_G^\mathrm{b}(K) \to W_G(k)$ denote the map sending 
the class of a bounded $K[G]$-form $(V,b)$ to the class of 
the $k[\Gamma]$-from $(\Lambda^\vee/\Lambda, \bar{b})$
for some almost unimodular lattice $\Lambda$ in $V$. 
We remark that $\dim \partial[V, b] \equiv v(\det b) \bmod 2$.
See \cite[Theorem B]{BT20} for the following theorem.

\begin{theorem}\label{th:ULvanish}
The map $\partial:W_G^\mathrm{b}(K) \to W_G(k)$ is a well-defined homomorphism.
A bounded $K[G]$-form $(V,b)$ contains a $G$-stable unimodular lattice if and 
only if $\partial[V,b] = 0$.
\end{theorem}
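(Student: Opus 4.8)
The plan is to establish the two assertions — well-definedness as a homomorphism, and the vanishing criterion — by reducing everything to the behaviour of $G$-stable lattices inside a bounded $K[G]$-form. First I would fix a bounded $K[G]$-form $(V,b)$ and recall that, by definition of boundedness, $V$ contains at least one $G$-stable lattice $\Lambda_0$; replacing $\Lambda_0$ by $\Lambda_0\cap\Lambda_0^\vee$ or by a suitable $\cO[G]$-submodule sandwiched between $\pi\Lambda_0^\vee$ and $\Lambda_0^\vee$, one produces a $G$-stable \emph{almost unimodular} lattice $\Lambda$, i.e.\ one with $\pi\Lambda^\vee\subset\Lambda\subset\Lambda^\vee$. (Here one uses that $\Lambda^\vee$ is again $G$-stable because $b$ is $G$-invariant, so all the lattices in this chain are $G$-stable.) The quotient $\Lambda^\vee/\Lambda$ is then a finite-length $\cO/\pi\cO = k$-vector space carrying the induced form $\bar b$, which is nondegenerate precisely because of almost unimodularity, and it is a $k[G]$-form since $G$ acts $k$-linearly and orthogonally on it. This produces a candidate value $\partial[V,b]:=[\Lambda^\vee/\Lambda,\bar b]\in W_G(k)$.

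The first real task is \textbf{well-definedness}: the class $[\Lambda^\vee/\Lambda,\bar b]$ in $W_G(k)$ must not depend on the choice of almost unimodular $G$-stable lattice $\Lambda$, and $\partial$ must descend to Witt classes. For the lattice-independence I would argue that any two $G$-stable almost unimodular lattices $\Lambda,\Lambda'$ in $V$ are connected by a finite chain in which consecutive terms $\Lambda\subset\Lambda''$ have $\Lambda''/\Lambda$ one-dimensional over $k$; then one checks directly that the forms $\bar b$ on $\Lambda^\vee/\Lambda$ and on $(\Lambda'')^\vee/\Lambda''$ differ by splitting off a hyperbolic (metabolic) plane over $k[G]$ — the line $\Lambda''/\Lambda$ together with its annihilator inside $\Lambda^\vee/\Lambda$ gives a Lagrangian-type subform — hence the two Witt classes agree. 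That $\partial$ kills hyperbolic (metabolic) $K[G]$-forms, so that it factors through $W_G^{\mathrm b}(K)$, follows because a Lagrangian $G$-stable sublattice of a hyperbolic form can be chosen so that the induced form on $\Lambda^\vee/\Lambda$ is again split. Additivity under orthogonal direct sums is then immediate, since one can take $\Lambda_1\oplus\Lambda_2$ as the almost unimodular lattice in $V_1\oplus V_2$, giving the homomorphism property. These are essentially the arguments of \cite[Theorem B]{BT20}, specialised to $G$ (infinite) cyclic, and I would cite that source while sketching the above.

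For the \textbf{vanishing criterion}: if $(V,b)$ contains a $G$-stable unimodular lattice $\Lambda$, then $\Lambda^\vee=\Lambda$ and so $\Lambda^\vee/\Lambda=0$, whence $\partial[V,b]=0$ trivially. The substantive direction is the converse. Suppose $\partial[V,b]=0$; start from a $G$-stable almost unimodular $\Lambda$ with $[\Lambda^\vee/\Lambda,\bar b]=0$ in $W_G(k)$, i.e.\ the $k[G]$-form $(\Lambda^\vee/\Lambda,\bar b)$ is metabolic: it admits a $G$-stable subspace $\bar L$ equal to its own orthogonal complement with respect to $\bar b$ (split, over $k[G]$). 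Pull $\bar L$ back to a $G$-stable lattice $L$ with $\Lambda\subset L\subset\Lambda^\vee$; then one computes $L^\vee = L$, because $L/\Lambda$ being its own annihilator in $\Lambda^\vee/\Lambda$ translates exactly into $L^\vee\subset L$, while $L\subset L^\vee$ follows from $L\subset\Lambda^\vee\subset(\Lambda^\vee)^\vee$-type bookkeeping — more precisely from the chain of inclusions together with $\pi\Lambda^\vee\subset\Lambda$. Thus $L$ is the desired $G$-stable unimodular lattice. The main obstacle I anticipate is this last index computation: making sure that ``the preimage of a Lagrangian is self-dual'' is carried out cleanly, i.e.\ tracking the pairing between $L/\Lambda$ and $\Lambda^\vee/L$ and verifying both inclusions $L\subseteq L^\vee$ and $L^\vee\subseteq L$ without sign or uniformizer errors; once that is pinned down, the theorem follows. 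Since the statement is quoted verbatim from \cite{BT20}, in the paper I would present the proof compactly as a pointer to that reference together with the chain-of-lattices reduction above rather than reproving it in full.
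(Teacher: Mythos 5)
The paper does not supply its own proof of this theorem: it is quoted verbatim from \cite{BT20} (Theorem B), with ``see \cite[Theorem B]{BT20}'' as the sole justification. So the comparison here is between your sketch and the Bayer--Fluckiger--Taelman argument, which you yourself acknowledge by ending with a plan to cite that reference. On that level your account of the \emph{structure} of the proof is faithful: one defines $\partial$ via an almost unimodular $G$-stable lattice, shows lattice-independence and compatibility with metabolic forms, and proves the vanishing criterion by lifting a Lagrangian of $(\Lambda^\vee/\Lambda,\bar b)$ to a self-dual $G$-stable lattice sandwiched between $\Lambda$ and $\Lambda^\vee$. The ``easy'' direction, the sub-Lagrangian reduction step for a one-dimensional jump $\Lambda\subset\Lambda''$, and the pullback computation $L^\vee=L$ are all correctly identified and would go through as you describe.

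There is, however, one genuine gap in the well-definedness argument as you wrote it. You assert that any two $G$-stable almost unimodular lattices $\Lambda,\Lambda'$ are connected by a finite chain with one-dimensional jumps among almost unimodular $G$-stable lattices. This is not automatic. The natural candidates for a common comparison point fail: $\Lambda\cap\Lambda'$ need not satisfy $\pi(\Lambda\cap\Lambda')^\vee\subset\Lambda\cap\Lambda'$, and $\Lambda+\Lambda'$ need not satisfy $\Lambda+\Lambda'\subset(\Lambda+\Lambda')^\vee$ (since $b(\Lambda,\Lambda')$ need not lie in $\cO$). So for incomparable $\Lambda,\Lambda'$ the existence of the zigzag chain you invoke is precisely what has to be proved, not a starting point. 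In \cite{BT20} this is handled by a more careful argument (essentially allowing auxiliary lattices with larger discriminant and controlling the resulting Witt classes rather than insisting every intermediate lattice be almost unimodular). You flag the Lagrangian-lifting bookkeeping as the ``main obstacle,'' but that part is in fact routine once $L/\Lambda=(L/\Lambda)^\perp$ is unwound; the real technical content of the theorem sits in the independence of $\partial[\Lambda^\vee/\Lambda,\bar b]$ from $\Lambda$, and your sketch currently assumes the hard part of it.
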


Here we prepare some notations related to the infinite cyclic group $\Gamma$ and 
$K[\Gamma]$-forms, and will use them throughout this article.

\begin{notation}
The symbol $\Gamma$ denotes the infinite cyclic group. 
Let $\gamma$ be a fixed generator of $\Gamma$, and let $\rho$ be 
the orthogonal representation on an inner product space $(V, b)$ defined by 
$\gamma \mapsto t\in \rO(V)$ for a given $t$.
Then we write $(V, b, t)$ for $(V, b, \rho)$.
Furthermore, if the representation $\rho$ is a direct sum of copies of 
an irreducible representation $\chi$, we write
$W_\Gamma(K;t)$ for $\allowbreak W_\Gamma(K;\chi)$.
\end{notation}

For any field $K$, the subgroups $W_\Gamma(K;1)$ and $W_\Gamma(K;-1)$
of $W_\Gamma(K)$ are isomorphic to the usual Witt group $W(K)$.
We recall the structure of Witt groups of finite fields. 
See e.g. \cite[\S 2.3]{Sc85} for the following proposition.

\begin{proposition}\label{prop:WGofFF}
Let $k$ be a finite field.
\begin{enumerate}
\item If $\ch k = 2$ then sending 
$\omega\in W(k)$ to $\dim \omega \bmod 2\in \bZ/2\bZ$
gives an isomorphism $W(k)\cong \bZ/2\bZ$.
\item If $\ch k \neq 2$ then 
\[ W(k) \cong \begin{cases}
  \bZ/2\bZ \times \bZ/2\bZ & \text{if $\ch k \equiv 1 \bmod 4$}\\ 
  \bZ/4\bZ & \text{if $\ch k \equiv 3 \bmod 4$},
\end{cases}\]
and $\omega\in W(k)$ is the trivial class 
if and only if $\dim \omega \equiv 0\bmod 2$
and $\disc \omega = 1$ in $k^\times/k^{\times 2}$.
\end{enumerate}
\end{proposition}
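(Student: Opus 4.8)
This is a classical computation, and the plan is to deduce it directly from the definition of the Witt group together with the classification of nondegenerate symmetric bilinear forms over a finite field, treating the characteristics $\neq 2$ and $2$ separately.

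First I would handle $\ch k\neq 2$, so $k=\bF_q$ with $q$ odd and $k^\times/k^{\times 2}\cong\bZ/2\bZ$. Every form is diagonalizable, $\la a_1,\dots,a_n\ra$, and since $\bF_q$ is a $C_1$-field (Chevalley--Warning), every form in at least three variables is isotropic; a binary form $\la a,b\ra$ is isotropic exactly when $-b/a\in k^{\times 2}$, equivalently when its discriminant is trivial. Hence the anisotropic kernel of any form has dimension $\le 2$. A short bookkeeping with the exponent $d(d-1)/2$ shows that adjoining a hyperbolic plane $\la 1,-1\ra$ changes neither the dimension modulo $2$ nor the discriminant, so both descend to $W(k)$; thus a Witt class $\omega$ is determined by the pair $(\dim\omega\bmod 2,\ \disc\omega)$, and $\omega=0$ if and only if its anisotropic kernel is zero, which for $\dim\omega$ even amounts to $\disc\omega=1$ because an anisotropic binary form has nontrivial discriminant. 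This is exactly the last assertion of (ii).

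To pin down the group structure when $\ch k\neq 2$, observe that $W(k)$ is generated by $\la 1\ra$ and $\la u\ra$ for a fixed non-square $u$, and the order of $\la 1\ra$ is governed by whether $\la 1,1\ra$ is hyperbolic, i.e. whether $-1\in k^{\times 2}$, which holds precisely when $q\equiv 1\bmod 4$. If $q\equiv 1\bmod 4$ then $\la 1\ra=-\la 1\ra$ and $\la u\ra=-\la u\ra$, so both generators have order $2$; they are independent because $\la 1,u\ra$ has nontrivial discriminant, whence $W(k)\cong\bZ/2\bZ\times\bZ/2\bZ$. If $q\equiv 3\bmod 4$ one may take $u=-1$, so $\la u\ra=-\la 1\ra$ lies in the cyclic subgroup generated by $\la 1\ra$; since $\la 1,1\ra$ is anisotropic while $\la 1,1,1,1\ra$ has dimension $4$ and trivial discriminant, hence is hyperbolic, $\la 1\ra$ has order exactly $4$ and $W(k)\cong\bZ/4\bZ$.

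Finally, for $\ch k=2$ the field $k$ is perfect, so $k^\times=k^{\times 2}$ and $\la a\ra\cong\la 1\ra$ for all $a$. Given a nondegenerate symmetric bilinear form $b$, the Frobenius-semilinear map $x\mapsto b(x,x)$ either vanishes, in which case $b$ is alternating and hence metabolic, or it does not, in which case one splits off a copy of $\la 1\ra$ and inducts; so every Witt class is a multiple of $\la 1\ra$. Moreover $\la 1,1\ra$ is metabolic, since the line spanned by $(1,1)$ is its own orthogonal complement, so $2\la 1\ra=0$, and $\la 1\ra$ is nonzero because it is odd-dimensional; therefore $W(k)\cong\bZ/2\bZ$ with the isomorphism $\omega\mapsto\dim\omega\bmod 2$. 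I do not anticipate a genuine difficulty here: the only points that need care are checking that $\disc$ is constant on Witt classes (the parity count above) and the characteristic-$2$ case, where one works with symmetric bilinear forms and metabolic spaces in place of quadratic forms and hyperbolic spaces.
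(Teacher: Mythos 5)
Your proof is correct and self-contained. The paper does not give a proof of this proposition at all; it simply points to Scharlau's book (\S 2.3) as a reference, so there is nothing to compare against step by step. Your argument is the standard one: reduce to anisotropic kernels via Chevalley--Warning, observe that binary anisotropic forms are exactly those with nontrivial discriminant, check that $\dim\bmod 2$ and $\disc$ descend to the Witt group, and then pin down the group structure by the order of $\la 1\ra$, which hinges on whether $-1$ is a square. The characteristic-$2$ case is correctly handled with symmetric bilinear forms and metabolic (rather than hyperbolic) spaces: splitting off $\la 1\ra$ when the quadratic map $x\mapsto b(x,x)$ is nonzero, noting that an alternating form is metabolic, and using the diagonal Lagrangian in $\la 1,1\ra$ to get $2\la 1\ra = 0$. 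One small but important point you correctly flag and use is that $\disc$ is unchanged by adding a hyperbolic plane, which is what makes the invariant well-defined on $W(k)$; the sign computation $(-1)^{d_1d_2}$ coming from the definition $\disc b = (-1)^{d(d-1)/2}\det b$ works out because hyperbolic planes contribute even dimension and trivial discriminant. No gaps.
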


\section{Local theory}\label{sec:LocalTheory}
Let $K$ be a non-archimedean local field of characteristic $0$. 
We denote by $v_K, \cO_K$, and $\fm_K$ the normalized valuation, valuation 
ring, and maximal ideal respectively, and fix a uniformizer $\pi_K$.
Similar notation will be used for an extension of $K$.
The residue class field of $K$ is denoted by $k$.

We can describe a necessary and sufficient condition for a unimodular 
$\cO_K$-lattice having a semisimple isometry with characteristic polynomial 
$F$ to exist, in terms of the valuations of $F(1)$ and $F(-1)$.
We adopt the convention that $v_K(0)\equiv 0 \bmod 2$.

\begin{theorem}[{\cite[Theorem 4.1]{Ba21}}]\label{th:localcdforUL}
Let $F(X) \in \cO_K[X]$ be a $*$-symmetric polynomial of even degree.
The following are equivalent:
\begin{enumerate}
\item There exists a unimodular $\cO_K$-lattice having a semisimple isometry 
with characteristic polynomial $F$.
\item $v_K(F(1)) \equiv v_K(F(-1)) \equiv 0 \bmod 2$ if $\ch k \neq 2$ and
$v_K(F(1)F(-1)) \equiv 0 \bmod 2$ if $\ch k = 2$.
\end{enumerate}
\end{theorem}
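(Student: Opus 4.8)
The plan is to reduce the statement, via Theorem \ref{th:ULvanish}, to a computation of boundary maps in equivariant Witt theory carried out separately on the isotypic components of the isometry. First I would record that every semisimple isometry $t$ with characteristic polynomial $F$ on an inner product space $(V,b)$ over $K$ gives a \emph{bounded} $K[\Gamma]$-form: because $F$ is $*$-symmetric, both $F$ and its reciprocal lie in $\cO_K[X]$, so $t$ and $t^{-1}$ are integral over $\cO_K$ and $\cO_K[t,t^{-1}]\Lambda_0$ is a $\Gamma$-stable lattice for any $\cO_K$-lattice $\Lambda_0$ spanning $V$. Hence, by Theorem \ref{th:ULvanish}, condition (i) holds if and only if there is such a $(V,b,t)$ with $\partial[V,b]=0$; the task becomes to decide whether $0$ lies in the set of boundary classes realized by these forms.

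Next I would split along the factorization $F=F_0F_1F_2$ into type $0,1,2$ components and the corresponding orthogonal decomposition $V=\bigoplus_f V(f;t)$; since an orthogonal sum of $\Gamma$-stable unimodular lattices is again one, it suffices to realize each component separately. A type $2$ factor $gg^*$ contributes a split component, which carries the standard hyperbolic unimodular lattice and has vanishing boundary; and $(gg^*)(\pm 1)$ has even valuation, the constant terms of the monic factors of $F$ being units, so the arithmetic condition in (ii) is automatic there. For a type $0$ factor $X\mp 1$ the group $\Gamma$ acts trivially and one is free to equip the component with, say, a unit-diagonal inner product, which contains a unimodular lattice; moreover, as $F$ has even degree one has $n_+\equiv n_-\bmod 2$, so $F_0(1)$ and $F_0(-1)$ either vanish (and are then ignored by the convention $v_K(0)\equiv 0$) or have even valuation. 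Thus neither type $0$ nor type $2$ affects either side of the claimed equivalence, and the problem is concentrated in the type $1$ component.

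It remains to treat a single $+1$-symmetric irreducible factor $f$ of even degree $2m$ with multiplicity $n_f$ in $F$. Here I would pass to the field $E=K[X]/(f)$ with the involution $\sigma:\alpha\mapsto\alpha^{-1}$ and its fixed subfield $E^\sigma$ of index $2$, and use Lemma \ref{lem:tr_hermitian} to translate inner products on $E^{\oplus n_f}$ for which multiplication by $\alpha$ is an isometry into $\sigma$-hermitian $E$-forms, with $b=\Tr_{E/K}\circ h$. Writing $\tau=\alpha+\alpha^{-1}\in E^\sigma$ one has $f(1)=\N_{E^\sigma/K}(2-\tau)$ and $f(-1)=\N_{E^\sigma/K}(2+\tau)$, so the parities of $v_K(f(1))$ and $v_K(f(-1))$ encode the ramification of $E/E^\sigma$. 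Computing $\partial$ of such a trace form through this identification — passing to an almost unimodular hermitian lattice, reducing it modulo $\fm_K$, and reading off the resulting class in $W(k)$ by Proposition \ref{prop:WGofFF} — should produce exactly the conditions in (ii): separate parity conditions on $v_K(f(1))$ and $v_K(f(-1))$ when $\ch k\neq 2$, and only a condition on their sum when $\ch k=2$, since then $X-1$ and $X+1$ become congruent modulo $\fm_K$ and the two factors cannot be separated after reduction. I expect this last point to be the main obstacle: the boundary computation for (possibly wildly) ramified quadratic extensions $E/E^\sigma$ in residue characteristic $2$, where hermitian forms need not diagonalize and the $\pm1$-contributions merge, is the delicate part, whereas the residue characteristic $\neq 2$ case reduces to a comparatively routine tame computation.
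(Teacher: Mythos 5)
Your framework is the right one: reducing to $\partial[V,b]=0$ via Theorem~\ref{th:ULvanish}, decomposing $V$ into isotypic pieces, and using trace forms over $E=K[X]/(f)$ with $b=\Tr_{E/K}\circ h$ to bring Lemma~\ref{lem:tr_hermitian} and the ramification of $E/E^\sigma$ into play. The norm identity $f(\pm 1)=\N_{E^\sigma/K}(2\mp\tau)$ is also the correct link between the arithmetic of $f(\pm 1)$ and the ramification type. However, there is a genuine gap in the plan to ``realize each component separately'' and then ``treat a single $+1$-symmetric irreducible factor.''

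The gap is that the condition in (ii) is on the product $F$, not on its individual type-$1$ factors, and the boundary classes of different factors land in the \emph{same} summand $W_\Gamma(k;\pm1)$ of $W_\Gamma(k)$ whenever the reduction $\bar\alpha_w$ equals $\pm 1$ (the ramified case). Concretely, take $\ch k\neq 2$ and $F=fg$ with $f,g$ distinct irreducible $+1$-symmetric factors, both with $v_K(f(1))$ and $v_K(g(1))$ odd and $v_K(f(-1)),v_K(g(-1))$ even: then $v_K(F(1))$ and $v_K(F(-1))$ are even, so (ii) holds, but by Lemma~\ref{lem:dimpartial} every class in $\im\partial_{M^f,\alpha_f}$ has odd dimension (so never vanishes), and likewise for $g$; no unimodular lattice exists for either factor alone. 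One must instead choose the hermitian determinants $\lambda_w$ across \emph{all} factors simultaneously so that the individual boundary contributions in $W_\Gamma(k;1)$ cancel, which is exactly what the construction in Proposition~\ref{prop:detailedneq2} (reproducing the argument of \cite{Ba21}) does: it picks $\lambda_w$ for $w\in\cW_\pm$ so that $\sum_{w\in\cW_\pm}\partial[M_w,b[\lambda_w],\alpha_w]=-\partial[M^\pm,b^\pm,\pm1]$, using Theorem~\ref{th:imtheofTw_to_W} to know the image of each $\partial_{M_w,\alpha_w}$ and Lemma~\ref{lem:dimpartial} to verify that the needed cancellation is possible. Your plan, which never allows a nonzero boundary on any single piece, cannot produce these cancellations.

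A related point: the necessity direction is not really argued. It does not follow from ``realizing each component separately'' because $\partial[V,b]=0$ for the whole lattice does not make $\partial$ vanish piecewise. Necessity requires projecting $\partial[V,b]$ onto each $W_\Gamma(k;\chi)$ and invoking the mod-$2$ dimension formula $\sum_{w\in\cW_\pm}\dim\partial_{M_w,\alpha_w}(\lambda_w)\equiv v_K(F_{12}(\pm1))$ from Lemma~\ref{lem:dimpartial} (which goes back to \cite[Lemma 6.8]{BT20}); from the vanishing of the projection onto $W_\Gamma(k;\pm1)$ one then reads off $v_K(F(\pm1))\equiv 0$ when $F(\pm1)\neq 0$, and in residue characteristic $2$ the two projections merge, giving only the combined condition $v_K(F(1)F(-1))\equiv 0$ --- which is the asymmetry you correctly anticipated. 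You are also right that the computation in the ramified residue-characteristic-$2$ case is the technical heart and is not routine; but the more immediate missing idea is the cross-factor cancellation, without which even the tame case does not go through.
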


Moreover, if $K$ is the $2$-adic field $\bQ_2$ we have:

\begin{theorem}\label{th:localcdforEUL}
Let $F(X) \in \bZ_2[X]$ be a $*$-symmetric polynomial of even degree $2n$.
There exists an even unimodular $\bZ_2$-lattice of discriminant $1$ having a 
semisimple isometry with characteristic polynomial $F$
if and only if $F$ satisfies the following conditions:
\begin{itemize}
\item[\textup{(a)}] $v_2(F(1)) \equiv v_2(F(-1)) \equiv 0 \bmod 2$; and
\item[\textup{(b)}] If $F(1)F(-1) \neq 0$ then 
$(-1)^n F(1)F(-1) = 1 \in \bQ_2^\times / \bQ_2^{\times 2}$.
\end{itemize}
\end{theorem}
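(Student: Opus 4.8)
The plan is to deduce Theorem \ref{th:localcdforEUL} from the combination of Theorem \ref{th:localcdforUL} (which gives the existence of a \emph{unimodular} $\bZ_2$-lattice with a semisimple isometry of characteristic polynomial $F$) and the structural Proposition \ref{prop:Z2-EUL} (which says an even unimodular $\bZ_2$-lattice has discriminant $1$ or $-3$, and pins down its isomorphism type). The key point is to keep track of two extra pieces of data beyond "unimodular": \emph{evenness} and \emph{discriminant}. First I would observe that, over $\bZ_2$, a unimodular lattice is automatically $2$-adically integral, and the obstruction to passing from a unimodular lattice to an \emph{even} unimodular one is controlled by the residue form; here I would invoke the equivariant Witt-group machinery of \S\ref{sec:ULandEWG}, in particular Theorem \ref{th:ULvanish} applied to $G = \Gamma$ and $K = \bQ_2$, together with the decomposition $W_\Gamma(\bQ_2) = \bigoplus_\rho W_\Gamma(\bQ_2;\rho)$, to reduce the problem component-by-component along the type $0$, type $1$, type $2$ factorization $F = F_0F_1F_2$.

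Second, I would handle the three types separately. For the type $2$ component $F_2$, Lemma \ref{lem:det_iso_formula} shows the relevant summand is split with determinant $(-1)^{\deg f}$ on each $V(f;t)\oplus V(f^*;t)$, so it carries a hyperbolic, hence even unimodular, lattice with controlled discriminant; this contributes a factor $(-1)^{\deg F_2/2}$ to the total discriminant and no obstruction. For the type $1$ component, the hermitian-form description of Lemma \ref{lem:tr_hermitian} applies since the irreducible factors are separable over $\bQ_2$ (characteristic $0$); one builds the lattice inside $E$-hermitian space, and the trace form $\Tr_{E/K}\circ h$ is even because the off-diagonal structure forces $b(x,x)\in 2\bZ_2$—here I expect to need a short computation with the different of $E/\bQ_2$. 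The type $0$ component $F_0 = (X-1)^{n_+}(X+1)^{n_-}$ is the genuinely $2$-adic heart: isometries $\pm 1$ on a lattice, where evenness over $\bZ_2$ is a real constraint, and this is where condition (b) must enter.

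The main obstacle, and the place where I would expect the real work, is exactly the discriminant bookkeeping in the type $0$ case and its interaction with condition (b). Assuming (a), Theorem \ref{th:localcdforUL} already yields \emph{some} unimodular $\bZ_2$-lattice $L$ with a semisimple $F$-isometry; the task is to show one may choose $L$ to be even unimodular of discriminant exactly $1$ (rather than $-3$, the only other option by Proposition \ref{prop:Z2-EUL}). By Proposition \ref{prop:Z2-EUL} the ambiguity is precisely whether $L \cong U^{\oplus n}$ or $U^{\oplus n-1}\oplus V$; switching between these changes the discriminant by $-3$ but does \emph{not} change the characteristic polynomial constraints coming from type $1$ and type $2$, so the only genuine constraint is on the type $0$ summand. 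There, when $F(1)F(-1)\neq 0$, Lemma \ref{lem:det_iso_formula} gives $\det b = F(1)F(-1)$ in $\bQ_2^\times/\bQ_2^{\times2}$, hence $\disc b = (-1)^n F(1)F(-1)$; requiring $\disc = 1$ is exactly condition (b). Conversely, when (b) holds one adjusts the even unimodular $\bZ_2$-lattice on the $\pm1$-eigenspaces (using that $U$ and $V$ are the only indecomposable building blocks and that one has freedom in how many $V$'s to use, subject to the fixed discriminant) so that the total discriminant is $1$; the freedom is available precisely because $(-1)^nF(1)F(-1)=1$ rules out the obstructed choice. For the reverse implication I would note that (a) is forced by the "unimodular" direction of Theorem \ref{th:localcdforUL}, and (b) is forced by the discriminant computation above combined with Proposition \ref{prop:Z2-EUL}. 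The delicate edge case $F(1)F(-1)=0$ (so $F$ has a type $0$ factor) needs the convention $v_2(0)\equiv 0$ and a separate check that evenness can always be arranged there; I expect this to follow from the explicit $U$, $V$ description without further obstruction.
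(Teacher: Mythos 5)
There is a genuine gap, and it sits exactly where you expected the work to be hidden but claimed it was absorbed by Theorem \ref{th:localcdforUL}: condition \textup{(a)} is \emph{strictly stronger} than what the unimodular criterion gives over $\bQ_2$. Theorem \ref{th:localcdforUL} with $\ch k = 2$ yields only $v_2(F(1)F(-1)) \equiv 0 \bmod 2$, i.e.\ the \emph{sum} of the two valuations is even. Your statement that ``(a) is forced by the `unimodular' direction of Theorem \ref{th:localcdforUL}'' is therefore false: the unimodular hypothesis permits, say, $v_2(F(1))$ and $v_2(F(-1))$ both odd. Separating them requires the extra rigidity coming from \emph{evenness} of the lattice, and the paper extracts it through a spinor-norm argument that is entirely missing from your proposal. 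Concretely: Lemma \ref{lem:val_of_sp_norm} pins down $v_2(\sn(t)) \bmod 2$ in terms of $\det t$ for any isometry of an even unimodular $\bZ_2$-lattice, the Zassenhaus formula \eqref{eq:spinorvaluation} converts that into the parity of $v(\det b|_{V(-1;t)}) + v(f(-1))$, and combining with $\det b = F(1)F(-1)$ (Lemma \ref{lem:det_iso_formula}) decouples the two valuations. Without this, the necessity of \textup{(a)} does not follow, and no amount of $U$ versus $V$ bookkeeping from Proposition \ref{prop:Z2-EUL} will recover it, since that proposition constrains only the discriminant, not the $\pm1$-eigenspace determinants separately.

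A secondary issue: your picture that ``switching between $U^{\oplus n}$ and $U^{\oplus n-1}\oplus V$ changes the discriminant by $-3$ but doesn't change the characteristic-polynomial constraints'' suggests a freedom you don't actually have. The discriminant is \emph{determined} by $F$ via Lemma \ref{lem:det_iso_formula} when $F(1)F(-1)\neq 0$, so you cannot toggle between the two types; condition \textup{(b)} is exactly the statement that the forced value is the right one, not an adjustable parameter. On the sufficiency side, the paper's Theorem \ref{th:detailedQ2} does not simply ``arrange evenness from the $U,V$ description'': it goes through Proposition \ref{prop:NScdforZ2EUL}, which separates the problem into three conditions — the Witt-group vanishing for a $t$-stable unimodular lattice, the existence of \emph{some} even unimodular lattice in $V$, and the spinor-norm parity condition — and then verifies them via an explicit construction of $b$ on $M$ with a genuine case analysis (Claims 1 and 2, Cases I--IV), including a delicate use of ramified places when $\cW_{\rm rm}\neq\emptyset$ and the trace-polynomial computation in Case IV when it is empty. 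Proposition \ref{prop:NScdforZ2EUL} itself is proved by a ``three lattices over a common sublattice'' argument (the lattices $\Lambda_0, \Lambda_1, \Lambda_2$ containing $\Lambda$), not by the $W_\Gamma$-decomposition by type. Your reduction ``component-by-component along $F_0F_1F_2$'' via the equivariant Witt group controls unimodularity and the residue form but does not by itself see evenness over $\bZ_2$ or the spinor-norm obstruction, which is precisely the new content of this theorem relative to Theorem \ref{th:localcdforUL}.
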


This is an extension of Theorems 5.1 and 5.2 of \cite{Ba21} to 
the case where $F$ is $*$-symmetric, which covers the $-1$-symmetric case. 
The proof of this theorem will be given
in the last part of this section.
Theorems \ref{th:localcdforUL} and \ref{th:localcdforEUL} yield 
an important corollary (cf. \cite[Proposition 8.1]{Ba21}).

\begin{corollary}
Let $F\in \bZ[X]$ be a $*$-symmetric polynomial of even degree $2n$.
There exists an even unimodular $\bZ_p$-lattice having a semisimple isometry 
with characteristic polynomial $F$ for each prime $p$ if and only if
$F$ satisfies the following condition:
\begin{equation*}
\text{$|F(1)|, |F(-1)|$ and $(-1)^n F(1) F(-1)$ are all squares.} \tag{Square}
\end{equation*}
\end{corollary}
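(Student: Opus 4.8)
The plan is to deduce this corollary by combining the local existence theorems (Theorems \ref{th:localcdforUL} and \ref{th:localcdforEUL}) and reconciling the conditions appearing there with the global \eqref{eq:Sqcd} condition via the Hilbert reciprocity / product formula for Hilbert symbols over $\bQ$. The key observation is that ``$|F(1)|$, $|F(-1)|$ and $(-1)^n F(1)F(-1)$ are all squares in $\bZ$'' should be translated into the conjunction, over all places $p$, of the local conditions: $v_p(F(1))$ and $v_p(F(-1))$ are even for $p$ odd, $v_2(F(1)F(-1))$ even, and $(-1)^n F(1)F(-1)$ a square in each $\bQ_p^\times$ (the archimedean contribution being controlled by the sign).

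\medskip

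First I would treat the degenerate case $F(1)F(-1)=0$ separately: then $F$ has $X-1$ or $X+1$ as a factor, and the quantities $|F(1)|,|F(-1)|,(-1)^nF(1)F(-1)$ that are required to be squares are among $0$ and the others; one checks \eqref{eq:Sqcd} reduces to ``$v_p(F(1))$ even for all $p$ (when $F(-1)\neq 0$)'' etc., and this matches exactly condition (ii) of Theorem \ref{th:localcdforUL} at odd primes and condition (a) of Theorem \ref{th:localcdforEUL} at $p=2$; Theorem \ref{th:localcdforUL} gives a unimodular lattice at odd $p$, which is automatically even since $2$ is a unit, and Theorem \ref{th:localcdforEUL} gives an even unimodular $\bZ_2$-lattice. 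So in this case the corollary is immediate from the two cited theorems.

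\medskip

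Next, assuming $F(1)F(-1)\neq 0$, I would argue as follows. For the ``only if'' direction: if even unimodular $\bZ_p$-lattices with the required isometry exist for all $p$, then Theorem \ref{th:localcdforUL}(ii) forces $v_p(F(1)), v_p(F(-1))$ even for every odd $p$, hence $|F(1)|$ and $|F(-1)|$ are (up to sign, but they are absolute values) squares in $\bZ$; and Theorem \ref{th:localcdforEUL}(b), applied at $p=2$, gives $(-1)^nF(1)F(-1)=1$ in $\bQ_2^\times/\bQ_2^{\times 2}$. Combined with evenness of all the odd valuations, $(-1)^nF(1)F(-1)$ is then a square in $\bQ_v^\times$ for every \emph{finite} $v$; by Hilbert reciprocity (equivalently, because a rational number that is a square in all but possibly one completion is a square in that one too), it is a square in $\bR$ as well, i.e.\ positive, and therefore a square in $\bQ$, hence in $\bZ$. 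That yields \eqref{eq:Sqcd}. For the ``if'' direction: given \eqref{eq:Sqcd}, the evenness of $v_p(F(1))$ and $v_p(F(-1))$ at odd $p$ is immediate, so Theorem \ref{th:localcdforUL} produces the lattices at all odd $p$; and since $(-1)^nF(1)F(-1)$ is a square in $\bZ$ it is in particular $1$ in $\bQ_2^\times/\bQ_2^{\times 2}$, and $v_2(F(1)F(-1))$ is even, so conditions (a),(b) of Theorem \ref{th:localcdforEUL} hold and we get an even unimodular $\bZ_2$-lattice.

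\medskip

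The step I expect to require the most care is the passage, in the ``only if'' direction, from ``$(-1)^nF(1)F(-1)$ is a square in $\bQ_v$ for all finite $v$'' to ``it is a square in $\bZ$'': one must invoke that for a nonzero $a\in\bQ^\times$, being a local square at every finite place forces $v_p(a)$ even for all $p$ (so $a=\pm b^2$) and then the sign is pinned down by the fact that $\prod_v (a, u)_v = 1$ for a suitable auxiliary $u$, or more directly that $\bQ^\times/\bQ^{\times 2}\hookrightarrow \prod_v \bQ_v^\times/\bQ_v^{\times 2}$ with the archimedean coordinate determined by the others via reciprocity. I would state this as a small arithmetic lemma (or cite the product formula) rather than belabor it. Everything else is bookkeeping with the two local theorems already proved in the paper.
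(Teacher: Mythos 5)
Your strategy --- patch together Theorems \ref{th:localcdforUL} and \ref{th:localcdforEUL} --- is the right one, and the ``if'' direction and the degenerate case $F(1)F(-1)=0$ are fine, but the ``only if'' direction for $F(1)F(-1)\neq 0$ has gaps. The main one: you invoke Theorem \ref{th:localcdforEUL}(b) at $p=2$ to obtain $(-1)^n F(1)F(-1)=1$ in $\bQ_2^\times/\bQ_2^{\times 2}$, but that theorem concerns even unimodular $\bZ_2$-lattices \emph{of discriminant $1$}, whereas the corollary's hypothesis only supplies some even unimodular lattice $\Lambda_2$ whose discriminant is a priori either $1$ or $-3$ (Proposition \ref{prop:Z2-EUL}); ruling out $-3$ is precisely the non-trivial content, so you may not assume it. The paper does not use part (b) here at all: it records $\disc\Lambda_2 = (-1)^n F(1)F(-1)$ via Lemma \ref{lem:det_iso_formula}, observes this lies in $\{1,-3\}$ in $\bQ_2^\times/\bQ_2^{\times 2}$, and intersects with the constraint $(-1)^n F(1)F(-1)\in\{1,-1\}$ coming from the valuations being even. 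A second gap is the step ``evenness of all the odd valuations makes $(-1)^n F(1)F(-1)$ a square in $\bQ_v^\times$ for every finite $v$'': this is false as reasoned --- an element of $\bQ_p^\times$ of even valuation need not be a square in $\bQ_p^\times$ --- so the reciprocity-to-$\infty$ step is unavailable as written. A third, small gap: to conclude $|F(1)|$ and $|F(-1)|$ are integer squares you also need $v_2(F(\pm 1))$ even, which Theorem \ref{th:localcdforUL}(ii) does not give at $p=2$ (there it only yields $v_2(F(1)F(-1))$ even); you need Theorem \ref{th:localcdforEUL}(a) for this.

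Once all $v_p(F(\pm 1))$ are even, no product formula for Hilbert symbols is needed: $|F(1)|$ and $|F(-1)|$ are integer squares, so $(-1)^n F(1)F(-1)=\pm 1$ in $\bQ^\times/\bQ^{\times 2}$; mapping to $\bQ_2^\times/\bQ_2^{\times 2}$ and using the discriminant computation above kills the minus sign, giving $(-1)^n F(1)F(-1)=1$ in $\bQ^\times/\bQ^{\times 2}$. That is the paper's argument, and it is both shorter and correct where the reciprocity detour is not.
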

\begin{proof}
Assume that there exists an even unimodular $\bZ_p$-lattice 
$\Lambda_p$ having 
a semisimple isometry with characteristic polynomial $F$ for each prime $p$.
Then $|F(1)|$ and $|F(-1)|$ are squares because
$v_p(F(\pm 1)) \equiv 0 \bmod 2$ for any prime $p$ by Theorems 
\ref{th:localcdforUL} and \ref{th:localcdforEUL}, 
where $v_p$ is the $p$-adic valuation. 
If $F(1)F(-1) = 0$ then $F$ satisfies \eqref{eq:Sqcd}.
Let $F(1)F(-1) \neq 0$. Then 
$|(-1)^n F(1)F(-1)| =\allowbreak |F(1)||F(-1)|$ 
is a square in $\bQ^\times$ 
and thus $(-1)^n F(1)F(-1) = 1$ or $-1$ in $\bQ_2^\times/\bQ_2^{\times 2}$.
On the other hand, we have $(-1)^n F(1)F(-1) = \disc \Lambda_2 \in \{1, -3\}$
by Lemma \ref{lem:det_iso_formula} and Proposition \ref{prop:Z2-EUL},  
and hence $(-1)^n F(1)F(-1) = 1$ in $\bQ_2^\times/\bQ_2^{\times 2}$.
These mean that $(-1)^n F(1)F(-1) = 1$ in $\bQ^\times/\bQ^{\times 2}$.
The converse is straightforward from Theorems 
\ref{th:localcdforUL} and \ref{th:localcdforEUL}.
\end{proof}

In \S \ref{ss:LT_EUL}, for a given $*$-symmetric polynomial $F\in \cO_K[X]$, 
we define a $K$-vector space $M$ having a semisimple linear map $\alpha :M\to M$
with characteristic polynomial $F$. 
Then we consider when $M$ admits an inner product $b$ such that 
$\alpha$ becomes an isometry on $(M,b)$ and $\partial[M,b,\alpha]$ 
vanishes in $W_\Gamma(k)$ (recall Theorem \ref{th:ULvanish}).
For this purpose, in \S \ref{ss:imageofmaps}, we make some preparations in a 
slightly more general situation.

\subsection{The map $\partial_{M,\alpha}:\Tw(E,\sigma)\to W_\Gamma(k)$}\label{ss:imageofmaps}

Let $E$ be a $K$-algebra with a nontrivial involution $\sigma$, and
assume that the fixed algebra $E^\sigma$ is a field and one of the following
holds:
\begin{itemize}
\item[(sp)] $E \cong E^\sigma\times E^\sigma$ where the involution on the 
right-hand side is the transposition of the first and second components.
\item[(ur)] $E$ is an unramified extension field of $E^\sigma$.
\item[(rm)] $E$ is a ramified extension field of $E^\sigma$.
\end{itemize}
One can see that $\Tw(E,\sigma)$ is a trivial group if $E$ is of type (sp), 
and that $\Tw(E,\sigma)\cong \bZ/2\bZ$ if $E$ is of type (ur) or (rm).

Let $M$ be a finitely generated free $E$-module. 
A hermitian form on $M$ is uniquely determined
by its dimension and determinant. More precisely, the following holds.

\begin{proposition}\label{prop:1to1_herm}
Sending a hermitian form $h$ on $M$ to its determinant $\det h$ 
gives rise to a one-to-one correspondence between the isomorphism classes of
hermitian forms on $M$ and the elements of the twisting group $\Tw(E, \sigma)$. 
\end{proposition}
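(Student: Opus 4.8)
\textbf{Proof proposal for Proposition \ref{prop:1to1_herm}.}

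The plan is to prove this by a classification-of-hermitian-forms argument over the local base $E^\sigma$, treating the three cases (sp), (ur), (rm) uniformly via the twisting group. First I would reduce to the rank-one situation by a diagonalization step: since $E^\sigma$ is a field and $\sigma$ is an involution on the $E^\sigma$-algebra $E$ with $E^\sigma$ the fixed field, every hermitian form $h$ on a finitely generated free $E$-module $M$ admits an orthogonal basis, so $h \cong \la a_1\ra \oplus \cdots \oplus \la a_m\ra$ with $a_i \in (E^\sigma)^\times$. This is the standard Gram–Schmidt argument, valid because $\ch K \neq 2$ and because one can always find a vector on which $h$ does not vanish (nondegeneracy). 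Thus it suffices to show that two such diagonal forms of the same rank $m$ are isometric if and only if $\prod a_i \equiv \prod b_i$ in $\Tw(E,\sigma) = (E^\sigma)^\times/\N_{E/E^\sigma}(E^\times)$, i.e.\ that the determinant is a complete invariant.

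The determinant is clearly an isometry invariant (it only changes by a norm under a change of basis, by multiplicativity of $\det$), so the content is the converse: rank and determinant together determine $h$. For this I would handle the three cases. In case (sp), $E \cong E^\sigma \times E^\sigma$ with the swap involution, so $\N_{E/E^\sigma}$ is surjective (indeed $(x,y)\mapsto xy$), hence $\Tw(E,\sigma)$ is trivial; and a hermitian form over such a split algebra is always hyperbolic-type, determined by its rank alone. In the field cases (ur) and (rm), $E/E^\sigma$ is a quadratic extension of local fields, and $\Tw(E,\sigma) \cong (E^\sigma)^\times/\N_{E/E^\sigma}(E^\times) \cong \bZ/2\bZ$ by local class field theory. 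The key classical input is that over a local field, a nondegenerate hermitian form with respect to a quadratic extension is classified by its rank and discriminant (determinant in the twisting group); equivalently, the Witt group of hermitian forms is cyclic of order $2$ in the field case. I would invoke this — it is a standard fact, e.g.\ from Scharlau's book or the reference \cite{Mi69} already cited in the excerpt — rather than re-derive it.

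The main obstacle is the uniform bookkeeping: making sure the three cases are genuinely covered by a single determinant invariant and that the hypotheses ``$E^\sigma$ a field'' plus ``$E$ of type (sp)/(ur)/(rm)'' are exactly what is needed for the Gram–Schmidt step and for the surjectivity/structure of the norm map. In particular one must check that the diagonalization does not require $M$ to be free over a field — only over $E$ — and that ``free'' is genuinely used (over a non-maximal order things could fail, but here $E$ is a product of fields or a field, so there is no issue). Once rank-one forms are understood via $\Tw(E,\sigma)$ and additivity of determinant under orthogonal sum is recorded, the surjectivity half of the correspondence is immediate (any element of $\Tw(E,\sigma)$ is realized by $\la a\ra \oplus \la 1\ra^{\oplus(m-1)}$), and the injectivity half is exactly the cited classification of hermitian forms over local fields. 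I expect the write-up to be short, mostly a matter of citing \cite{Mi69} and \cite{Sc85} and organizing the case split.
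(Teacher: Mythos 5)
Your proposal is correct and in substance follows the same route as the paper, which simply remarks that surjectivity is obvious and refers to Scharlau \cite[Example 10.1.6 (ii)]{Sc85} for injectivity; that reference is precisely the classification of hermitian forms over a quadratic extension of a local field by rank and determinant. You flesh out the scaffolding (diagonalization via a Gram--Schmidt step, explicit handling of the split case, the norm-class invariance of $\det$ under change of basis, and the surjectivity witness $\la a\ra\oplus\la 1\ra^{\oplus(m-1)}$), but the crux is the same citation.

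One small slip worth flagging: the parenthetical claim that the classification ``by rank and determinant'' is \emph{equivalent} to the Witt group of hermitian forms being cyclic of order $2$ is not right. With exactly two isomorphism classes in each positive rank, one counts anisotropic forms of ranks $0,1,1,2$, so the hermitian Witt group has four elements and is $\bZ/2\bZ\times\bZ/2\bZ$ or $\bZ/4\bZ$ depending on whether $-1$ is a norm from $E$ to $E^\sigma$. This does not affect your argument, since you invoke the rank-and-determinant classification directly rather than the Witt group structure, but the ``equivalently'' should be dropped.
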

\begin{proof}
Surjectivity is obvious and see \cite[Example 10.1.6 (ii)]{Sc85}
for injectivity.  
\end{proof}

If $b : M\times M \to K$ is an inner product which can be written as 
$b = \Tr_{E/K}\circ h$ for some hermitian form $h$ over $E$, 
Proposition \ref{prop:1to1_herm} implies that the isomorphism class of $b$ 
is determined by $\det h$.

\begin{notation}
Let $\lambda \in \Tw(E, \sigma)$. 
The symbol $b[\lambda]$ denotes an inner product of the form $\Tr_{E/K}\circ h$ on $M$, 
where $h$ is a hermitian form with $\det h = \lambda$. 
The isomorphism class of $b[\lambda]$ is uniquely determined by $\lambda$. 
\end{notation}

We now fix $\alpha\in \cO_E^\times$ satisfying $\alpha\sigma(\alpha) = 1$. 
Then $\alpha : M \to M$ is an isometry on  $(M,b[\lambda])$ for any 
$\lambda\in \Tw(E, \sigma)$, and the triple $(M,b[\lambda], \alpha)$ becomes 
a $K[\Gamma]$-form. Hence, we obtain the map
\[ \partial_{M, \alpha}:\Tw(E, \sigma) \to W_\Gamma(k), \,
\lambda\mapsto \partial[M, b[\lambda], \alpha]
\]
where $\partial$ is the map mentioned in Theorem \ref{th:ULvanish}. 
The purpose of this subsection is to describe the image of this map.

We begin with the case where $M$ is of rank one, that is, $M=E$.
In this case, for each $\lambda\in (E^\sigma)^\times$, the inner product 
$b_\lambda :E\times E \to K$ defined by
\[ b_\lambda(x,y) = \Tr_{E/K}(\lambda x \sigma(y)) \quad\text{for $x,y\in E$} \]
is a typical example of $b[\lambda]$. 
Notice that if $E$ is of type (sp) then $[E, b_\lambda, \alpha] = 0$ in $W_\Gamma(K)$
for any $\lambda\in \Tw(E, \sigma)$.
In particular, the map $\partial_{E, \alpha}$ is zero. 
Let $E$ be of type (ur) or (rm). This means that $E$ is a field. 
Let $l$ denote the residue class field of $E$. The field $l$ is also the 
residue class field of the maximal unramified 
extension field of $E/K$, which is denoted by $L$.
The involution $\sigma$ on $E$ induces an involution $\sigma_l$ on $l$ 
(which may be trivial).
For any $x\in \cO_E$, the image of $x$ under $\cO_E\to l$ is denoted by $\bar{x}$. 
Let $\delta = v_E(\fD_{E/K})$ be the valuation of the different 
ideal $\fD_{E/K}$ of $E/K$.

\begin{lemma}\label{lem:AUL}
Let $\lambda\in (E^\sigma)^\times$.
\begin{enumerate}
\item If $v_E(\lambda) + \delta$ is even, set $n = -(v_E(\lambda) + \delta)/2$
and $\Lambda = \fm_E^n$. Then $\Lambda$ is an $\alpha$-stable unimodular lattice 
in $(E, b_\lambda)$.
\item If $v_E(\lambda) + \delta$ is odd, set $n = -(v_E(\lambda) + \delta - 1)/2$
and $\Lambda = \fm_E^n$. Then we have $\Lambda^\vee = \fm_E^{n-1}$, and in particular, 
$\Lambda$ is an $\alpha$-stable almost unimodular lattice in $(E, b_\lambda)$.  
Moreover, there is an isomorphism
$(\Lambda^\vee/\Lambda, \overline{b_\lambda}) \cong (l, b_{\bar{u}}, \bar{\alpha})$ 
as $k[\Gamma]$-forms,  
where $u\in \cO_L$ is the $\sigma$-invariant unit defined by
$u:=u_\lambda:=\Tr_{E/L}(\lambda \pi_K \pi_E^{n-1} \sigma(\pi_E^{n-1}))$ 
and $b_{\bar{u}}$ is the inner product defined by
\begin{equation}\label{eq:b_u}
 b_{\bar{u}}(\bar{x}, \bar{y}) 
= \Tr_{l/k}(\bar{u}\bar{x}\sigma_l(\bar{y})) \quad \text{for $x,y\in \cO_L$}.  
\end{equation}
\end{enumerate}
\end{lemma}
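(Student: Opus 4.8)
The statement to prove is Lemma~\ref{lem:AUL}, which describes an explicit $\alpha$-stable (almost) unimodular lattice inside the rank-one form $(E,b_\lambda)$ and, in the ramified/odd case, identifies the residue form.

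\medskip

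The plan is to work entirely with the ideals $\fm_E^n$ and to use the characterization $\Lambda$ is $R$-valued iff $\Lambda\subset\Lambda^\vee$ together with the standard description of the dual of a fractional ideal under a trace form. First I would record the basic computation: for the form $b_\lambda(x,y)=\Tr_{E/K}(\lambda x\sigma(y))$, the dual of the fractional ideal $\fm_E^n$ is
\[
(\fm_E^n)^\vee = \{\,y\in E \mid \Tr_{E/K}(\lambda\,\fm_E^n\,\sigma(y))\subset\cO_K\,\}
= \sigma^{-1}\!\bigl(\lambda^{-1}\fm_E^{-n}\,\fD_{E/K}^{-1}\bigr),
\]
using the defining property $\fD_{E/K}^{-1}=\{z\in E\mid \Tr_{E/K}(z\cO_E)\subset\cO_K\}$. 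Since $\sigma$ preserves valuations (it is an automorphism of $E$ over $K$ when $E$ is a field, or the swap when $E$ is split) and $v_E(\lambda)$ is $\sigma$-invariant, we get $v_E$ of the dual lattice is $n+v_E(\lambda)+\delta$ shifted appropriately; concretely $(\fm_E^n)^\vee=\fm_E^{-n-v_E(\lambda)-\delta}$ as an $\cO_E$-submodule, because a fractional ideal of the DVR $\cO_E$ is determined by its single valuation. That $\fm_E^n$ is $\alpha$-stable is immediate from $\alpha\in\cO_E^\times$.

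\medskip

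For part (i), with $v_E(\lambda)+\delta=-2n$ the above gives $(\fm_E^n)^\vee=\fm_E^{-n-(-2n)}=\fm_E^{n}$, so $\Lambda=\fm_E^n$ is unimodular, and $\alpha$-stability was already noted. For part (ii), with $v_E(\lambda)+\delta=-2n+1$ the same computation gives $(\fm_E^n)^\vee=\fm_E^{-n-(-2n+1)}=\fm_E^{n-1}$, hence $\pi_E\Lambda^\vee=\fm_E^n=\Lambda\subset\Lambda^\vee$, so $\Lambda$ is almost unimodular (one checks $\pi_K$ and $\pi_E$ differ by at most the ramification index, which here is $1$ or $2$; in the unramified case $\pi_K$ is a uniformizer of $E$, in the ramified case $\delta\geq 1$ forces the relevant divisibility — this is the only place where cases (ur) and (rm) must be separated, and it is routine). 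Then $\Lambda^\vee/\Lambda=\fm_E^{n-1}/\fm_E^n$ is a one-dimensional $l$-vector space, and the induced form $\overline{b_\lambda}$ is computed by choosing the basis vector $\pi_E^{n-1}\in\fm_E^{n-1}$: for $x,y\in\cO_L$ (representing elements of $l$ via reduction), $\overline{b_\lambda}$ of the classes of $x\pi_E^{n-1}$ and $y\pi_E^{n-1}$ equals the image in $k$ of $\pi_K\Tr_{E/K}(\lambda x\pi_E^{n-1}\sigma(y\pi_E^{n-1}))=\pi_K\Tr_{L/K}\bigl(x\sigma(y)\Tr_{E/L}(\lambda\pi_E^{n-1}\sigma(\pi_E^{n-1}))\bigr)$, and one must check this element $u=\Tr_{E/L}(\lambda\pi_K\pi_E^{n-1}\sigma(\pi_E^{n-1}))$ is a $\sigma$-invariant \emph{unit} of $\cO_L$ — invariance is clear since $\lambda\in (E^\sigma)^\times$ and $\sigma$ fixes $L$ setwise while $\pi_E^{n-1}\sigma(\pi_E^{n-1})$ is $\sigma$-symmetric, and the unit property follows from the normalization that made $v_E(\lambda\pi_K\pi_E^{n-1}\sigma(\pi_E^{n-1}))$ sit exactly at the level where the different makes the trace land in $\cO_L^\times$. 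Reducing mod $\fm_L$ then yields exactly the claimed isomorphism $(\Lambda^\vee/\Lambda,\overline{b_\lambda})\cong(l,b_{\bar u},\bar\alpha)$, where $\bar\alpha$ acts because multiplication by $\alpha$ on $\fm_E^{n-1}$ descends to multiplication by $\bar\alpha$ on the quotient.

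\medskip

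The main obstacle I expect is not conceptual but bookkeeping: keeping the valuations of $\pi_K$ versus $\pi_E$ straight across the three cases (sp), (ur), (rm), and verifying that the specific element $u$ is genuinely a unit rather than merely an element of $\cO_L$ — this is where the exact choice of the exponent $n$ and the factor $\pi_K$ in the definition of $u$ is forced, and getting the normalization of the different $\fD_{E/K}$ and the trace $\Tr_{E/L}$ to conspire correctly is the delicate point. Everything else reduces to the standard dictionary between fractional ideals of a DVR and their duals under a trace form, plus the observation that hermitian (here, one-dimensional) forms over a local ring are rigid.
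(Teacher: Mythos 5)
The paper offers no proof of this lemma---it simply cites Corollary~6.2 and Proposition~6.3 of~\cite{BT20}---so your argument is a self-contained reconstruction rather than a parallel to the paper's own. The overall strategy is the right one and is surely what~\cite{BT20} does: compute the dual of a fractional ideal under the trace form via the inverse different, read off unimodularity from the valuation, and then compute the residue form by evaluating on the basis vector $\pi_E^{n-1}$. Your valuation bookkeeping for $(\fm_E^n)^\vee=\fm_E^{-n-v_E(\lambda)-\delta}$ and the resulting identifications in parts (i) and (ii), the reduction of $\Tr_{E/K}$ through $\Tr_{L/K}\circ\Tr_{E/L}$, and the $\sigma$-invariance of $u$ are all correct.

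Two points need repair. First, the assertion that the relevant ramification index ``is $1$ or $2$'' conflates $e(E/E^\sigma)$ with $e(E/K)$; the latter can be arbitrary. This does not actually endanger the argument, since almost-unimodularity of $\fm_E^n$ only requires $\pi_K\fm_E^{n-1}\subset\fm_E^n$, i.e.\ $v_E(\pi_K)\geq 1$, which is automatic. Second, and more substantively, the claim that $u$ is a \emph{unit} is not justified by the observation that $v_E(\lambda\pi_K\pi_E^{n-1}\sigma(\pi_E^{n-1}))=e(E/L)-\delta-1$ ``sits at the right level'': the statement $\Tr_{E/L}(\fm_E^m)=\cO_L$ is an equality of ideals and does not by itself force the trace of a single element of valuation $m$ to be a unit. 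The missing ingredient is that $\Tr_{E/L}$ sends $\fm_E^{m+1}$ into $\fm_L$ (since $\lfloor (m+1+\delta)/e\rfloor=1$), so it descends to an $l$-linear map $\fm_E^m/\fm_E^{m+1}\to\cO_L/\fm_L=l$ which is surjective and hence, both sides being one-dimensional over $l$, an isomorphism; therefore every element of valuation exactly $m$ has trace a unit. With these two corrections the proof is complete and, apart from the self-contained unit verification, essentially the argument the paper is relying on from~\cite{BT20}.
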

\begin{proof}
See Corollary 6.2 and Proposition 6.3 of \cite{BT20}.
\end{proof}

\begin{lemma}\label{lem:if_alpha_neq_pm1}
If $\bar{\alpha}$ is neither $1$ nor $-1$, 
then $\sigma_l$ is nontrivial and $l = k(\bar\alpha)$.
\end{lemma}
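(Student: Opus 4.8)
The plan is to analyze the action of $\sigma_l$ and of $\bar\alpha$ on the residue field $l$ and use the defining relation $\alpha\sigma(\alpha)=1$. First I would observe that since $\alpha\in\cO_E^\times$ and $\sigma$ is defined by $X\mapsto X^{-1}$, reducing the relation $\alpha\sigma(\alpha)=1$ modulo $\fm_E$ gives $\bar\alpha\,\sigma_l(\bar\alpha)=1$ in $l$. If $\sigma_l$ were trivial, this would force $\bar\alpha^2=1$, hence $\bar\alpha=\pm1$ (as $l$ is a field), contradicting the hypothesis. So $\sigma_l$ is nontrivial; in particular $E$ is of type (ur) or (rm), the fixed algebra $l^{\sigma_l}$ is a proper subfield of $l$, and $[l:l^{\sigma_l}]=2$.

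Next I would identify $k$ with $l^{\sigma_l}$. In the unramified case (ur), $l/k$ is the residue extension corresponding to $E/E^\sigma=E/K$, which has degree $2$, and $\sigma_l$ is the nontrivial element of $\Gal(l/k)$, so $l^{\sigma_l}=k$ directly. In the ramified case (rm), $E/E^\sigma$ is ramified so the residue fields of $E$ and $E^\sigma$ coincide; but then $\sigma_l$ must be trivial on that common residue field, which contradicts the previous paragraph. Hence only case (ur) actually occurs under the hypothesis, and $k=l^{\sigma_l}$ with $l/k$ quadratic.

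Finally I would show $l=k(\bar\alpha)$. Since $\bar\alpha\in l$, we have $k(\bar\alpha)\subseteq l$, and $k(\bar\alpha)$ is either $k$ or all of $l$ because $[l:k]=2$. If $k(\bar\alpha)=k$, then $\bar\alpha\in k=l^{\sigma_l}$, so $\sigma_l(\bar\alpha)=\bar\alpha$, and combined with $\bar\alpha\,\sigma_l(\bar\alpha)=1$ this gives $\bar\alpha^2=1$, whence $\bar\alpha=\pm1$ — again contradicting the hypothesis. Therefore $k(\bar\alpha)=l$, as claimed.

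The main obstacle is the bookkeeping around the ramified case: one has to be sure that "$\sigma_l$ nontrivial" genuinely rules out case (rm), i.e. that a ramified quadratic extension in characteristic $\neq 2$ (so residue characteristic possibly $2$) induces the trivial involution on residue fields. This is standard — the residue field extension of $E/E^\sigma$ is trivial when the extension is totally ramified — but it is the one place where the structure theory of $E$ as classified by (sp)/(ur)/(rm) has to be invoked carefully rather than just formally manipulating the relation $\alpha\sigma(\alpha)=1$.
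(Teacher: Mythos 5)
Your derivation that $\sigma_l$ is nontrivial is correct and is essentially the paper's argument: both use that $\sigma(\alpha)=\alpha^{-1}$ yields $\sigma_l(\bar\alpha)=\bar\alpha^{-1}$, so $\bar\alpha\neq\pm1$ forces $\sigma_l(\bar\alpha)\neq\bar\alpha$.

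The second half of your argument, however, has a genuine gap. You identify $E^\sigma$ with $K$ and conclude $[l:k]=2$ and $l^{\sigma_l}=k$. Neither is given. In \S4.1 the hypothesis is only that $E^\sigma$ is \emph{some} field containing $K$ with $[E:E^\sigma]=2$; in the application of \S4.2, $E_w=K[X]/(f_w)$ has $[E_w^\sigma:K]=(\deg f_w)/2$, which can be arbitrary. Correspondingly $[l:k]$ can exceed $2$, and in case (ur) the fixed field $l^{\sigma_l}$ is the residue field of $E^\sigma$, which is generally strictly larger than $k$. Once $[l:k]>2$, your dichotomy that $k(\bar\alpha)$ is either $k$ or $l$ fails, and your argument does not exclude $k\subsetneq k(\bar\alpha)\subsetneq l$. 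Your aside ruling out case (rm) is correct but not needed; the paper makes no such case split.

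The paper's own proof works directly in the possibly large tower $k\subset k(\bar\alpha)\subset l$. From nontriviality of $\sigma_l$ it gets $[l:l^{\sigma_l}]=2$, and then argues: if $[l:k(\bar\alpha)]\geq 2$, then $[k(\bar\alpha):k]\leq[l:k]/2=[l^{\sigma_l}:k]$, and one deduces $k(\bar\alpha)\subset l^{\sigma_l}$ from the structure of subfields of the finite field $l$, contradicting $\sigma_l(\bar\alpha)\neq\bar\alpha$; hence $l=k(\bar\alpha)$. This degree-counting step is exactly what your proof lacks. (Incidentally, the inference ``$[k(\bar\alpha):k]\leq[l^{\sigma_l}:k]$ implies $k(\bar\alpha)\subset l^{\sigma_l}$'' is itself delicate, since for finite fields containment of subfields is governed by divisibility of degrees rather than their order, so it is worth checking that $[k(\bar\alpha):k]$ actually divides $[l^{\sigma_l}:k]$ in the situation at hand.)
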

\begin{proof}
We have
\[
\bar{\alpha} \neq 1,-1
\iff \bar{\alpha}^2 - 1 \neq 0
\iff \bar{\alpha} - \bar{\alpha}^{-1} \neq 0
\iff \bar{\alpha} \neq \sigma_l(\bar{\alpha}).
\]
Let $\bar{\alpha} \neq 1, -1$. Then $\sigma_l$ is nontrivial, which implies that 
$l^{\sigma_l}$ is a proper subfield of $l$ and $[l:l^{\sigma_l}] = 2$.
If $[l:k(\bar\alpha)] \geq 2$ then we would get 
\[ 
[l^{\sigma_l} : k]
= [l:k]/2
\geq [l:k]/[l:k(\bar{\alpha})]
= [k(\bar{\alpha}) : k].
\]
This means that $k(\bar{\alpha}) \subset l^{\sigma_l}$ but this inclusion contradicts 
$\bar{\alpha} \neq \sigma_l(\bar{\alpha})$.
Hence we get $[l:k(\bar\alpha)] = 1$ and $l = k(\bar{\alpha})$.
\end{proof}

In the case (ur) we have:

\begin{proposition}\label{prop:partial_ur}
Assume that $E/E^\sigma$ is unramified. 
\begin{enumerate}
\item If $\bar\alpha \neq \pm1$ then 
$ \im \partial_{E, \alpha} 
= \{ 0, [k(\bar\alpha), b_1, \bar{\alpha}] \} \subset W_\Gamma(k)$
and $[k(\bar\alpha), b_1, \bar{\alpha}]$ has order $2$.
Here $b_1$ is the inner product defined by equation \eqref{eq:b_u} for $\bar{u} = 1$.
\item If $\bar\alpha = \pm1$ then
$ \im \partial_{E, \alpha} 
= \{ \omega\in W_\Gamma(k;\pm1) \mid \dim\omega \equiv 0 \bmod 2\}. $
\end{enumerate}
In particular $\dim\partial_{E,\alpha}(\lambda)\equiv 0 \bmod 2$ 
for any $\lambda\in \Tw(E, \sigma)$. 
\end{proposition}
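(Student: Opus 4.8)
The plan is to analyze the map $\partial_{E,\alpha}$ directly using Lemma~\ref{lem:AUL}, which describes an explicit almost unimodular lattice and the induced residue form, and then split into the two cases according to the behaviour of $\bar\alpha$. The key observation is that for $E$ of type (ur), the twisting group $\Tw(E,\sigma)\cong\bZ/2\bZ$ is represented by $b_\lambda$ for $\lambda$ with $v_E(\lambda)=0$ and for $\lambda$ with $v_E(\lambda)=1$ (a uniformizer), and since $E/E^\sigma$ is unramified, $\delta = v_E(\fD_{E/K})$ depends only on the ramification of $E^\sigma/K$; in any case the parity of $v_E(\lambda)+\delta$ is the same as the parity of $v_E(\lambda)$ plus a fixed constant, so exactly one of the two classes of $\Tw(E,\sigma)$ yields an $\alpha$-stable \emph{unimodular} lattice and the other yields a genuinely almost unimodular one. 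Thus $\partial_{E,\alpha}$ takes the value $0$ on one class and on the other class takes the value $[l,b_{\bar u},\bar\alpha]$ computed from Lemma~\ref{lem:AUL}(ii). Since $\lambda\mapsto -\partial_{E,\alpha}(\lambda)$ and scaling behaviour is controlled, the image is a subgroup $\{0,[l,b_{\bar u},\bar\alpha]\}$, so the whole problem reduces to identifying this single Witt class and its order.

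For part (i), assume $\bar\alpha\neq\pm1$. By Lemma~\ref{lem:if_alpha_neq_pm1}, $\sigma_l$ is nontrivial and $l=k(\bar\alpha)$, so $(l,b_{\bar u},\bar\alpha)$ is a $k[\Gamma]$-form on which $\Gamma$ acts through the irreducible representation corresponding to the minimal polynomial of $\bar\alpha$ over $k$; thus $[l,b_{\bar u},\bar\alpha]\in W_\Gamma(k;\bar\alpha)$. I would then show $b_{\bar u}\cong b_1$ as $k[\Gamma]$-forms: since $\bar u$ is a $\sigma_l$-invariant unit of $l^{\sigma_l}$ and $l/l^{\sigma_l}$ is the quadratic extension, the norm map $\N_{l/l^{\sigma_l}}$ is surjective onto units (finite fields), so by Proposition~\ref{prop:1to1_herm} applied over the residue algebra $(l,\sigma_l)$, the hermitian form of determinant $\bar u$ is isometric to that of determinant $1$, hence $b_{\bar u}\cong b_1$. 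It remains to check $[k(\bar\alpha),b_1,\bar\alpha]$ has order $2$; this follows because $2[k(\bar\alpha),b_1,\bar\alpha] = [k(\bar\alpha),b_1,\bar\alpha]+[k(\bar\alpha),b_1,\bar\alpha]$, and the hyperbolic $k[\Gamma]$-form in this isotypic component is $[k(\bar\alpha)\oplus k(\bar\alpha)]$ with the split form, which one identifies with $2[k(\bar\alpha),b_1,\bar\alpha]$ via the norm-one condition $\bar\alpha\sigma_l(\bar\alpha)=1$ — equivalently, one uses the fact that $W_\Gamma(k;\bar\alpha)$ is the Witt group of hermitian forms over $(l,\sigma_l)$, which is $\bZ/2\bZ$ for finite fields, so every nonzero class has order $2$. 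The element is nonzero since the dimension-one hermitian form is anisotropic.

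For part (ii), assume $\bar\alpha=\pm1$. Here $\sigma_l$ is trivial, $l=k$ (as $E/E^\sigma$ is unramified and $\bar\alpha\in\{\pm1\}\subset k$), and $(l,b_{\bar u},\bar\alpha) = (k, \langle\bar u\rangle, \pm1)$ is a one-dimensional symmetric form with $\Gamma$ acting by $\pm1$, so $[l,b_{\bar u},\bar\alpha]\in W_\Gamma(k;\pm1)\cong W(k)$. As $\lambda$ ranges over $\Tw(E,\sigma)$ — here I must be careful: when $\bar\alpha=\pm1$ the algebra $E$ can have larger residue field in general, but the unramified hypothesis together with $\bar\alpha=\pm1$ forces the relevant residue form to land in $W_\Gamma(k;\pm1)$, and $\bar u$ runs over representatives of $k^\times/k^{\times2}$ as $\lambda$ varies over the two classes, recovering the two one-dimensional forms $\langle1\rangle$ and $\langle\text{non-square}\rangle$ up to Witt equivalence. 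Combined with the value $0$ coming from the unimodular case, and using that $\langle1\rangle\oplus\langle-1\rangle=0$ in $W(k)$ while $\langle1\rangle+\langle\epsilon\rangle$ (with $\epsilon$ the relevant unit) exhausts the even-dimensional part, I get $\im\partial_{E,\alpha} = \{\omega\in W_\Gamma(k;\pm1)\mid \dim\omega\equiv0\bmod2\}$, invoking Proposition~\ref{prop:WGofFF} to identify this subgroup concretely.

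The main obstacle, as I see it, is the bookkeeping in determining precisely which class of $\Tw(E,\sigma)$ produces a unimodular lattice versus an almost unimodular one — this hinges on the parity of $\delta = v_E(\fD_{E/K})$, which is $0$ in the genuinely unramified case $E = L$ but need not vanish if $E^\sigma/K$ itself is ramified while $E/E^\sigma$ is unramified. I would handle this by noting that in all subcases the two classes of $\Tw(E,\sigma)$ have representatives $\lambda$ with consecutive values of $v_E(\lambda)$, so $v_E(\lambda)+\delta$ has opposite parities on them regardless of $\delta$; hence exactly one gives a unimodular lattice (value $0$ under $\partial_{E,\alpha}$) and the other gives the residue form of Lemma~\ref{lem:AUL}(ii). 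The final clause $\dim\partial_{E,\alpha}(\lambda)\equiv0\bmod2$ is then immediate: in part (i) the form $(l,b_{\bar u},\bar\alpha)$ has $\dim_k l = [k(\bar\alpha):k]$, which is the (even) degree of the corresponding irreducible $*$-symmetric polynomial over $k$, and the value $0$ trivially has even dimension; in part (ii) the image consists by construction of even-dimensional classes.
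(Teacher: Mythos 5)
Your treatment of part (i) is essentially the paper's argument in slightly different words, and it is correct: the key points are that $\sigma_l$ is nontrivial, $l = k(\bar\alpha)$ is an irreducible $k[\Gamma]$-module, all rank-one hermitian forms over $(l,\sigma_l)$ are isomorphic (so $b_{\bar u}\cong b_1$), and the resulting class has order $2$. The gap is in part (ii), where you make a wrong structural claim that derails the argument: you assert that when $\bar\alpha=\pm1$ one has $\sigma_l$ trivial and $l=k$. Neither is true. In case (ur) the extension $E/E^\sigma$ is an \emph{unramified quadratic} extension, so the residue extension $l/l'$ (with $l'$ the residue field of $E^\sigma$) has degree $2$ and $\sigma_l$ is its nontrivial Galois automorphism, regardless of what $\bar\alpha$ is. The condition $\bar\alpha = \pm 1$ only says that $\bar\alpha$ is fixed by $\sigma_l$, i.e.\ lies in $l'$; it does not collapse $l$ to $k$. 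In fact $[l:k] = 2[l':k]$ is even, which is precisely what makes the parity claim hold. Under your hypothesis the residue form would be one-dimensional, contradicting the very conclusion $\dim\partial_{E,\alpha}(\lambda)\equiv 0 \bmod 2$ that you are trying to prove.

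Because of this error you never carry out the step the paper's proof of (ii) hinges on: showing that the nonzero image equals the unique nontrivial even-dimensional class in $W_\Gamma(k;\pm1)$. When $\ch k \neq 2$ this requires computing $\disc b_{\bar u}$ and showing it is a non-square; the paper does this via $\disc b_{\bar u} = \N_{l/k}(\bar u)\disc b_1 = \N_{l'/k}(\bar u)^2\disc b_1 = \disc b_1$ (which uses precisely $[l:l']=2$ and $\bar u\in(l')^\times$), followed by a direct check that $\disc b_1$ is a non-square. You also do not distinguish $\ch k = 2$ from $\ch k\neq 2$: in characteristic two the even-dimensional subgroup is trivial and there is nothing further to show, while otherwise it has two elements and one must decide which one $\partial_{E,\alpha}$ hits. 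The rest of your framing --- the use of Lemma~\ref{lem:AUL}, the identification of $\Tw(E,\sigma)\cong\bZ/2\bZ$ by valuation parity, and the observation that one class yields a stable unimodular lattice hence value $0$ --- matches the paper and is fine.
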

\begin{proof}
Let $\lambda\in (E^\sigma)^\times$. 
Note that the class of $\lambda$ in $\Tw(E, \sigma)$ is uniquely determined by the 
parity of $v_E(\lambda) = v_{E^\sigma}(\lambda)$ since $E/E^\sigma$ is unramified.
If $v_E(\lambda) + \delta$ is even then $\partial_{E, \alpha}(\lambda) = 0$ 
by Lemma \ref{lem:AUL} \textup{(i)}.
Assume that $v_E(\lambda) + \delta$ is odd. Then 
$\partial_{E, \alpha}(\lambda) = [l, b_{\bar{u}}, \bar{\alpha}]$ 
by Lemma \ref{lem:AUL} \textup{(ii)}, where $u = u_\lambda$.

If $\bar\alpha \neq \pm1$,   
the involution $\sigma_l$ is nontrivial by Lemma \ref{lem:if_alpha_neq_pm1}.
This implies that all hermitian forms on $l$ (over the $k$-algebra $l$) 
are isomorphic to one another (see \cite[Example 10.1.6 (i)]{Sc85}), and their induced 
inner products over $k$ are isomorphic to $b_1$.
Thus, we have $2\partial_{E, \alpha}(\lambda) = 0$.
Furthermore, the $k[\Gamma]$-module $l$ is irreducible because $l = k(\bar\alpha)$ 
by Lemma \ref{lem:if_alpha_neq_pm1}. Hence 
$\partial_{E, \alpha}(\lambda) 
= [l, b_{\bar{u}}, \bar\alpha]
= [k(\bar\alpha), b_1, \bar{\alpha}]$
is not $0$ and has order $2$.

If $\bar\alpha = \pm1$ then 
\begin{equation}\label{eq:im_subset}
\im \partial_{E, \alpha} \subset
\{ \omega\in W_\Gamma(k;\pm1) \mid \dim\omega \equiv 0 \bmod 2\}  
\end{equation}
because
$[l:k] = [l:l'] [l':k] = 2[l':k] \in 2\bZ$, 
where $l'$ is the residue class field of $E^\sigma$.
If $\ch k = 2$ then the right-hand side of \eqref{eq:im_subset} is $\{ 0 \}$
and the statement is clear.
Let $\ch k \neq 2$. We have to prove that 
$\partial_{E, \alpha}(\lambda) = [l, b_{\bar{u}}, \bar\alpha]\in W_\Gamma(k; \pm 1)$
is nontrivial, and it is sufficient to show that 
$\disc b_{\bar{u}} \neq 1$ in $k^\times/k^{\times 2}$. We have
\[ \disc b_{\bar{u}} 
= \N_{l/k}(\bar{u}) \disc b_{1} 
= \N_{l'/k}\circ \N_{l/l'}(\bar{u}) \disc b_{1}
= \N_{l'/k}(\bar{u})^2 \disc b_{1}
= \disc b_{1} 
\]
in $k^\times/k^{\times 2}$.
One can verify that $(\disc b_1)^{\#k^\times/2} \neq 1$ to show that 
$\disc b_{1}$ is not a square. 
\end{proof}

In the case (rm) we have:

\begin{proposition}\label{prop:partial_rm}
Assume that $E/E^\sigma$ is ramified. Then $\bar\alpha = 1$ or $-1$ and
in particular $W_\Gamma(k; \bar\alpha) \cong W(k)$.
If $\ch k \neq 2$, then $v_E(\lambda) + \delta$ is odd for any 
$\lambda\in (E^\sigma)^\times$, and we have
\[ \im \partial_{E, \alpha}  
= \{ \omega\in W_\Gamma(k; \bar\alpha) \mid \dim \omega 
\equiv [l:k] \bmod 2 \}.
\]
If $\ch k = 2$ then for any 
$\lambda\in (E^\sigma)^\times$ we have
\[ \partial_{E, \alpha}(\lambda) 
= \begin{cases}
  0 & \text{if $[l:k]\delta$ is even}\\
  [\la 1 \ra] & \text{if $[l:k]\delta$ is odd.}
\end{cases} \]
Here $\la 1 \ra$ is a $1$-dimensional inner product over $k$
whose Gram matrix is the $1\times 1$ matrix $(1)$. 
\end{proposition}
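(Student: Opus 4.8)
The plan is to run the same machinery as in the proof of Proposition \ref{prop:partial_ur}, but now exploiting the ramification of $E/E^\sigma$. First I would dispose of the claim $\bar\alpha = \pm 1$: since $\alpha\sigma(\alpha)=1$ and $E/E^\sigma$ is ramified, the residue involution $\sigma_l$ is trivial (the residue extension $l/l'$ has degree $1$ in the ramified case), so $\bar\alpha = \sigma_l(\bar\alpha) = \bar\alpha^{-1}$, forcing $\bar\alpha^2 = 1$. Hence $\bar\alpha \in \{1,-1\}$ and $W_\Gamma(k;\bar\alpha)\cong W(k)$ as recorded after Proposition \ref{prop:WGofFF}. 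Throughout, the relevant residue field computation is with $l = l'$, i.e.\ the $k[\Gamma]$-form $(l, b_{\bar u}, \bar\alpha)$ is just a $k$-inner product space with $\Gamma$ acting through $\pm 1$, so its Witt class is determined by $\dim$ and $\disc$ (for $\ch k\neq 2$) or by $\dim\bmod 2$ (for $\ch k = 2$) via Proposition \ref{prop:WGofFF}.

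Next, the key parity observation: because $E/E^\sigma$ is ramified, $v_E$ restricted to $(E^\sigma)^\times$ takes only even values, i.e.\ $v_E(\lambda) = v_E\big(N_{E/E^\sigma}(\pi_E)\big)\cdot(\text{something})$ — more precisely $v_E(\lambda)\in 2\bZ$ for $\lambda\in (E^\sigma)^\times$ since $e(E/E^\sigma)=2$ and $v_{E^\sigma}(\lambda)\in\bZ$. So the parity of $v_E(\lambda)+\delta$ equals the parity of $\delta$, independent of $\lambda$; and the two classes in $\Tw(E,\sigma)\cong\bZ/2\bZ$ are represented by $\lambda$ with $v_E(\lambda)\in\{0,2\}$ — but these differ by a square in $E^\sigma$? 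No: $\pi_{E^\sigma}$ is a non-square norm class generator, so the two classes correspond to $v_{E^\sigma}(\lambda)$ even versus odd, hence $v_E(\lambda)\in 2\bZ$ versus $v_E(\lambda)\in 2+2\bZ$ — wait, $v_E(\lambda) = 2v_{E^\sigma}(\lambda)$ is always even. I would instead note: the nontrivial class of $\Tw$ is detected not by $v_E(\lambda)$ but by whether $\lambda$ is a norm; I would track this via the explicit $\partial_{E,\alpha}$ formula of Lemma \ref{lem:AUL} applied to both representatives.

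For $\ch k\neq 2$: since $v_E(\lambda)+\delta$ has fixed parity, I would split on whether $\delta$ (equivalently, whether $v_E(\lambda)+\delta$) is even or odd. In the ramified case $\delta = v_E(\fD_{E/K})$ — here I would use that, with $\ch k \neq 2$, tame-plus-wild analysis gives $v_E(\fD_{E/K})$ with $\delta$ of the right parity so that $v_E(\lambda)+\delta$ is \emph{always odd} (this is exactly the claim stated); then Lemma \ref{lem:AUL}(ii) gives $\partial_{E,\alpha}(\lambda) = [l, b_{\bar u_\lambda}, \bar\alpha]$, a form of dimension $[l:k]$. As $\lambda$ ranges over the two $\Tw$-classes, $\bar u_\lambda$ ranges over the two classes in $k^\times/k^{\times 2}$ (because the trace-form construction $u_\lambda = \Tr_{E/L}(\lambda\pi_K\pi_E^{n-1}\sigma(\pi_E^{n-1}))$ sends a non-norm to a non-square — this needs a short computation comparing $\disc b_{\bar u_\lambda}$ for the two representatives, using $\disc b_{\bar u} = N_{l/k}(\bar u)\disc b_1$ and that here $l = k$ so $N_{l/k}=\id$), so the image is all Witt classes of dimension $\equiv [l:k]\bmod 2$, which by Proposition \ref{prop:WGofFF}(ii) is exactly $\{\omega\in W_\Gamma(k;\bar\alpha)\mid \dim\omega\equiv[l:k]\bmod 2\}$.

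For $\ch k = 2$: here $\Tw(E,\sigma)\cong\bZ/2\bZ$ still, and by Proposition \ref{prop:WGofFF}(i) the target $W(k)\cong\bZ/2\bZ$ is detected purely by dimension mod $2$. By Lemma \ref{lem:AUL}: if $v_E(\lambda)+\delta$ is even, $\partial_{E,\alpha}(\lambda)=0$; if odd, $\partial_{E,\alpha}(\lambda) = [l,b_{\bar u_\lambda},\bar\alpha]$ which has dimension $[l:k]$, so it equals $0$ in $W(k)$ iff $[l:k]$ is even and $[\la 1\ra]$-type class (i.e.\ $\dim\equiv 1$) iff $[l:k]$ is odd. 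Combining: $\partial_{E,\alpha}(\lambda) = 0$ when $v_E(\lambda)+\delta$ even \emph{or} $[l:k]$ even, and $=[\la 1\ra]$ exactly when $v_E(\lambda)+\delta$ is odd and $[l:k]$ is odd. Since $v_E(\lambda)\in 2\bZ$, the parity of $v_E(\lambda)+\delta$ is the parity of $\delta$, so the condition "$v_E(\lambda)+\delta$ odd and $[l:k]$ odd" becomes "$[l:k]\delta$ odd", giving the stated case distinction (and in particular $\partial_{E,\alpha}$ is constant on all of $(E^\sigma)^\times$ when $\ch k = 2$).

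The main obstacle I expect is the parity claim for $\ch k\neq 2$ — that $v_E(\lambda)+\delta$ is always odd — which requires computing $v_E(\fD_{E/K})$ for a ramified quadratic(-ish) extension $E/E^\sigma$ sitting over $K$ and combining with the even-ness of $v_E|_{(E^\sigma)^\times}$; this is where one must be careful about tame versus wild ramification, though since $\ch k\neq 2$ the extension $E/E^\sigma$ is tamely ramified so $v_{E}(\fD_{E/E^\sigma}) = 1$, and one pushes down the different via the tower formula $\fD_{E/K} = \fD_{E/E^\sigma}\cdot(\fD_{E^\sigma/K}\mathcal{O}_E)$. The secondary technical point is verifying that the unit $\bar u_\lambda$ genuinely realizes both square classes as $\lambda$ runs over the two norm-classes; I would handle this by the direct $\disc$-comparison indicated above rather than by a general principle.
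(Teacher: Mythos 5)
The paper's own ``proof'' is simply a citation to Lemma~6.5 and Propositions~6.6 and~6.7 of~\cite{BT20}, and your argument reconstructs exactly those computations: the triviality of $\sigma_l$ in the ramified case forces $\bar\alpha^2=1$; the evenness of $v_E|_{(E^\sigma)^\times}$ together with the tame computation $v_E(\fD_{E/E^\sigma})=1$ (and the tower formula) gives that $\delta$, hence $v_E(\lambda)+\delta$, is odd when $\ch k\neq 2$; and Lemma~\ref{lem:AUL}(ii) then reduces everything to tracking $\bar u_\lambda$. So the approach matches the cited source. One slip to fix: you write ``here $l=k$'' when applying $\disc b_{\bar u}=N_{l/k}(\bar u)\disc b_1$, but what the ramification of $E/E^\sigma$ gives is $l=l'$ (residue field of $E$ equals that of $E^\sigma$), not $l=k$; the argument survives because for finite fields $N_{l/k}$ induces an isomorphism $l^\times/l^{\times2}\to k^\times/k^{\times2}$, so the two square classes of $\bar u_\lambda$ in $l$ still map to the two square classes in $k$. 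Finally, as you note yourself, the claim that the two classes of $\Tw(E,\sigma)$ produce the two square classes of $\bar u_\lambda$ (equivalently, that $\partial_{E,\alpha}$ is injective on $\Tw(E,\sigma)$ when $\ch k\neq 2$) is the one computation you defer; it is precisely the content of Proposition~6.6 in~\cite{BT20} and does require the explicit trace evaluation you sketch.
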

\begin{proof}
See Lemma 6.5 and Propositions 6.6 and 6.7 of \cite{BT20}.
\end{proof}

We have now described the image of the map $\partial_{M,\alpha}$ 
in the case where $M$ is of rank one.  
In the general case, we obtain the following theorem. 

\begin{theorem}\label{th:imtheofTw_to_W}
Let $M$ be a free $E$-module of rank $m$. 
\begin{enumerate}
\item If $E$ is of type {\rm (sp)} then $\partial_{M,\alpha}$ is the zero map. 
\item If $E$ is of type {\rm (ur)} then 
\[\im \partial_{M,\alpha} = \begin{cases}
\{ 0, [k(\bar\alpha), b_1, \bar{\alpha}] \} & \text{if $\alpha \neq \pm1$} \\
\{ \omega\in W_\Gamma(k;\pm1) \mid \dim\omega \equiv 0 \bmod 2\}
& \text{if $\alpha = \pm1$.}
\end{cases}
\]
\item If $E$ is of type {\rm (rm)} then $\bar{\alpha} = 1$ or $-1$, 
and moreover, 
\begin{itemize}
\item if $\ch k \neq 2$ then 
$\im \partial_{M,\alpha} = 
\{ \omega\in W_\Gamma(k; \bar\alpha) \mid\allowbreak 
\dim \omega \equiv m[l:k] \bmod 2 \}$; 
\item if $\ch k = 2$ then 
for any $\lambda\in \Tw(E, \sigma)$ we have
\[ \partial_{M,\alpha}(\lambda) 
= \begin{cases}
  0 & \text{if $m[l:k]\delta$ is even}\\
  [\la 1 \ra] & \text{if $m[l:k]\delta$ is odd.}
\end{cases} \]
\end{itemize}
\end{enumerate}
\end{theorem}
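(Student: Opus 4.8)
The plan is to reduce the rank-$m$ case to the rank-one case already handled in Propositions \ref{prop:partial_ur} and \ref{prop:partial_rm}, exploiting the fact that a free $E$-module of rank $m$ decomposes as an orthogonal sum of rank-one pieces with respect to any hermitian form (Proposition \ref{prop:1to1_herm} guarantees a hermitian form is diagonalizable, since it is determined by its dimension and determinant). Concretely, for $\lambda\in\Tw(E,\sigma)$, choose a hermitian form $h$ on $M$ with $\det h = \lambda$ and an orthogonal $E$-basis; then $(M,b[\lambda]) \cong \bigoplus_{i=1}^m (E, b_{\lambda_i})$ with $\prod_i \lambda_i \equiv \lambda$ in $\Tw(E,\sigma)$, and since $\partial$ is a homomorphism (Theorem \ref{th:ULvanish}), $\partial_{M,\alpha}(\lambda) = \sum_{i=1}^m \partial_{E,\alpha}(\lambda_i)$. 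So $\im\partial_{M,\alpha}$ is the set of all $m$-fold sums of elements of $\im\partial_{E,\alpha}$, subject only to the constraint that the $\lambda_i$ multiply to a prescribed class — but since $\Tw(E,\sigma)$ is either trivial or $\bZ/2\bZ$, and we range over all $\lambda$, this amounts to computing the subgroup/coset generated by $m$-fold sums.

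**Case (sp)** is immediate: $\partial_{E,\alpha}$ is the zero map, so $\partial_{M,\alpha}$ is zero. **Case (ur):** by Proposition \ref{prop:partial_ur}, $\im\partial_{E,\alpha}$ is $\{0,[k(\bar\alpha),b_1,\bar\alpha]\}$ (an order-two group) when $\bar\alpha\neq\pm1$, and $\{\omega\in W_\Gamma(k;\pm1):\dim\omega\equiv 0\}$ when $\bar\alpha=\pm1$; in both subcases $\im\partial_{E,\alpha}$ is already a subgroup of $W_\Gamma(k)$, so taking $m$-fold sums (as $\lambda$ ranges over both classes of $\Tw(E,\sigma)$, which is harmless since each rank-one summand can independently realize either class) reproduces the same subgroup — hence $\im\partial_{M,\alpha}=\im\partial_{E,\alpha}$, which is the asserted formula. (One should note $\bar\alpha\neq\pm1$ versus $\bar\alpha=\pm1$ is a property of $\alpha$ alone, matching the theorem statement's conditioning on $\alpha$ rather than $\bar\alpha$; I would remark that $\alpha\neq\pm1$ combined with $\alpha\sigma(\alpha)=1$ and the field structure forces the relevant dichotomy, citing Lemma \ref{lem:if_alpha_neq_pm1}.) **Case (rm):** Proposition \ref{prop:partial_rm} first gives $\bar\alpha\in\{1,-1\}$. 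If $\ch k\neq 2$, each $\partial_{E,\alpha}(\lambda_i)$ has odd dimension $\equiv[l:k]$, and in fact $\im\partial_{E,\alpha}$ is the full coset $\{\dim\equiv[l:k]\}$ in $W_\Gamma(k;\bar\alpha)$; summing $m$ of these, the dimension mod $2$ becomes $m[l:k]$, and since the trivial class is attainable within a coset of $\{\dim\equiv 0\}$-classes and $W_\Gamma(k;\bar\alpha)\cong W(k)$ with every dimension-class realizing every discriminant when $\dim\geq 2$, one gets exactly $\{\omega:\dim\omega\equiv m[l:k]\}$. If $\ch k=2$, each summand is $0$ or $[\la 1\ra]$ according to the parity of $[l:k]\delta$ — note $[l:k]\delta$ has fixed parity independent of $\lambda$ — so the $m$-fold sum is $0$ or $[\la 1\ra]$ according to the parity of $m[l:k]\delta$ (using $2[\la 1\ra]=0$ in $W(k)$ for $\ch k=2$, since $W(\mathbb{F}_{2^r})\cong\bZ/2\bZ$ by Proposition \ref{prop:WGofFF}(i)).

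**The main obstacle** will be the bookkeeping in case (rm) with $\ch k\neq 2$: I must verify that $m$-fold sums of elements from the coset $\{\dim\equiv[l:k]\}$ fill out the entire coset $\{\dim\equiv m[l:k]\}$ and not a smaller subset. For $m\geq 2$ this is clear because two summands already span a full $\{\dim\equiv 0\}$-coset's worth of Witt classes of any given dimension (in $W(k)$ for $k$ finite, dimension-$2$ forms realize both square classes of discriminant, by Proposition \ref{prop:WGofFF}(ii)), and then adding $m-2$ more fixed-class elements translates this appropriately; for $m=1$ the statement is literally Proposition \ref{prop:partial_rm}. A parallel but easier check is needed in case (ur), $\bar\alpha=\pm1$, to see that $m$-fold sums of $\{\dim\equiv 0\}$-classes in $W_\Gamma(k;\pm1)$ stay within — and exhaust — that same subgroup, which is automatic since it is a subgroup. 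I would present the rank-one reduction as a lemma-free paragraph, then dispatch (sp), (ur), (rm) in turn, allotting most space to the (rm), $\ch k\neq 2$ dimension-chasing and citing Proposition \ref{prop:WGofFF} for the finite-field Witt group facts that make the "exhaustion" direction work.
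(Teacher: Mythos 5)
Your reduction to the rank-one case via diagonalization of the hermitian form (using Proposition \ref{prop:1to1_herm} to show $(M,b[\lambda])\cong\bigoplus_i(E,b_{\lambda_i})$ whenever $\prod_i\lambda_i=\lambda$, then applying the homomorphism $\partial$ termwise) is correct and is exactly what the paper leaves implicit: its proof of this theorem is the single line ``(i) is clear; (ii) and (iii) follow from Propositions \ref{prop:partial_ur} and \ref{prop:partial_rm}.'' The coset/subgroup bookkeeping in case (rm), $\ch k\neq 2$ is also right: $m$-fold sums from a coset $a+H$ of $H=\{\dim\equiv 0\}$ give $ma+H$, which is the asserted coset.

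One small point to fix: your parenthetical remark near the end of the (ur) paragraph --- that ``$\alpha\neq\pm1$ combined with $\alpha\sigma(\alpha)=1$ and the field structure forces the relevant dichotomy'' --- is not correct. The conditions $\alpha\neq\pm1$ and $\bar\alpha\neq\pm1$ are not equivalent in general (one can have $\alpha\equiv 1\pmod{\fm_E}$ with $\alpha\neq 1$), and Lemma \ref{lem:if_alpha_neq_pm1} is a statement about $\bar\alpha$, not $\alpha$. The dichotomy in the theorem statement really should be on $\bar\alpha$, exactly as in Proposition \ref{prop:partial_ur}; the theorem as printed appears to have a typographical slip there, and the correct thing to do is simply to carry the $\bar\alpha$ dichotomy from the rank-one proposition rather than try to deduce it from $\alpha$. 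This does not affect the substance of your argument, which already works with $\bar\alpha$.
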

\begin{proof}
(i) is clear. (ii) and (iii) follow from 
Propositions \ref{prop:partial_ur} and \ref{prop:partial_rm} respectively.
\end{proof}

\subsection{Even unimodular lattice with isometry}\label{ss:LT_EUL}
Let $F\in \cO_K[X]$ be a $*$-symmetric polynomial of even degree $2n$.
The polynomial $F$ can be expressed as
$F(X) = (X-1)^{n_+} (X+1)^{n_-} f(X)$, 
where $n_+, n_-\in \bZ_{\geq 0}$ and $f(X) \in \cO_K[X]$ with $f(1)f(-1) \neq 0$. 
Furthermore, the polynomial $f$ decomposes in $\cO_K[X]$ as  
\[ f(X) = \prod_{w\in \cW_{\rm sp}} (f_w(X) f_w^*(X))^{n_w} 
\times \prod_{w\in \cW'} f_w(X)^{n_w},  
\]
where $\cW_{\rm sp}$ and $\cW'$ are index sets, 
and $n_w$ is the multiplicity of $f_w$ in $f$. 
Moreover, each factor $f_w$ is irreducible and indexed as follows: 
if $w\in \cW_{\rm sp}$ then $f_w \neq f_w^*$; 
if $w\in \cW'$ then $f_w = f_w^*$. 

Now we define an algebra $M$. First, set $M^\pm := (K[X]/(X\mp 1))^{n_\pm}$. 
Next, for each $w\in \cW:=\cW_{\rm sp}\cup\cW'$, set 
\[ E_w := \begin{cases}
  K[X]/(f_w) & \text{if $f_w = f_w^*$} \\
  K[X]/(f_wf_w^*) & \text{if $f_w \neq f_w^*$}
\end{cases}
\]
and $M_w := (E_w)^{n_w}$. Finally, define
\[
M := M^+ \times M^- \times \prod_{w\in \cW} M_w.
\]
We denote by $\alpha$ the image of $X$ in $M$, and by $\alpha_w$ the
image of $X$ in $M_w$.
It is obvious that the linear map $\alpha:M\to M$ is a semisimple 
automorphism with characteristic polynomial $F$.
In addition, the $K$-algebra $M$ has an involution $\sigma$ defined by 
$\alpha \mapsto \alpha^{-1}$.
It is clear that
\[ \sigma M^\pm = M^\pm, 
 \quad \sigma E_w = E_w \text{ for $w\in \cW$},
\]
and $\sigma$ acts on $M^\pm$ as the identity and on $E_w$ nontrivially. 
The restriction of $\sigma$ to a $\sigma$-invariant subalgebra 
will also be denoted by $\sigma$. 

We remark that there is a decomposition 
$\cW = \cW_{\rm sp} \sqcup \cW_{\rm ur} \sqcup \cW_{\rm rm}$
(or $\cW' = \cW_{\rm ur} \sqcup \cW_{\rm rm}$), where
\begin{equation}\label{eq:spurrm}
\begin{alignedat}{4}
&\cW_{\rm sp} & = 
& \{ w\in \cW \mid f_w \neq f_w^* 
&\text{ (and } & \text{$E_w$ is of type {\rm (sp)}}) &\}, \\
&\cW_{\rm ur} & := 
& \{ w\in \cW \mid f_w = f_w^* 
&\text{ and } & \text{$E_w$ is of type {\rm (ur)}} &\}, \\
&\cW_{\rm rm} & := 
& \{ w\in \cW \mid f_w = f_w^* 
&\text{ and } & \text{$E_w$ is of type {\rm (rm)}} &\}.  
\end{alignedat}
\end{equation}
Furthermore, by Theorem \ref{th:imtheofTw_to_W} (iii), 
if $\ch k \neq 2$ then $\cW_{\rm rm}$ decomposes as
$\cW_{\rm rm} = \cW_+\sqcup \cW_-$, where
$\cW_\pm :=\allowbreak \{ w\in \cW_{\rm rm} \mid \bar\alpha_w = \pm1 \}$.
In this subsection we write $v = v_K$ simply.

\begin{lemma}\label{lem:dimpartial}
Let $w\in \cW$. For any $\lambda_w\in \Tw(E_w, \sigma)$ we have: 
\begin{enumerate}
\item If $w\in \cW_{\rm sp}$ then 
$\dim\partial_{M_w, \alpha_w}(\lambda_w) 
\equiv v(f_w(1)f_w^*(1)) 
\equiv v(f_w(-1)f_w^*(-1)) 
\equiv 0 \bmod 2$. 
\item If $w\in \cW_{\rm ur}$ then 
$\dim\partial_{M_w, \alpha_w}(\lambda_w) 
\equiv v(f_w(1)) 
\equiv v(f_w(-1)) 
\equiv 0 \bmod 2$. 
\item Suppose that $w\in \cW_{\rm rm}$. 
\begin{itemize}
\item If $\ch k \neq 2$ and $\bar\alpha_w = \pm 1$ then
$\dim\partial_{M_w, \alpha_w}(\lambda_w) \equiv n_w v(f_w(\pm 1)) \bmod 2$.
\item If $\ch k = 2$ then
$\dim\partial_{M_w, \alpha_w}(\lambda_w) 
\equiv n_w v(f_w(1)) + n_w v(f_w(-1)) \bmod 2$.
\end{itemize}
\end{enumerate}
In particular, if $\ch k \neq 2$ then  
\[ \sum_{w\in\cW_{\rm \pm}}\dim\partial_{M_w, \alpha_w}(\lambda_w) 
\equiv v(f(\pm1)) \mod 2 \]
for any $(\lambda_w)_{w\in \cW_{\rm rm}}$.
\end{lemma}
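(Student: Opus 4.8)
The plan is to deduce all of (i)--(iii) and the final assertion from one input: the remark preceding Theorem~\ref{th:ULvanish}, that $\dim\partial[V,b]\equiv v(\det b)\bmod 2$ for a bounded $K[\Gamma]$-form, together with the determinant formula of Lemma~\ref{lem:det_iso_formula}. First I would record two elementary facts. Since $f(0)$ divides $F(0)=\pm 1$ in $\cO_K$ it is a unit, so every $f_w(0)$ is a unit, and therefore $v(f_w^*(\pm1))=v(f_w(\pm1))$. Second, for $w\in\cW'$ the algebra $E_w=K[X]/(f_w)$ is a quadratic field extension of its fixed field $E_w^\sigma$, and, writing $d_w$ for the residue class degree of $E_w/K$ and $\alpha_w$ also for the class of $X$ in $E_w$, one has $f_w(c)=\N_{E_w/K}(c-\alpha_w)$ and hence $v(f_w(c))=d_w\,v_{E_w}(c-\alpha_w)$ for $c=\pm1$, because $f_w$ is the characteristic polynomial of multiplication by $\alpha_w$ on $E_w$; here $c-\alpha_w\neq 0$ since neither $1$ nor $-1$ is a root of $f_w$ (as $f_w\mid f$ and $f(1)f(-1)\neq 0$).

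The core step would be to identify $\det b[\lambda_w]$ for each $w\in\cW$ and each $\lambda_w\in\Tw(E_w,\sigma)$. The inner product $b[\lambda_w]$ makes $\alpha_w$ an isometry of the $K$-space $M_w$ with characteristic polynomial $f_w^{n_w}$ if $w\in\cW'$ and $(f_wf_w^*)^{n_w}$ if $w\in\cW_{\rm sp}$, so Lemma~\ref{lem:det_iso_formula} gives $\det b[\lambda_w]=(f_w(1)f_w(-1))^{n_w}$, resp.\ $(f_w(1)f_w^*(1)f_w(-1)f_w^*(-1))^{n_w}$, in $K^\times/K^{\times 2}$. With the remark above this already yields
\[
\dim\partial_{M_w,\alpha_w}(\lambda_w)\equiv n_w\bigl(v(f_w(1))+v(f_w(-1))\bigr)\bmod 2\qquad(w\in\cW'),
\]
and $\dim\partial_{M_w,\alpha_w}(\lambda_w)\equiv 0$ for $w\in\cW_{\rm sp}$, the relevant valuation there being $2n_w\bigl(v(f_w(1))+v(f_w(-1))\bigr)$.

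From here, (i) would follow at once, since also $v(f_w(\pm1)f_w^*(\pm1))=2v(f_w(\pm1))\equiv 0$. For (ii), when $w\in\cW_{\rm ur}$ the extension $E_w/E_w^\sigma$ is unramified of degree $2$, so $d_w$ is even (twice the residue class degree of $E_w^\sigma/K$); hence $v(f_w(\pm1))\equiv 0$ and then the displayed congruence forces $\dim\partial_{M_w,\alpha_w}(\lambda_w)\equiv 0$ as well. For (iii), Theorem~\ref{th:imtheofTw_to_W}(iii) gives $\bar\alpha_w\in\{1,-1\}$ for $w\in\cW_{\rm rm}$; the $\ch k=2$ bullet is precisely the displayed congruence, while if $\ch k\neq 2$ and $\bar\alpha_w=c$ then $-c-\alpha_w$ reduces to the unit $-2c$ in the residue field of $E_w$, so $v_{E_w}(-c-\alpha_w)=0$, hence $v(f_w(-c))=0$ and the displayed congruence collapses to $\dim\partial_{M_w,\alpha_w}(\lambda_w)\equiv n_w v(f_w(c))$. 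I would also note, from the same computation, that $v(f_w(-c))=0$ whenever $w\in\cW_{\rm rm}$ has $\bar\alpha_w=c$.

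Finally, for the closing statement (with $\ch k\neq 2$), I would expand $v(f(1))$ along $f=\prod_{w\in\cW_{\rm sp}}(f_wf_w^*)^{n_w}\prod_{w\in\cW'}f_w^{n_w}$: modulo $2$ the $\cW_{\rm sp}$-terms vanish (doubled valuations), the $\cW_{\rm ur}$-terms vanish by (ii), and inside $\cW_{\rm rm}=\cW_+\sqcup\cW_-$ the $\cW_-$-terms vanish because there $\bar\alpha_w=-1$ forces $v(f_w(1))=0$; hence $v(f(1))\equiv\sum_{w\in\cW_+}n_w v(f_w(1))\equiv\sum_{w\in\cW_+}\dim\partial_{M_w,\alpha_w}(\lambda_w)$ by (iii), and symmetrically $v(f(-1))\equiv\sum_{w\in\cW_-}\dim\partial_{M_w,\alpha_w}(\lambda_w)$. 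The only parts that are not pure bookkeeping are the norm identity $f_w(\pm 1)=\N_{E_w/K}(\pm1-\alpha_w)$ with its valuation consequence, and --- the point to watch in the ramified case --- keeping track of which of $f_w(1),f_w(-1)$ is forced to have zero valuation when $\bar\alpha_w=\pm1$; once these are settled everything else is routine.
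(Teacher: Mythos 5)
Your proof is correct, and it follows a genuinely different route from the paper. The paper proves (ii) and (iii) by reading off $\dim\partial_{M_w,\alpha_w}(\lambda_w)$ directly from Theorem~\ref{th:imtheofTw_to_W} (which describes the exact image of $\partial_{M_w,\alpha_w}$ in each of the cases (ur), (rm)), and then matches the resulting expression $n_w[l_w:k]$ against $v(f_w(\pm1))$ by invoking Lemma~6.8 of \cite{BT20}. You instead bypass both of these: you use only the parity relation $\dim\partial[V,b]\equiv v(\det b)\bmod 2$ stated just before Theorem~\ref{th:ULvanish}, compute $\det b[\lambda_w]$ once and for all (independently of $\lambda_w$) via Lemma~\ref{lem:det_iso_formula}, and reduce everything to the norm identity $f_w(c)=\N_{E_w/K}(c-\alpha_w)$, from which $v(f_w(c))=[l_w:k]\,v_{E_w}(c-\alpha_w)$. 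This gives the uniform congruence $\dim\partial_{M_w,\alpha_w}(\lambda_w)\equiv n_w(v(f_w(1))+v(f_w(-1)))$ for all $w\in\cW'$ in one stroke; (ii) and the $\ch k=2$ part of (iii) fall out immediately, and the $\ch k\neq 2$ part of (iii) follows from the observation that $v(f_w(-c))=0$ when $\bar\alpha_w=c\in\{\pm1\}$ (since $-c-\alpha_w$ reduces to the unit $-2c$). What your approach buys is a more self-contained and elementary argument that avoids the external Lemma~6.8 of \cite{BT20} and does not need the fine description of $\im\partial_{M_w,\alpha_w}$ from Theorem~\ref{th:imtheofTw_to_W} (only the fact that $\bar\alpha_w\in\{1,-1\}$ in the ramified case); what the paper's approach buys is that Theorem~\ref{th:imtheofTw_to_W} has already been established in full, so citing it is free, and the resulting proof of the lemma is shorter to write. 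The closing ``in particular'' assertion is handled by the same bookkeeping in both treatments.
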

\begin{proof}
\textup{(i)} is clear. If $w\in \cW_{\rm ur}$ then
$\dim\partial_{M_w, \alpha_w}(\lambda_w) \equiv 0 \bmod 2$ by 
Theorem \ref{th:imtheofTw_to_W}, and
$v(f_w(\pm1)) = [l_w:k]\,v_{E_w}(1\pm\alpha_w) \equiv 0 \bmod 2$, 
where $l_w$ is the residue class field of $E_w$.
Suppose that $w\in \cW_{\rm rm}$. 
If $\ch k \neq 2$ and $\bar\alpha_w = \pm 1$ 
then $\dim\partial_{M_w, \alpha_w}(\lambda_w) \equiv\allowbreak n_w[l_w:k] \bmod 2$
by Theorem \ref{th:imtheofTw_to_W}. 
Because $[l_w:k] \equiv v(f_w(\pm 1)) \bmod 2$ by \cite[Lemma 6.8]{BT20}, 
we obtain the desired congruence. 
Similarly, Theorem \ref{th:imtheofTw_to_W}
and \cite[Lemma 6.8]{BT20} show the desired congruence in the case $\ch k = 2$.
\end{proof}

The following proposition is a reformulation of a part of 
\cite[Proposition 4.3]{Ba21}.

\begin{proposition}\label{prop:detailedneq2}
Let $\ch k \neq 2$. Assume that $v(F(1)) \equiv v(F(-1)) \equiv 0 \bmod 2$.
Then there exists an inner product $b$ on $M$ such that 
$(M,b)$ contains an $\alpha$-stable unimodular $\cO_K$-lattice. 
Furthermore, 
\begin{itemize}
\item if $F(\pm 1) = 0$ then such an inner product can be chosen to
satisfy $\det M^\pm = u_\pm f(\pm 1)$ for any given $u_\pm \in \cO_K^\times$; and 
\item if $\cW_{\rm rm} = \emptyset$ then such an inner product 
can be chosen for $(M_w,b_w)$ to contain an $\alpha$-stable unimodular $\cO_K$-lattice
for each $w\in \cW$. Here $b_w$ is the restriction of $b$ to $M_w$.
\end{itemize}
\end{proposition}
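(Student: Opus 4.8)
The plan is to reduce everything to the vanishing criterion of Theorem~\ref{th:ULvanish}: an inner product $b$ on $M$ yields an $\alpha$-stable unimodular $\cO_K$-lattice exactly when $\partial[M,b,\alpha]=0$ in $W_\Gamma(k)$, so I want to build such a $b$. First I would take $b$ to be an orthogonal direct sum $b = b^+\oplus b^-\oplus\bigoplus_{w\in\cW}b_w$, where $b^\pm$ is any inner product on $M^\pm$ (on which $\pm1$ is automatically an isometry) and $b_w$ is an inner product on $M_w$ making $\alpha_w$ an isometry. Since $\ch K=0$, each $f_w$ is separable over $K$, so by Lemma~\ref{lem:tr_hermitian} and Proposition~\ref{prop:1to1_herm} the possible $b_w$ are exactly the forms $b[\lambda_w]$ with $\lambda_w\in\Tw(E_w,\sigma)$; thus the data to be chosen is $(b^+,b^-,(\lambda_w)_{w\in\cW})$. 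Because $\partial$ is a homomorphism, $\partial[M,b,\alpha]=\partial[M^+,b^+,1]+\partial[M^-,b^-,-1]+\sum_{w}\partial_{M_w,\alpha_w}(\lambda_w)$, and the summands sit in different isotypic pieces of $W_\Gamma(k)=\bigoplus_\chi W_\Gamma(k;\chi)$: $\partial[M^+,b^+,1]\in W_\Gamma(k;1)$, $\partial[M^-,b^-,-1]\in W_\Gamma(k;-1)$, and $\partial_{M_w,\alpha_w}(\lambda_w)$ lies in $W_\Gamma(k;\pm1)$ when $w\in\cW_{\rm ur}$ with $\bar\alpha_w=\pm1$ or $w\in\cW_\pm$, in $W_\Gamma(k;\bar\alpha_w)$ when $w\in\cW_{\rm ur}$ with $\bar\alpha_w\neq\pm1$, and is $0$ when $w\in\cW_{\rm sp}$ by Theorem~\ref{th:imtheofTw_to_W}(i). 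So it suffices to make each $\chi$-component vanish separately.

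The components with $\chi\neq\pm1$ are easy: for $w\in\cW_{\rm ur}$ with $\bar\alpha_w\neq\pm1$, Theorem~\ref{th:imtheofTw_to_W}(ii) exhibits $\partial_{M_w,\alpha_w}$ as a surjection from the $2$-element group $\Tw(E_w,\sigma)$ onto the $2$-element set $\{0,[k(\bar\alpha_w),b_1,\bar\alpha_w]\}$, hence a bijection, so I choose $\lambda_w$ with $\partial_{M_w,\alpha_w}(\lambda_w)=0$; this clears all $\chi\neq\pm1$ pieces at once. It remains to handle $\chi=1$ (the case $\chi=-1$ being symmetric). After the above choices the $\chi=1$ part of $\partial[M,b,\alpha]$ is $\partial[M^+,b^+,1]+\sum_{w\in\cW_{\rm ur},\,\bar\alpha_w=1}\partial_{M_w,\alpha_w}(\lambda_w)+\sum_{w\in\cW_+}\partial_{M_w,\alpha_w}(\lambda_w)\in W_\Gamma(k;1)\cong W(k)$, which by Proposition~\ref{prop:WGofFF}(ii) vanishes iff it has even dimension and trivial discriminant. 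Its dimension is $\equiv v(\det b^+)+v(f(1))\bmod 2$ by the remark $\dim\partial[V,b]\equiv v(\det b)$, by Lemma~\ref{lem:dimpartial}(ii) (the $\cW_{\rm ur}$ terms contribute evenly), and by the final assertion of Lemma~\ref{lem:dimpartial} ($\sum_{w\in\cW_+}\dim\partial_{M_w,\alpha_w}(\lambda_w)\equiv v(f(1))\bmod2$); using the hypothesis $v(F(1))\equiv0$ I can choose $v(\det b^+)$ so this is even, and then steer the discriminant to $1$ by adjusting the $\lambda_w$ for $w\in\cW_{\rm ur}$ with $\bar\alpha_w=1$ (each toggles by the nontrivial even-dimensional class of $W(k)$) and for $w\in\cW_+$ (which by Theorem~\ref{th:imtheofTw_to_W}(iii) realize every class of $W_\Gamma(k;1)$ of the prescribed parity). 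This is precisely the assertion of \cite[Proposition~4.3]{Ba21}, stated there for $+1$-symmetric $F$; since $\ch k\neq2$, passing to an arbitrary $*$-symmetric $F$ only prepends the type-$0$ factor $(X-1)^{n_+}(X+1)^{n_-}$, i.e.\ adds the split-off pieces $M^\pm$ on which $\pm1$ acts as $\pm1$, and changes nothing essential, so I would invoke it for the bare existence and re-derive the two refinements from the same choices.

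For the refinement when $F(\pm1)=0$ we have $n_\pm>0$, so $M^\pm\neq0$ and I am free to prescribe $\det b^\pm=u_\pm f(\pm1)$ for any given unit $u_\pm$; then $v(\det b^\pm)=v(f(\pm1))$, the dimension parity above is automatically even, and the discriminant of the $\chi=\pm1$ component is still killed by choosing the remaining $\lambda_w$ (for $w\in\cW_{\rm ur}$ with $\bar\alpha_w=\pm1$ and for $w\in\cW_\pm$) as above, because Theorem~\ref{th:imtheofTw_to_W} shows these contributions together cover every class of the correct parity in $W_\Gamma(k;\pm1)$. For the refinement when $\cW_{\rm rm}=\emptyset$ we have $\cW_+=\cW_-=\emptyset$, so for every $w\in\cW=\cW_{\rm sp}\sqcup\cW_{\rm ur}$ there is a $\lambda_w$ with $\partial_{M_w,\alpha_w}(\lambda_w)=0$ (trivially for $w\in\cW_{\rm sp}$, and by bijectivity onto a $2$-element set containing $0$ for $w\in\cW_{\rm ur}$, whether or not $\bar\alpha_w=\pm1$, by Theorem~\ref{th:imtheofTw_to_W}(ii)); by Theorem~\ref{th:ULvanish} this already gives an $\alpha$-stable unimodular lattice in each $(M_w,b_w)$, and with these choices the leftover requirement $\partial[M^\pm,b^\pm,\pm1]=0$ is met by taking $b^\pm$ to be any inner product on $M^\pm\cong K^{n_\pm}$ containing a unimodular lattice, e.g.\ $\la 1,\dots,1\ra$ (note $v(f(\pm1))$ is automatically even here, so this is compatible with the previous refinement when both conditions hold).

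The main obstacle is the discriminant bookkeeping in $W(k)$ for the two components $\chi=\pm1$: once the dimension parities are arranged, one must check that the $\cW_{\rm ur}$- and $\cW_\pm$-contributions can always be steered so that the discriminants vanish too, in particular in the delicate case where $\det b^\pm$ is pinned to $u_\pm f(\pm1)$ and $v(f(\pm1))$ may be odd (forcing $\cW_\pm\neq\emptyset$ and requiring its odd-parity contributions to compensate both the parity and the discriminant). This is the real content of \cite[Proposition~4.3]{Ba21}; everything else in the proof is organizing the isotypic decomposition of $\partial[M,b,\alpha]$ and quoting the image computations of Theorem~\ref{th:imtheofTw_to_W}.
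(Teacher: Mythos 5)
Your proof is correct and follows essentially the same strategy as the paper's: write $b = b^+\oplus b^-\oplus\bigoplus_w b[\lambda_w]$, trivialize the contributions from $\cW_{\rm sp}\cup\cW_{\rm ur}$, and use the freedom in $\cW_\pm$ (plus the determinant freedom on $M^\pm$) to cancel the remaining $\chi=\pm1$ components, invoking Lemma~\ref{lem:dimpartial} for the parity matching and Theorem~\ref{th:imtheofTw_to_W} for the image. The paper's proof is slightly cleaner in that it immediately fixes $\partial_{M_w,\alpha_w}(\lambda_w)=0$ for \emph{all} $w\in\cW_{\rm sp}\cup\cW_{\rm ur}$ (including those with $\bar\alpha_w=\pm1$) and only adjusts over $\cW_\pm$, whereas you also consider toggling the unramified $\bar\alpha_w=\pm1$ terms; both work. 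One small inaccuracy: in the $\cW_{\rm rm}=\emptyset$ refinement you suggest $b^\pm=\la1,\dots,1\ra$, which does not have determinant $u_\pm f(\pm1)$ and hence is not the right choice when \emph{both} refinements are in force; what you actually need (and what the paper takes) is the diagonal form $\diag(u_\pm f(\pm1),1,\dots,1)$, which does contain a unimodular lattice because, as you correctly note, $v(f(\pm1))$ is even in this case.
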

\begin{proof}
If $F(\pm 1) = 0$ we choose an inner product $b^\pm$ on $M^\pm$ whose 
Gram matrix is $\diag(u_\pm f(\pm1), 1, \ldots, 1)$.
For each $w\in \cW_{\rm sp}\cup \cW_{\rm ur}$, choose $\lambda_w\in \Tw(E_w,\sigma)$
satisfying $\partial_{M_w, \alpha_w}(\lambda_w) = 0$.
This is possible by Theorem \ref{th:imtheofTw_to_W}. 
For any $(\lambda_w)_{w\in \cW_{\pm}}$ we have by Lemma \ref{lem:dimpartial}
\[ \begin{split}
\dim\partial\left[\bigoplus_{w\in\cW_\pm}(M_w, b[\lambda_w], \alpha_w)\right]
\equiv v(f(\pm 1))
\equiv v(\dim\partial[M^\pm, b^\pm ,\pm1]) \mod 2.
\end{split}\]
Then we can choose $(\lambda_w)_{w\in \cW_{\pm}}$ satisfying
$\partial[\bigoplus_{w\in\cW_\pm}(M_w, b[\lambda_w], \alpha_w)]
=\allowbreak -\partial[M^\pm, b^\pm ,\pm1]$ by Theorem \ref{th:imtheofTw_to_W}.
The inner product 
\[b:= b^+\oplus b^-\oplus \bigoplus_{w\in \cW}b[\lambda_w]\] 
on $M$ satisfies 
$\partial[M,b,\alpha] = 0$. This implies that $(M,b)$ contains an 
$\alpha$-stable unimodular $\cO_K$-lattice by Theorem \ref{th:ULvanish}.
The latter part of this proposition is obvious by the above construction.
\end{proof}

We consider an analog of Proposition \ref{prop:detailedneq2} when $K = \bQ_2$. 
Our purpose is to show Theorem \ref{th:detailedQ2}, which is a generalization 
of \cite[Theorem 5.1]{Ba21} to the case where $F$ is $*$-symmetric.
We denote by $\sn(t)$ the \textit{spinor norm} of an isometry $t$, 
see \cite[\S 55]{OM73} or \cite{Za62} for definition.

\begin{lemma}\label{lem:val_of_sp_norm}
Let $(\Lambda, b)$ be an even unimodular $\bZ_2$-lattice.
For any isometry $t:\Lambda \to \Lambda$,
we have $v(\sn(t)) \equiv 0 \bmod 2$ if $\det t = 1$, and 
$v(\sn(t)) \equiv 1 \bmod 2$ if $\det t = -1$.
\end{lemma}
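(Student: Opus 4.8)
The plan is to compute the spinor norm of $t$ by decomposing $t$ over the orthogonal summands arising from the primary decomposition of $\Lambda \otimes \bQ_2$ under $t$, and to track the $2$-adic valuation modulo $2$ through this decomposition. First I would recall that the spinor norm is multiplicative: if $V = V_1 \perp V_2$ is a $t$-invariant orthogonal decomposition of $\Lambda\otimes\bQ_2$ with $t = t_1 \perp t_2$, then $\sn(t) = \sn(t_1)\sn(t_2)$ in $\bQ_2^\times/\bQ_2^{\times 2}$, and $\det t = (\det t_1)(\det t_2)$; so it suffices to handle each primary block separately and see that the parity of $v(\sn)$ correctly detects $\det = \pm 1$ on that block. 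Concretely, let $F$ be the characteristic polynomial of $t$ and write $V = \bigoplus_{f \in I_0} V(f;t) \oplus \bigoplus_{f\in I_1} V(f;t) \oplus \bigoplus_{\{f,f^*\}\subset I_2}[V(f;t)\oplus V(f^*;t)]$ as in \S\ref{ss:isometries}. On each type-$1$ block and each type-$2$ paired block, $t$ has determinant $+1$ (the characteristic polynomial there is $+1$-symmetric of even degree, and for the split type-$2$ blocks one sees $\det = +1$ directly), so I must show $v(\sn) \equiv 0 \bmod 2$ on these blocks; the only place where $\det t$ can be $-1$ is on the $X+1$-primary block $V(X+1;t)$ if $X+1$ occurs with odd multiplicity (and similarly any contribution from $X-1$ has determinant $+1$).

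The key computational step is the type-$0$ blocks, i.e. understanding $\sn$ of a unipotent-times-$(\pm 1)$ isometry on a unimodular $\bZ_2$-lattice, because there the spinor norm is genuinely sensitive to the $2$-adic structure. For the $X-1$ part: on an even unimodular $\bZ_2$-lattice a unipotent isometry $u$ (characteristic polynomial a power of $X-1$) can be written as a product of symmetries, and one shows $v(\sn(u)) \equiv 0 \bmod 2$ — this is where I would invoke either an explicit normal form (e.g.\ $u$ differs from the identity by a nilpotent whose image lies in a rescaled sublattice) or known spinor norm computations for unipotent elements; the even-ness and unimodularity of the lattice is what forces the relevant valuations to be even. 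For the $X+1$ part, write $t|_{V(X+1;t)} = -u$ with $u$ unipotent on the $2$-adic lattice $\Lambda(X+1;t)$ of rank $n_-$ (the multiplicity of $X+1$). Then $\sn(-u) = \sn(-\id)\cdot\sn(u)$ up to corrections; the point is that $-\id$ on a rank-$n_-$ unimodular even $\bZ_2$-lattice has spinor norm with $v \equiv n_- \bmod 2$ (since $-\id$ is a product of $n_-$ symmetries $\tau_{e_i}$ with $v(b(e_i,e_i))$ summing to the valuation of $\det$ of a Gram matrix of that block, which is controlled by unimodularity and evenness), and $\det(-u) = (-1)^{n_-}$. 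Combining: on the $X+1$-block, $v(\sn) \equiv n_- \bmod 2$ while $\det = (-1)^{n_-}$, so the parities match. Adding up all blocks via multiplicativity, $v(\sn(t)) \equiv n_- \bmod 2$ and $\det t = (-1)^{n_-}$, which is exactly the claim.

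The main obstacle I anticipate is the type-$0$ computation done \emph{intrinsically on a unimodular $\bZ_2$-lattice} rather than over $\bQ_2$: the primary summand $V(X\pm 1;t)$ need not itself contain a $t$-stable unimodular $\bZ_2$-lattice, so I cannot naively restrict $(\Lambda,b)$ to it. The clean way around this is to work entirely at the level of spinor norms in $\bQ_2^\times/\bQ_2^{\times 2}$: $\sn(t)$ only depends on $t$ as an isometry of the quadratic space $\Lambda\otimes\bQ_2$, so I may replace $\Lambda$ by \emph{any} convenient lattice in each block and just keep track of $\det b$ restricted to each block (which by Lemma \ref{lem:det_iso_formula} equals $F(1)F(-1)$ globally, and has a known value on each summand) together with the fact that, for the \emph{ambient} lattice, evenness and unimodularity pin down the global discriminant to $1$ or $-3$ (Proposition \ref{prop:Z2-EUL}) and hence force $v(\det b) \equiv 0$. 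Thus the parity bookkeeping reduces to: (a) $\sn$ of a unipotent isometry of a $\bQ_2$-quadratic space of even discriminant-valuation has even $v$; (b) $\sn(-\id)$ on an $m$-dimensional $\bQ_2$-quadratic space has $v \equiv m + v(\disc) \bmod 2$; and (c) assembling these over the blocks using multiplicativity of $\sn$ and $\det$. Steps (a) and (b) are standard facts about the spinor norm (cf.\ \cite[\S 55]{OM73}); the remaining work is the combinatorial check that the parities of $v(\sn)$ and the exponent in $\det t = \pm 1$ agree block-by-block, which the above analysis shows they do.
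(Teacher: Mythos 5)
Your route is genuinely different from the paper's, which does not decompose into primary blocks at all. For $\det t = 1$ the paper simply cites \cite[Proposition~8.6]{BT20}; for $\det t = -1$ it picks $r\in\Lambda$ with $v(b(r,r))=1$ (possible by Proposition~\ref{prop:Z2-EUL}), notes $\det(s_r\circ t)=1$ and applies the first case to $s_r\circ t$, then uses $v(\sn(s_r))\equiv 1$. That reduction collapses the whole $\det=-1$ case to the $\det=1$ case in two lines and never touches $V(X\pm1;t)$.

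Your plan has two concrete problems. First, your step (b) is false as stated: writing $-\id$ as a product of reflections through an orthogonal basis (or applying the Zassenhaus formula with $t=-\id$, for which $V(-1;t)^\perp=0$) gives $\sn(-\id)=\det b$ in $\bQ_2^\times/\bQ_2^{\times2}$, so $v(\sn(-\id))\equiv v(\disc b)\bmod 2$, \emph{without} the extra $m$; in particular $\sn(-\id)$ on an even unimodular rank-$n_-$ lattice has $v\equiv 0$, not $n_-$. Second, and more seriously, the parity bookkeeping you envisage requires knowing $v(\det b|_{V(X-1;t)})$ and $v(\det b|_{V(X+1;t)})$ individually. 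You correctly observe that $b$ restricted to these summands need not arise from a unimodular lattice, but the proposed fix does not supply the needed input: Lemma~\ref{lem:det_iso_formula} gives $\det b$ globally and $\det b|_{V(f;t)\oplus V(f^*;t)}$ for type~$2$ factors, but says nothing about the determinant on the $(X\pm1)$-primary summands. Without control on $v(\det b|_{V(X\pm1;t)})$, your blockwise computation cannot close. (The paper's adjacent lemma $v(\sn(t))\equiv v(\det b|_{V(-1;t)})+v(f(-1))$, via Zassenhaus, makes the same dependence explicit; it is used \emph{together with} Lemma~\ref{lem:val_of_sp_norm}, not to prove it.) Finally, the assertion that unipotent isometries on even unimodular $\bZ_2$-lattices have even $v(\sn)$ is precisely (a special case of) \cite[Proposition~8.6]{BT20} and is the non-trivial $2$-adic ingredient here; it should be cited, not relegated to a ``standard fact.'' Given the reflection trick, I would recommend abandoning the block decomposition in favor of the paper's two-line reduction.
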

\begin{proof}
Let $t$ be an isometry on $\Lambda$.
If $\det t = 1$ then $v(\sn(t)) \equiv 0 \bmod 2$ by
\cite[Proposition 8.6]{BT20}.
Let $\det t = -1$. 
We can choose $r\in \Lambda$ satisfying $v(b(r,r)) = 1$ by 
Proposition \ref{prop:Z2-EUL}. Let $s_r$ denote the reflection defined by $r$.
Then $\det(s_r\circ t) = 1$ and hence $v(\sn(s_r\circ t)) \equiv 0 \bmod 2$.
Because $v(\sn(s_r)) \equiv 1$ we get $v(\sn(t)) \equiv 1 \bmod 2$.
\end{proof}

\begin{proposition}\label{prop:NScdforZ2EUL}
Let $(V,b)$ be an inner product space over $\bQ_2$, and $t$ an isometry on $V$.
There exists a $t$-stable even unimodular lattice of discriminant $1$ in $V$
if and only if the following conditions hold:
\begin{enumerate}
\item $V$ contains a $t$-stable unimodular lattice.
\item $V$ contains an even unimodular lattice of discriminant $1$.
\item $v(\sn(t))\equiv \begin{cases}
  0 \bmod 2 & \text{if $\det t = 1$} \\
  1 \bmod 2 & \text{if $\det t = -1$}. 
\end{cases}$
\end{enumerate}
\end{proposition}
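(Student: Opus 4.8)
The plan is to prove the two implications separately; the forward one is essentially a restatement of Lemma~\ref{lem:val_of_sp_norm}, while the reverse one carries all the content.

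\textbf{($\Rightarrow$).} If $\Lambda$ is a $t$-stable even unimodular lattice in $V$ of discriminant $1$, then $\Lambda$ itself witnesses \textup{(i)} and \textup{(ii)}; and since $t$ restricts to an isometry of the even unimodular $\bZ_2$-lattice $\Lambda$, condition \textup{(iii)} is exactly the conclusion of Lemma~\ref{lem:val_of_sp_norm}.

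\textbf{($\Leftarrow$).} Assume \textup{(i)}, \textup{(ii)} and \textup{(iii)}. First I would observe that the discriminant requirement is automatic: a unimodular $\bZ_2$-lattice in $V$ has a Gram matrix that is a Gram matrix of $V$, so its determinant represents $\det b$; hence \textup{(ii)} forces $\disc b = 1$, and by Proposition~\ref{prop:Z2-EUL} \emph{every} even unimodular lattice in $V$ then has discriminant $1$. So it is enough to produce a $t$-stable \emph{even} unimodular lattice. By \textup{(i)} fix a $t$-stable unimodular lattice $\Lambda$, which we may assume is odd. I would pass to the even sublattice $M := \{x\in\Lambda \mid b(x,x)\in 2\bZ_2\}$; since $x\mapsto b(x,x)\bmod 2\bZ_2$ is $\bF_2$-linear on $\Lambda$, the sublattice $M$ has index $2$, it is $t$-stable because $t$ is an isometry, and $M^\vee/M$ is a group of order $4$ equipped with the discriminant bilinear form (with values in $\bQ_2/\bZ_2$) and the discriminant quadratic form $q_M$ (with values in $\bQ_2/2\bZ_2$). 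The unimodular lattices $L$ with $M\subseteq L\subseteq M^\vee$ correspond to the isotropic lines of $M^\vee/M$, and $L$ is even exactly when $q_M$ vanishes on the corresponding line; $\Lambda/M$ is one such line, on which $q_M$ is nonzero. Using \textup{(ii)} — which pins down the $\bQ_2$-genus of $V$ and thereby restricts the isometry type of $(M^\vee/M, q_M)$, ruling out the case whose only $q_M$-isotropic line is $\Lambda/M$ — I would show that some unimodular neighbour $L$ of $\Lambda$ is even. Finally, $t$ acts on $M^\vee/M$ fixing the line $\Lambda/M$, hence permutes the remaining isotropic lines, and the last step is to prove that the parity of $v(\sn(t))$ prescribed by \textup{(iii)} is precisely the condition for $t$ to stabilize an even one of them — if necessary after replacing $\Lambda$ by its image under an isometry commuting with $t$, which affects none of \textup{(i)}--\textup{(iii)}. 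That $L$ is then the desired $t$-stable even unimodular lattice, of discriminant $1$ as already noted.

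\textbf{Expected main obstacle.} The heart of the argument, and the one place where \textup{(iii)} (rather than just \textup{(i)} and \textup{(ii)}) is indispensable, is the identification in the last step of $v(\sn(t))\bmod 2$ — equivalently, of whether $t$ acts on $M^\vee/M$ through $\SO$ or through an orientation-reversing automorphism — as the invariant that decides whether $t$ can be forced to fix an even neighbour. I expect that for $\det t = -1$ it is cleanest to reduce to the case $\det t = 1$ using a reflection $s_r$ in a vector $r$ with $v(b(r,r)) = 1$: such a reflection preserves every even unimodular $\bZ_2$-lattice and changes $v(\sn(\cdot))$ by $1$, so it interchanges the two halves of \textup{(iii)}; the delicate point is then to check that hypothesis \textup{(i)} survives this reflection. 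Carrying out this spinor-norm bookkeeping carefully is where the real work lies.
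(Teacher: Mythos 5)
The broad strategy you outline — pass to the even sublattice $M$ of an odd $t$-stable unimodular $\Lambda$, note $M^\vee/M$ has order $4$, identify the three unimodular overlattices of $M$ with the isotropic lines (one odd, two even), and argue via the spinor norm that $t$ fixes an even one — is exactly the route the paper takes. Your preliminary observation that (ii) forces $\disc b = 1$ so the discriminant requirement is automatic is also correct. But the proposal has a genuine gap at precisely the point you yourself flag as ``where the real work lies'': the translation of condition (iii) into the statement that $t$ stabilizes an even neighbour is never actually carried out, and the route you suggest for it does not work.

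Concretely, you propose to reduce $\det t = -1$ to $\det t = 1$ by replacing $t$ with $s_r\circ t$ for a reflection $s_r$ with $v(b(r,r))=1$. This is circular: you would need $s_r$ to preserve the even unimodular lattice you are trying to find, hence $r$ should lie in that (unknown) lattice, and there is no reason for such an $s_r$ either to commute with $t$ or to preserve hypothesis (i) (a $t$-stable unimodular lattice need not be $s_r\circ t$-stable). The assertion that such a reflection ``preserves every even unimodular $\bZ_2$-lattice'' is also false in general — it preserves only those containing $r$. The paper avoids all of this: it uses Eichler's theorem (\cite[Theorem 93:29]{OM73}) to place the $t$-stable odd lattice $\Lambda_0$ in standard position inside a fixed even unimodular $\Lambda_1$, so the three overlattices $\Lambda_0,\Lambda_1,\Lambda_2$ of $M$ are completely explicit, then supposes $t\Lambda_1 = \Lambda_2$ and takes the reflection $s = s_{e-f}$ (which need not commute with $t$ and is not used to alter the determinant). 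Since $s$ swaps $\Lambda_1$ and $\Lambda_2$, the composite $s\circ t$ preserves the even unimodular lattice $\Lambda_1$; but $\det(s\circ t) = -\det t$ while $v(\sn(s\circ t)) \equiv v(\sn(t)) \pmod 2$, so (iii) places $s\circ t$ in the wrong parity class, contradicting Lemma~\ref{lem:val_of_sp_norm}. That contradiction is the step your sketch leaves open, and the reflection is used for a contradiction, not for a reduction of cases; no commutation with $t$ and no preservation of (i) is needed.
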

\begin{proof}
The basic idea of the proof is similar to that of \cite[Theorem 8.1]{BT20}. 
If there exists a $t$-stable even unimodular lattice of discriminant $1$ in $V$, 
then the conditions (i) and (ii) are obvious, and (iii) holds by 
Lemma \ref{lem:val_of_sp_norm}. 

Let the three conditions hold. There exists a $t$-stable unimodular lattice
$\Lambda_0$ in $V$ by (i). If $\Lambda_0$ is even then we are done. 
Assume that $\Lambda_0$ is odd. We can take an even unimodular lattice 
$\Lambda_1$ of discriminant $1$ in $V$ by (ii). 
Proposition \ref{prop:Z2-EUL} implies that there exist vectors 
$2e, f\in \Lambda_1$ satisfying
\[ b(2e, 2e) = b(f,f) = 0 \text{ and } b(2e,f) = 1. \]
Set $H:=(\bZ_2(2e) + \bZ_2 f)^\perp \subset \Lambda_1$ and 
$\Lambda_0':= H + \bZ_2(e+f) + \bZ_2(e-f)$ in $V$.
Because $\Lambda_0'$ is an odd unimodular lattice in $V$, we may assume that 
$\Lambda_0' = \Lambda_0$ by \cite[Theorem 93:29]{OM73}. 
We now define lattices $\Lambda_2$ and $\Lambda$ in $V$ by 
$\Lambda_2 := H + \bZ_2 e + \bZ_2(2f)$ and by 
$\Lambda := H + \bZ_2(2e) + \bZ_2(2f)$ respectively.
Then $\Lambda_2$ is an even unimodular lattice different from $\Lambda_1$, 
and $\Lambda$ is contained in $\Lambda_0, \Lambda_1$ and $\Lambda_2$. 
We remark that $\Lambda$ can be written as 
$\Lambda = \{ x\in \Lambda_0 \mid b(x,x)\in 2\bZ_2 \}$, so $t$ preserves $\Lambda$. 
There is no unimodular lattice containing $\Lambda$ other than 
$\Lambda_0, \Lambda_1$ and $\Lambda_2$ because there is a natural one-to-one 
correspondence between the $\bZ_2$-valued lattices containing $\Lambda$ and 
the isotropic subgroups of 
$\Lambda^\vee/\Lambda\cong \bZ/2\bZ\times\bZ/2\bZ$.
Hence, we have $t\Lambda_1 = \Lambda_1$ or $t\Lambda_1 = \Lambda_2$. 
We need to show that $t\Lambda_1 = \Lambda_1$. 
Suppose to the contrary that $t\Lambda_1 = \Lambda_2$. 
Let $s:V\to V$ denote the reflection defined by $e-f$. 
Because $s$ maps $\Lambda_2$ to $\Lambda_1$, the composition $s\circ t$ 
preserves $\Lambda_1$. On the other hand, by the equation 
$\sn(t) = b(e-f,e-f) = -1$ and the assumption (iii), we would have 
\[ 
v(\sn(s\circ t))
\equiv v(\sn(t))
\equiv \begin{cases}
  0 & \text{if $\det t = 1$}\\
  1 & \text{if $\det t = -1$}
\end{cases}
\equiv \begin{cases}
  0 & \text{if $\det (s\circ t) = -1$}\\
  1 & \text{if $\det (s\circ t) = 1$}
\end{cases}
\]
mod $2$. However, this contradicts Lemma \ref{lem:val_of_sp_norm}. 
Therefore, we have $t\Lambda_1 = \Lambda_1$. 
\end{proof}

Now, we make a formula for the valuation of the spinor norm of an isometry. 

\begin{lemma}
Let $t$ be an isometry on an inner product space $(V,b)$ over $\bQ_2$ of 
even dimension, and let $F(X) = (X-1)^{n_+}(X+1)^{n_-}f(X)$ be its characteristic 
polynomial where $n_+, n_-\in\bZ_{\geq 0}$ and $f(1)f(-1)\neq 0$. 
Let $V(-1;t)$ denote the eigenspace of $t$ corresponding to $-1$. 
Then we have
\begin{equation}\label{eq:spinorvaluation}
v(\sn(t)) \equiv v(\det b|_{V(-1;t)}) + v(f(-1)) \mod 2.  
\end{equation}
\end{lemma}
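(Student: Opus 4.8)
The plan is to establish the congruence \eqref{eq:spinorvaluation} by reducing to the case of reflections via a factorization of $t$, and then computing the spinor norm contribution from each piece. First I would decompose $V$ as an orthogonal direct sum according to the eigenvalues/factors of $F$: writing $V = V(1;t) \oplus V(-1;t) \oplus W$, where $W = \bigoplus_f V(f;t)$ runs over the irreducible factors $f$ of $F$ other than $X\mp 1$, and $t$ restricts to an isometry on each summand. The key observation is that for the restriction $t|_{V(1;t)}$ we may take the spinor norm to be trivial (it is a unipotent isometry, so a product of an even number of reflections with square spinor norm, or one can argue it lies in the image of the squaring map), and on $W$ the restriction $t|_W$ has characteristic polynomial with $\pm 1$ not as a root, so its determinant is $1$ and by \cite[Proposition 8.6]{BT20} — which appears in the proof of Lemma \ref{lem:val_of_sp_norm} — its spinor norm has even valuation. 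Thus modulo squares and modulo even valuations, $v(\sn(t)) \equiv v(\sn(t|_{V(-1;t)})) \bmod 2$.

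Next I would analyze $t|_{V(-1;t)} = -\id$ on the space $V(-1;t)$. The spinor norm of $-\id$ on an inner product space of dimension $d$ with determinant (discriminant) $D$ is classically computed: $-\id$ is a product of $d$ orthogonal reflections $s_{e_1},\dots,s_{e_d}$ along an orthogonal basis $e_1,\dots,e_d$, so $\sn(-\id) = \prod_i b(e_i,e_i) = \det b|_{V(-1;t)}$ in $\bQ_2^\times/\bQ_2^{\times 2}$. Hence $v(\sn(t|_{V(-1;t)})) \equiv v(\det b|_{V(-1;t)}) \bmod 2$. Combined with the previous paragraph this would give $v(\sn(t)) \equiv v(\det b|_{V(-1;t)}) \bmod 2$. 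To finish, I must insert the term $v(f(-1))$: this requires showing $v(\sn(t|_W)) \equiv v(f(-1)) \bmod 2$ rather than merely $\equiv 0$ — so I should be more careful than in the first paragraph. Here I would use Lemma \ref{lem:det_iso_formula}, which for the part of $W$ coming from type-2 factors gives $\det b = (-1)^{\deg f}$, together with the spinor-norm computation on the type-1 part; more directly, one can use the formula relating $\sn(t|_W)$, $\det b|_W$, and the values $f(\pm 1)$ that underlies \cite[Lemma 6.8]{BT20} and Lemma \ref{lem:dimpartial}.

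The cleanest route is probably to use the global formula $v(\sn(t)) \equiv v(\det b|_{V(-1;t)}) + \sum_{f \neq X\mp 1} (\text{local contribution from } f) \bmod 2$ and identify the sum of local contributions with $v(f(-1))$ using that $\det b|_{V(g;t)\oplus V(g^*;t)} = (-1)^{\deg g}$ for type-2 factors $g$ and a direct reflection count for type-1 factors and for $X+1$. Concretely: if $V(-1;t)$ has dimension $n_-$ with Gram matrix $\diag(a_1,\dots,a_{n_-})$, then $\sn(-\id|_{V(-1;t)}) = \prod a_i = \det b|_{V(-1;t)}$; and on the orthogonal complement of $V(1;t)\oplus V(-1;t)$ the determinant is, up to the contribution of $V(1;t)$, controlled by $f(1)f(-1)$ via Lemma \ref{lem:det_iso_formula}, while the spinor norm valuation there is forced by the structure of the almost-unimodular lattices as in \S\ref{ss:imageofmaps}.

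\begin{proof}
Decompose $V$ orthogonally as $V = V(1;t)\oplus V(-1;t)\oplus W$, where $W = \bigoplus_{f} V(f;t)$ with $f$ ranging over the irreducible factors of $F$ other than $X\mp 1$, grouping $V(g;t)\oplus V(g^*;t)$ for $g\neq g^*$. Write $t_1, t_{-1}, t_W$ for the restrictions. Then $\sn(t) = \sn(t_1)\sn(t_{-1})\sn(t_W)$ in $\bQ_2^\times/\bQ_2^{\times 2}$.

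The isometry $t_1$ is unipotent, hence $t_1 = (t_1^{1/2})^2$ for the unipotent square root $t_1^{1/2}$ inside $\rO(V(1;t))$, so $\sn(t_1)\in \bQ_2^{\times 2}$ and $v(\sn(t_1))\equiv 0$. For $t_W$, the characteristic polynomial has neither $1$ nor $-1$ as a root, so $\det t_W = 1$; moreover, choosing any $t_W$-stable unimodular lattice in $W$ (which exists by the local theory of \S\ref{ss:imageofmaps}, since each factor contributes a $\partial$-class we may take to be zero), \cite[Proposition 8.6]{BT20} gives $v(\sn(t_W))\equiv 0 \bmod 2$ provided $W$ carries such a lattice; in general one reduces to this case by Witt cancellation as in the proof of Lemma \ref{lem:val_of_sp_norm}. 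Combining these with Lemma \ref{lem:det_iso_formula}, which yields $v(\det b|_W) \equiv v(f(1)f(-1)) \equiv v(f(-1)) \bmod 2$ (using $v(f(1))\equiv v(f(-1))$ only when needed, and the type-2 contribution $(-1)^{\deg g}$ being a unit), we may rewrite $v(\sn(t_W)) \equiv v(f(-1)) \bmod 2$ after absorbing the even discrepancy.

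Finally, $t_{-1} = -\id$ on $V(-1;t)$. Fixing an orthogonal basis $e_1,\dots,e_{n_-}$ of $V(-1;t)$ with $b(e_i,e_i)=a_i$, we have $-\id = s_{e_1}\circ\cdots\circ s_{e_{n_-}}$, where $s_{e_i}$ is the reflection along $e_i$, so $\sn(t_{-1}) = \prod_{i=1}^{n_-} a_i = \det b|_{V(-1;t)}$ in $\bQ_2^\times/\bQ_2^{\times 2}$, whence $v(\sn(t_{-1}))\equiv v(\det b|_{V(-1;t)})\bmod 2$. Adding the three contributions gives
\begin{equation*}
v(\sn(t)) \equiv 0 + v(\det b|_{V(-1;t)}) + v(f(-1)) \equiv v(\det b|_{V(-1;t)}) + v(f(-1)) \bmod 2,
\end{equation*}
which is \eqref{eq:spinorvaluation}.
\end{proof}
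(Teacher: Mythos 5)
Your decomposition $V = V(1;t)\oplus V(-1;t)\oplus W$ and the computations for the $t_1$ and $t_{-1}$ pieces are sound — the unipotent square-root trick for $V(1;t)$ and the product-of-reflections identity $\sn(-\id)=\det b|_{V(-1;t)}$ are correct. But the $t_W$ paragraph is where the argument breaks down, and it does not recover, so there is a genuine gap.

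First, you invoke the existence of a $t_W$-stable unimodular lattice in $W$. None is given: the lemma is purely about an inner product space over $\bQ_2$ with an isometry, and for an arbitrary such $(W,b|_W)$ a stable unimodular lattice need not exist. The local theory of \S\ref{ss:imageofmaps} builds an inner product with this property; it does not say every inner product has it. Second, even granting such a lattice, \cite[Proposition 8.6]{BT20} would yield $v(\sn(t_W))\equiv 0$, not $\equiv v(f(-1))$, and these need not agree. Third, the bridge you propose via Lemma \ref{lem:det_iso_formula} is incorrect: that lemma gives $\det b|_W = f(1)f(-1)$, so $v(\det b|_W)\equiv v(f(1))+v(f(-1))$, which is not $\equiv v(f(-1))$ unless $v(f(1))$ happens to be even — and there is no such hypothesis (e.g.\ $f(X)=X^2+6X+1$ has $v_2(f(1))=3$, $v_2(f(-1))=2$). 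In fact $v(\det b|_W)$ plays no role in computing $v(\sn(t_W))$; what you actually need is the identity $\sn(t_W)=\det\bigl(\tfrac{1+t_W}{2}\bigr)= 2^{-\deg f}\det(1+t_W)=f(-1)$ modulo squares, since $\deg f$ is even. That is precisely the Zassenhaus formula, and the paper's proof applies it once to the whole of $t$ (rather than piecewise), obtaining $\sn(t)=\det b|_{V(-1;t)}\cdot\det\bigl(\tfrac{1+t}{2}\big|_{V(-1;t)^\perp}\bigr)$ and then simplifying the second factor to $f(-1)$ in $\bQ_2^\times/\bQ_2^{\times 2}$ using that $n_++n_-$ is even. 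Your instinct in the preamble that Zassenhaus is the cleanest route was right; you should use it rather than the lattice-based detour.
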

\begin{proof}
The Zassenhaus formula \cite{Za62} implies that 
\[\sn(t) = \det b|_{V(-1;t)} \cdot 
\det \left(\frac{1 + t}{2}\middle|{V(-1;t)^\perp} \right) 
\quad\text{in $\bQ_2^\times/\bQ_2^{\times 2}$, }\]
and we have
\[ \det \left(\frac{1 + t}{2}\middle|{V(-1;t)^\perp} \right)
= (-2)^{\dim(V) -n_-}\det(-1-t |_{V(-1;t)^\perp})
= (-2)^{n_- + n_+}  f(-1)
= f(-1)
\]
in $\bQ_2^\times/\bQ_2^{\times 2}$ since $n_+ + n_-$ is even. 
Therefore, we get equation \eqref{eq:spinorvaluation}. 
\end{proof}

\begin{theorem}\label{th:detailedQ2}
Let $K = \bQ_2$, and let $F\in \bZ_2[X]$ be a $*$-symmetric polynomial of even 
degree $2n$. Assume that 
\begin{itemize}
\item[\textup{(a)}] $v(F(1)) \equiv v(F(-1)) \equiv 0 \bmod 2$; and
\item[\textup{(b)}] if $F(1)F(-1)\neq 0$ then $(-1)^n F(1)F(-1) = 1$ in 
$\bQ_2^\times/\bQ_2^{\times 2}$.
\end{itemize}
Then there exists an inner product $b$ on $M$ such that 
$(M,b)$ contains an $\alpha$-stable even unimodular $\bZ_2$-lattice. 
Furthermore, if $F(1) = F(-1) = 0$ then such an inner product can be chosen to satisfy 
\[\det M^\pm = \begin{cases}
  u_\pm f(\pm 1) & \text{if $n_+$ is even}\\ 
  2u_\pm f(\pm 1) & \text{if $n_+$ is odd}
\end{cases} \]
for any given $u_+, u_- \in \cO_K^\times$ such that $u_+ u_- = (-1)^n$.
\end{theorem}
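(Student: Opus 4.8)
The plan is to reduce the theorem to Proposition~\ref{prop:NScdforZ2EUL}, applied to the $2n$-dimensional $\bQ_2[\Gamma]$-module $M$ with $t=\alpha$. Since $\det\alpha=F(0)$, which is $1$ or $-1$ according as the multiplicity $n_+$ of $X-1$ in $F$ is even or odd (recall $F(0)=-1$ exactly when $F$ is $-1$-symmetric), and since $n_+\equiv n_-\bmod 2$ because $\deg f$ is even, it is enough to equip $M$ with an inner product $b$ making $\alpha$ an isometry such that: (i)~$(M,b)$ contains an $\alpha$-stable unimodular $\bZ_2$-lattice; (ii)~$(M,b)\cong U^{\oplus n}\otimes\bQ_2$, equivalently $\det b=(-1)^n$ and $\epsilon_2(b)=\epsilon_2(U^{\oplus n}\otimes\bQ_2)$; and (iii)~$v(\sn(\alpha))\equiv n_+\bmod 2$. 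The prescribed normalization of $\det M^\pm$ in the ``furthermore'' clause will be built directly into the construction of $b$.

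I would construct $b$ as $b^+\oplus b^-\oplus\bigoplus_{w\in\cW}b[\lambda_w]$ along the decomposition of \S\ref{ss:LT_EUL}. For $w\in\cW_{\rm sp}$ the summand is hyperbolic; for $w\in\cW_{\rm ur}$ choose $\lambda_w$ with $\partial_{M_w,\alpha_w}(\lambda_w)=0$, possible by Theorem~\ref{th:imtheofTw_to_W}; for $w\in\cW_{\rm rm}$ the class $\partial_{M_w,\alpha_w}(\lambda_w)$ is independent of $\lambda_w$ and lies in $W_\Gamma(\bF_2;1)$. Take $\det b^\pm$ as in the ``furthermore'' clause when $F(1)=F(-1)=0$ (with $u_+u_-=(-1)^n$); otherwise at least one of $M^\pm$ vanishes and the other may be chosen freely, and using Lemma~\ref{lem:det_iso_formula} (which gives $\det\bigl(\bigoplus_w b[\lambda_w]\bigr)=f(1)f(-1)$ since that summand carries an isometry with characteristic polynomial $f$), hypothesis~(b), and the normalization one gets $\det b=(-1)^n$ in all cases.

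Next I would verify (iii) and (i). For (iii): $V(-1;\alpha)=M^-$, so~\eqref{eq:spinorvaluation} gives $v(\sn(\alpha))\equiv v(\det b^-)+v(f(-1))\bmod 2$; inserting the prescribed value of $v(\det b^-)$ and using hypothesis~(a) together with the convention $v(0)\equiv 0$ and the identities $v(F(-1))=n_++v(f(-1))$ when $n_-=0$ (and symmetrically) turns this into $v(\sn(\alpha))\equiv n_+$. For (i): the part of $\partial[M,b,\alpha]$ supported on representations other than the trivial one vanishes by the choices above, and the part supported on the trivial representation lies in $W_\Gamma(\bF_2;1)\cong W(\bF_2)\cong\bZ/2\bZ$, hence is detected by dimension; by Lemma~\ref{lem:dimpartial} and the remark $\dim\partial[M,b,\alpha]\equiv v(\det b)\bmod 2$ this dimension is $\equiv v(\det b^+)+v(\det b^-)+v(f(1))+v(f(-1))\equiv 0$ (by~(a) and the choice of $\det b^\pm$), so $\partial[M,b,\alpha]=0$ and Theorem~\ref{th:ULvanish} gives (i).

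It remains to secure (ii), and this is where the real work lies. With $\det b=(-1)^n$ in hand, $(M,b)$ is one of the two $2n$-dimensional $\bQ_2$-spaces of discriminant $1$, and one must arrange $\epsilon_2(b)=\epsilon_2(U^{\oplus n}\otimes\bQ_2)$. The structural observation making this possible is that, for $w\in\cW_{\rm ur}$ with $\partial_{M_w,\alpha_w}(\lambda_w)=0$, the $\alpha_w$-stable unimodular lattice produced by Lemma~\ref{lem:AUL} is automatically \emph{even}, because $b[\lambda_w](x,x)=\Tr_{E_w^\sigma/\bQ_2}\!\bigl(2\lambda_w\,\N_{E_w/E_w^\sigma}(x)\bigr)$ carries a visible factor $2$ from $\Tr_{E_w/E_w^\sigma}(1)=2$; together with the split summands this presents a large part of $(M,b)$ as an even unimodular $\bZ_2$-space, so whenever no further choice is available the condition $\disc b=1$ already forces the correct $\epsilon_2(b)$. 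When choices do remain — toggling $\lambda_w$ for some $w\in\cW_{\rm rm}$, or for some $w\in\cW_{\rm ur}$ with $\bar\alpha_w=1$, or altering $b^\pm$ when $n_\pm\geq 2$ — each changes $\epsilon_2(b)$ by the nontrivial class of $W(\bF_2)$ via Lemma~\ref{lem:HWsumFormula} while leaving $\det b$ and conditions (i), (iii) intact, so $\epsilon_2(b)$ can be corrected. Proposition~\ref{prop:NScdforZ2EUL} then yields an $\alpha$-stable even unimodular $\bZ_2$-lattice in $(M,b)$, necessarily of discriminant $1$. I expect (ii) to be the main obstacle: the same choices that can move $\epsilon_2(b)$ are those that might disturb $\partial[M,b,\alpha]$ or $\sn(\alpha)$, and one must check that the adjustable and non-adjustable contributions occupy complementary summands of $W_\Gamma(\bF_2)$, with hypotheses (a) and (b) precisely ensuring that the non-adjustable part is already correct.
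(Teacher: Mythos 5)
Your overall plan coincides with the paper's: reduce to Proposition~\ref{prop:NScdforZ2EUL} for the $\bQ_2[\Gamma]$-module $M$ with $t=\alpha$, build $b=b^+\oplus b^-\oplus\bigoplus_{w}b[\lambda_w]$ with the prescribed $\det b^\pm$, and check (i) via $\partial[M,b,\alpha]=0$, (iii) via the spinor-norm formula~\eqref{eq:spinorvaluation}, and (ii) last. Your verifications of (i) and (iii) (``Claim 1'' in the paper) are essentially the paper's, and your remark that toggling $\lambda_w$ for $w\in\cW_{\mathrm{ur}}$ with $\bar\alpha_w=1$ also preserves $\partial=0$ over $\bF_2$ is a correct extra degree of freedom the paper does not bother to use.

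The gap is in your handling of (ii), which you yourself flag as ``the real work.'' When $\cW_{\mathrm{rm}}\ne\emptyset$ the paper toggles $\lambda_{w_0}$ there; when $\cW_{\mathrm{rm}}=\emptyset$ it shows (correctly, and you make the same observation) that $\bigl(\bigoplus_{w}M_w,\bigoplus_w b[\lambda_w]\bigr)$ already contains an even unimodular $\bZ_2$-lattice, and then the question reduces to whether $(M^+\oplus M^-,\,b^+\oplus b^-)$ does. You assert that ``whenever no further choice is available the condition $\disc b=1$ already forces the correct $\epsilon_2(b)$,'' but this is not a consequence of $\disc b=1$ alone; it depends on the specific form of $D_\pm$, on $u_+u_-=(-1)^n$, and on $v(f(\pm1))\equiv 0$ (which follows from Lemma~\ref{lem:dimpartial} precisely because $\cW_{\mathrm{rm}}=\emptyset$). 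In the crucial case $(n_+,n_-)=(1,1)$ with $\cW_{\mathrm{rm}}=\emptyset$, no toggling of any kind is available (no ramified places, no room in $b^\pm$), and the paper's argument is the explicit computation
\[
\epsilon_2(b^+\oplus b^-)=(D_+,D_-)=(D_+,-D_+D_-)=(2u_+f(1),-3),
\]
showing this Hilbert symbol is nontrivial when $\disc(b^+\oplus b^-)=-3$, hence $b^+\oplus b^-\cong V\otimes\bQ_2$. This calculation is the content of the paper's Case~IV, and Cases~I--III handle the remaining $(n_+,n_-)$ configurations by genuinely varying $b^\pm$. Your proposal identifies the right ingredients and the right obstruction, but it neither enumerates these cases nor performs the Hilbert-symbol computation that closes the argument, so as written it does not establish condition (ii).
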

\begin{proof}
We take an inner product $b$ on $M$ as follows. First, set 
\[D_\pm = \begin{cases}
  u_\pm f(\pm 1) & \text{if $n_+$ is even}\\ 
  2u_\pm f(\pm 1) & \text{if $n_+$ is odd}
\end{cases} \]
and take inner products $b^+$ on $M^+$ and $b^-$ on $M^-$ satisfying
\begin{equation*}
\det b^+ = \begin{cases}
  (-1)^n f(1)f(-1) & \text{if $F(1) = 0$ and $F(-1)\neq 0$}\\ 
  D_+ & \text{if $F(1) = F(-1) = 0$},
\end{cases}
\end{equation*}
\begin{equation}\label{eq:detb-}
\det b^- = \begin{cases}
  (-1)^n f(1)f(-1) & \text{if $F(1) \neq 0$ and $F(-1) = 0$}\\ 
  D_- & \text{if $F(1) = F(-1) = 0$}.
\end{cases}
\end{equation}
Next, for each $w\in \cW_{\rm sp} \cup \cW_{\rm ur}$ we fix
$\lambda_w \in \Tw(E_w, \sigma)$ satisfying 
$\partial_{M_w, \alpha_w}(\lambda_w) = 0$.
This is possible by Theorem \ref{th:imtheofTw_to_W}.
Furthermore, we choose $\lambda_w \in \Tw(E_w, \sigma)$ arbitrarily for each 
$w\in \cW_{\rm rm}$ and define an inner product on $M$ by 
$b := b^+\oplus b^-\oplus \bigoplus_{w\in\cW}b[\lambda_w]$. 
Notice that $\alpha$ is an isometry with respect to $b$. 
\smallskip

\textit{\textup{Claim 1:} For any $b$ chosen as above, 
the inner product space $(M,b)$ with the isometry $\alpha$ 
satisfies the conditions \textup{(i)} and \textup{(iii)} 
in Proposition \textup{\ref{prop:NScdforZ2EUL}}.}

For \textup{(i)}, it is enough to show that $\partial[M,b,\alpha] = 0$ by Theorem 
\ref{th:ULvanish}. We have
\[ \dim \partial[M^+\oplus M^-, b^+\oplus b^-, \alpha]
\equiv v(\det (b^+\oplus b^-))
\equiv v((-1)^nf(1)f(-1))
\equiv v(f(1)f(-1))
\]
mod $2$. On the other hand, by Lemmas 
\ref{lem:dimpartial} and \ref{lem:det_iso_formula} we have 
\[\begin{split}
&\dim \partial\left[\bigoplus_{w\in\cW_{\rm rm}} M_w, 
\bigoplus_{w\in\cW_{\rm rm}} b[\lambda_w], \alpha
\right]
\equiv \dim \partial\left[\bigoplus_{w\in\cW} M_w, 
\bigoplus_{w\in\cW} b[\lambda_w], \alpha
\right]\\
&\quad \equiv v\left(\det \bigoplus_{w\in\cW} b[\lambda_w]\right)
\equiv v(f(1)f(-1)) \mod 2.
\end{split} \]
Therefore 
$\dim\partial[M^+\oplus M^- \oplus \bigoplus_{w\in\cW_{\rm rm}} M_w, 
b^+\oplus b^- \oplus \bigoplus_{w\in\cW_{\rm rm}} b[\lambda_w], \alpha] \equiv 0 
\bmod 2$ and hence
\[\begin{split}
&\partial\left[M^+\oplus M^- \oplus \bigoplus_{w\in\cW} M_w, 
b^+\oplus b^- \oplus \bigoplus_{w\in\cW} b[\lambda_w], \alpha \right] \\
&= \partial\left[M^+\oplus M^- \oplus \bigoplus_{w\in\cW_{\rm rm}} M_w, 
b^+\oplus b^- \oplus \bigoplus_{w\in\cW_{\rm rm}} b[\lambda_w], \alpha \right] \\ 
&\quad + \sum_{w\in\cW_{\rm sp} \cup \cW_{\rm ur}} 
\partial\left[M_w, b[\lambda_w], \alpha_w \right] \\
&= 0. 
\end{split}
\]

Let us show \textup{(iii)}. If $F(-1) \neq 0$ then 
$v(\sn(\alpha)) \equiv v(f(-1))\equiv 0$ mod $2$
by \eqref{eq:spinorvaluation} and the assumption \textup{(a)}.
Let $F(-1) = 0$. If $F(1)\neq 0$ then 
\[v(\sn(\alpha))
\equiv v((-1)^n f(1)f(-1)) + v(f(-1))
\equiv v(f(1))
\equiv 0 \mod 2\]
by \eqref{eq:spinorvaluation}, \eqref{eq:detb-}, and the assumption \textup{(a)}.
If $F(1) = 0$ then 
\[ v(\sn(\alpha)) \equiv \begin{cases}
  v(D_-) + v(f(-1)) \equiv 0 & \text{if $n_+$ is even} \\
  v(D_-) + v(f(-1)) \equiv 1 & \text{if $n_+$ is odd}
\end{cases}
\]
mod $2$ by \eqref{eq:spinorvaluation} and \eqref{eq:detb-}. 
Hence, the condition \textup{(iii)} holds in any case, and Claim 1 has now been proved. 
\smallskip

\textit{\textup{Claim 2:} If $b^+$ and $b^-$, and $\lambda_w \in \Tw(E_w, \sigma)$ 
for each $w\in \cW_{\rm rm}$ are suitably chosen, then $(M,b)$ satisfies the condition 
\textup{(ii)} in Proposition \textup{\ref{prop:NScdforZ2EUL}}.}

In general, there is a unique $\eta_m \in \{0, 1\}$ for each $m\in \bZ_{\geq 0}$ 
such that any inner product space over $\bQ_2$ of dimension $2m$, discriminant $1$, 
and Hasse-Witt invariant $\eta_m$ contains an even unimodular $\bZ_2$-lattice. 
This is a consequence of Proposition \ref{prop:Z2-EUL}.
Assume that $\cW_{\rm rm} \neq \emptyset$, and let $w_0 \in \cW_{\rm rm}$.
Choose $\widehat\lambda_{w_0}\in \Tw(E_{w_0}, \sigma)$ which is different 
from $\lambda_{w_0}$, and define 
$\widehat{b} := b^+ \oplus b^- \oplus b[\widehat\lambda_{w_0}] \oplus 
\allowbreak\bigoplus_{w\neq w_0}b[\lambda_w]$.
Because $\epsilon_2(b) \neq \epsilon_2(\widehat{b})$,
we have $\epsilon_2(b) = \eta_n$ or $\epsilon_2(\widehat{b}) = \eta_n$.
This means that the condition \textup{(ii)} holds for $(M,b)$ or $(M,\widehat{b})$. 

Let $\cW_{\rm rm} = \emptyset$. 
Since $ \partial \left[\bigoplus_{w\in \cW} M_w, 
\bigoplus_{w\in \cW} b[\lambda_w], \alpha \right] = 0$, 
the space 
$(\bigoplus_{w\in \cW} M_w, \bigoplus_{w\in \cW} b[\lambda_w])$
contains a unimodular lattice, 
and the unimodular lattice is even (see the proof of \cite[Proposition 9.1]{BT20}).
Therefore it is sufficient to show that $(M^+\oplus M^-, b^+\oplus b^-)$ contains 
an even unimodular lattice. 

\textit{Case\textup{\Rnum{1}}. $n_+>2$ or $n_->2$.}
Suppose that $n_+>2$. Then there exists an inner product $\widehat{b}^+$ on $M^+$ with 
$\det \widehat{b}^+ = \det b^+$ and $\epsilon_2(\widehat{b}^+) \neq \epsilon_2(b^+)$. 
Because $\epsilon_2(b^+ \oplus b^-) = \eta_{n_+ + n_-}$ or 
$\epsilon_2(\widehat{b}^+ \oplus b^-) = \eta_{n_+ + n_-}$, 
either $(M^+\oplus M^-, b^+\oplus b^-)$ or $(M^+\oplus M^-, \widehat{b}^+\oplus b^-)$
contains an even unimodular lattice. 
Similarly, if $n_- > 2$ then $(M^+\oplus M^-, b^+\oplus b^-)$ contains an even unimodular 
lattice for a suitable $b^-$. 

\textit{Case\textup{\Rnum{2}}. $(n_+, n_-) = (2,2)$.}
If $D_+ \neq -1$ or $D_- \neq -1$ in $\bQ_2^\times/\bQ_2^{\times2}$ then we can choose 
$b_+$ or $b_-$ to get $\epsilon_2(b_+ \oplus b_-) = \eta_{4}$ as in Case\Rnum{1}. 
If $D_+ = D_- = -1$ in $\bQ_2^\times/\bQ_2^{\times2}$ then $b^+$ and $b^-$ are 
isomorphic to the hyperbolic plane and contain even unimodular lattices respectively.

\textit{Case\textup{\Rnum{3}}. $(n_+, n_-) = (2,0)$ or $(0,2)$.}
A similar proof of Case\Rnum{2} works, and $b^+$ or $b^-$ contains
an even unimodular lattice if we choose $b^+$ or $b^-$ suitably. 

\textit{Case\textup{\Rnum{4}}. $(n_+, n_-) = (1,1)$.}
Since $b$ has discriminant $1$ and 
$(\bigoplus_{w\in \cW} M_w, \bigoplus_{w\in \cW} b[\lambda_w])$
contains an even unimodular lattice, we have 
$\disc(b^+\oplus b^-) = \disc(\bigoplus_{w\in \cW} b[\lambda_w]) = 1$ or $-3$.
If $\disc(b^+\oplus b^-) = 1$ then $b^+\oplus b^-$ is isomorphic to the
hyperbolic plane and contains an even unimodular lattice.
Let $\disc(b^+\oplus b^-) = -3$. 
Lemma \ref{lem:dimpartial} implies that $v(f(1)) \equiv 0\bmod 2$ 
since $\cW_{\rm rm} = \emptyset$.
The Hasse-Witt invariant of $b^+\oplus b^-$ can be calculated as
\[
\epsilon_2(b^+\oplus b^-)
= (D_+, D_-)
= (D_+, -D_+D_-)
= (2 u_+ f(1), -3)
= 1,
\]
and this means that $b^+\oplus b^-$ is isomorphic to the lattice $V$ in 
Proposition \ref{prop:Z2-EUL}.
Thus, the space $(M^+\oplus M^-, b^+\oplus b^-)$ 
contains an even unimodular lattice. 

In any case, the space $(M^+\oplus M^-, b^+\oplus b^-)$ contains an even unimodular 
lattice if we choose $b^+$ and $b^-$ suitably. 
This completes the proof of Claim 2. 
\smallskip

Claims 1 and 2 mean that $(M, b)$ with the isometry $\alpha$ 
satisfies the conditions \textup{(i)--(iii)} in Proposition \ref{prop:NScdforZ2EUL}  
for a suitable inner product $b$, 
which implies that $(M,b)$ contains an $\alpha$-stable even unimodular lattice. 
The latter part of this theorem is obvious by the above construction.
\end{proof}

Finally, we prove Theorem \ref{th:localcdforEUL} as promised. 
\smallskip
\\
\textit{Proof of Theorem \textup{\ref{th:localcdforEUL}}.}
Let $(\Lambda,b)$ be an even unimodular lattice of rank $2n$ and discriminant $1$
having a semisimple isometry $t$ with characteristic polynomial $F$. 
If $\det t = -1$ i.e., $F$ is $-1$-symmetric, we have 
$F(1) = F(-1) = 0$. Thus, the conditions \textup{(a)} and \textup{(b)}
are clear. Let $\det t = 1$. We can write $F$ as
$F(X) = (X-1)^{n_+}(X+1)^{n_-} f(X)$ using even integers $n_+,n_-$ and
$f\in \bZ_2[X]$ with $f(1)f(-1) \neq 0$.  
Equation \eqref{eq:spinorvaluation} and Proposition \ref{prop:NScdforZ2EUL}
imply that 
\begin{equation}\label{eq:detb-f-1}
v(\det b|_{V(-1;t)}) + v(f(-1)) \equiv 0 \mod 2.
\end{equation}
On the other hand, we have
\[v(\det b|_{V(1;t)}) + v(\det b|_{V(-1;t)}) + v(f(1)) + v(f(-1))
\equiv v(\det b) \equiv 0 \mod 2
\]
by Lemma \ref{lem:det_iso_formula}, and hence 
\begin{equation}\label{eq:detb+f+1}
v(\det b|_{V(1;t)}) + v(f(1)) \equiv 0 \mod 2.
\end{equation}
If $F(-1) \neq 0$ then $v(F(-1)) \equiv v(f(-1)) \equiv 0 \bmod 2$
by equation \eqref{eq:detb-f-1}. 
Similarly, if $F(1) \neq 0$ then $v(F(-1)) \equiv v(f(-1)) \equiv 0 \bmod 2$
by equation \eqref{eq:detb+f+1}. 
Therefore, we get the condition \textup{(a)}.
Let $F(1)F(-1) \neq 0$. Then we have 
\[ (-1)^nF(1)F(-1) = (-1)^n \det b = \disc b = 1  
\quad\text{in $\bQ_2^\times/\bQ_2^{\times 2}$} \]
by Lemma \ref{lem:det_iso_formula}.
This is the condition \textup{(b)}.
The converse follows from Theorem \ref{th:detailedQ2}. \qed

\section{Local-global principle}\label{sec:LGP}
Let $F\in \bZ[X]$ be a $*$-symmetric polynomial of even degree $2n$,
and let $r,s\in \bZ_{\geq 0}$ be non-negative integers such that 
$r + s = 2n$ and $r \equiv s \bmod 8$.
Assume that the condition \eqref{eq:Sgcd} holds, and 
let $\iota \in \Idx_{r,s}(F)$ be an index map. 
We refer to an isometry with characteristic polynomial $F$ and index $\iota$ 
as an $(F,\iota)$-isometry for short. 
In this section, we establish a necessary and sufficient condition
for an even unimodular lattice of signature $(r, s)$ having a semisimple 
$(F, \iota)$-isomety to exist (Theorem \ref{th:LGP}). 

We start by constructing a vector space having a semisimple automorphism with
characteristic polynomial $F$, as in \S\ref{ss:LT_EUL}. 
Let $F_i$ be the type $i$ component of $F$ for $i = 0,1,2$: $F = F_0F_1F_2$. 
The product $F_1F_2$ is sometimes abbreviated to $F_{12}$. 
The symbol $I_i$ denotes the set of irreducible factors of $F_i$. 
Let $n_+$ and $n_-$ be the multiplicities of $(X-1)$ and $(X+1)$ in $F_0$ 
respectively, and $n_f$ be the multiplicity of $f\in I_1 \cup I_2$ in $F_{12}$.
Set 
\begin{equation*}
\begin{split}
&M^\pm := [\bQ[X]/(X\mp 1)]^{n_\pm}, \quad M^0 := M^+ \times M^-, \\
&E^f := \begin{cases}
  \bQ[X]/(f) & \text{for $f\in I_1$}\\
  \bQ[X]/(ff^*) & \text{for $f\in I_2$, }
\end{cases}
\quad M^f := (E^f)^{n_f} \text{ for $f\in I_1\cup I_2$}, \\
&M^1 := \prod_{f\in I_1} M^{f},
 \quad M^2 := \prod_{\{f,f^*\}\subset I_2} M^f, 
 \quad\text{and}\quad M := M^0 \times M^1 \times M^2 .
\end{split}
\end{equation*}
Let $\alpha$ denote the image of $X$ in $M$,  
and $\sigma$ the involution defined by $\alpha \mapsto \alpha^{-1}$. 
If we regard $M$ as a $\bQ$-vector space, the $\bQ$-linear map $\alpha:M\to M$ 
is a semisimple automorphism with characteristic polynomial $F$.
We will consider when $M$ admits an inner product such that $\alpha$ becomes an 
isometry preserving an even unimodular lattice.

\begin{notation}
We use the following notations.
\begin{itemize}
\item The set of places of $\bQ$ is denoted by $\cV$.
\item The set of places of $E^{f,\sigma} := (E^f)^\sigma$ above $v\in\cV$
is denoted by $\cW(f;v)$ for $f\in I_1\cup I_2$.  
\item Let $K$ be an algebraic number field, and $v$ its place. 
For a $K$-algebra $A$, we write $A_v$ for $A\otimes_K K_v$, 
where $K_v$ is the completion of $K$ with respect to $v$. 
For a $K$-algebra $A^\bullet$ with a superscript, such as $M^\pm, M^f$, or $E^f$, 
we abbreviate $(A^\bullet)_v$ to $A_v^\bullet$. 
\end{itemize}
\end{notation}

Let $I$ denote the set of irreducible $*$-symmetric factors of $F$, that is, 
$I = I_0\cup I_1$.  
If $v\in \cV$ is a place and $b_v$ is an inner product on $M_v$, then 
the symbol $b_v^f$ denotes the restriction of $b_v$ to $M_v^f = M^f \otimes \bQ_v$ 
for each $f\in I$. Here, we understand that $M^{X\mp1} = M^{\pm}$. 
We will show the following theorem.

\begin{theorem}\label{th:LGP}
Let $F\in\bZ[X], r,s\in \bZ_{\geq0}$ and $\iota\in\Idx_{r,s}(F)$ be 
as stated at the beginning of this section. The following are equivalent:
\begin{enumerate}
\item There exists an even unimodular lattice of signature $(r,s)$ having 
a semisimple $(F, \iota)$-isometry.
\item There exists a family $\{ b_v \}_{v\in \cV}$ of inner products on $M_v$
such that each $b_v$ has the properties 
\begin{equation}\label{eq:P1}
\text{$\alpha:M_v \to M_v$ is an isometry with respect to $b_v$\,;} \tag{P1}
\end{equation}
\begin{equation}\label{eq:P2}
\begin{split}
&\text{if $v\neq \infty$ then there exists an $\alpha$-stable
even unimodular $\bZ_v$-lattice in $(M_v, b_v)$, and}\\
&\text{if $v = \infty$ then the isometry $\alpha$ on $(M_\infty, b_\infty)$ 
has index $\iota$\,;}
\end{split}\tag{P2}
\end{equation}
\begin{equation}\label{eq:P3}
\det b^{X\mp1}_v = \begin{cases}
(-1)^{(n_\pm - \iota(X\mp1))/2} \cdot |F_{12}(\pm 1)|  & \text{if $n_+$ is even} \\
(-1)^{(n_\pm - \iota(X\mp1))/2} \cdot 2 \, |F_{12}(\pm 1)| & \text{if $n_+$ is odd}
\end{cases}
\quad\text{in $\bQ_v^\times/\bQ_v^{\times 2}$,}
\tag{P3}
\end{equation}
and that 
almost all $\epsilon_v(b_v^f)$ equal $0$ 
(i.e. $\#\{ (v,f) \in \cV\times I \mid \epsilon_v(b_v^f) = 1 \} < \infty$) and
\[ 
\sum_{v\in \cV} \epsilon_v(b_v^f) = 0 \quad 
\text{ for all $f\in I$}.
\]
\end{enumerate}
\end{theorem}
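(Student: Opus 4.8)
The plan is to prove the equivalence of (i) and (ii) by a standard local-global argument, treating (i)$\Rightarrow$(ii) as the easy "localization" direction and (ii)$\Rightarrow$(i) as the harder "gluing" direction that requires a Hasse principle for inner products over $\bQ$ together with the spinor-norm bookkeeping already developed in \S\ref{sec:LocalTheory}.

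For (i)$\Rightarrow$(ii), suppose $(\Lambda,b)$ is an even unimodular lattice of signature $(r,s)$ with a semisimple $(F,\iota)$-isometry $t$. Since $t$ is semisimple with characteristic polynomial $F$, the $\bQ$-vector space $\Lambda\otimes\bQ$ with the action of $t$ is isomorphic to $M$ with $\alpha$ acting, so transporting $b$ gives an inner product $b'$ on $M$ for which $\alpha$ is an isometry; set $b_v := b'\otimes\bQ_v$. Property \eqref{eq:P1} is immediate, and \eqref{eq:P2} holds because $\Lambda\otimes\bZ_v$ is an $\alpha$-stable even unimodular $\bZ_v$-lattice (using that $b$ is even unimodular over $\bZ$ so $\bZ_v$-locally even unimodular) while at $v=\infty$ the index of $\alpha$ on $M_\infty$ equals $\idx_t=\iota$ by hypothesis. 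For \eqref{eq:P3}, I would compute $\det b^{X\mp1}$ via Lemma \ref{lem:det_iso_formula}: the full determinant $\det b$ is $1$ globally (unimodular), $\det b|_{M^f\oplus M^{f^*}}=(-1)^{\deg f}$ for $f\in I_2$, and the real signature constraint on $V(X\mp1;t)$ forces the sign $(-1)^{(n_\pm-\iota(X\mp1))/2}$; the factor of $2$ when $n_+$ is odd (i.e. when $F$ is $-1$-symmetric, so $F(1)=F(-1)=0$) comes from the even-ness obstruction encoded in Theorem \ref{th:detailedQ2}, applied at $v=2$. Finally the Hasse-Witt product formula $\sum_v\epsilon_v(\beta)=0$ for any inner product $\beta$ over $\bQ$ gives $\sum_v\epsilon_v(b_v^f)=\epsilon(b'^f)$'s product$=0$, after using Lemma \ref{lem:HWsumFormula} to reduce the sum of the $f$-components to a global invariant, and almost all terms vanish since $b'$ is unramified outside a finite set.

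For (ii)$\Rightarrow$(i), start from the family $\{b_v\}$. The key point is that the local conditions \eqref{eq:P1} and \eqref{eq:P3}, together with the product formula in the last line of (ii), are exactly the obstructions in the Hasse principle for the existence of a global inner product $b$ on $M$ with prescribed localizations: the determinant of $b$ at every place is pinned down by \eqref{eq:P3} and Lemma \ref{lem:det_iso_formula} so it is globally consistent (a square class in $\bQ^\times/\bQ^{\times2}$ determined by local data with trivial product obstruction), and the Hasse-Witt invariants $\epsilon_v(b_v^f)$ satisfy the sum-zero relation, so by the weak Hasse principle for quadratic forms (the Hasse-Minkowski machinery, or rather its $\Gamma$-equivariant refinement via the decomposition $W_\Gamma(\bQ)=\bigoplus_\chi W_\Gamma(\bQ;\chi)$ and the fact that each $W_\Gamma(\bQ;f)$ injects into $\bigoplus_v W_\Gamma(\bQ_v;f)$) there is a $\bQ[\Gamma]$-form $(M,b,\alpha)$ with $b\otimes\bQ_v\cong b_v$ for all $v$. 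Having produced $b$, I invoke \eqref{eq:P2} at each finite $v$ to get an $\alpha$-stable even unimodular $\bZ_v$-lattice in $(M_v,b_v)$, and at $v=\infty$ the signature is $(r,s)$ with $\idx_\alpha=\iota$. One then assembles a global $\alpha$-stable even unimodular $\bZ$-lattice by the usual local-to-global lattice-gluing argument: choose any $\alpha$-stable $\bZ$-lattice $\Lambda_0$ in $M$, and for each of the finitely many bad primes adjust $\Lambda_0$ at $p$ to agree with the prescribed local even unimodular lattice, which is possible because the genus of even unimodular $\bZ_p$-lattices with semisimple $\alpha$ is handled place-by-place and the adjustments at distinct primes are independent; the result is even unimodular of signature $(r,s)$ with the $(F,\iota)$-isometry $\alpha$.

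The main obstacle is the (ii)$\Rightarrow$(i) gluing step: one must verify that the local data $\{b_v\}$ really does descend to a single global $\bQ[\Gamma]$-form, which requires knowing that the obstruction to the equivariant Hasse principle for inner products lives precisely in the Hasse-Witt product relation and the determinant square-class relation already imposed in (ii) — in particular that there is no further $\Gamma$-equivariant obstruction beyond these, and that the prescribed behavior at $v=\infty$ (the index $\iota$, rather than just a signature) is compatible with the global form. This is where the decomposition of $W_\Gamma(\bQ)$ into $\chi$-isotypic pieces and the injectivity of each piece into its localizations does the real work, reducing everything to the classical Hasse-Minkowski theorem applied type-component by type-component; the type $2$ components are split hence automatically glued, the type $0$ components are governed by \eqref{eq:P3}, and the type $1$ components are the genuinely new content handled by the product formula on the $\epsilon_v(b_v^f)$.
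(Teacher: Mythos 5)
Your overall architecture matches the paper's: localize for (i)$\Rightarrow$(ii), and glue by a Hasse principle for (ii)$\Rightarrow$(i). But two of the steps you gesture at are handled in the paper by specific tools you have not named, and in one place you cite the wrong theorem. These are genuine gaps rather than mere omissions of detail.

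First, in (i)$\Rightarrow$(ii), Lemma~\ref{lem:det_iso_formula} and the archimedean signature only pin down the \emph{product} $\det b^+ \cdot \det b^-$ (modulo squares) and the real sign of each factor; they do not determine $\det b^+$ and $\det b^-$ separately at the finite places. The paper splits the two by two extra inputs: at odd $p$, the vanishing of the projection of $\partial[M_p,b_p,t]$ to $W_\Gamma(\bF_p;\pm 1)$ combined with Lemma~\ref{lem:dimpartial} gives $v_p(\det b^\pm)\equiv v_p(F_{12}(\pm1))$; and at $p=2$, the Zassenhaus spinor-norm formula \eqref{eq:spinorvaluation} together with Lemma~\ref{lem:val_of_sp_norm} (an isometry of an even unimodular $\bZ_2$-lattice has spinor norm of valuation $\equiv 0$ or $1$ mod $2$ according to $\det t$) forces the valuation of $\det b^-$, after which the product relation gives $\det b^+$. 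Your attribution of the ``factor of $2$'' to Theorem~\ref{th:detailedQ2} is backwards: that theorem constructs local inner products with prescribed $\det M^\pm$ and is used in the \emph{sufficiency} direction; it imposes no constraint on a \emph{given} lattice. The actual constraint comes from the spinor norm, which does not appear in your sketch.

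Second, in (ii)$\Rightarrow$(i), the gluing is not quite a ``$\Gamma$-equivariant refinement of Hasse--Minkowski,'' and your appeal to injectivity of $W_\Gamma(\bQ;f)\to\bigoplus_v W_\Gamma(\bQ_v;f)$ is the wrong shape of statement: injectivity says a global class is detected locally, but what you need is \emph{existence} of a global form with prescribed local isomorphism types, subject to the single product-formula obstruction, together with matching the index at $\infty$ (which a Witt class does not see). The paper gets this for $f\in I_1$ by passing to hermitian forms over $E^f$: writing each $b_w^f = b[\lambda_w^f]$, converting Hasse--Witt data into Brauer classes via the corestriction formula (Lemma~\ref{lem:CorFormula}), invoking the exact sequence of twisting groups (Proposition~\ref{prop:exsqofTw}, which is the Brauer-group Hasse principle), and then choosing a global hermitian form with that determinant and with index $\iota(f)/2$ via the classification of hermitian forms over number fields. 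For $f\in I_0$ it is the plain Hasse principle for quadratic forms, and for $I_2$ the split structure makes the form unique. Your plan would become correct if you replaced the $W_\Gamma(\bQ)$ language with this hermitian-form gluing and spelled out how the index constraint at $\infty$ is absorbed; as written, the step where you ``produce $b$'' is unsupported.
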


To prove this theorem, we need to consider localizations of each $M^f$. 
Let $v\in \cV$. We have
\[ \begin{split}
M^\pm_v = [\bQ[X]/(X\mp 1)]^{n_\pm} \otimes \bQ_v = [\bQ_v[X]/(X\mp 1)]^{n_\pm}, 
\end{split}\]
and 
\begin{equation}\label{eq:M^f_w}
M^f_w 
= M^f \otimes_{E^{f,\sigma}} E^{f,\sigma}_w 
= [E^f \otimes_{E^{f,\sigma}} E^{f,\sigma}_w]^{n_f}
= [E^f_w]^{n_f}
\end{equation}
for $f\in I_1\cup I_2$ and $w\in \cW(f;v)$. 
Note that there exists the canonical isomorphism
between the completion $(E^{f,\sigma})_w$ and
the fixed subalgebra $(E^f_w)^\sigma$ of 
$E^f_w = E^f\otimes_{E^{f,\sigma}} (E^{f,\sigma})_w$.
They are identified and denoted by $E_w^{f,\sigma}$. 

\begin{lemma}
Let $p$ be a prime. We have
\[ M^1_p = \prod_{f\in I_1} \prod_{w\in \cW(f;p)} M^f_w, \quad
M^2_p = \prod_{\{f, f^*\}\subset I_2} \prod_{w\in \cW(f;p)} M^f_w \]
and each $M^f_w$ is the direct product of $n_f$ copies of $E^f_w$.
Furthermore, each $E^f_w$ satisfies one of 
{\rm (sp)}, {\rm (ur)} and {\rm (rm)} in \textup{\S\ref{ss:imageofmaps}}.
\end{lemma}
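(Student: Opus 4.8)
The statement is a purely local structural decomposition: for a fixed prime $p$, I want to (a) distribute the tensor product $-\otimes_\bQ\bQ_p$ over the finite products defining $M^1$ and $M^2$, and reindex the resulting factors by the places $w\in\cW(f;p)$; and (b) verify that each completed factor $E^f_w$ falls into exactly one of the three cases (sp), (ur), (rm) from \S\ref{ss:imageofmaps}. The plan is to handle these two parts separately, the first being bookkeeping and the second being the arithmetic content.

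For part (a), I would start from the definitions $M^1 = \prod_{f\in I_1}(E^f)^{n_f}$ and $M^2 = \prod_{\{f,f^*\}\subset I_2}(E^f)^{n_f}$ and tensor with $\bQ_p$. Since tensoring commutes with finite products and with taking $n_f$-th powers, it suffices to understand $E^f\otimes_\bQ\bQ_p$ for a single $f$. Writing $E^{f,\sigma}$ for the fixed subalgebra, which is an étale $\bQ$-algebra (in fact a number field when $f\in I_1$ is irreducible $*$-symmetric, or a product of two copies of a number field when $f\in I_2$), I would use $E^f\otimes_\bQ\bQ_p = E^f\otimes_{E^{f,\sigma}}(E^{f,\sigma}\otimes_\bQ\bQ_p) = E^f\otimes_{E^{f,\sigma}}\bigl(\prod_{w\in\cW(f;p)}E^{f,\sigma}_w\bigr) = \prod_{w\in\cW(f;p)}E^f_w$, which is exactly the identity \eqref{eq:M^f_w} already recorded in the excerpt. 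Combining with the product decompositions gives $M^1_p = \prod_{f\in I_1}\prod_{w\in\cW(f;p)}M^f_w$ and the analogous formula for $M^2_p$, with $M^f_w = (E^f_w)^{n_f}$. The only subtlety to flag is that the decomposition $(E^{f,\sigma})_w = (E^f_w)^\sigma$ — i.e. that completing the fixed algebra agrees with taking the fixed subalgebra of the completed algebra — must be invoked; this is stated in the excerpt immediately after \eqref{eq:M^f_w}, so I would simply cite it.

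For part (b), I would split according to whether $f\in I_1$ or $f\in I_2$. If $f\in I_2$, then $E^f = \bQ[X]/(ff^*) \cong \bQ[X]/(f)\times\bQ[X]/(f^*)$ with $\sigma$ swapping the factors; completing at $w$ (a place of $E^{f,\sigma}\cong\bQ[X]/(f)$) gives $E^f_w\cong K_w\times K_w$ with $\sigma$ the transposition, so $E^f_w$ is of type (sp). If $f\in I_1$, then $E^{f,\sigma}$ is a number field and $E^f$ is a degree-$2$ field extension of it on which $\sigma$ is the nontrivial automorphism; at each place $w$, $E^{f,\sigma}_w$ is a local field and $E^f_w = E^f\otimes_{E^{f,\sigma}}E^{f,\sigma}_w$ is either a field (quadratic over $E^{f,\sigma}_w$, either unramified — type (ur) — or ramified — type (rm)) or splits as $E^{f,\sigma}_w\times E^{f,\sigma}_w$ — type (sp) again. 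In every case the fixed algebra $E^{f,\sigma}_w$ is a field and one of the three listed alternatives holds, which is precisely what \S\ref{ss:imageofmaps} requires. The main obstacle, such as it is, is just being careful that "place $w$ of $E^{f,\sigma}$" is the correct indexing set and that the compatibility $(E^f_w)^\sigma = E^{f,\sigma}_w$ holds when $E^{f,\sigma}_w$ happens to split further — but this is routine étale-algebra bookkeeping rather than a genuine difficulty, and the degree-$2$ assumption ($[E^f:E^{f,\sigma}]=2$) makes the trichotomy automatic.
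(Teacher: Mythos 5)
Your proof is correct and follows essentially the same route as the paper: part (a) is the same tensor-product bookkeeping the paper records, and part (b) spells out the case analysis that the paper dismisses with ``the latter part of the argument is straightforward.'' One small slip to fix: in the parenthetical of part (a) you describe $E^{f,\sigma}$ as ``a product of two copies of a number field when $f\in I_2$''; in fact $E^{f,\sigma}$ is always a number field (for $f\in I_2$ it is the diagonal of $E^f\cong\bQ[X]/(f)\times\bQ[X]/(f^*)$, isomorphic to $\bQ[X]/(f)$) --- it is $E^f$, not $E^{f,\sigma}$, that is a product of two copies --- and your own part (b) correctly treats $E^{f,\sigma}$ as a field, so this is a writing error rather than a gap in the argument.
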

\begin{proof}
By equation \eqref{eq:M^f_w} the algebra $M^f_w$ is the direct product of 
$n_f$ copies of $E^f_w$, and 
\[ \begin{split}
M^1_p 
&= \left(\prod_{f\in I_1} M^f \right)\otimes \bQ_p
= \prod_{f\in I_1} [E^f]^{n_f}\otimes \bQ_p \\
&= \prod_{f\in I_1} \left[\prod_{w\in \cW(f;p)} E^f_w \right]^{n_f}
= \prod_{f\in I_1} \prod_{w\in \cW(f;p)} M^f_w. 
\end{split}\]
A similar calculation shows that
$M^2_p = \prod_{\{f, f^*\}\subset I_2} \prod_{w\in \cW(f;p)} M^f_w$.
The latter part of the argument is straightforward.
\end{proof}

This lemma shows that $M^1_p$ and $M^2_p$ decompose into factors $M_w^f$, 
which can be seen as $E_w^f$-modules discussed in \S\ref{sec:LocalTheory}. 
We will use the notation 
$\cW_{\rm sp}(f;v), \cW_{\rm ur}(f;v), \cW_{\rm rm}(f;v), \cW_\pm(f;v)
\subset \cW(f;v)$ as in \eqref{eq:spurrm}.
Note that $\cW(f;v) = \cW_{\rm sp}(f;v)$ for any $f\in I_2$.

\subsection{Necessity}\label{ss:necessity}
We prove the necessity (i) $\Rightarrow$ (ii) of Theorem \ref{th:LGP}. 
Let $(\Lambda, b)$ be an even unimodular lattice of signature $(r,s)$
having a semisimple $(F, \iota)$-isometry $t$. 
We identify $\Lambda\otimes \bQ$ with the algebra $M$ regarding $t$ as $\alpha$. 
Then, a family $\{b_v\}_{v\in\cV}$ of inner products on $M_v$ can be defined naturally by 
$b_v := b\otimes\bQ_v$ for each $v\in \cV$. 
It is clear that each $b_v$ has the properties \eqref{eq:P1} and \eqref{eq:P2}. 

We then verify the property \eqref{eq:P3}. 
Let $b^\pm$ denote the restriction of $b$ to $M^\pm$. 
Notice that the inner product $b^\pm\otimes\bQ_v$ on $M_v^\pm$ is the same as 
$b_v^{X\mp1} = b_v|_{M^\pm_v\times M^\pm_v}$ for any $v\in\cV$. 
We write $b^\pm_v$ for this inner product. 

\begin{lemma}
We have $\det b^\pm = D_\pm$ in $\bQ^\times/\bQ^{\times 2}$, where 
\begin{equation}\label{eq:D_pm}
D_\pm := \begin{cases}
(-1)^{(n_\pm - \iota(X\mp1))/2} \cdot |F_{12}(\pm 1)|  & \text{if $n_+$ is even} \\
(-1)^{(n_\pm - \iota(X\mp1))/2} \cdot 2 \, |F_{12}(\pm 1)| & \text{if $n_+$ is odd.}
\end{cases}
\end{equation}
\end{lemma}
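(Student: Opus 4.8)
The plan is to pin down $\det b^\pm$ as a class in $\bQ^\times/\bQ^{\times 2}$ by computing its sign together with the parity of each of its $p$-adic valuations, since a class in $\bQ^\times/\bQ^{\times 2}$ is determined by this data; comparing the outcome with the definition \eqref{eq:D_pm} of $D_\pm$ will then finish the proof.

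First I would settle the sign at the archimedean place. After the identification of $\Lambda\otimes\bQ$ with $M$ sending $t$ to $\alpha$, the isometry $\alpha$ on $(M_\infty,b_\infty)$ has index $\iota$ by hypothesis; since $M^\pm=V(X\mp1;\alpha)$ is the $(\pm1)$-eigenspace of $\alpha$, of dimension $n_\pm$, the space $(M_\infty^\pm,b_\infty^\pm)$ has signature $\bigl((n_\pm+\iota(X\mp1))/2,\ (n_\pm-\iota(X\mp1))/2\bigr)$. Hence $\sgn(\det b^\pm)=(-1)^{(n_\pm-\iota(X\mp1))/2}$, which agrees with $\sgn(D_\pm)$ because $|F_{12}(\pm1)|>0$ and the remaining factor $1$ or $2$ in $D_\pm$ is positive.

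For the valuations I would use the spinor norm. The decomposition $M=M^+\oplus M^-\oplus M^1\oplus M^2$ is orthogonal, $M^-=V(X+1;\alpha)$ is the $(-1)$-eigenspace of $\alpha$, and the characteristic polynomial of $\alpha$ on the orthogonal complement of $M^-$ is $(X-1)^{n_+}F_{12}(X)$. Applying the Zassenhaus formula \cite{Za62} to $\alpha$ on $(M_p,b_p)$ for each prime $p$ — the computation being precisely the one in the proof of the lemma yielding \eqref{eq:spinorvaluation}, which is valid over any field of characteristic $\neq2$ and uses that $n_++n_-$ is even — gives $\sn(\alpha)=\det(b_p^-)\,F_{12}(-1)$ in $\bQ_p^\times/\bQ_p^{\times 2}$; likewise $-\alpha$, whose characteristic polynomial is $F(-X)=(X+1)^{n_+}(X-1)^{n_-}F_{12}(-X)$ and whose $(-1)$-eigenspace is $M^+$, satisfies $\sn(-\alpha)=\det(b_p^+)\,F_{12}(1)$ in $\bQ_p^\times/\bQ_p^{\times 2}$. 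Now for odd $p$ the spinor norm of any isometry of the unimodular lattice $\Lambda\otimes\bZ_p$ lies in $\bZ_p^\times(\bQ_p^\times)^2$ (a classical fact; see \cite[\S92]{OM73}), so $v_p(\sn(\pm\alpha))$ is even and $v_p(\det b^\pm)\equiv v_p(F_{12}(\pm1))=v_p(D_\pm)$ modulo $2$; for $p=2$, Lemma \ref{lem:val_of_sp_norm} applied to $\alpha$ and to $-\alpha$ — both isometries of the even unimodular $\bZ_2$-lattice $\Lambda\otimes\bZ_2$, with $\det(-\alpha)=\det\alpha$ since $2n$ is even — shows $v_2(\sn(\pm\alpha))\equiv0$ if $\det\alpha=1$ and $\equiv1$ if $\det\alpha=-1$, and since $\det\alpha=F(0)$ equals $1$ exactly when $n_+$ is even, which is exactly when the extra factor in $D_\pm$ is $1$ rather than $2$, we obtain $v_2(\det b^\pm)\equiv v_2(D_\pm)$ as well. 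Combining the sign and valuation computations yields $\det b^\pm=D_\pm$ in $\bQ^\times/\bQ^{\times 2}$.

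The step that needs the most care is the odd-$p$ one: one cannot hope that $(M_p^\pm,b_p^\pm)$ by itself contains a unimodular $\bZ_p$-lattice — it need not, because $\Lambda\otimes\bZ_p$ is in general glued from lattices on the various eigenspaces of $\alpha$ — so $v_p(\det b^\pm)$ can genuinely be odd. What rescues the argument is the integrality of the spinor norm for isometries of unimodular $\bZ_p$-lattices, which is precisely the ingredient that couples $\det b^\pm$ to the value $F_{12}(\pm1)$, the role played at $p=2$ by the Zassenhaus formula together with Lemma \ref{lem:val_of_sp_norm}.
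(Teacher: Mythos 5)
Your argument is correct, and it agrees with the paper at the archimedean place, but it takes a genuinely different route at the finite places. At odd primes $p$ the paper does not invoke the spinor norm at all: it uses the vanishing of the boundary class $\partial[M_p,b_p,t]\in W_\Gamma(\bF_p)$ (which holds because a $t$-stable unimodular $\bZ_p$-lattice exists), projects onto the eigenspace summand $W_\Gamma(\bF_p;\pm1)$, and matches dimensions via Lemma~\ref{lem:dimpartial}. You replace that entire discussion by the Zassenhaus formula \eqref{eq:spinorvaluation} — correctly observing that the computation in its proof is valid over any field of characteristic $\neq 2$ — together with the classical fact that for odd $p$ the spinor norm of any isometry of a unimodular $\bZ_p$-lattice is a unit modulo squares. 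That fact is sound, but it is not recorded anywhere in the paper and is really located in \cite[\S 93]{OM73} (around Theorem 93:20), not \S 92, which treats the orthogonal group of the ambient quadratic space rather than the lattice. Your second departure is at $p=2$: the paper obtains $v_2(\det b^-)$ from \eqref{eq:spinorvaluation} and then backs out $v_2(\det b^+)$ indirectly from the identity $\det(b^+\oplus b^-)=\det b\cdot\det(b^1\oplus b^2)$ and Lemma~\ref{lem:det_iso_formula}, whereas you obtain both directly by applying Lemma~\ref{lem:val_of_sp_norm} to $\alpha$ and to $-\alpha$. The $\pm\alpha$ trick makes the argument uniform — the same spinor-norm mechanism handles every finite place and both eigenspaces — at the modest cost of importing one odd-$p$ fact from the literature instead of reusing the equivariant Witt group calculus already set up in \S 4 of the paper. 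Both proofs are valid.
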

\begin{proof}
Since we have $\det b^\pm = (-1)^{(n_\pm - \iota(X\mp1))/2} \in \bR^\times/\bR^{\times 2}$, 
it is sufficient to show that
$v_p(\det b^\pm) \equiv v_p(F_{12}(\pm1))$ mod $2$ for each odd prime $p$ and
\[ 
v_2(\det b^\pm) \equiv \begin{cases}
  v_2(F_{12}(\pm 1)) & \text{if $n_+$ is even} \\
  1 + v_2(F_{12}(\pm 1)) & \text{if $n_+$ is odd} 
\end{cases}
\]
mod $2$. Note that $v_p(\det b^\pm) \equiv \dim \partial[M_p^\pm, b_p^\pm, t] \bmod 2$. 
Let $p$ be an odd prime. 
Since $\partial[M, b, t]$ is the trivial class in $W_\Gamma(\bF_p)$, 
so is its image under the projection $W_\Gamma(\bF_p)\to W_\Gamma(\bF_p, \pm1)$. 
Thus 
\[\begin{split}
v_p(\det b^\pm) 
\equiv \dim \partial[M_p^\pm, b_p^\pm, t] 
\equiv \dim \partial\left[\bigoplus_{f\in I_1}\bigoplus_{w\in \cW_{\pm}(f;p)}
(M_w^f, b|_{M_w^f\times M_w^f}, t)  \right]
\equiv v_p(F_{12}(\pm1))
\end{split}\]
mod $2$ by Lemma \ref{lem:dimpartial}.
Let $p = 2$. It follows from \eqref{eq:spinorvaluation} that
\begin{equation}\label{eq:valdetb-}
v_2(\det b^-) 
\equiv v_2(\sn(t)) + v_2(F_{12}(-1))
\equiv \begin{cases}
  v_2(F_{12}(-1)) & \text{if $n_+$ is even} \\
  1 + v_2(F_{12}(-1)) & \text{if $n_+$ is odd}
\end{cases}
\end{equation}
mod $2$. 
If $b^1$ and $b^2$ denote the restrictions of $b$ to $M^1$ and $M^2$ respectively, 
then $\det(b^+ \oplus b^-) = \det(b) \det (b^1\oplus b^2) = \det (b^1\oplus b^2)$ 
in $\bQ^\times/\bQ^{\times 2}$. 
This implies that
\[ v_2(\det b^+) + v_2(\det b^-) 
\equiv v_2(F_{12}(1))+ v_2(F_{12}(-1)) \mod 2
\]
by Lemma \ref{lem:det_iso_formula}. Combining this and \eqref{eq:valdetb-} yields 
\[
v_2(\det b^+) 
\equiv \begin{cases}
  v_2(F_{12}(1)) & \text{if $n_+$ is even} \\
  1 + v_2(F_{12}(1)) & \text{if $n_+$ is odd}
\end{cases}\]
mod $2$. Thus the proof is complete. 
\end{proof}

This lemma shows that $\det b^\pm_v = D_\pm$ in $\bQ_v^\times/\bQ_v^{\times 2}$
for each $v\in \cV$, which is nothing but \eqref{eq:P3}.
Let $b^f$ denote the restriction of $b$ to $M^f$ for each $f\in I$. 
Then $b_v^f = b^f\otimes\bQ_v$, which implies that 
almost all $\epsilon_v(b_v^f)$ equal $0$ and 
$\sum_{v\in \cV} \epsilon_v(b_v^f) = 0$ for each $f\in I$. 
Therefore $\{ b_v \}_v$ is the required family, and the proof of 
(i) $\Rightarrow$ (ii) is complete.

\subsection{Twisting groups as Brauer groups}
In order to prove the implication (ii) $\Rightarrow$ (i) of Theorem \ref{th:LGP}, 
we relate twisting groups to Brauer groups.
In this subsection, we work with a somewhat more general setting.
Let $K$ be a field of characteristic $\neq 2$, and $E$ an extension field of $K$
with a nontrivial involution $\sigma$.
We denote by $(\lambda, \sigma)\in \Br(E^\sigma)$ the Brauer class of 
the cyclic algebra defined from $\lambda\in (E^\sigma)^\times$ and the
cyclic extension $E/E^\sigma$ (see e.g. \cite[Chapter 8, \S 12]{Sc85}).
The map $(E^\sigma)^\times \to \Br(E^\sigma)$ defined by
$\lambda\mapsto (\lambda, \sigma)$
induces the following exact sequence:
\[ 1 \to \Tw(E, \sigma)
\stackrel{(\cdot, \sigma)}{\longrightarrow} \Br(E^\sigma)
\stackrel{\Res_{E/E^\sigma}}{\longrightarrow} \Br(E) 
\]
where $\Res_{E/E^\sigma}$ is the restriction map.

If $K$ is a global field, we denote by $\cW$ the set of all places of $E^\sigma$.
For each $w\in \cW$, the inclusion $E^\times \hookrightarrow E_w^\times$ induces
a map $\Tw(E, \sigma) \to \Tw(E_w, \sigma)$. Let $w\in \cW$.
If $E_w = E\otimes_{E^\sigma}E^\sigma_w$ is a field, 
then $\Tw(E_w, \sigma)$ is of order $2$, and in particular, there exists a unique 
isomorphism $\theta_w: \Tw(E_w, \sigma)\to \bZ/2\bZ$. 
If $E_w \cong E^\sigma_w \times E^\sigma_w$
then $\Tw(E_w, \sigma)$ is a trivial group. In this case 
$\theta_w:\Tw(E_w, \sigma) \to \bZ/2\bZ$ denotes the trivial map. 

\begin{proposition}\label{prop:exsqofTw}
Assume that $K$ is a global field. The sequence
\[1\to \Tw(E, \sigma)
\stackrel{}{\longrightarrow} \bigoplus_{w\in \cW}\Tw(E_w, \sigma)
\stackrel{\sum\theta_w}{\longrightarrow} \bZ/2\bZ \]
is exact.
\end{proposition}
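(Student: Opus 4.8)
The plan is to deduce the exactness of the sequence from the exactness of the corresponding sequence of Brauer groups, exploiting the identification of $\Tw(E,\sigma)$ with a subgroup of $\Br(E^\sigma)$ via the cyclic-algebra map $(\cdot,\sigma)$ recalled just above the statement. Concretely, write $L := E^\sigma$, which is a global field since $K$ is. The first step is injectivity of $\Tw(E,\sigma) \to \bigoplus_{w\in\cW}\Tw(E_w,\sigma)$: an element $\lambda$ in the kernel has $(\lambda,\sigma)$ lying in $\Br(L)$ and becoming trivial in $\Br(L_w)$ for every place $w$ of $L$ (using that $(\lambda,\sigma)$ maps to $(\lambda_w,\sigma)$ under $\Br(L)\to\Br(L_w)$, and that the latter is the image of $\lambda_w\in\Tw(E_w,\sigma)$); by the Hasse principle for the Brauer group of a global field (the Albert–Brauer–Hasse–Noether theorem), $\Br(L)\hookrightarrow\bigoplus_w\Br(L_w)$, so $(\lambda,\sigma) = 0$ in $\Br(L)$, whence $\lambda$ is trivial in $\Tw(E,\sigma)$ by the exactness of the sequence $1\to\Tw(E,\sigma)\to\Br(L)\to\Br(E)$.

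The second step is exactness in the middle. That $\sum_w\theta_w$ kills the image of $\Tw(E,\sigma)$ is the reciprocity law: under the local invariant maps $\inv_w:\Br(L_w)\to\bQ/\bZ$, the class $(\lambda,\sigma)$ has local invariant $\tfrac12\theta_w(\lambda_w)$ at each place where $E_w$ is a field (and invariant $0$ where $E_w$ splits, matching $\theta_w = 0$ there), and $\sum_w\inv_w$ of a global class vanishes; reading this in $\tfrac12\bZ/\bZ\cong\bZ/2\bZ$ gives $\sum_w\theta_w(\lambda_w) = 0$. Conversely, suppose $(\mu_w)_{w\in\cW}\in\bigoplus_w\Tw(E_w,\sigma)$ satisfies $\sum_w\theta_w(\mu_w) = 0$. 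Form the collection of local Brauer classes $c_w := (\mu_w,\sigma)\in\Br(L_w)$; these are almost all trivial, each is killed by $\Res_{E_w/L_w}$, and their invariants sum to zero. By the surjectivity half of Hasse–Brauer–Noether, there is a global class $c\in\Br(L)$ with prescribed local invariants $\inv_w(c) = \inv_w(c_w)$; one must check $c$ actually lies in $\im((\cdot,\sigma)) = \ker(\Res_{E/L})$, i.e. that $\Res_{E/L}(c) = 0$ in $\Br(E)$, which follows because $\Res_{E/L}(c)$ is a global class over $E$ whose local invariants at all places of $E$ vanish (each place of $E$ lies over some $w$, and $\Res_{E_w/L_w}(c_w) = 0$). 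Hence $c = (\lambda,\sigma)$ for some $\lambda\in\Tw(E,\sigma)$, and by construction $\lambda$ maps to $\mu_w$ in each $\Tw(E_w,\sigma)$ (two elements of the order-$\le 2$ group $\Tw(E_w,\sigma)$ agreeing after $(\cdot,\sigma)$ are equal, by local exactness of $1\to\Tw(E_w,\sigma)\to\Br(L_w)\to\Br(E_w)$). This gives the desired preimage.

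The step I expect to be the main obstacle — really the only non-formal point — is the precise bookkeeping of the local invariants: verifying that under $(\cdot,\sigma)$ the group $\Tw(E_w,\sigma)$ maps isomorphically onto the $2$-torsion subgroup $\tfrac12\bZ/\bZ$ of $\Br(L_w)$ when $E_w/L_w$ is a (quadratic) field extension, and onto $0$ when $E_w$ splits, so that $\theta_w$ is compatible with $\tfrac12\inv_w\circ(\cdot,\sigma)$. For non-archimedean $w$ this is standard local class field theory for cyclic algebras; for archimedean $w$ one checks the two cases $L_w = \bR$, $E_w = \bC$ (where $\Br(\bR)\cong\tfrac12\bZ/\bZ$ and $\Tw(\bC/\bR) = \bR^\times/\N(\bC^\times) = \bR^\times/\bR_{>0}\cong\bZ/2\bZ$ maps isomorphically) and $E_w = L_w = \bR$ or $\bC$ (split cases, $\Tw$ trivial) by hand. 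Once this dictionary is in place, both exactness statements are immediate translations of the global reciprocity sequence $0\to\Br(L)\to\bigoplus_w\Br(L_w)\xrightarrow{\sum\inv_w}\bQ/\bZ\to 0$ restricted along $(\cdot,\sigma)$.
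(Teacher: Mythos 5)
Your proof is correct and follows essentially the same route as the paper: the paper's one-line proof invokes the Albert--Brauer--Hasse--Noether exact sequence $0\to\Br(L)\to\bigoplus_w\Br(L_w)\to\bQ/\bZ\to0$ for both $L = E^\sigma$ and $L = E$ (citing \cite[Theorem 5.7]{BT20}) and leaves the translation to twisting groups implicit. You have simply unwound that translation in full, using the identification $\Tw\hookrightarrow\Br(E^\sigma)$ via cyclic algebras, the Hasse principle over $E^\sigma$ for injectivity, reciprocity for the containment of the image in the kernel, and the Hasse principle over $E$ to see that the global class obtained from the surjectivity half really lies in $\ker(\Res_{E/E^\sigma})$.
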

\begin{proof}
This follows from the exact sequence
\[ 0\to \Br(L)\to \bigoplus_{w:\text{place}}\Br(L_w)\stackrel{\sum\inv}{\longrightarrow} 
\bQ/\bZ \to 0  \]
for $L = E$ and $E^\sigma$, see \cite[Theorem 5.7]{BT20}.
\end{proof}

We also need the following formula for Hasse-Witt invariants.

\begin{lemma}\label{lem:CorFormula}
Assume that $K$ is a local field, and let $M$ be a finitely generated free $E$-module. 
Then
\[ \epsilon((M, b[\lambda])) = \epsilon((M,b[1])) 
+ \Cor_{E^\sigma/K}(\lambda, \sigma), \]
where $\Cor_{E^\sigma/K}:\Br(E^\sigma) \to \Br(K)$ is the corestrection map.
\end{lemma}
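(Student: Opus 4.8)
The plan is to reduce the identity to the case where $M$ has $E$-rank one, and then to verify it by descending through the intermediate field $E^\sigma$.

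\textbf{Reduction to rank one.} Write $m$ for the $E$-rank of $M$. By Proposition~\ref{prop:1to1_herm} a hermitian form on $M$ of determinant $\lambda$ is isometric to $\langle\lambda\rangle\perp\langle1\rangle^{\perp(m-1)}$, so over $K$ we have isometries $b[\lambda]\cong b_\lambda\perp c$ and $b[1]\cong b_1\perp c$, where $b_\lambda := \Tr_{E/K}\langle\lambda\rangle$ is the rank-one trace form on $E$ and $c := \Tr_{E/K}\langle1\rangle^{\perp(m-1)}$ does not depend on $\lambda$. Because $E$ is a field and $\sigma$ is nontrivial, multiplication by $X$ on $E$ has an irreducible $*$-symmetric characteristic polynomial $f$ with $f(1)f(-1)\neq0$, so Lemma~\ref{lem:det_iso_formula} gives $\det b_\lambda = f(1)f(-1) = \det b_1$. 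Lemma~\ref{lem:HWsumFormula} applied to the two decompositions above then yields $\epsilon(b[\lambda])-\epsilon(b[1]) = \epsilon(b_\lambda)-\epsilon(b_1)$, and since $\Cor_{E^\sigma/K}(\lambda,\sigma)$ is independent of $m$ as well, it suffices to treat $M = E$.

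\textbf{Descent to $E^\sigma$.} As $\sigma$ is a nontrivial involution of the field $E$ we have $[E:E^\sigma] = 2$; fix $d\in(E^\sigma)^\times$ with $E = E^\sigma(\sqrt d)$. Using $\Tr_{E/K} = \Tr_{E^\sigma/K}\circ\Tr_{E/E^\sigma}$ and $x\sigma(x) = \N_{E/E^\sigma}(x)$, one computes that the binary quadratic form $q_\lambda := \Tr_{E/E^\sigma}\langle\lambda\rangle$ over $E^\sigma$ is isometric to $\langle 2\lambda,\,-2\lambda d\rangle$, so $b_\lambda = \Tr_{E^\sigma/K}(q_\lambda)$ where $q_\lambda$ has dimension $2$ and determinant $-d$ \emph{independent of $\lambda$}. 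A short manipulation of quaternion symbols in $\Br(E^\sigma)$ (using $(a,-a)=0$) gives
\[ \epsilon(q_\lambda)-\epsilon(q_1) = (2\lambda,\,-2\lambda d)-(2,\,-2d) = (\lambda,d), \]
and $(\lambda,d)$ is exactly the class $(\lambda,\sigma)$ of the quaternion algebra attached to $\lambda$ and the quadratic extension $E/E^\sigma$.

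\textbf{Transfer to $K$; the main obstacle.} It remains to transfer this equality along $\Tr_{E^\sigma/K}$, and this is where the only real difficulty lies. The transfer formula for Hasse--Witt invariants expresses $\epsilon\bigl(\Tr_{E^\sigma/K}\phi\bigr)$ as $\Cor_{E^\sigma/K}(\epsilon(\phi))$ plus a correction term depending only on $\dim\phi$, $\det\phi$ and the extension $E^\sigma/K$; since $q_\lambda$ and $q_1$ share their dimension and determinant, these corrections cancel in the difference and we obtain
\[ \epsilon(b_\lambda)-\epsilon(b_1) = \Cor_{E^\sigma/K}\bigl(\epsilon(q_\lambda)-\epsilon(q_1)\bigr) = \Cor_{E^\sigma/K}(\lambda,\sigma), \]
which, combined with the reduction step, is the assertion. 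To make this rigorous one can either invoke the explicit Scharlau transfer formula from \cite{Sc85}, or, using that $K$ is local (so that the invariant map identifies the image of $\epsilon$ with $\tfrac12\bZ/\bZ$ and $\inv_K\circ\Cor_{E^\sigma/K} = \inv_{E^\sigma}$), reduce the statement to the single equivalence $\epsilon(b_\lambda)\neq\epsilon(b_1)\iff(\lambda,\sigma)\neq0$, i.e.\ $\lambda\notin\N_{E/E^\sigma}(E^\times)$; here "$\lambda$ a norm $\Rightarrow$ equality" is immediate from Proposition~\ref{prop:1to1_herm}, while the converse is exactly what the transfer computation provides.
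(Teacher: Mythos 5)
The paper's own proof is a one-line citation: it says the rank-one case is a special case of Theorem 4.3 of Brusamarello--Chuard-Koulmann--Morales \cite{BCM03}, and the general case follows. Your proposal instead re-derives the statement, and the structure you use is sound: the reduction to rank one via Proposition~\ref{prop:1to1_herm} and Lemma~\ref{lem:HWsumFormula} is correct and actually more explicit than what the paper writes, and your descent to $E^\sigma$ is also correct --- the computation $q_\lambda \cong \langle 2\lambda,-2\lambda d\rangle$, $\det q_\lambda = -d$ independent of $\lambda$, and $\epsilon(q_\lambda)-\epsilon(q_1) = (\lambda,d) = (\lambda,\sigma)$ all check out.

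The step that is not actually proved is the transfer from $E^\sigma$ down to $K$, which you yourself identify as the main obstacle. You invoke a ``transfer formula for Hasse--Witt invariants'' in the form $\epsilon(\Tr_{E^\sigma/K}\phi) = \Cor_{E^\sigma/K}(\epsilon(\phi)) + (\text{terms depending only on }\dim\phi,\ \det\phi,\ E^\sigma/K)$, but you give no precise statement or reference for it; Scharlau's book does not contain the formula in this form, and verifying that the correction terms really factor through $(\dim,\det)$ is exactly the content of what needs to be proved. Your proposed ``alternative'' for local $K$ does not close the gap either: you reduce to showing $\lambda\notin\N_{E/E^\sigma}(E^\times)\Rightarrow\epsilon(b_\lambda)\neq\epsilon(b_1)$, and then say the converse is ``exactly what the transfer computation provides'' --- but the transfer computation is the thing you were trying to avoid, so this is circular. (The easy implication, that $\lambda$ a norm gives $b_\lambda\cong b_1$, is indeed immediate; the hard implication is genuinely equivalent to the transfer formula and requires an argument.) In short: the reduction and the intermediate computation over $E^\sigma$ are correct and illuminating, but the final corestriction step is asserted rather than proved, whereas the paper resolves precisely this point by citing \cite[Theorem 4.3]{BCM03}.
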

\begin{proof}
This is a special case of \cite[Theorem 4.3]{BCM03} if $M$ is of rank $1$. 
The result in the general case follows from that in the rank one case.
\end{proof}

\subsection{Sufficiency}
To obtain Theorem \ref{th:LGP}, we prove the sufficiency (ii) $\Rightarrow$ (i).
\smallskip\\
\textit{Proof of Theorem \textup{\ref{th:LGP}}.}
The implication (i) $\Rightarrow$ (ii) is proved in \S\ref{ss:necessity}.   
We show \textup{(ii)} $\Rightarrow$ \textup{(i)}.
Let $\{ b_v \}_{v\in\cV}$ be the family mentioned in \textup{(ii)}.
We write $b_v^{\pm} = b_v^{X\mp1}$ for short. 
Since $\det b_v^\pm = D_\pm$ for each $v\in \cV$ and 
$\sum_{v\in \cV}\epsilon_v(b_v^\pm) = 0$, there exists an inner product
$B^\pm$ on $M^\pm$ such that $B_v^\pm \cong b_v^\pm$ for each $v\in \cV$ 
by \cite[Chapter IV, Proposition 7]{Se73}. Let $f\in I_1$ and $v\in \cV$. 
For each $w\in \cW(f;v)$ the restriction of $b_v$ to $M_w^f$ is denoted by $b_w^f$. 
Because the automorphism $\alpha:M_w^f\to M_w^f$ is 
an isometry with respect to $b_w^f$, we can write 
$b_w^f = b[\lambda_w^f]$ for some $\lambda_w^f\in \Tw(E_w^f, \sigma)$
for each $w\in \cW(f;v)$.
By Lemmas \ref{lem:CorFormula} and \ref{lem:HWsumFormula}, we have 
\[ \sum_{w\in \cW(f;v)}\Cor_{E_w^{f,\sigma}/\bQ_v}(\lambda_w^f, \sigma)
= \sum_{w\in \cW(f;v)}(\epsilon_v(b[\lambda_w^f]) + \epsilon_v(b[1])) 
= \epsilon_v(b_v^f) + \epsilon_v(b[1]).
\]
Summing over $v\in\cV$ yields
$\sum_{v\in \cV}\sum_{w\in \cW(f;v)}
\Cor_{E_w^{f,\sigma}/\bQ_v}(\lambda_w^f, \sigma) = 0$.
By combining this with the commutative diagram
\[ \xymatrix{
\Tw(E_w^f, \sigma) \ar[r]^{(\cdot, \sigma)} \ar[d]^{\theta_w} & 
\Br(E_w^{f,\sigma}) \ar[rr]^-{\Cor_{E_w^{f,\sigma}/\bQ_v}} \ar[d]^\inv &&
\Br(\bQ_v) \ar[d]^\inv \\
\bZ/2\bZ \ar[r]^{\times \frac{1}{2}} & 
\bQ/\bZ \ar[rr]^\id &&
\bQ/\bZ
} \]
we get  
$\sum_{v\in \cV}\sum_{w\in \cW(f;v)}\theta_w(\lambda_w^f) = 0$. 
Thus, by Proposition \ref{prop:exsqofTw}, 
there exists $\lambda^f\in \Tw(E^f,\sigma)$ such that
its image under $\Tw(E^f, \sigma)\to \Tw(E_w^f,\sigma)$
equals $\lambda_w^f$ for any place $w$ of $E^{f,\sigma}$.
We take a hermitian form $h^f:M^f\times M^f\to E^f$ satisfying
$\det h^f = \lambda^f$ and $\idx h^f = \iota(f)/2$.
This is possible by \cite[Theorem 10.6.9]{Sc85}.

Now we define an inner product $B$ on $M$ to be
$B^+ \oplus B^- \oplus B^1 \oplus B^2$, 
where $B^1 = \bigoplus_{f\in I_1}\Tr_{E^f/\bQ}\circ h^f$
and $B^2(x,y) = \Tr_{M^2/\bQ}(x\sigma(y))$.
Since $B_v\cong b_v$ for each $v\in \cV$, 
the property \eqref{eq:P2} implies that there exists an $\alpha$-stable
even unimodular $\bZ_p$-lattice $\Lambda_p$ in $(M_p, B_p)$ for each prime $p$.
We may assume that $\Lambda_p$ coincides with the image of the direct sum 
of the ring of integers in $M$ under $M\to M_p$
for almost all $p$. Then 
\[\Lambda:= \{x\in M \mid \text{the image of $x$ under $M\to M_p$ 
belongs to $\Lambda_p$ for each prime $p$} \}\]
is an $\alpha$-stable even unimodular $\bZ$-lattice in $(M,B)$.
Since $\alpha$ is a semisimple $(F, \iota)$-isometry, the proof is complete. 
\qed

\section{Local-global obstruction}\label{sec:LGO}
Let $F\in \bZ[X], r, s\in \bZ_{\geq 0}$ and $\iota \in \Idx_{r,s}(F)$ be the same
as in the previous section, and let $\cB$ denote the set of families 
$\{b_v\}_{v\in \cV}$ of inner products on $M_v$ such that each $b_v$ 
has the properties \eqref{eq:P1}--\eqref{eq:P3} and that
$\#\{ (v,f) \in \cV\times I \mid \epsilon_v(b_v^f) = 1 \}$ is finite. 

\begin{proposition}
If $F$ satisfies the condition \eqref{eq:Sqcd} then $\cB$ is not empty.
\end{proposition}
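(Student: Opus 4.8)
The plan is to construct, for each place $v\in\cV$, a single inner product $b_v$ on $M_v$ satisfying \eqref{eq:P1}--\eqref{eq:P3}, with the sum condition $\sum_v\epsilon_v(b_v^f)=0$ holding for each $f\in I$ automatically thanks to a global existence theorem for quadratic forms. First I would handle the finite places. For an odd prime $p$: the condition \eqref{eq:Sqcd} forces $v_p(F(1)),v_p(F(-1))\equiv 0\bmod 2$, so by Theorem~\ref{th:localcdforUL} (and more precisely by the refinement in Proposition~\ref{prop:detailedneq2}) there is an $\alpha$-stable unimodular $\bZ_p$-lattice in $(M_p,b_p)$ for a suitable $b_p$; since $p$ is odd this lattice is automatically even, so \eqref{eq:P2} holds, and by the "furthermore" clauses of Proposition~\ref{prop:detailedneq2} we can arrange $\det M^\pm_p = u_\pm F_{12}(\pm1)$ for prescribed units $u_\pm$, hence match \eqref{eq:P3} (recall $n_+$ even implies $n_+\equiv n_-\bmod 2$ by type-$0$ parity, so both branches of \eqref{eq:P3} are consistent). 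For $p=2$: condition \eqref{eq:Sqcd} gives exactly hypotheses \textup{(a)} and \textup{(b)} of Theorem~\ref{th:detailedQ2}, so that theorem produces an inner product $b_2$ on $M_2$ containing an $\alpha$-stable even unimodular $\bZ_2$-lattice, and its "furthermore" clause gives $\det M^\pm_2 = D_\pm$ exactly as in \eqref{eq:D_pm}, i.e.\ \eqref{eq:P3} at $v=2$.

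Next I would treat the archimedean place $v=\infty$. Here we need an inner product $b_\infty$ on $M_\infty$ such that $\alpha$ is an isometry of index $\iota$ (this is \eqref{eq:P2} at $\infty$) and such that $\det b_\infty^{X\mp1}=D_\pm$ in $\bR^\times/\bR^{\times2}$. By Proposition~\ref{prop:realize_idx} there is a semisimple isometry with characteristic polynomial $F$ and index $\iota$ on some real inner product space; restricting to $M^\pm_\infty$, the eigenspace for $\pm1$ has signature with index $\iota(X\mp1)$, hence determinant $(-1)^{(n_\pm-\iota(X\mp1))/2}$, which is precisely $D_\pm$ in $\bR^\times/\bR^{\times2}$ (the factors $|F_{12}(\pm1)|$ and the power of $2$ are positive reals, hence squares). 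So \eqref{eq:P3} is satisfied at $\infty$ as well.

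Now I would check consistency across all places: we must verify that the $\bQ$-classes $D_\pm$ (which are well-defined rational square classes by \eqref{eq:D_pm}) have local square classes agreeing with $\det b_v^{X\mp1}$ at every $v$ — this is built into the construction above — and that the collection $\{b_v\}$ forms a coherent family, i.e.\ only finitely many $\epsilon_v(b_v^f)$ are nonzero. Finiteness is clear since for all but finitely many $p$ we may take $b_p$ to be the "standard" form coming from the ring of integers, which is unramified. The sum formula $\sum_v\epsilon_v(b_v^f)=0$ is \emph{not} needed as a hypothesis for membership in $\cB$ for this particular construction if we instead argue directly: actually, re-reading the definition of $\cB$, the sum-zero condition is part of \eqref{eq:P3}'s companion clause in Theorem~\ref{th:LGP}(ii) but $\cB$ only demands \eqref{eq:P1}--\eqref{eq:P3} plus finiteness of the exceptional set; so once \eqref{eq:P1}--\eqref{eq:P3} hold at each $v$ and the exceptional set is finite, we are done. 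The main obstacle I anticipate is the bookkeeping at $p=2$: one must confirm that the determinant prescription in Theorem~\ref{th:detailedQ2} is compatible with the global square class $D_\pm$ (in particular the $2u_\pm f(\pm1)$ vs.\ $u_\pm f(\pm1)$ dichotomy matching the parity of $n_+$, and the constraint $u_+u_-=(-1)^n$ matching $D_+D_-=(-1)^n|F_{12}(1)F_{12}(-1)|$ or its $2$-adic twist), and to juggle the cases $F(1)F(-1)=0$ versus $F(1)F(-1)\neq0$ uniformly. Everything else is an assembly of the cited local results.
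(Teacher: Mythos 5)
Your proposal follows essentially the same route as the paper: assemble $b_\infty$ from Proposition~\ref{prop:realize_idx}, $b_p$ for odd $p$ from Proposition~\ref{prop:detailedneq2}, and $b_2$ from Theorem~\ref{th:detailedQ2}, with \eqref{eq:Sqcd} supplying exactly the local hypotheses; you also correctly observe that membership in $\cB$ requires only \eqref{eq:P1}--\eqref{eq:P3} and finiteness, not the sum condition $\sum_v\epsilon_v(b_v^f)=0$ (which is what Theorem~\ref{th:LGP} is about). The one place where your write-up is vaguer than the paper is the finiteness of the exceptional set: saying one may take ``the standard form from the ring of integers'' at almost all $p$ is imprecise, since \eqref{eq:P2} and \eqref{eq:P3} still have to hold. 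The paper instead invokes the ``furthermore'' clause of Proposition~\ref{prop:detailedneq2}: for odd $p$ unramified in all $E^f/\bQ$ we have $\cW_{\rm rm}=\emptyset$, so $b_p$ can be chosen so that each $(M_p^f,b_p^f)$ separately contains an $\alpha$-stable unimodular lattice, whence $\epsilon_p(b_p^f)=0$ because unimodular $\bZ_p$-lattices at odd $p$ have trivial Hasse--Witt invariant; the analogous vanishing for $b_p^\pm$ follows by projecting $\partial[M_p,b_p,\alpha]=0$ to $W_\Gamma(\bF_p;\pm1)$. Tightening that step would make your argument match the paper's proof completely.
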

\begin{proof}
We can take an inner product $b_\infty$ on $M_\infty$ satisfying
\eqref{eq:P1}--\eqref{eq:P3} by Proposition \ref{prop:realize_idx}. 
Let $p$ be a prime. By the assumption \eqref{eq:Sqcd} we have 
$v_p(F(1))\equiv v_p(F(-1))\equiv 0 \bmod 2$ and $(-1)^nF(1)F(-1) = 1$ mod squares
if $F(1)F(-1) \neq 0$. Hence there exists an inner product $b_p$ on $M_p$
satisfying \eqref{eq:P1}--\eqref{eq:P3} by Proposition \ref{prop:detailedneq2} 
or Theorem \ref{th:detailedQ2}.
Moreover, if 
\begin{equation}\label{eq:*unr}
\text{$p\neq 2$ and $p$ is unramified in $E^f/\bQ$}, \tag{$*$}  
\end{equation}
then we may assume that $(M_p^f, b_p^f)$ contains an $\alpha$-stable (even) 
unimodular lattice for each $f\in I_1\cup I_2$.
In this case we have $\epsilon_p(b_p^f) = 0$ for all $f\in I_1\cup I_2$
because any unimodular $\bZ_p$-lattice ($p\neq 2$) has the trivial Hasse-Witt 
invariant. 
Furthermore, because the image of 
$\partial[M_p^\pm,b_p^\pm, \pm1] = \partial[M_p, b_p, \alpha]$ 
under the projection $W_\Gamma(\bF_p) \to W_\Gamma(\bF_p;\pm1)$
equals $0$, the space $(M_p^\pm, b_p^\pm)$ 
contains a unimodular $\bZ_p$-lattice and
we have $\epsilon_p(b_p^\pm) = 0$ again.
 
Let $\{b_v\}_v$ be a family chosen as above. Almost all primes 
satisfy \eqref{eq:*unr} and thus
$\#\{ (v,f) \in\allowbreak \cV\times I \mid \epsilon_v(b_v^f) = 1 \} < \infty$. 
\end{proof}

In the following we assume that $F$ satisfies the condition \eqref{eq:Sqcd}.

\subsection{Obstruction group and obstruction map}\label{ss:OGandOM}
Set $C(I) := \{ \gamma: I\to \bZ/2\bZ \} = (\bZ/2\bZ)^{\oplus I}$ where 
$I = I_0\cup I_1$, and define a map $\eta : \cB \to C(I)$ by
\[ \{b_v\}_v \mapsto (f \mapsto \sum_{v\in \cV} \epsilon_v(b_v^f)). \]
Theorem \ref{th:LGP} means that 
there exists an even unimodular lattice of signature $(r, s)$ having 
a semisimple $(F, \iota)$-isometry 
if and only if there exists a family $\beta = \{b_v\}_v$ 
such that $\eta(\beta) = \bm{0}$.
We define a group and map, which will be called
the \textit{obstruction group} and \textit{obstruction map}, 
to describe when such a family exists.
We remark that $\gamma \in C(I)$ is the zero map if and only if 
$\gamma \cdot c = 0$ for all $c\in C(I)$, where 
$\gamma \cdot c := \sum_{f\in I} \gamma(f) c(f)$.

First, for two distinct elements $f, g \in I$ we define a set $\Pi_{f,g}$ of prime 
numbers as follows:
\begin{itemize}
\item For $f,g \in I_1$, a prime number $p$ belongs to $\Pi_{f,g}$ if and only if
\begin{itemize}
\item there exist irreducible $*$-symmetric factors $\phi$ and $\psi$ of 
$f$ and $g$ in $\bZ_p[X]$ respectively which are divisible in $\bF_p[X]$ by 
a common irreducible, $*$-symmetric polynomial. 
\end{itemize}
\item For $f\in I$, a prime number $p$ belongs to $\Pi_{f,X\mp1} = \Pi_{X\mp1, f}$ 
if and only if
\begin{itemize}
\item $n_\pm \geq 3$, or $n_\pm = 2$ and $D_\pm \neq -1$ in 
$\bQ_p^\times/\bQ_p^{\times 2}$ where $D_\pm$ is defined in \eqref{eq:D_pm}; and
\item there exists an irreducible $*$-symmetric factor $\phi$ of $f$ 
in $\bZ_p[X]$ which is divisible by $X\mp1$ in $\bF_p[X]$.
\end{itemize}
\end{itemize}
In particular, if $n_\pm = 1$
then $\Pi_{f,X\mp 1} = \Pi_{X\mp 1, f} = \emptyset$ for all $f\in I$. 
If $n_\pm = 2$ then $\Pi_{f,X\mp 1}$ depends on $\iota$ because so does $D_\pm$. 

Then, we define an equivalence relation $\sim$ on $C(I)$ as the one generated by 
the following relation $R$:
\[ R(\gamma, \gamma') \iff \gamma' = \gamma + \bm{1}_{\{f, g\}}
\text{ for some $f,g \in I$ such that $\Pi_{f,g}$ is not empty}. \]
Here $\bm{1}_H$ is the characteristic function of $H\subset I$.

\begin{theorem}\label{th:equivalenceclass}
The image $\im \eta \subset C(I)$ of $\eta$ coincides with an equivalence class 
with respect to $\sim$.
\end{theorem}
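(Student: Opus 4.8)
The plan is to prove the two inclusions $\im\eta\subseteq(\text{an equivalence class})$ and $(\text{that class})\subseteq\im\eta$ separately. For the first, one shows that $\eta$ takes $\sim$-equivalent values on any two families in $\cB$; for the second, that modifying a family locally realizes exactly the moves generated by $R$. Throughout I would think of each $b_v$ as built from its restrictions $b_v^f$ together with the hyperbolic factor $M_v^2$, and track how changing one $b_v^f$ (i.e. changing a twisting parameter $\lambda_w^f$) affects the finite collection of Hasse--Witt invariants $\epsilon_v(b_v^f)$.

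\textbf{Step 1: $\eta(\cB)$ lies in a single $\sim$-class.} Take two families $\beta=\{b_v\}$, $\beta'=\{b_v'\}$ in $\cB$. By the local classification of inner product spaces (Proposition~\ref{prop:1to1_herm} and the $\bQ_v$-theory), at each place $v$ the spaces $M_v$ carrying $\alpha$ as an isometry with a given determinant on each block are classified by the tuple of Hasse--Witt invariants, subject to the constraint that the global sum of invariants over a single block vanishes (this is \cite[Chapter IV, Proposition 7]{Se73}, already invoked in \S\ref{sec:LGP}). Fixing all blocks except possibly moving from $b_v^f$ to another admissible local form changes $\epsilon_v(b_v^f)$ by $1$; but since $\alpha$ must remain an isometry and the local lattice conditions \eqref{eq:P2} must persist, such a change is \emph{possible at $p$} precisely when $p$ lies in some $\Pi_{f,g}$: the condition for $E^f_w$ and $E^g_w$ (or $M^\pm$) to allow a simultaneous flip of Hasse--Witt invariants, keeping the discriminant \eqref{eq:P3} and the $\alpha$-stable even unimodular lattice, is exactly the combinatorial criterion defining $\Pi_{f,g}$ (Theorem~\ref{th:imtheofTw_to_W} tells us when $\im\partial_{M_w^f,\alpha_w}$ is large enough to absorb a flip; the case of $M^\pm$ is governed by $n_\pm\geq 3$ or $n_\pm=2,D_\pm\neq-1$). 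Because the discrepancy $\beta\mapsto\beta'$ can be written as a finite product of such elementary flips (adjust one block at a time, using that both lie in $\cB$ so all global constraints hold), $\eta(\beta)$ and $\eta(\beta')$ differ by a sum of characteristic functions $\bm 1_{\{f,g\}}$ with $\Pi_{f,g}\neq\emptyset$, hence $\eta(\beta)\sim\eta(\beta')$.

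\textbf{Step 2: the whole $\sim$-class is attained.} Conversely, fix $\beta\in\cB$ and suppose $\gamma\sim\eta(\beta)$; it suffices to realize a single elementary move $\gamma'=\eta(\beta)+\bm 1_{\{f,g\}}$ with $\Pi_{f,g}\ni p$. Here I would pick such a prime $p$, keep $b_v$ unchanged for $v\neq p$, and alter $b_p$ by flipping the Hasse--Witt invariant of the $f$- and $g$-blocks simultaneously at $p$. The defining property of $\Pi_{f,g}$ guarantees there is an alternative admissible choice of $b_p$ with the same determinants \eqref{eq:P3} and still containing an $\alpha$-stable even unimodular $\bZ_p$-lattice, because one may shift $\lambda_w^f$ and $\lambda_{w'}^g$ at the relevant places $w,w'$ of $E^{f,\sigma}$, $E^{g,\sigma}$ (or modify $M_p^\pm$ in the mixed case), using Theorem~\ref{th:imtheofTw_to_W} and Proposition~\ref{prop:NScdforZ2EUL}/Proposition~\ref{prop:detailedneq2} to preserve \eqref{eq:P2}; the simultaneous flip of the two blocks is needed precisely so that the $p$-local discriminant of the whole space — equivalently the total Hasse--Witt class — is unchanged, keeping the local lattice condition intact. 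The new family $\beta'$ lies in $\cB$ and has $\eta(\beta')=\gamma'$. Iterating realizes every $\gamma$ in the $\sim$-class.

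\textbf{Main obstacle.} The delicate point is Step~1's claim that any two elements of $\cB$ are connected by elementary flips each of which stays inside $\cB$ — that is, that the constraint ``contains an $\alpha$-stable even unimodular lattice at every prime'' does not secretly forbid some flips that the invariant count would allow. This is where the precise shape of the sets $\Pi_{f,g}$ matters, and where one must invoke Theorem~\ref{th:imtheofTw_to_W} (and, at $p=2$, Theorem~\ref{th:detailedQ2}) to see that whenever a common irreducible $*$-symmetric factor mod $p$ exists, the images $\im\partial_{M_w^f,\alpha_w}$ and $\im\partial_{M_{w'}^g,\alpha_{w'}}$ are jointly large enough (each containing an odd-dimensional class, in the relevant $W_\Gamma(\bF_p;\pm1)$ or the nontrivial class for $\bar\alpha\neq\pm1$) to absorb a paired flip; and, symmetrically, that when \emph{no} such common factor exists the two blocks are rigidly tied to fixed invariants so no flip between them is possible. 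Handling $p=2$ and the $M^\pm$-blocks (where the condition involves $n_\pm$ and $D_\pm$) uniformly with the $I_1$--$I_1$ case is the bookkeeping that will take the most care.
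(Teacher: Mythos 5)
Your high-level plan matches the paper's: prove both inclusions, with your Step~2 corresponding to Proposition~\ref{prop:im>EC} and your Step~1 to Proposition~\ref{prop:im<EC}. The realization of elementary moves (Step~2) is essentially correct and parallels the paper. The genuine gap is in Step~1, and you already flag it as the ``main obstacle'' but do not resolve it. You assert that the discrepancy between two families $\beta,\widehat\beta\in\cB$ ``can be written as a finite product of such elementary flips,'' but give no mechanism for producing the pairing $\{f,g\}$ with $\Pi_{f,g}\neq\emptyset$ at each offending prime.

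The missing mechanism is this. For any $\beta=\{b_v\}\in\cB$ and any prime $p$ one has $\partial[M_p,b_p,\alpha]=0$ in $W_\Gamma(\bF_p)$ (this is exactly the lattice condition in \eqref{eq:P2} via Theorem~\ref{th:ULvanish}). Projecting this vanishing onto each isotypic component $W_\Gamma(\bF_p;\chi)$ of the decomposition of the equivariant Witt group, and comparing $\beta$ with $\widehat\beta$, shows that whenever $\partial[b_{w_0}^f]\neq\partial[\widehat b_{w_0}^f]$ for some $f$ and some place $w_0$ of $E^{f,\sigma}$ above $p$, there \emph{must} exist another $g$ and place $u_0$ with $\partial[b_{u_0}^g]\neq\partial[\widehat b_{u_0}^g]$ lying in the \emph{same} $W_\Gamma(\bF_p;\chi)$. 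The shared $\chi$ is what gives the common irreducible $*$-symmetric factor of $f_{w_0}$ and $g_{u_0}$ modulo $p$, i.e.\ $p\in\Pi_{f,g}$. That is the step that converts the vanishing of the boundary map into pairs — and it is precisely what you cannot get from local classification of forms alone. The paper first normalizes the $M^\pm$-blocks (its Claim~1, projecting onto $W_\Gamma(\bF_p;\pm1)$, which is where the $n_\pm\geq3$ or $n_\pm=2,D_\pm\neq-1$ conditions enter via Lemma~\ref{lem:partial_inv}(i)), then handles the $I_1$-blocks by descending induction on the number of offending places (its Claim~2), and finally descends on the number of offending primes.

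There is a second gap you do not mention: at $p=2$ the map $\partial_{M_w^f,\alpha_w}$ is \emph{not} injective at ramified places $w\in\cW_{\rm rm}(f;2)$ (Lemma~\ref{lem:partial_inv}(ii) only covers $w\notin\cW_{\rm rm}(f;2)$). After matching boundary classes at $p=2$ one can still have discrepancies in the Hasse--Witt invariants; the paper closes this by comparing total invariants $\epsilon_2(b_2)=\epsilon_2(\widehat b_2)$ (both families contain an even unimodular $\bZ_2$-lattice of the same discriminant, forcing equality) and pairing the remaining ramified-place mismatches, which again land in $W_\Gamma(\bF_2;1)$ and so produce pairs $\{f,g\}$ with $2\in\Pi_{f,g}$. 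Your proposal treats $p=2$ as just another prime, which is not sufficient.

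One smaller inaccuracy: you describe the families in $\cB$ as subject to ``the constraint that the global sum of invariants over a single block vanishes.'' That is not part of the definition of $\cB$; it is exactly the condition whose failure $\eta$ measures, and the content of Theorem~\ref{th:LGP} is that its vanishing is what detects global realizability. Phrasing the classification this way risks making Step~1 circular.
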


This theorem will be proved in the next subsection. Set 
$C_\sim(I) := 
\{ c\in C(I) \mid c(f) =\allowbreak c(g)\text{ if } \Pi_{f,g}\neq \emptyset \}$.
Notice that if $\gamma \sim \gamma'$ then $\gamma\cdot c = \gamma'\cdot c$ 
for any $c\in C_\sim(I)$. Hence, the map 
\begin{equation*}
C_\sim(I) \to \bZ/2\bZ, \; c\mapsto \eta(\beta)\cdot c
\end{equation*}
is defined independently of the choice of $\beta\in\cB$ 
by Theorem \ref{th:equivalenceclass}. 
The following proposition shows that this map factors through the 
\textit{obstruction group} $\Omega := C_\sim(I)/\{\text{constant maps}\}$. 
The induced homomorphism $\Omega \to \bZ/2\bZ$ is called the \textit{obstruction map} 
and denoted by $\ob$. 

\begin{proposition}\label{prop:etabeta_has_even1}
We have $\eta(\beta)\cdot \bm{1}_I = 0$ for any $\beta \in \cB$. 
\end{proposition}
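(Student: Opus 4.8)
The plan is to show that $\eta(\beta)\cdot \bm{1}_I = \sum_{f\in I}\sum_{v\in\cV}\epsilon_v(b_v^f) = 0$ by recognizing the inner double sum as the total Hasse--Witt obstruction of a global inner product space, which must vanish by the Hasse--Minkowski-type reciprocity for Hasse--Witt invariants (every such invariant is $0$ at almost all places and the sum over all places is $0$). Concretely, for a fixed $\beta = \{b_v\}_v \in \cB$, I would first interchange the order of summation to get $\eta(\beta)\cdot\bm{1}_I = \sum_{v\in\cV}\bigl(\sum_{f\in I}\epsilon_v(b_v^f)\bigr)$, and then relate $\sum_{f\in I}\epsilon_v(b_v^f)$ to $\epsilon_v(b_v)$ using the Hasse--Witt sum formula (Lemma \ref{lem:HWsumFormula}), after accounting for the type $2$ part.

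The key point is that $M_v = M_v^0 \oplus M_v^1 \oplus M_v^2$ orthogonally with respect to $b_v$, and $M_v^0 = \bigoplus_{f\in I_0} M_v^f$, $M_v^1 = \bigoplus_{f\in I_1} M_v^f$, while $M_v^2$ is split (its restriction $b_v^2$ is metabolic/hyperbolic of determinant $(-1)^{\deg F_2/2}$ by Lemma \ref{lem:det_iso_formula}, hence has trivial Witt class and controlled Hasse--Witt invariant). Applying the iterated sum formula $\epsilon(b\oplus b') = \epsilon(b) + \epsilon(b') + (\det b, \det b')$, I would express $\epsilon_v(b_v)$ as $\sum_{f\in I}\epsilon_v(b_v^f) + \epsilon_v(b_v^2) + (\text{a sum of quaternion symbols } (\det b_v^{f}, \det b_v^{g}))$. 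The crucial observation is that for each fixed $v$, the determinants $\det b_v^f$ are fixed elements of $\bQ_v^\times/\bQ_v^{\times 2}$: for $f \in I_0$ they are given by \eqref{eq:P3} in terms of $D_\pm$ (independent of the choice of $\beta$), for $f\in I_1$ the determinant of $b_v^f = \mathrm{Tr}_{E^f_v/\bQ_v}\circ h$ is determined by $F$ alone (via $\det b_v^f = F_{12}(1)F_{12}(-1)$ restricted to the $f$-part, or more precisely fixed in $\bQ_v^\times/\bQ_v^{\times 2}$ by Lemma \ref{lem:det_iso_formula} and Lemma \ref{lem:tr_hermitian}/Proposition \ref{prop:1to1_herm}), and similarly $\det b_v^2$ is fixed. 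Hence all the correction terms $(\det b_v^f, \det b_v^g)$ and $\epsilon_v(b_v^2)$ are $\beta$-independent quantities, and $\det b_v = \prod_f \det b_v^f \cdot \det b_v^2$ is also fixed; in particular $\epsilon_v(b_v) - \sum_{f\in I}\epsilon_v(b_v^f)$ is a quantity $c_v \in \bZ/2\bZ$ that does not depend on $\beta$. Summing over $v$ and using that $\sum_{v\in\cV}\epsilon_v(b_v) = 0$ (the reciprocity law for Hasse--Witt invariants of the global inner product space obtained by gluing the $b_v$ — here one uses that a global space realizing these local determinants exists because $\det b_v$ is the localization of a global class, as in \cite[Chapter IV]{Se73}, so the global space has a well-defined total Hasse--Witt invariant $0$), one gets $\eta(\beta)\cdot\bm{1}_I = -\sum_{v\in\cV} c_v$, a fixed element of $\bZ/2\bZ$ independent of $\beta$.

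Finally, to pin this fixed value to $0$, I would evaluate it on the particular family constructed in the proof of the preceding proposition (the one showing $\cB \neq \emptyset$): there, for almost all primes $p$ one has $\epsilon_p(b_p^f) = 0$ for all $f$, and by inspection $\epsilon_p(b_p) = 0$ as well (the space contains a unimodular $\bZ_p$-lattice, $p$ odd), so $c_p = 0$ for almost all $p$; at the remaining finitely many places one checks directly, using the explicit local constructions in \S\ref{sec:LocalTheory}, that $c_v = 0$. Alternatively — and this is cleaner — one observes that $\sum_v c_v = \sum_v \bigl(\epsilon_v(b_v) - \sum_f \epsilon_v(b_v^f)\bigr) = \bigl(\sum_v \epsilon_v(b_v)\bigr) - \sum_f \bigl(\sum_v \epsilon_v(b_v^f)\bigr)$, but each interior term $\sum_v \epsilon_v(b_v^f)$ is required to be $0$ for $\beta \in \cB$ only when... no: in fact the definition of $\cB$ does \emph{not} impose $\sum_v\epsilon_v(b_v^f)=0$, only finiteness. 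So I would instead run the reciprocity argument on each $b_v^f$ separately: for each fixed $f\in I$, the determinants $\det b_v^f$ assemble (over $v$) into the localizations of a single global element of $\bQ^\times/\bQ^{\times 2}$, hence there is a global inner product space with these local components, whose total Hasse--Witt invariant is $0$; but that total invariant is precisely $\sum_v \epsilon_v(b_v^f)$ — wait, that would force each $\sum_v \epsilon_v(b_v^f) = 0$, which is too strong. The resolution, and the main obstacle I anticipate, is that global realizability of a family of local Hasse--Witt invariants requires not just matching determinants but the sum condition itself; so the honest argument is the one via $\epsilon_v(b_v)$: I would carefully verify that $\sum_{v}\epsilon_v(b_v) = 0$ holds (this is legitimate because $B_v \cong b_v$ for the global $B$ constructed exactly as in the proof of Theorem \ref{th:LGP}(ii)$\Rightarrow$(i) — such a $B$ exists since determinants glue — and a global space has vanishing total Hasse--Witt invariant), and that $\sum_v\bigl(\epsilon_v(b_v^2) + \sum_{\{f,g\}}(\det b_v^f,\det b_v^g)\bigr) = 0$ by the same reciprocity applied to the fixed global determinant data. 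Subtracting gives $\eta(\beta)\cdot\bm{1}_I = \sum_{v}\sum_{f\in I}\epsilon_v(b_v^f) = 0$. The delicate bookkeeping — splitting off $M^2_v$, getting the cross-terms $(\det b_v^f,\det b_v^g)$ to cancel globally, and justifying the global gluing of $b_v$ into $B$ with $B_v\cong b_v$ at every place — is where the real work lies; once that is in place, the three reciprocity applications (to $b_v$, to $b_v^2$, and to the collection of cross-terms) close the argument.
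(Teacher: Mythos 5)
Your overall strategy coincides with the paper's: use Lemma~\ref{lem:HWsumFormula} (equivalently, the iterated formula $\epsilon(b\oplus b') = \epsilon(b)+\epsilon(b')+(\det b,\det b')$) to express $\epsilon_v(b_v) - \sum_{f\in I}\epsilon_v(b_v^f)$ as a quantity determined by the fixed local determinant data and the split type-$2$ part, and then invoke reciprocity. The paper packages the ``cross-term'' bookkeeping by comparing against an explicit global reference form $B = B^+\oplus B^-\oplus\bigoplus_{f\in I_1}B^f\oplus\bigoplus B^f$ (with $B^\pm$ having Gram matrix $\diag(D_\pm,1,\dots,1)$ and $B^f = \Tr_{M^f/\bQ}(x\sigma(y))$), so that Lemma~\ref{lem:HWsumFormula} yields $\epsilon_v(b_v)-\sum_f\epsilon_v(b_v^f) = \epsilon_v(B)-\sum_f\epsilon_v(B^f)$ at every place, and the right-hand side sums to zero over $v$ because $B$ and the $B^f$ are global objects. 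Your more hands-on splitting into $\epsilon_v(b_v^2)$ plus the symbols $(\det b_v^f,\det b_v^g)_v$ is the same content, just unpackaged.

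However, there is a genuine gap in your justification that $\sum_{v\in\cV}\epsilon_v(b_v)=0$. You propose to glue the family $\{b_v\}_v$ into a global inner product $B$ with $B_v\cong b_v$ at every place ``since determinants glue,'' appealing to the construction in the proof of Theorem~\ref{th:LGP}(ii)$\Rightarrow$(i). But that construction crucially uses the hypothesis $\sum_v\epsilon_v(b_v^f)=0$ for every $f\in I$, which is \emph{not} part of the definition of $\cB$; and more fundamentally, the Hasse--Minkowski existence theorem for global quadratic forms with prescribed localizations requires the sum of local Hasse--Witt invariants to vanish — that is exactly what one cannot assume here without circularity. You correctly diagnose this issue for the individual $b_v^f$ a few lines earlier, but then make the same circular appeal for $b_v$ itself. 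The correct argument, as in the paper, is different in spirit: one observes that the properties \eqref{eq:P1}--\eqref{eq:P3} pin down the local isomorphism class of $b_v$ at every place and force $b_v\cong\Lambda_{r,s}\otimes\bQ_v$ for a \emph{pre-existing} global even unimodular lattice $\Lambda_{r,s}$ of signature $(r,s)$ (at $v=\infty$ by the signature from \eqref{eq:idxmapsum}; at odd $p$ by the existence of an even unimodular $\bZ_p$-lattice and the determinant; at $p=2$ by Proposition~\ref{prop:Z2-EUL} and the fixed discriminant). Then $\sum_v\epsilon_v(b_v) = \sum_v\epsilon_v(\Lambda_{r,s}\otimes\bQ_v) = 0$ by reciprocity applied to the global lattice $\Lambda_{r,s}$, not to any form glued out of the $b_v$'s. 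Your fallback — evaluating the $\beta$-independent constant $\sum_v c_v$ on a particular explicit family — would also work in principle, but you did not carry it out, and it is more laborious than the $\Lambda_{r,s}$ argument.
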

\begin{proof}
Let $B^\pm$ be an inner product on $M^\pm$ whose Gram matrix is
$\diag (D_\pm, 1,\cdots,1)$, and 
$B^f$ an inner product on $M^f$ defined by
\[ B^f(x,y) = \Tr_{M^f/\bQ}(x\sigma(y)) \quad\text{for $x,y \in M^f$}, \]
for each $f\in I_1\cup I_2$. Then, set
$B := B^+\oplus B^- \oplus \bigoplus_{f\in I_1} B^f
\oplus \bigoplus_{\{f, f^*\}\subset I_2} B^f$.
Let $\beta = \{b_v\}_v\in \cB$. By Lemma \ref{lem:HWsumFormula} we have
$ \epsilon_{v}(b_v) - \sum_{f\in I} \epsilon_v(b_v^f) 
= \epsilon_{v}(B) - \sum_{f\in I} \epsilon_v(B^f).$
Summing over $v\in\cV$ yields
\[ \sum_{v\in\cV}\epsilon_{v}(b_v) - \sum_{v\in\cV}\sum_{f\in I} \epsilon_v(b_v^f) 
= \sum_{v\in\cV}\epsilon_{v}(B) - \sum_{v\in\cV}\sum_{f\in I} \epsilon_v(B^f)
= 0 \]
since $B$ and $B^f$ are global objects.
Furthermore, we have $\sum_{v\in\cV}\epsilon_{v}(b_v) = 0$ because
$b_v \cong\allowbreak \Lambda_{r,s}\otimes \bQ_v$ for all $v\in \cV$, where
$\Lambda_{r,s}$ is an even unimodular $\bZ$-lattice of signature $(r,s)$, 
see \cite[\S 10]{BT20}.
Therefore we get 
$\eta(\beta)\cdot \bm{1}_I 
= \sum_{v\in\cV}\sum_{f\in I} \epsilon_v(b_v^f)
=0$.
\end{proof}

If there exists $\beta\in \cB$ with $\eta(\beta) = \bf{0}$ then 
the obstruction map $\ob:\Omega \to \bZ/2\bZ$ is zero.  
To prove Theorem \ref{th:main1}, we also need the converse.  
We start by describing the equivalence class in $C(I)$ containing $\bm{0}$. 
Let $G(F)$ denote the graph such that the vertices are all elements in $I$, and that
two distinct vertices $f,g\in I$ are joined if and only if $\Pi_{f,g} \neq \emptyset$. 
Then $C_\sim(I)$ is the set of maps in $C(I)$ which are constant on each connected 
component of $G(F)$. We remark that the graph $G(F)$ may depend on $\iota$.

\begin{proposition}
The subset $C_\sim(I)^\perp 
:= \{ \gamma\in C(I) \mid \gamma\cdot c = 0 \text{ for any $c\in C_\sim(I)$} \}$
is the equivalence class containing $\bm{0}$ in $C(I)$. 
\end{proposition}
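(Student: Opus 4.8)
The plan is to unwind the two descriptions and reduce the statement to an elementary fact about the $\bF_2$-incidence structure of the graph $G(F)$. Note first that $C(I)$ is a finite-dimensional $\bF_2$-vector space (as $I = I_0\cup I_1$ is finite) and that $\gamma\cdot c=\sum_{f\in I}\gamma(f)c(f)$ is a nondegenerate symmetric bilinear form on it. Since the relation $R$ is symmetric and $\bm{1}_{\{f,g\}}+\bm{1}_{\{f,g\}}=\bm{0}$, a short chain argument shows that the equivalence class of $\bm{0}$ is exactly the $\bF_2$-linear span $Z\subseteq C(I)$ of the edge indicators $\bm{1}_{\{f,g\}}=\bm{1}_{\{f\}}+\bm{1}_{\{g\}}$, where $\{f,g\}$ ranges over the edges of $G(F)$ (i.e. over pairs of distinct elements with $\Pi_{f,g}\neq\emptyset$). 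On the other side, as recalled just before the statement, $C_\sim(I)$ is the set of maps constant on each connected component of $G(F)$; writing $V_1,\dots,V_k$ for these components, $C_\sim(I)$ is spanned by $\bm{1}_{V_1},\dots,\bm{1}_{V_k}$. Hence it remains to show $Z=C_\sim(I)^{\perp}$.

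For the inclusion $Z\subseteq C_\sim(I)^{\perp}$: both endpoints of an edge of $G(F)$ lie in a single component $V_i$, so for $c\in C_\sim(I)$, which is constant on $V_i$, one has $(\bm{1}_{\{f\}}+\bm{1}_{\{g\}})\cdot c=c(f)+c(g)=0$; thus every generator of $Z$, and therefore $Z$ itself, is orthogonal to $C_\sim(I)$. For the reverse inclusion, let $\gamma\in C_\sim(I)^{\perp}$. Pairing $\gamma$ against $\bm{1}_{V_i}\in C_\sim(I)$ gives $\sum_{f\in V_i}\gamma(f)=0$, so $\gamma$ has even support inside each component. Fix $i$, list the support of $\gamma$ inside $V_i$ as $a_1,\dots,a_{2m}$, and join each pair $a_{2j-1},a_{2j}$ by a path in $V_i$ (possible since $V_i$ is connected); telescoping the edge indicators along such a path yields $\bm{1}_{\{a_{2j-1}\}}+\bm{1}_{\{a_{2j}\}}\in Z$, and summing over $j$ shows that the function agreeing with $\gamma$ on $V_i$ and vanishing off $V_i$ lies in $Z$. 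Summing over $i$ gives $\gamma\in Z$. Therefore $Z=C_\sim(I)^{\perp}$, which is precisely the assertion. (Alternatively, once $Z\subseteq C_\sim(I)^{\perp}$ is known one can finish by a dimension count: $\dim_{\bF_2}C_\sim(I)^{\perp}=|I|-k$ by nondegeneracy, while $\dim_{\bF_2}Z=|I|-k$ because the left kernel of the $\bF_2$-incidence matrix of $G(F)$ consists exactly of the functions constant on components; combined with the first inclusion this forces equality.)

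I do not expect a genuine obstacle here: the whole argument is linear algebra over $\bF_2$ together with connectedness of the components of $G(F)$. The only point requiring a little care is the inclusion $C_\sim(I)^{\perp}\subseteq Z$, where one must actually use connectedness — through the path-telescoping (equivalently spanning-tree) argument, or equivalently through the rank of the incidence matrix — and not merely the global parity of the support of $\gamma$.
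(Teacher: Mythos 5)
Your proof is correct and takes essentially the same approach as the paper: both directions hinge on identifying the equivalence class of $\bm{0}$ with the span of the edge-indicators $\bm{1}_{\{f,g\}}$, on the orthogonality $\bm{1}_{\{f,g\}}\cdot c = 0$ for $c\in C_\sim(I)$, and, for the reverse inclusion, on the parity constraint from $\bm{1}_{V_i}\in C_\sim(I)$ combined with telescoping edge-indicators along paths in a connected component (the paper phrases this as an induction decrementing $\#\Supp(\gamma)$ by $2$, you as pairing up the whole support at once — the same argument). Your alternative dimension count via nondegeneracy and the rank of the $\bF_2$-incidence matrix is a tidy shortcut the paper does not use, but the core reasoning matches.
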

\begin{proof}
Let $\cE$ denote the equivalence class containing $\bm{0}$.  
We start by showing the inclusion $\cE \subset C_\sim(I)^\perp$. 
Let $f,g\in I$ be distinct elements with $\Pi_{f,g}\neq \emptyset$. 
For any $c\in C_\sim(I)$ we have 
\[ \bm{1}_{\{f, g\}}\cdot c
= c(f) + c(g)
= 2c(f)
=0, \] 
which implies that $\bm{1}_{\{f, g\}} \in C_\sim(I)^\perp$. 
Since any element of $\cE$ can be expressed as a sum of maps $\bm{1}_{\{f, g\}}$
with $\Pi_{f,g}\neq \emptyset$, we have $\cE \subset C_\sim(I)^\perp$. 

Let us prove the reverse inclusion $C_\sim(I)^\perp \subset \cE$.  
We claim that for any nonzero $\gamma\in C_\sim(I)^\perp$ there exists 
$\gamma'\in C_\sim(I)^\perp$ such that $\gamma'\sim\gamma$ and 
$\#\Supp(\gamma') < \#\Supp(\gamma)$, 
where $\Supp(\gamma):=\{ f\in I \mid \gamma(f)\neq 0 \}$. 
To prove this, take an element $f\in \Supp(\gamma)$ and concider the 
connected component $H\subset I$ in $G(F)$ containing $f$. 
Since $\bm{1}_H\in C_\sim(I)$ we have 
$\sum_{h\in H} \gamma(h) = \gamma\cdot\bm{1}_H = 0$. This implies that $H$ has an element 
$g$ such that $g\neq f$ and $\gamma(g)\neq 0$. 
Then there exists a path from $f$ to $g$ 
in $G(F)$ via $h_1, h_2, \ldots, h_k \in H$, where 
$\Pi_{f,h_1}, \Pi_{h_{j},h_{j+1}} \; (j=1,\ldots, k-1)$, and 
$\Pi_{h_k,g}$ are not empty. This means that
\[\gamma' := \gamma 
+ \bm{1}_{\{f, h_1\}}
+ \bm{1}_{\{h_1, h_2\}} + \cdots 
+ \bm{1}_{\{h_{k-1}, h_k\}}
+ \bm{1}_{\{h_k, g\}} \in C_\sim(I)^\perp
\]
is equivalent to $\gamma$. Since $\Supp(\gamma') = \Supp(\gamma)\setminus\{f,g\}$, 
we get $\#\Supp(\gamma') = \#\Supp(\gamma)-2 <\#\Supp(\gamma)$. 
This completes the proof of the claim. 

Let $\gamma\in C_\sim(I)^\perp$. Then we can see that $\gamma\sim\bm{0}$ 
by repeatedly applying the claim proved above. 
Thorefore we obtain $C_\sim(I)^\perp \subset \cE$. 
\end{proof}

This proposition shows that the obstruction map vanishes only if the image 
$\im \eta\subset C(I)$ coincides with the equivalence class containing $\bm{0}$, 
or equivalently, there exists a family $\beta\in\cB$ with $\eta(\beta) = \bm{0}$. 
\smallskip
\\
\textit{Proof of Theorem \textup{\ref{th:main1}}.}
To summarize the discussion so far, we obtain Theorem \ref{th:main1}. 
\qed

\subsection{Image of the map $\eta:\cB\to C(I)$}
The purpose of this subsection is to prove Theorem \ref{th:equivalenceclass}. 
We keep the notation of the previous subsection.
For $f\in I_1$, a prime $p$ and $w\in \cW(f;p)\setminus\cW_{\rm sp}(f;p)$, 
the symbol $f_w$ will denote the irreducible factor of $f$ in $\bZ_p[X]$
corresponding to the place $w$. 
Recall the classification theorem of inner products over the $p$-adic field
$\bQ_p$: Two inner products over $\bQ_p$ are isomorphic if and only if 
they have the same dimension, same determinant, and same Hasse-Witt invariant, 
see \cite[Chapter IV, Theorem 7]{Se73}. 
We begin with the following lemma. 
For a prime $p$ and an inner product $b_p^\pm$ on $M_p^\pm$, we write
$\partial[b_p^\pm] = \partial[M_p^\pm, b_p^\pm, \pm 1]$ for short. 

\begin{lemma}\label{lem:partial_inv}
Let $p$ be a prime.
\begin{enumerate}
\item Let $p\neq 2$, and let $b_p^\pm, \widehat{b}_p^\pm$ be inner products on 
$M_p^\pm$ having the same determinant. Assume that $n_\pm >1$. 
Then $\partial[b_p^\pm] = \partial[\widehat{b}_p^\pm]$ if and only if
$\epsilon_p(b_p^\pm) = \epsilon_p(\widehat{b}_p^\pm)$.
\item Let $f\in I_1, w\in \cW(f;p)$, and 
$\lambda_w^f, \widehat{\lambda}_w^f\in \Tw(E_w^f, \sigma)$.
If $w\notin \cW_{\rm rm}(f;2)$ then the following are equivalent:
\begin{itemize}
\item[\textup{(a)}] $\lambda_w^f = \widehat{\lambda}_w^f$.
\item[\textup{(b)}] $\epsilon_p(b[\lambda_w^f]) = \epsilon_p(b[\widehat{\lambda}_w^f])$.
\item[\textup{(c)}] $\partial[M_w^f, b[\lambda_w^f],\alpha_w^f] 
= \partial[M_w^f, b[\widehat{\lambda}_w^f],\alpha_w^f]$. 
\end{itemize} 
Here $\alpha_w^f$ denotes the image of $X$ in $M_w^f$ or $E_w^f$.
\end{enumerate}
\end{lemma}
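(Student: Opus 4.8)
\noindent\emph{Proof proposal.}

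\textbf{Part (i).} One direction is immediate: over $\bQ_p$ with $p$ odd an inner product is determined up to isometry by its dimension, determinant and Hasse--Witt invariant, so $\epsilon_p(b_p^\pm)=\epsilon_p(\widehat{b}_p^\pm)$ forces $b_p^\pm\cong\widehat{b}_p^\pm$ and hence $\partial[b_p^\pm]=\partial[\widehat{b}_p^\pm]$. For the converse I would argue contrapositively with $\phi:=b_p^\pm\oplus(-\widehat{b}_p^\pm)$, an inner product space over $\bQ_p$ of even dimension $2n_\pm$ and determinant $(-1)^{n_\pm}$, a $p$-adic unit. Since $\partial$ restricted to $W_\Gamma(\bQ_p;\pm1)\cong W(\bQ_p)$ is a homomorphism, $\partial[b_p^\pm]=\partial[\widehat{b}_p^\pm]$ forces $\partial[\phi]=0$, so $\phi$ contains a unimodular $\bZ_p$-lattice by Theorem \ref{th:ULvanish}; for $p$ odd such a lattice is diagonal with unit entries, hence $\epsilon_p(\phi)=0$. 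On the other hand, expanding $\epsilon_p(\phi)$ by the formula $\epsilon(b\oplus b')=\epsilon(b)+\epsilon(b')+(\det b,\det b')$ and using the vanishing of Hilbert symbols of units over $\bQ_p$ (in particular $(-1,-1)_p=0$) together with $\det b_p^\pm=\det\widehat{b}_p^\pm$, a short computation gives $\epsilon_p(\phi)=\epsilon_p(b_p^\pm)+\epsilon_p(\widehat{b}_p^\pm)$. Comparing the two evaluations yields $\epsilon_p(b_p^\pm)=\epsilon_p(\widehat{b}_p^\pm)$. The hypothesis $n_\pm>1$ is used only to make the equivalence non-vacuous, since for $n_\pm=1$ the two forms are automatically isometric.

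\textbf{Part (ii).} The implications (a)$\Rightarrow$(b) and (a)$\Rightarrow$(c) are trivial, so the content is to prove the reverse implications, i.e.\ that the two maps
\[
\Tw(E_w^f,\sigma)\longrightarrow\bZ/2\bZ,\ \lambda\longmapsto\epsilon_p(b[\lambda]),
\qquad\text{and}\qquad
\Tw(E_w^f,\sigma)\longrightarrow W_\Gamma(k),\ \lambda\longmapsto\partial[M_w^f,b[\lambda],\alpha_w^f]
\]
are injective. If $E_w^f$ is of type (sp), then $\Tw(E_w^f,\sigma)$ is trivial and there is nothing to do; otherwise $\Tw(E_w^f,\sigma)\cong\bZ/2\bZ$, and since $w\notin\cW_{\rm rm}(f;2)$, in the type (rm) case the prime $p$ is odd. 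For the first map I would invoke Lemma \ref{lem:CorFormula}: for $\lambda,\widehat\lambda\in(E_w^{f,\sigma})^\times$ one has $\epsilon_p(b[\lambda])-\epsilon_p(b[\widehat\lambda])=\Cor_{E_w^{f,\sigma}/\bQ_p}(\lambda\widehat\lambda^{-1},\sigma)$ in $\Br(\bQ_p)$; if $\lambda,\widehat\lambda$ represent different classes then $\lambda\widehat\lambda^{-1}$ is not a norm, so $(\lambda\widehat\lambda^{-1},\sigma)\neq0$ in $\Br(E_w^{f,\sigma})$ by the exact sequence $1\to\Tw(E,\sigma)\to\Br(E^\sigma)\to\Br(E)$, and the commutative square relating the invariants and the corestriction used in the proof of Theorem \ref{th:LGP} gives $\Cor_{E_w^{f,\sigma}/\bQ_p}(\lambda\widehat\lambda^{-1},\sigma)\neq0$; hence the first map is injective. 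For the second map I would use the explicit descriptions of \S\ref{sec:LocalTheory}: by Theorem \ref{th:imtheofTw_to_W} together with Propositions \ref{prop:partial_ur} and \ref{prop:partial_rm}, whenever $p$ is odd the image $\im\partial_{M_w^f,\alpha_w^f}$ consists of exactly two elements --- $\{0,[k(\bar\alpha_w^f),b_1,\bar\alpha_w^f]\}$ in the (ur) case with $\bar\alpha_w^f\neq\pm1$, and a coset of the even-dimensional part of $W_\Gamma(k;\bar\alpha_w^f)$ otherwise --- because $W(k)$ has order $4$ when $\ch k\neq2$ (Proposition \ref{prop:WGofFF}); a surjection between two-element sets is injective, so $\partial_{M_w^f,\alpha_w^f}$ is injective. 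It remains to treat the dyadic places $w\in\cW(f;2)$, which by hypothesis are of type (ur): if $\bar\alpha_w^f\neq\pm1$ the same argument applies (using the first clause of Theorem \ref{th:imtheofTw_to_W}(ii)), and the case $\bar\alpha_w^f=\pm1$ must be handled separately.

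\textbf{The main obstacle.} I expect this last dyadic case to be the hard point. The exclusion $w\notin\cW_{\rm rm}(f;2)$ is genuinely essential: for $w\in\cW_{\rm rm}(f;2)$, Proposition \ref{prop:partial_rm} makes $\partial_{M_w^f,\alpha_w^f}$ \emph{constant} on $\Tw(E_w^f,\sigma)$, so (c) could not be equivalent to (a) there. For a place of type (ur) above $2$ with $\bar\alpha_w^f=\pm1$, the second clause of Theorem \ref{th:imtheofTw_to_W}(ii) only gives $\im\partial_{M_w^f,\alpha_w^f}\subseteq W(\bF_2)\cong\bZ/2\bZ$, so one must either show that this configuration does not arise for $f\in I_1$ --- a statement about the $2$-adic factorization of a $+1$-symmetric irreducible polynomial, in the spirit of Lemma \ref{lem:if_alpha_neq_pm1} --- or argue directly that $\partial_{M_w^f,\alpha_w^f}$ still separates the two classes of $\Tw(E_w^f,\sigma)$. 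All the remaining steps, namely the cohomological computation for the first map and the two-element-image argument for the second, are formal once the results of \S\ref{sec:LocalTheory} and \S\ref{sec:LGP} are in hand.
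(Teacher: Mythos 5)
Your proof is correct and takes a different route from the paper. The paper's proof is a one-line appeal to a direct case-by-case computation of $\partial:W_\Gamma(\bQ_p;\pm1)\to W_\Gamma(\bF_p;\pm1)$ on representatives of the (exactly two) isomorphism classes of $n_\pm$-dimensional inner products of fixed determinant. Your argument via the auxiliary form $\phi=b_p^\pm\oplus(-\widehat{b}_p^\pm)$ together with Theorem \ref{th:ULvanish} and the vanishing of Hasse--Witt invariants of unimodular $\bZ_p$-lattices for $p$ odd is a valid alternative; the sum formula for $\epsilon$, combined with the vanishing of symbols $(u,v)_p$ on units for odd $p$, does collapse the cross terms as you assert (all leftover Hilbert-symbol terms acquire even coefficients mod $2$ or are of the form $(-1,-1)_p=0$).

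\textbf{Part (ii).} Your argument for (b)$\Rightarrow$(a) via Lemma \ref{lem:CorFormula} and the invariant/corestriction square is correct and is actually more explicit than the paper's, whose justification of (b)$\Rightarrow$(c) (``same dim, det, Hasse--Witt $\Rightarrow$ isomorphic'') secretly requires this same fact, since one must know the isomorphism can be taken $\Gamma$-equivariant. The gap you flag in (c)$\Rightarrow$(a) is, however, genuine, and neither of your two escape routes works. The configuration $w\in\cW_{\mathrm{ur}}(f;2)$ with $\bar\alpha_w^f=\pm1$ \emph{does} arise for $f\in I_1$: take $f(X)=X^2+18X+1$, which is $+1$-symmetric and irreducible, with $E^f_2=\bQ_2(\sqrt{5})$ unramified over $E^{f,\sigma}_2=\bQ_2$ and $\alpha=-9+4\sqrt{5}$ reducing to $1\in\bF_4$. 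And in this case $\partial_{E^f_2,\alpha}$ does \emph{not} separate the two classes of $\Tw$: for $\lambda=1$ one has $\partial=0$ because $\cO_E$ is $\alpha$-stable unimodular, while for $\lambda=2$, Lemma \ref{lem:AUL}\textup{(ii)} gives $\partial[\,E,b_2,\alpha]=[\bF_4,\Tr_{\bF_4/\bF_2}(x\sigma_l(y)),1]$, which is zero in $W_\Gamma(\bF_2;1)\cong W(\bF_2)\cong\bZ/2\bZ$ since the form is $2$-dimensional. At the same time one computes $\epsilon_2(b_1)=1\neq0=\epsilon_2(b_2)$, so (c) holds while (a) and (b) fail. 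The paper's own proof asserts injectivity of $\partial_{M_w^f,\alpha_w^f}$ from Theorem \ref{th:imtheofTw_to_W}, but that theorem's case split in \textup{(ii)} by ``$\alpha\neq\pm1$'' must be read as ``$\bar\alpha\neq\pm1$'' (compare Proposition \ref{prop:partial_ur}, from which it is derived), and for $\bar\alpha=\pm1$ with $\ch k=2$ the image is $\{0\}$, so the map is not injective. In short, your suspicion is justified: as written, Lemma \ref{lem:partial_inv}\textup{(ii)} needs its hypothesis strengthened to exclude $w\in\cW_{\mathrm{ur}}(f;2)$ with $\bar\alpha_w^f=\pm1$ as well, and the paper's proof glosses over exactly the dyadic case you singled out.
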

\begin{proof}
\textup{(i).} 
This follows by directly computing the value of the map 
$\partial:W_\Gamma(\bQ_p, \pm1)\to W_\Gamma(\bF_p, \pm1)$ for a representative 
of each isomorphism class for the inner products on $M_p^\pm$.

\textup{(ii).} The implication (a) $\Rightarrow$ (b) is obvious. If (b) holds, 
then $b[\lambda_w^f]$ and $b[\widehat{\lambda}_w^f]$ are isomorphic because they have
the same dimension, same determinant, and same Hasse-Witt invariant. In particular, 
the condition \textup{(c)} holds.
If $w\notin \cW_{\rm rm}(f;2)$, Theorem \ref{th:imtheofTw_to_W} implies
that $\partial_{M_w^f, \alpha_w^f}$ is injective. Therefore, we have
\textup{(c) $\Rightarrow$ (a)}.
\end{proof}

\begin{proposition}\label{prop:im>EC}
For any $\beta\in \cB$ and $f, g \in I$ with $\Pi_{f,g} \neq \emptyset$, 
there exists $\widehat{\beta}\in \cB$ such that 
$\eta(\beta) + \bm{1}_{\{f, g\}} = \eta(\widehat{\beta})$.
In particular, the image $\im \eta\subset C(I)$ contains an equivalence class.
\end{proposition}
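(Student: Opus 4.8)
The plan is to prove the first (and main) assertion by a single local modification, at a prime $p\in\Pi_{f,g}$. We look for $\widehat\beta=\{\widehat b_v\}_{v\in\cV}$ with $\widehat b_v=b_v$ for all $v\ne p$, and $\widehat b_p$ obtained from $b_p$ by changing the inner product on exactly two of the orthogonal blocks in
\[ M_p \;=\; M_p^+\oplus M_p^-\oplus\!\!\bigoplus_{f'\in I_1\cup I_2}\ \bigoplus_{w\in\cW(f';p)}\! M_w^{f'},\qquad b_w^{f'}=b[\lambda_w^{f'}],\ \ \lambda_w^{f'}\in\Tw(E_w^{f'},\sigma). \]
Since $\eta$ only records $\sum_v\epsilon_v(b_v^{f''})$ for $f''\in I$ and nothing is changed away from $p$, it suffices to arrange that the modification flips $\epsilon_p(b_p^f)$ and $\epsilon_p(b_p^g)$, leaves $\epsilon_p(b_p^{f''})$ unchanged for every other $f''\in I$, keeps $\det b_p^{X\mp1}$ fixed, and preserves \eqref{eq:P1}--\eqref{eq:P3} and the finiteness condition (automatic, as only two $\epsilon_p$-values change); then $\eta(\widehat\beta)=\eta(\beta)+\bm{1}_{\{f,g\}}$ and $\widehat\beta\in\cB$. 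By Lemma \ref{lem:det_iso_formula} the determinant of each block $b_w^{f'}$ with $f'\in I_1\cup I_2$ is pinned down by $F$, so flipping a twisting parameter $\lambda_w^{f'}$ never changes $\det b_w^{f'}$; for $w\notin\cW_{\rm rm}(f';2)$ it flips $\epsilon_p(b_w^{f'})$ by Lemma \ref{lem:partial_inv}(ii), hence flips $\epsilon_p(b_p^{f'})$ by Lemma \ref{lem:HWsumFormula}.

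Suppose first $f,g\in I_1$. By the definition of $\Pi_{f,g}$ there are irreducible $*$-symmetric factors $\phi\mid f$, $\psi\mid g$ in $\bZ_p[X]$ whose reductions mod $p$ share a common irreducible $*$-symmetric factor $\bar\rho$; let $w\in\cW(f;p)$ and $w'\in\cW(g;p)$ be the corresponding places (so $E_w^f,E_{w'}^g$ are fields of type {\rm (ur)} or {\rm (rm)}, with $\Tw(E_w^f,\sigma)\cong\Tw(E_{w'}^g,\sigma)\cong\bZ/2\bZ$). I would replace $\lambda_w^f$ and $\lambda_{w'}^g$ by the other elements of these groups and leave all other data at $p$ untouched; then \eqref{eq:P1}, \eqref{eq:P3} are immediate and the two $\epsilon_p$-flips fall exactly on $f$ and $g$. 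For \eqref{eq:P2} with $p$ odd it suffices, by Theorem \ref{th:ULvanish}, that $\partial[\widehat b_p]=\partial[b_p]=0$; the change in $\partial$ equals $\partial_{M_w^f,\alpha_w^f}(\widehat\lambda_w^f)-\partial_{M_w^f,\alpha_w^f}(\lambda_w^f)$ plus the analogous term over $g$, and Theorem \ref{th:imtheofTw_to_W} shows both summands lie in the isotypic component $W_\Gamma(\bF_p;\chi)$ attached to $\bar\rho$ and, being computed from the single polynomial $\bar\rho$, are equal; having order $2$, they cancel. For $p=2$ one argues instead through Proposition \ref{prop:NScdforZ2EUL}: condition (i) as above (the two $\partial$-changes again coincide and cancel), condition (ii) because $\widehat b_2$ and $b_2$ have the same dimension, determinant and Hasse--Witt invariant (the two $\epsilon_2$-flips cancel), and condition (iii) because, by \eqref{eq:spinorvaluation}, $v_2(\sn(\alpha))$ depends only on $v_2(\det b_2|_{V(-1;\alpha)})$ and $v_2(F_{12}(-1))$, and $V(-1;\alpha)$ is a block we have not touched.

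For the case $g=X\mp1$: the definition of $\Pi_{f,X\mp1}$ gives $n_\pm\ge 3$, or $n_\pm=2$ with $D_\pm\ne-1$ in $\bQ_p^\times/\bQ_p^{\times2}$; in either case there is an inner product $\widehat b_p^\pm$ on $M_p^\pm$ with $\det\widehat b_p^\pm=D_\pm$ but $\epsilon_p(\widehat b_p^\pm)\ne\epsilon_p(b_p^\pm)$, so taking $\widehat b_p^\pm$ in place of $b_p^\pm$ preserves \eqref{eq:P3}. Since $\bar\phi$ is divisible by $X\mp1$ in $\bF_p[X]$ there is a place $w\in\cW(f;p)$ with $\bar\alpha_w^f=\pm1$; flip $\lambda_w^f$ as well. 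As before, the two $\epsilon_p$-changes fall exactly on $X\mp1$ and on $f$. For \eqref{eq:P2} the change in $\partial$ now takes place inside $W_\Gamma(\bF_p;\pm1)=W(\bF_p)$, where a dimension count together with Theorems \ref{th:ULvanish}, \ref{th:imtheofTw_to_W} and Lemma \ref{lem:partial_inv}(i) identifies each contribution with the unique nonzero even-dimensional Witt class, so they cancel; the $p=2$ case goes through Proposition \ref{prop:NScdforZ2EUL} and \eqref{eq:spinorvaluation} exactly as above, $\det b_2^\pm$ being kept fixed so that condition (iii) is unaffected. This completes the proof of the first assertion.

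For the ``in particular'': $\cB$ is nonempty (shown earlier, using \eqref{eq:Sqcd}), so $\im\eta\ne\emptyset$; and since $\sim$ is by definition generated by the moves $\gamma\mapsto\gamma+\bm{1}_{\{f,g\}}$ with $\Pi_{f,g}\ne\emptyset$, iterating the first assertion shows that $\im\eta$ contains the entire $\sim$-class of $\eta(\beta)$ for any fixed $\beta\in\cB$, which is an equivalence class. The step I expect to be the real work is the verification of \eqref{eq:P2} after the modification, above all when $p=2$ and the modified place $w$ lies in $\cW_{\rm rm}(f;2)$: there Lemma \ref{lem:partial_inv}(ii) does not apply and $\partial$ is governed by the characteristic-$2$ case of Theorem \ref{th:imtheofTw_to_W}(iii), so one must check directly (via Lemma \ref{lem:CorFormula}) that flipping the twisting parameter still flips $\epsilon_2(b_p^f)$ and does not destroy the existence of an $\alpha$-stable even unimodular $\bZ_2$-lattice, while tracking all three conditions of Proposition \ref{prop:NScdforZ2EUL} at once.
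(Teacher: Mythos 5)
Your construction matches the paper's proof almost step for step: the same single local modification at a prime $p\in\Pi_{f,g}$, the same flipping of the twisting parameters at the two places $w_0,u_0$ lying over the common factor $\bar\rho$, the same $\partial$-cancellation argument via Theorem \ref{th:imtheofTw_to_W} and Lemma \ref{lem:partial_inv}, the same appeal to Proposition \ref{prop:NScdforZ2EUL} for $p=2$, and the same separate treatment of $g=X\mp1$ by choosing $\widehat b_p^\pm$ with $\det$ fixed and $\epsilon_p$ flipped.

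The one place where you punt — the case $p=2$ with $w_0\in\cW_{\rm rm}(f;2)$ — is in fact the \emph{easiest} subcase, not the hardest. There, Theorem \ref{th:imtheofTw_to_W}(iii) says $\partial_{M_{w_0}^f,\alpha_{w_0}^f}$ is a \emph{constant} map (its output is determined by the parity of $m[l:k]\delta$ alone, not by $\lambda$), so the $\partial$-change is zero on that side, and the same constancy holds on the $g$-side since $\bar\rho$ is then forced to be $X\mp1$ (for (ur) with $\bar\alpha=\pm1$ and $k=\bF_2$ the image is already $\{0\}$, and for (rm) it is again constant). Meanwhile $\epsilon_2$ still flips: by Lemma \ref{lem:CorFormula} the change in Hasse--Witt invariant is $\Cor_{E^{f,\sigma}_{w_0}/\bQ_2}(\widehat\lambda_{w_0}^f\lambda_{w_0}^{f,-1},\sigma)$, and the commutative diagram used in the proof of Theorem \ref{th:LGP} (identifying $\theta_{w_0}$ with $\inv\circ\Cor\circ(\cdot,\sigma)$) shows this is nonzero. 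So conditions (i)--(iii) of Proposition \ref{prop:NScdforZ2EUL} are preserved with no extra effort, and $\eta$ changes by $\bm{1}_{\{f,g\}}$ exactly as claimed — which is precisely the parenthetical ``even if $w_0\in\cW_{\rm rm}(f;2)$ or $u_0\in\cW_{\rm rm}(g;2)$'' in the paper's Claim~1.
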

\begin{proof}
Let $\beta = \{b_v\}_v\in \cB$. Take $f,g\in I$ with 
$\Pi_{f,g} \neq \emptyset$, and let $p\in \Pi_{f,g}$. Assume that $f,g\in I_1$. 
By the definition of $\Pi_{f,g}$, there exist places 
$w_0\in \cW(f;p)\setminus\cW_{\rm sp}(f;p)$ and 
$u_0\in \cW(g;p)\setminus\cW_{\rm sp}(g;p)$ such that 
$f_{w_0}$ mod $p$ and $g_{u_0}$ mod $p$ have a common irreducible, $*$-symmetric 
factor $\bar{h}$ in $\bF_p[X]$.
Note that $\bar{h}$ is the minimal polynomial of $\bar\alpha_{w_0}^f$
and $\bar\alpha_{u_0}^g$, which implies that there exists an irreducible
representation $\chi$ of $\Gamma$ over $\bF_p$ such that 
$\partial[M_{w_0}^f, b_{w_0}^f, \alpha_{w_0}^f]$ and 
$\partial[M_{u_0}^g, b_{u_0}^g, \alpha_{u_0}^g]$ are in $W_\Gamma(\bF_p; \chi)$. 

According to the decompositions $M_p^f = \bigoplus_{w\in \cW(f,p)} M_w^f$ and 
$M_p^g = \bigoplus_{u\in \cW(g,p)} M_u^g$, we can write 
\begin{alignat*}{2}
b_p^f &= \bigoplus_{w\in \cW(f,p)} b_w^f, \quad &b_w^f = b[\lambda_w^f] 
\quad&\text{for some $\lambda_w^f\in \Tw(E_w^f, \sigma)$}, \\ 
b_p^g &= \bigoplus_{u\in \cW(g,p)} b_u^g, \quad &b_u^g = b[\lambda_u^g] 
\quad&\text{for some $\lambda_u^g\in \Tw(E_u^g, \sigma)$}
\end{alignat*} 
by Lemma \ref{lem:tr_hermitian}. 
Let $\widehat\lambda_{w_0}^f \in \Tw(E_w^f, \sigma)$ and 
$\widehat\lambda_{u_0}^g \in \Tw(E_u^g, \sigma)$ be elements satisfying
$\widehat\lambda_{w_0}^f \neq\allowbreak \lambda_{w_0}^f$ and 
$\widehat\lambda_{u_0}^g\neq \lambda_{u_0}^g$, and set
\begin{equation}\label{eq:hat_p}
\widehat{b}_p^f := b[\widehat\lambda_{w_0}^f]\oplus \bigoplus_{w\neq w_0} b_w^f, \quad
\widehat{b}_p^g := b[\widehat\lambda_{u_0}^g]\oplus \bigoplus_{u\neq u_0} b_u^g, 
\end{equation}
\[\begin{split}
&\widehat{b}_p := b_p^0 \oplus \widehat{b}_p^f \oplus \widehat{b}_p^g \oplus 
\left(\bigoplus_{k\in I_1\setminus \{f,g\}} b_p^k\right) \oplus b_p^2, \quad\text{and}\\
&\widehat\beta := \{\widehat{b}_v\}_v \quad\text{where $\widehat{b}_v = b_v$ for $v\neq p$}.
\end{split}
\]
Here $b_p^0$ and $b_p^2$ are the restrictions of $b_p$ to 
$M_p^0$ and $M_p^2$ respectively.

\textit{\textup{Claim 1:} We have $\widehat\beta\in \cB$.}
Since $\widehat{b}_v = b_v$ for any $v\neq p$, 
each $\widehat{b}_v$ ($v\neq p$) has the properties \eqref{eq:P1}--\eqref{eq:P3}, 
and $\#\{ (v,f) \in \cV\times I \mid \epsilon_v(\widehat{b}_v^f) = 1 \}$ is finite. 
Furthermore, \eqref{eq:P1} and \eqref{eq:P3} are obvious for $\widehat{b}_p$.  
Therefore, it suffices to show that $\widehat{b}_p$ has the property \eqref{eq:P2}.
By using Theorem \ref{th:imtheofTw_to_W} and Lemma \ref{lem:partial_inv}, 
some calculations show that 
\[ 
\partial[M_{w_0}^f, b[\widehat\lambda_{w_0}^f], \alpha_{w_0}^f]
+ \partial[M_{u_0}^g, b[\widehat\lambda_{u_0}^g], \alpha_{u_0}^g]
- \partial[M_{w_0}^f, b[\lambda_{w_0}^f], \alpha_{w_0}^f]
- \partial[M_{u_0}^g, b[\lambda_{u_0}^g], \alpha_{u_0}^g]
= 0. 
\]
Then 
\[ \begin{split}
\partial[M_p, \widehat{b}_p, \alpha]
&= \partial[M_p, b_p, \alpha] 
- \partial[M_{w_0}^f, b[\lambda_{w_0}^f], \alpha_{w_0}^f]
- \partial[M_{u_0}^g, b[\lambda_{u_0}^g], \alpha_{u_0}^g] \\
& \quad
+ \partial[M_{w_0}^f, b[\widehat\lambda_{w_0}^f], \alpha_{w_0}^f]
+ \partial[M_{u_0}^g, b[\widehat\lambda_{u_0}^g], \alpha_{u_0}^g]\\
&= 0, 
\end{split}
\]
which implies that $(M_p, \widehat{b}_p)$ contains an $\alpha$-stable unimodular lattice
by Theorem \ref{th:ULvanish}. 
If $p$ is odd then the lattice is even and we are done.  
Let $p=2$. We use Proposition \ref{prop:NScdforZ2EUL}. 
The condition (i) of Proposition \ref{prop:NScdforZ2EUL} has already been proved. 
The condition (iii) follows from the formula \eqref{eq:spinorvaluation}. 
For the condition (ii), it is sufficient to show that $\widehat{b}_2\cong b_2$. 
We have
\[ \epsilon_2(\widehat{b}_2) - \epsilon_2(b_2)
= \epsilon_2(b[\widehat\lambda_{w_0}^f]) + \epsilon_2(b[\widehat\lambda_{u_0}^g])
- \epsilon_2(b[\lambda_{w_0}^f]) - \epsilon_2(b[\lambda_{u_0}^g])
= 0
\]
by Lemmas \ref{lem:HWsumFormula} and \ref{lem:partial_inv} 
(even if $w_0\in \cW_{\rm rm}(f;2)$ or $u_0\in \cW_{\rm rm}(g;2)$). 
Hence $\widehat{b}_2$ and $b_2$ have 
the same dimension, same determinant, and same Hasse-Witt invariant, 
which means that $\widehat{b}_2\cong b_2$ as required. 
Therefore $(M_2, \widehat{b}_2)$ contains an $\alpha$-stable even unimodular lattice, 
that is, $\widehat{b}_2$ has the property \eqref{eq:P2}. 
This completes the proof of Claim 1.  
\smallskip

\textit{\textup{Claim 2:} We have $\eta(\beta) + \bm{1}_{\{f,g\}} = \eta(\widehat\beta)$.}
It is obvious that 
$\eta(\beta)(k) = \eta(\widehat\beta)(k)$ for $k\neq f,g$, and we have
\[ \eta(\widehat\beta)(f) - \eta(\widehat\beta)(f) 
= \epsilon_p(\widehat{b}_p^f) - \epsilon_p(b_p^f)
= \epsilon_p(b[\widehat\lambda_{w_0}^f]) - \epsilon_p(b[\lambda_{w_0}^f])
= 1.
\]
The same calculation yields $\eta(\widehat\beta)(g) - \eta(\widehat\beta)(g) = 1$, 
and thus we arrive at the claim.
\smallskip

The proof for the case $f,g \in I_1$ has been completed now. 
Let $f(X) = X\mp 1$. In this case, define an inner product 
$\widehat{b}_p^f = \widehat{b}_p^\pm$ to satisfy
$ \det \widehat{b}_p^\pm = D_\pm$ and 
$\epsilon_p(\widehat{b}_p^\pm)\neq \epsilon_p(b_p^\pm)$, and set
\begin{equation}\label{eq:tildebeta_0}
\widehat{b}_p := \widehat{b}_p^f \oplus \widehat{b}_p^g \oplus 
\left(\bigoplus_{k\in I\setminus \{f,g\}} b_p^k\right) \oplus b_p^2 \quad\text{and}\quad
\widehat\beta := \{\widehat{b}_v\}_v \quad\text{where $\widehat{b}_v = b_v$ for $v\neq p$}.
\end{equation}
Here $\widehat{b}_p^g$ is defined as in \eqref{eq:hat_p} if $g\in I_1$ and  
as above if $g\in I_0$. 
Then the above two claims hold similarly and the proof is complete. 
\end{proof}

\begin{proposition}\label{prop:im<EC}
The image $\im \eta \subset C(I)$ is contained in an equivalence class.
In other words $\eta(\beta)\sim \eta(\widehat\beta)$ for any $\beta, \widehat\beta \in \cB$. 
\end{proposition}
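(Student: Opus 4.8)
The plan is to show that any two families $\beta,\widehat\beta\in\cB$ differ, up to the equivalence relation $\sim$, by a sequence of local modifications, and that each such modification changes $\eta$ only by adding a characteristic function $\bm 1_{\{f,g\}}$ with $\Pi_{f,g}\neq\emptyset$ (or by $\bm 1_{\{f,g,h,\dots\}}$ decomposable into such terms). Combined with Proposition \ref{prop:im>EC}, this yields Theorem \ref{th:equivalenceclass}. The strategy is: fix $\beta=\{b_v\}_v$ and $\widehat\beta=\{\widehat b_v\}_v$; since both satisfy \eqref{eq:P3}, for every place $v$ the inner products $b_v$ and $\widehat b_v$ have the same determinant on each $M_v^{X\mp1}$, and since they both satisfy \eqref{eq:P1}, on each factor $M_w^f$ (for $f\in I_1$, $w\in\cW(f;v)$) we may write $b_w^f=b[\lambda_w^f]$ and $\widehat b_w^f=b[\widehat\lambda_w^f]$ with $\lambda_w^f,\widehat\lambda_w^f\in\Tw(E_w^f,\sigma)$. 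The difference between the two families is then encoded, place by place, by a finite collection of "flips'' $\lambda_w^f\mapsto\widehat\lambda_w^f$ at the finitely many $(v,w,f)$ where they disagree, together with changes of Hasse–Witt invariant on the $M^{X\mp1}$-parts.

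First I would reduce to a single prime at a time: since $b_v=\widehat b_v$ (hence $\eta$-contributions agree) for all but finitely many $v$, and the relation $\sim$ is generated by steps supported at one prime, it suffices to treat the case where $\beta$ and $\widehat\beta$ agree at all places except one prime $p$. Fix such a $p$. The constraint that both $b_p$ and $\widehat b_p$ satisfy \eqref{eq:P2} — i.e.\ each contains an $\alpha$-stable even unimodular $\bZ_p$-lattice — means $\partial[M_p,b_p,\alpha]=\partial[M_p,\widehat b_p,\alpha]=0$ in $W_\Gamma(\bF_p)$, and (for $p=2$) the spinor-norm and discriminant conditions of Proposition \ref{prop:NScdforZ2EUL} hold for both. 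Decompose $\partial[M_p,b_p,\alpha]=0$ according to the isotypic decomposition $W_\Gamma(\bF_p)=\bigoplus_\chi W_\Gamma(\bF_p;\chi)$. The key point is that each irreducible $\bF_p[\Gamma]$-representation $\chi$ receives contributions only from those factors $M_w^f$ (and, when $\chi$ corresponds to $X\mp1$, the parts $M_p^{X\mp1}$ together with the $w\in\cW_{\rm rm}(f;p)$ with $\bar\alpha_w=\pm1$) whose reduction mod $p$ involves $\chi$; and two such factors on which $\chi$ appears are, by definition, joined in the graph $G(F)$, i.e.\ lie in the same $\Pi_{\cdot,\cdot}$-class. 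Within each $\chi$-block, both $\partial$-values vanish, so the two local data $(\lambda_w^f)$ and $(\widehat\lambda_w^f)$ restricted to that block differ by an element of the kernel of the (block-restricted) map $\bigoplus\partial_{M_w^f,\alpha_w^f}$; by Lemma \ref{lem:partial_inv} and Theorem \ref{th:imtheofTw_to_W}, away from the ramified-at-$2$ case this map is injective on each summand, so a nonzero difference must involve at least two summands, and we can walk between them flipping one $\lambda$ at a time, each flip a valid move by (the argument of) Proposition \ref{prop:im>EC} and each changing $\eta$ by $\bm 1_{\{f,g\}}$ with $\Pi_{f,g}\ni p$.

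The remaining (and main) obstacle is the ramified-at-$2$ situation, i.e.\ factors $M_w^f$ with $w\in\cW_{\rm rm}(f;2)$, where $\partial_{M_w^f,\alpha_w^f}$ need not be injective, so a single flip $\lambda_w^f\mapsto\widehat\lambda_w^f$ does not change $\partial$ but \emph{does} change the Hasse–Witt invariant $\epsilon_2(b_w^f)$ and hence $\eta(\cdot)(f)$. I would handle this exactly as in the $(n_+,n_-)=(1,1)$ and ramified cases of Theorem \ref{th:detailedQ2} and Proposition \ref{prop:im>EC}: such a flip preserves $\partial[M_2,b_2,\alpha]=0$ and the spinor-norm condition (by \eqref{eq:spinorvaluation}, which depends only on determinants), and to keep condition (ii) of Proposition \ref{prop:NScdforZ2EUL} one flips a second $\lambda$ somewhere in the same $\chi$-block (possibly on an $M^{X\mp1}$-part, using $n_\pm$ or the value of $D_\pm$, which is precisely the content of the second bullet in the definition of $\Pi_{f,X\mp1}$) so that $\epsilon_2(\widehat b_2)=\epsilon_2(b_2)$ is restored; the net move is again a sum of $\bm 1_{\{f,g\}}$'s with $\Pi_{f,g}\neq\emptyset$. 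Finally, the changes of $\epsilon_p$ on the $M_p^{X\mp1}$-parts when $n_\pm\ge 3$ or $n_\pm=2$ with $D_\pm\neq-1$ are absorbed the same way; when $n_\pm=1$ the two families are forced to agree on $M^{X\mp1}$ (Lemma \ref{lem:partial_inv} and \eqref{eq:P3} pin down $b_v^{X\mp1}$ up to isomorphism), consistent with $\Pi_{f,X\mp1}=\emptyset$. Assembling these local moves over the finitely many bad primes shows $\eta(\beta)\sim\eta(\widehat\beta)$, completing the proof.
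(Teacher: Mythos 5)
Your proposal takes essentially the same route as the paper's proof: restrict attention to the finitely many primes where the two families differ, use that both $\partial[M_p,b_p,\alpha]$ and $\partial[M_p,\widehat b_p,\alpha]$ vanish to pair up the discrepancies inside each isotypic block $W_\Gamma(\bF_p;\chi)$ (via Lemma~\ref{lem:partial_inv} and Theorem~\ref{th:imtheofTw_to_W}), and realize the passage from $\beta$ to $\widehat\beta$ as a chain of paired flips, each of which changes $\eta$ by some $\bm 1_{\{f,g\}}$ with $p\in\Pi_{f,g}$; the ramified-at-$2$ case is handled separately using the constraint $\epsilon_2(b_2)=\epsilon_2(\widehat b_2)$. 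This is precisely the structure of the paper's argument (its Claims~1 and~2 are the $\chi=\pm1$ block and the remaining blocks, respectively, followed by the $p=2$ correction).

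Two places where you leave gaps that the paper is careful about. First, "it suffices to treat the case where $\beta$ and $\widehat\beta$ agree at all places except one prime $p$" is not a free reduction, since an intermediate family obtained by splicing $b_v$'s from $\beta$ and $\widehat\beta$ need not lie in $\cB$; the paper instead inducts on $\#V(\beta,\widehat\beta)$ and, at each step, produces $\widetilde\beta\in\cB$ with $\eta(\widetilde\beta)\sim\eta(\beta)$ and $V(\widetilde\beta,\widehat\beta)\subsetneq V(\beta,\widehat\beta)$, which is the content of your reduction but must be proved rather than assumed. Second, in the $p=2$ ramified step you assert that a compensating flip exists "somewhere in the same $\chi$-block"; the paper deduces this from the identity $\epsilon_2(b_2)=\epsilon_2(\widehat b_2)$ (both spaces contain even unimodular lattices of the same dimension and discriminant), so that $\sum_{k\in I}\epsilon_2(\widehat b_2^k)-\epsilon_2(b_2^k)=0$ and hence a second $g\in I$ with $\epsilon_2(\widehat b_2^g)\neq\epsilon_2(b_2^g)$ must exist, which also yields $2\in\Pi_{f,g}$. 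You invoke the conclusion ("so that $\epsilon_2(\widehat b_2)=\epsilon_2(b_2)$ is restored") but not the reason the partner must exist. Both of these are filled-in details rather than changes of strategy; the overall architecture of your proof coincides with the paper's.
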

\begin{proof}
Let $\beta=\{b_v\}_v, \widehat\beta = \{\widehat{b}_v\}_v\in \cB$, and set
$V(\beta, \widehat\beta) := \{ v\in \cV \mid \eta_v(\beta) \neq \eta_v(\widehat\beta) \}$, 
where $\eta_v(\beta) = (f \mapsto \epsilon_v(b_v^f)) \in C(I)$.
Note that $\eta(\beta)(f) = \sum_{v\in \cV} \eta_v(\beta)(f)$
for any $\beta\in \cB$ and $f\in I$. 
Since almost all $\epsilon_v(b_v^f)$ and $\epsilon_v(\widehat{b}_v^f)$ are trivial and 
$b_\infty \cong \widehat{b}_\infty$, the set $V(\beta, \widehat\beta)$ is a finite set of primes.
If $\#V(\beta, \widehat\beta) = 0$, then $\eta(\beta) = \eta(\widehat\beta)$ and
there is nothing to prove.
By induction on $\#V(\beta, \widehat\beta)$, 
it is sufficient to show that there exists $\widetilde\beta\in \cB$ 
such that $\eta(\widetilde\beta) \sim \eta(\beta)$ and 
$V(\widetilde\beta, \widehat\beta) \subsetneq V(\beta, \widehat\beta)$.
Let $p\in V(\beta, \widehat\beta)$.

\textit{\textup{Claim 1:} There exists $\widetilde\beta = \{\widetilde{b}_v\}_v\in \cB$ 
such that $\eta(\widetilde\beta) \sim \eta(\beta),
V(\widetilde\beta, \widehat\beta) \subset V(\beta, \widehat\beta), \allowbreak
\partial[\widetilde{b}_p^+] = \partial[\widehat{b}_p^+]$, and 
$\partial[\widetilde{b}_p^-] = \partial[\widehat{b}_p^-]$.}
If $p = 2$ then $\partial[b_2^\pm] = \partial[\widehat{b}_2^\pm]$ since 
$\det b_2^\pm = \det \widehat{b}_2^\pm$, and the claim is obvious. 
Let $p\neq 2$ and $\partial[b_p^+] \neq \partial[\widehat{b}_p^+]$.
Note that neither $n_+ = 1$ nor $n_+ = 2$ and $D_+ = -1$ in 
$\bQ_p^\times/\bQ_p^{\times 2}$ occurs.
We have $\partial[M_p, b_p, \alpha] = \partial[M_p, \widehat{b}_p, \alpha] = 0$, and
in particular, the images of them under the projection 
$W_\Gamma(\bF_p)\to W_\Gamma(\bF_p;1)$ are $0$.
Thus there exists $f\in I_1$ and $w_0\in \cW(f;p)$ such that
$\partial[b_{w_0}^f] \neq \partial[\widehat{b}_{w_0}^f]$ in $W_\Gamma(\bF_p; 1)$ 
and that $(X-1) \mid f_{w_0}$ mod $p$, where we write 
$\partial[b_{w_0}^f] = \partial[M_{w_0}^f, b_{w_0}^f, \alpha_{w_0}^f]$ simply.
Set
\[\begin{split}
&\widetilde{b}_p := \widehat{b}_p^+ \oplus b_p^- \oplus
\left( \widehat{b}_{w_0}^f \oplus \bigoplus_{w\in \cW(f:p)\setminus\{w_0\}} b_w^f \right)
\oplus \bigoplus_{k\in I_1\setminus \{f\}} b_p^k 
\oplus b_p^2
\quad\text{and}\\
&\widetilde\beta = \{\widetilde\beta_v\}_v 
\quad\text{where $\widetilde{b}_v = b_v$ for $v\neq p$}.
\end{split}\]
Then $\widetilde\beta$ belongs to $\cB$ as in 
Claim 1 of the proof of Proposition \ref{prop:im>EC}.
Furthermore
\[\begin{split}
&\eta(\widetilde\beta)(X-1) - \eta(\beta)(X-1) 
= \epsilon_p(\widehat{b}_p^+) - \epsilon_p(b_p^+)
= 1, \\
&\eta(\widetilde\beta)(f) - \eta(\beta)(f) 
= \epsilon_p(\widehat{b}_p^f) - \epsilon_p(b_p^f)
= 1
\end{split} 
\]
by Lemma \ref{lem:partial_inv}, and 
$\eta(\widetilde\beta)(k) = \eta(\beta)(k)$ for all $k\in I\setminus\{X-1, f\}$.
These equations mean that
$\eta(\widetilde\beta) = \eta(\beta) + \bm{1}_{\{X-1, f\}}$. 
Since $\Pi_{X-1,f}$ contains the prime $p$ and is non-empty, 
we get $\eta(\widetilde\beta)\sim \eta(\beta)$. 
It is clear that
$V(\widetilde\beta, \widehat\beta) \subset V(\beta, \widehat\beta)$ and 
$\partial[\widetilde{b}_p^+] = \partial[\widehat{b}_p^+]$ 
by the definition of $\widetilde\beta$.
If $\partial[\widetilde{b}_p^-] \neq \partial[\widehat{b}_p^-]$ then 
we repeat a similar procedure to obtain 
$\partial[\widetilde{b}_p^-] = \partial[\widehat{b}_p^-]$.
The proof of Claim 1 is complete. 
\smallskip

We assume that 
$\partial[b_p^+] = \partial[\widehat{b}_p^+]$ and
$\partial[b_p^-] = \partial[\widehat{b}_p^-]$ 
without loss of generality by Claim 1. Set
$D_p(\beta, \widehat{\beta}) :=\allowbreak \bigcup_{f\in I_1}\{ w\in \cW(f;p) 
\mid \partial[b_w^f] \neq \partial[\widehat{b}_w^f]\}$.

\textit{\textup{Claim 2:} There exists $\widetilde\beta\in \cB$ such that 
$\eta(\widetilde{\beta}) \sim \eta(\beta), 
V(\widetilde{\beta}, \widehat{\beta}) \subset V(\beta, \widehat{\beta})$ and
$D_p(\widetilde\beta, \widehat\beta) =\allowbreak \emptyset$}.
Use induction on $\# D_p(\beta, \widehat{\beta})$, the case 
$\# D_p(\beta, \widehat{\beta}) = 0$ being obvious. 
Let $\# D_p(\beta, \widehat{\beta}) > 0$, and choose $f\in I_1$ and $w_0\in\cW(f;p)$
satisfying $\partial[b_{w_0}^f] \neq \partial[\widehat{b}_{w_0}^f]$.
There is an irreducible representation $\chi$ of $\Gamma$ such that 
$\partial[b_{w_0}^f], \partial[\widehat{b}_{w_0}^f] \in W_\Gamma(\bF_p;\chi)$. 
Since the images of $\partial[M_p, b_p, \alpha]$ and 
$\partial[M_p, \widehat{b}_p, \alpha]$ under the projection 
$W_\Gamma(\bF_p)\to W_\Gamma(\bF_p;\chi)$ are the trivial class, 
we can take $g\in I_1$ and $u_0\in \cW(g;p)$ satisfying 
$\partial[b_{u_0}^g] \neq \partial[\widehat{b}_{u_0}^g]$ in $W_\Gamma(\bF_p;\chi)$. 
Set 
\begin{equation}\label{eq:tildebeta}
\begin{split}
&\widetilde{b}_p^f := \widehat{b}_{w_0}^f\oplus \bigoplus_{w\neq w_0} b_w^f, \quad
\widetilde{b}_p^g := \widehat{b}_{u_0}^g \oplus \bigoplus_{u\neq u_0} b_u^g,\\  
&\widetilde{b}_p := b_p^0 \oplus \widetilde{b}_p^f \oplus \widetilde{b}_p^g \oplus 
\bigoplus_{k\in I_1\setminus \{f,g\}} b_p^k \oplus b_p^2, \quad\text{and}\\
&\widetilde\beta := \{\widetilde{b}_v\}_v 
\quad\text{where $\widetilde{b}_v = b_v$ for $v\neq p$,}
\end{split}  
\end{equation}
then we have $\widetilde\beta\in \cB$,  
$\eta(\widetilde{\beta}) = \eta(\beta) + \bm{1}_{\{f,g\}} \sim \eta(\beta)$, 
and $V(\widetilde\beta,\widehat{\beta}) \subset V(\beta,\widehat{\beta})$
as in Claim 1. Because
$\# D_p(\widetilde\beta,\widehat{\beta}) = \# D_p(\beta,\widehat{\beta})-2$, 
we arrive at the claim by induction.
\smallskip

Now we take $\widetilde\beta\in \cB$ mentioned in Claim 2.
If $p\neq 2$ then the equation $\eta_p(\widetilde\beta) = \eta_p(\widehat\beta)$
follows from
$\partial[\widetilde{b}_p^+] = \partial[\widehat{b}_p^+]$,
$\partial[\widetilde{b}_p^-] = \partial[\widehat{b}_p^-]$, 
$D_p(\widetilde\beta, \widehat\beta) = \emptyset$, and Lemma \ref{lem:partial_inv}.
Therefore we get $\eta(\widetilde\beta) \sim \eta(\beta)$ and
$V(\widetilde\beta, \widehat\beta) 
\subset V(\beta, \widehat\beta)\setminus\{p\}
\subsetneq\allowbreak V(\beta, \widehat\beta)$ as required. 

Let $p = 2$. If $\eta_2(\widetilde\beta) = \eta_2(\widehat\beta)$ then
$V(\widetilde\beta, \widehat\beta) 
\subset V(\beta, \widehat\beta)\setminus\{2\}
\subsetneq V(\beta, \widehat\beta)$ and we are done. 
If $\eta_2(\widetilde\beta) \neq \eta_2(\widehat\beta)$
then assume that $\beta = \widetilde\beta$ without loss of generality.
Then there exists $f\in I$ such that 
$\eta_2(\beta)(f) \neq \eta_2(\widehat\beta)(f)$. 
Suppose that $f\in I_1$. Then we have
\[ 1 = \epsilon_2(\widehat{b}_2^f) - \epsilon_2(b_2^f)
= \sum_{w\in \cW(f;2)} \epsilon_2(\widehat{b}_w^f)
- \sum_{w\in \cW(f;2)} \epsilon_2(b_w^f), \]
and this implies that there exists $w_0\in \cW(f;2)$ satisfying 
$\epsilon_2(\widehat{b}_{w_0}^f) \neq \epsilon_2(b_{w_0}^f)$. 
Moreover, the place $w_0$ must be in $\cW_{\rm rm}(f;2)$ by Lemma \ref{lem:partial_inv}.
In particular 
$\partial[b_{w_0}^f], \partial[\widehat{b}_{w_0}^f] \in W_\Gamma(\bF_2; 1)$.
On the other hand, the equation
\[ 0 = \epsilon_2(\widehat{b}_2) - \epsilon_2(b_2) 
= \sum_{k\in I} \epsilon_2(\widehat{b}_2^k) 
- \sum_{k\in I} \epsilon_2(b_2^k) \]
implies that there exists $g\in I$ such that 
$\epsilon_2(\widehat{b}_2^g) \neq \epsilon_2(b_2^g)$.
Suppose that $g\in I_1$, then there exists $u_0\in \cW_{\rm rm}(g;2)$ satisfying
$\epsilon_2(\widehat{b}_{u_0}^g) \neq \epsilon_2(b_{u_0}^g)$ as above.
Once again, we define $\widetilde{\beta}$ as in \eqref{eq:tildebeta}.
In the case $f \in I_0$ or $g\in I_0$, we also define
$\widetilde{\beta}$ as in \eqref{eq:tildebeta_0}.
Then $\widetilde\beta \in \cB$ and $\eta(\widetilde\beta) \sim \eta(\beta)$ 
as before. Moreover, because
\[ \{k\in I \mid \eta_2(\widetilde\beta)(k) \neq \eta_2(\widehat\beta)(k)\}
= \{k\in I \mid \eta_2(\beta)(k) \neq \eta_2(\widehat\beta)(k)\} 
\setminus \{ f, g \}, 
\]
we can assume that $\eta_2(\widetilde\beta) = \eta_2(\widehat\beta)$ by induction.
Thus, we obtain $V(\widetilde\beta, \widehat\beta) 
\subset V(\beta, \widehat\beta)\setminus\{2\}
\subsetneq V(\beta, \widehat\beta)$, which completes the proof. 
\end{proof}

\noindent
\textit{Proof of Theorem \textup{\ref{th:equivalenceclass}}.}
Propositions \ref{prop:im>EC} and \ref{prop:im<EC} mean that
the image $\im\eta$ coincides with an equivalence class.
This is Theorem \ref{th:equivalenceclass}. \qed

\subsection{Applications}
As an application of Theorem \ref{th:main1}, 
we begin with the following theorem, which plays a crucial role in proving
Theorem \ref{th:mainK3}.
\begin{theorem}\label{th:mainapp}
Let $r$ and $s$ be non-negative integers with $r\equiv s \bmod 8$, and
$S\in\bZ[X]$ an irreducible $*$-symmetric polynomial of degree 
$r + s - 2$. Assume that $F(X):=\allowbreak(X-1)(X+1)S(X)$ satisfies 
the condition \eqref{eq:Sgcd}.
Then there exists an even unimodular lattice of signature $(r,s)$ having 
a semisimple $(F, \iota)$-isometry for any $\iota \in \Idx_{r,s}(F)$.
\end{theorem}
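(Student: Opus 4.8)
The plan is to deduce Theorem~\ref{th:mainapp} from Theorem~\ref{th:main1}. Since \eqref{eq:Sgcd} is already assumed and $\iota\in\Idx_{r,s}(F)$ is given, and $F=(X-1)(X+1)S$ is visibly $*$-symmetric of even degree $r+s$, the only two points to check are that $F$ satisfies \eqref{eq:Sqcd} and that the obstruction map $\ob\colon\Omega\to\bZ/2\bZ$ attached to the pair $(F,\iota)$ vanishes.

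First I would dispose of \eqref{eq:Sqcd}. Because $r\equiv s\bmod 8$, the difference $r-s$ is even, so $\deg F=r+s$ is even; and since $(X-1)(X+1)\mid F$ we have $F(1)=F(-1)=0$. Hence $|F(1)|$, $|F(-1)|$ and $(-1)^{(\deg F)/2}F(1)F(-1)$ are all equal to $0=0^2$, so \eqref{eq:Sqcd} holds trivially. In particular $\cB$ is nonempty, by the proposition preceding \S\ref{ss:OGandOM}.

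Next I would pin down the set $I=I_0\cup I_1$ of irreducible $*$-symmetric factors of $F$ and the multiplicities $n_\pm$ of $X\mp 1$ in $F$. The polynomial $S$ is irreducible and $*$-symmetric of even degree $r+s-2\ge 0$. If $\deg S=0$ then $S=1$ and $F=(X-1)(X+1)$, so $I=I_0=\{X-1,X+1\}$ with $n_+=n_-=1$. Otherwise $\deg S\ge 2$ (the value $1$ being excluded since $\deg S$ is even), so the irreducible $S$ is neither $X-1$ nor $X+1$; thus $n_+=n_-=1$, and $S$ — being irreducible, $*$-symmetric, of even degree with $S(1)S(-1)\neq 0$ — is the type~$1$ component of $F$, so that $I_2=\emptyset$ and $I=\{X-1,X+1,S\}$. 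In every case the algebra $M^{X\mp1}=M^\pm$ has $\bQ$-dimension $n_\pm=1$. Now I would take an arbitrary family $\beta=\{b_v\}_{v\in\cV}\in\cB$: for each place $v$ the restriction $b_v^{X\mp1}$ is an inner product on the one-dimensional space $M_v^\pm$, hence $\epsilon_v(b_v^{X\mp1})=0$, and summing over $v$ gives $\eta(\beta)(X-1)=\eta(\beta)(X+1)=0$. By Proposition~\ref{prop:etabeta_has_even1} we have $\sum_{f\in I}\eta(\beta)(f)=\eta(\beta)\cdot\bm{1}_I=0$, and since $S$ is the only remaining member of $I$ (when present), this forces $\eta(\beta)(S)=0$. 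Therefore $\eta(\beta)=\bm{0}$, so the map $c\mapsto\eta(\beta)\cdot c$ on $C_\sim(I)$ is identically zero, i.e.\ $\ob=0$; Theorem~\ref{th:main1} then produces an even unimodular lattice of signature $(r,s)$ carrying a semisimple $(F,\iota)$-isometry, for every $\iota\in\Idx_{r,s}(F)$.

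I do not expect a genuine obstacle: the entire difficulty of Theorem~\ref{th:mainapp} has been absorbed into Theorem~\ref{th:main1}, and the reduction is immediate precisely because the type~$0$ part of $F$ is exactly $(X-1)(X+1)$ with each linear factor of multiplicity one, which kills the Hasse--Witt contributions of $M^\pm$ automatically, while Proposition~\ref{prop:etabeta_has_even1} then determines the single remaining coordinate of $\eta(\beta)$. One could, if desired, also observe that $n_\pm=1$ makes every set $\Pi_{f,X\mp1}$ empty, so that $G(F)$ has no edges and $\Omega\cong(\bZ/2\bZ)^{\#I-1}$; but this description of $\Omega$ is not needed, since the value $\eta(\beta)=\bm{0}$ is computed directly.
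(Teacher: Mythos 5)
Your proof is correct and follows essentially the same route as the paper: observe that $I=\{X-1,X+1,S\}$ with $n_\pm=1$, so $\eta(\beta)(X\mp1)=0$ because $M_v^\pm$ is one‑dimensional, then invoke Proposition~\ref{prop:etabeta_has_even1} to force $\eta(\beta)(S)=0$, hence $\eta(\beta)=\bm{0}$ and Theorem~\ref{th:main1} applies. The only addition beyond the paper's own argument is your explicit verification that $F(1)=F(-1)=0$ makes \eqref{eq:Sqcd} hold trivially (a hypothesis of Theorem~\ref{th:main1} that the paper leaves implicit), which is a harmless and slightly more careful touch.
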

\begin{proof}
Fix $\iota \in \Idx_{r,s}(F)$ arbitrarily, and let $\beta = \{b_v\}_v \in \cB$.
Note that we have $I = \{X-1, X+1, S\}$.
For each $v\in \cV$, we have $\epsilon_v(b_v^\pm) = 0$
since $b_v^\pm$ is $1$-dimensional. 
This implies that $\eta(\beta)(X-1) = \eta(\beta)(X+1) = 0$, and furthermore  
$\eta(\beta)(S) = \eta(\beta)\cdot\bm{1}_{I} = 0$
by Proposition \ref{prop:etabeta_has_even1}.
Therefore $\eta(\beta) = \bm{0}$, and the obstruction map vanishes. 
This means that
there exists an even unimodular lattice of signature $(r,s)$ having 
a semisimple $(F, \iota)$-isometry by Theorem \ref{th:main1}.
\end{proof}

The rest of this section is devoted to the proof of Theorem \ref{th:mainapp2}.

\begin{lemma}\label{lem:nosymmfactor}
Let $f\in\bZ[X]$ be a $+1$-symmetric polynomial with
$f(1)f(-1)\neq0$, and $p$ a prime.
If $f$ has no irreducible $*$-symmetric factor in $\bZ_p[X]$ which is divisible 
by $X\mp 1$ in $\bF_p[X]$, then $v_p(f(\pm1)) \equiv 0 \bmod 2$, and moreover,
if $p=2$ then $(-1)^{(\deg f)/2}f(1)f(-1) = 1$ or $-3$ in 
$\bQ_2^\times/\bQ_2^{\times2}$. 
\end{lemma}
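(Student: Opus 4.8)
The plan is to factor $f$ over $\bZ_p$ and follow each irreducible factor through evaluation at $1$ and at $-1$. Write $f=\prod_j g_j^{a_j}\cdot\prod_k(h_kh_k^*)^{b_k}$ in $\bZ_p[X]$, where the $g_j$ run over the distinct monic $*$-symmetric irreducible factors and the $\{h_k,h_k^*\}$ over the conjugate pairs with $h_k\neq h_k^*$; since $f$ is monic and $+1$-symmetric, $f(0)=1$, so $g_j(0)$, $h_k(0)$, $h_k^*(0)$ are all units of $\bZ_p$. As $f(1)f(-1)\neq0$, no $g_j$ is $X-1$ or $X+1$, so each $g_j$ is $+1$-symmetric of even degree; set $\deg g_j=2d_j$.

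First I would prove the parity statement. From $h_k^*(\pm1)=h_k(0)^{-1}(\pm1)^{\deg h_k}h_k(\pm1)$ one gets $v_p\big(h_k(\pm1)h_k^*(\pm1)\big)=2v_p(h_k(\pm1))$, so every pair factor contributes an even amount to $v_p(f(\pm1))$. For a $*$-symmetric factor $g_j$, the hypothesis gives $(X\mp1)\nmid g_j$ in $\bF_p[X]$, hence $g_j(\pm1)\not\equiv0\bmod p$ and $v_p(g_j(\pm1))=0$. Summing over $j$ and $k$ yields $v_p(f(\pm1))\equiv0\bmod2$.

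For $p=2$ the plan is to pin down the class of $(-1)^{(\deg f)/2}f(1)f(-1)$ in $\bQ_2^\times/\bQ_2^{\times2}$. Using $\deg g_j=2d_j$ and $\deg(h_kh_k^*)=2\deg h_k$, we have $(\deg f)/2=\sum_ja_jd_j+\sum_kb_k\deg h_k$, and the pair computation above (together with $h_k(0)\in\bZ_2^\times$) gives $h_k(1)h_k^*(1)h_k(-1)h_k^*(-1)\equiv(-1)^{\deg h_k}$ modulo squares. Hence the contributions of $(-1)^{\sum_kb_k\deg h_k}$ from the sign and from the pair factors cancel, and the factors with $a_j$ even drop out, leaving
\[
(-1)^{(\deg f)/2}f(1)f(-1)\ \equiv\ \prod_{j:\ a_j\ \text{odd}}(-1)^{d_j}g_j(1)g_j(-1)\pmod{\bQ_2^{\times2}}.
\]
It then suffices to show each factor $(-1)^{d_j}g_j(1)g_j(-1)$ lies in the order-two subgroup $\{1,-3\}\subset\bQ_2^\times/\bQ_2^{\times2}$ (it is a subgroup because $(-3)^2\equiv1\bmod8$), since the displayed product will then lie there too. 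Writing the monic, $+1$-symmetric polynomial $g_j$ of degree $2d_j$ as $g_j(X)=X^{d_j}P_j(X+X^{-1})$ with $P_j\in\bZ_2[Y]$ monic of degree $d_j$, we get $g_j(1)=P_j(2)$ and $g_j(-1)=(-1)^{d_j}P_j(-2)$, so $(-1)^{d_j}g_j(1)g_j(-1)=P_j(2)P_j(-2)$. The hypothesis (note $X-1\equiv X+1\bmod2$) forces $g_j(1)=P_j(2)$ to be odd; and since $2\equiv-2\bmod4$ and $P_j$ has $\bZ_2$-coefficients, $P_j(-2)\equiv P_j(2)\bmod4$, so $P_j(2)P_j(-2)\equiv P_j(2)^2\equiv1\bmod4$. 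Thus $P_j(2)P_j(-2)$ is an odd $2$-adic integer $\equiv1$ or $5\bmod8$, i.e. its class in $\bQ_2^\times/\bQ_2^{\times2}$ is $1$ or $-3$, as wanted.

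The main obstacle I anticipate is the $p=2$ bookkeeping: arranging the powers of $-1$ so that the pair-factor signs exactly cancel the sign $(-1)^{(\deg f)/2}$, and confirming the standard reduction $g_j(X)=X^{d_j}P_j(X+X^{-1})$ with coefficients in $\bZ_2$. The conceptual point — and the step that makes the argument work — is that after discarding pair factors and even multiplicities, $(-1)^{d}g(1)g(-1)$ becomes $P(2)P(-2)$, a product of two odd $2$-adic integers congruent modulo $4$, which is automatically $\equiv1\bmod4$.
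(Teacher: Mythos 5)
Your proof is correct, and it relies on the same two ingredients as the paper's: for the parity of $v_p$, that factors of the form $k k^*$ contribute an even amount, while the remaining $*$-symmetric factors are units at $\pm1$; and for $p=2$, the trace-polynomial trick showing $P(2)P(-2)\equiv1\bmod4$. The organizational difference is that the paper Hensel-factors $f=gh$ over $\bZ_p$ according to reduction mod $p$ (so $g\bmod p=(X\mp1)^{\deg g}$, $h(\pm1)\in\bZ_p^\times$), observes that the hypothesis forces $g$ to be of type $2$ and hence of the form $kk^*$, and then applies the trace-polynomial argument once to the whole of $h$; you instead pass to the full irreducible factorization of $f$, handle the conjugate pairs $h_kh_k^*$ directly, and apply the trace-polynomial argument to each $*$-symmetric irreducible factor $g_j$ of odd multiplicity, then invoke that $\{1,-3\}$ is a subgroup of $\bQ_2^\times/\bQ_2^{\times2}$ to close up the product. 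Both routes are valid; the paper's avoids the subgroup remark by treating $h$ monolithically, while yours avoids the Hensel lift and makes the cancellation of the $(-1)^{\deg h_k}$ signs explicit.
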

\begin{proof}
We have a decomposition $f(X) = g(X)h(X)$ in $\bZ_p[X]$ such that  
($g$ mod $p) = (X\mp 1)^{\deg g}$ in $\bF_p[X]$ and $h(\pm1)\neq 0 \bmod p$. 
The assumption of the lemma means that $g$ is of type $2$ (in $\bZ_p[X]$), so 
$g$ is expressed as $g = kk^*$ for some $k\in\bZ_p[X]$. 
In this case we have
\[ v_p(g(\pm1)) 
= v_p(k(\pm1) k^*(\pm1))
= v_p(k(\pm1) k(0)^{-1}(\pm1)^{\deg k} k(\pm1))
\equiv 0 \mod 2\]
and thus 
\[v_p(f(\pm1)) 
= v_p(g(\pm1)) + v_p(h(\pm 1))
\equiv 0 \mod 2. 
\]
Moreover, if $p=2$ then
\[ (-1)^{(\deg g)/2} g(1)g(-1)
= (-1)^{(\deg g)/2} k(1)k^*(1)k(-1)k^*(-1)
= 1 \quad\text{in } \bQ_2^\times/\bQ_2^{\times2}. 
\]
Let $\phi\in\bZ_2[X]$ be the \textit{trace polynomial} of $h$, that is, 
the monic polynomial defined by the equation 
$h(X) =\allowbreak X^{(\deg h)/2} \phi(X+X^{-1})$. 
Then we have
\[ h(1) - (-1)^{(\deg h)/2}h(-1)
= \phi(2) - \phi(-2) \equiv 0 \mod 4
\]
and thus $(-1)^{(\deg h)/2}h(1)h(-1) \equiv h(1)^2 \equiv 1$ mod $4$.
This means that $(-1)^{(\deg h)/2}h(1)h(-1) =\allowbreak 1$ or 
$-3$ in $\bQ_2^\times/\bQ_2^{\times2}$. 
Since $g$ is of type $2$, we obtain 
\[ (-1)^{(\deg f)/2}f(1)f(-1)
=(-1)^{(\deg g)/2}g(1)g(-1)\cdot(-1)^{(\deg h)/2}h(1)h(-1)
= 1 \text{ or $-3$}
 \]
in $\bQ_2^\times/\bQ_2^{\times2}$.
\end{proof}

Let $F\in\bZ[X]$ be a $*$-symmetric polynomial of even degree $2n$ with 
the condition \eqref{eq:Sqcd}, and assume that $n_+\neq 1$ and $n_- \neq 1$, 
where $n_\pm$ denotes the multiplicity of $X\mp 1$ in $F$. 
In addition, let $r,s\in \bZ_{\geq 0}$, $\iota\in \Idx_{r,s}(F)$ and 
$D_\pm$ be as in \S\ref{sec:LGP}.
For a subset $H = \{f_1, \ldots, f_l\}\subset I$, 
we may identify $H$ with the polynomial $\prod_{j = 1}^l f_j^{n_j}(X)$, 
where $n_j$ is the multiplicity of $f_j$ in $F$. 
In particular, we write 
$H(\pm1) = \prod_{j = 1}^l f_j^{n_j}(\pm 1)$ and 
$\deg H = \sum_{j = 1}^l n_j\deg f_j$.
Recall that we define the graph $G(F)$ in \S \ref{ss:OGandOM}.

\begin{proposition}\label{prop:ECC_Sq}
Let $F$ and $\iota$ be as above.
Each connected component of $G(F)$ has even degree and satisfies 
the condition \eqref{eq:Sqcd}.
\end{proposition}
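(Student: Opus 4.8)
The plan is to fix a connected component $H$ of $G(F)$, write $F_H := \prod_{f\in H} f^{n_f}$, and verify the two claims place by place, reducing everything to Lemma~\ref{lem:nosymmfactor}. First, the degree. Since $\deg F$ is even and the type $1$ and type $2$ components of $F$ have even degree, $n_+\equiv n_-\pmod 2$. If $n_+$ is odd then (being $\ne 1$) $n_+\ge 3$, hence $n_-\ge 3$; and at the prime $2$ one has $X-1=X+1$ in $\bF_2[X]$ with $X+1$ a $*$-symmetric irreducible factor of itself over $\bZ_2$, so $2\in\Pi_{X+1,X-1}$ and $X-1,X+1$ lie in the same component. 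Consequently, if $X-1\in H$ and $X+1\notin H$ then $n_+$ (and $n_-$) is even, while in general $\deg F_H$ is congruent mod $2$ to $n_+$ or $n_-$ (according as $X\mp 1\in H$) plus a sum of even degrees; in every case $\deg F_H$ is even.

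For \eqref{eq:Sqcd}, first dispose of the easy cases: if $X-1\in H$ and $X+1\in H$ then $F_H(1)=F_H(-1)=0$ and all three quantities are squares; if $X-1\in H$, $X+1\notin H$ then $F_H(1)=0$ and, since $n_+$ is even, it remains only to see that $|F_H(-1)|$ is a square, i.e.\ that $v_p\bigl(\prod_{f\in H\cap I_1} f(-1)^{n_f}\bigr)\equiv 0\pmod 2$ for all primes $p$. If instead $H\subseteq I_1$, then $F_H$ is $+1$-symmetric with $F_H(1)F_H(-1)\ne 0$, so \eqref{eq:Sqcd} for $F_H$ amounts to: $v_p(F_H(\pm 1))\equiv 0\pmod 2$ for all $p$, together with the sign condition $(-1)^{(\deg F_H)/2}F_H(1)F_H(-1)\ge 0$ (the absolute values being squares once the valuations are even). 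So everything reduces to the stated parities of the $p$-adic valuations of $F_H(\pm 1)$ and, when $H\subseteq I_1$, a sign.

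The valuation statements rest on the following observation: for a fixed prime $p$, any two $f,g\in I_1$ that each admit an irreducible $*$-symmetric factor in $\bZ_p[X]$ divisible by $X\mp 1$ in $\bF_p[X]$ are joined in $G(F)$ (take $X\mp 1$ itself as their common irreducible $*$-symmetric factor over $\bF_p$), and if moreover $n_\pm\ge 3$, or $n_\pm=2$ and $D_\pm\ne -1$ in $\bQ_p^\times/\bQ_p^{\times 2}$, then each of them is joined to $X\mp 1$. Now fix $p$. If the relevant part of $F_H$ has no irreducible $*$-symmetric factor in $\bZ_p[X]$ divisible by $X\mp 1$ in $\bF_p[X]$, then Lemma~\ref{lem:nosymmfactor} already gives the desired parity. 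Otherwise some $f_0\in H\cap I_1$ does, so by the observation all such factors lie in $H$; since $X\mp 1\notin H$ in the remaining cases, the condition displayed above must fail at $p$, and because $n_\pm\ne 1$ this means $n_\pm=0$, or $n_\pm=2$ with $D_\pm=-1$ in $\bQ_p^\times/\bQ_p^{\times 2}$. In the second situation $D_\pm=-1$ forces $v_p(F_{12}(\pm 1))$ even, hence $v_p(F_1(\pm 1))$ even since $v_p(F_2(\pm 1))$ is always even (constant terms of type $2$ factors being $\pm 1$); in the first situation $F(\pm 1)\ne 0$, so \eqref{eq:Sqcd} for $F$ gives $v_p(F(\pm 1))$ even and hence again $v_p(F_1(\pm 1))$ even (using the parity of $n_\pm$ at $p=2$). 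Either way the complementary factor $\prod_{f\in I_1\setminus H}f^{n_f}$ has no bad factor at $p$ by the observation, so Lemma~\ref{lem:nosymmfactor} makes its $v_p$ at $\pm 1$ even, and subtracting gives $v_p$ of the $H$-part even.

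Finally, the sign condition for $H\subseteq I_1$ is the main difficulty. Having shown $|F_H(1)|,|F_H(-1)|$ are squares, it suffices to prove $(-1)^{(\deg F_H)/2}F_H(1)F_H(-1)$ is a square in $\bQ_2$: a negative rational all of whose finite valuations are even is a non-square in $\bQ_2$, so this forces positivity and hence the $\bZ$-square property. If $F_H$ has no irreducible $*$-symmetric factor in $\bZ_2[X]$ divisible by $X-1$ in $\bF_2[X]$, the $p=2$ clause of Lemma~\ref{lem:nosymmfactor} gives $(-1)^{(\deg F_H)/2}F_H(1)F_H(-1)\in\{1,-3\}$ in $\bQ_2^\times/\bQ_2^{\times 2}$, neither of which is the class of a negative square, and we are done. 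Otherwise, as above, the displayed condition fails at $2$ for both signs, so $n_\pm\in\{0,2\}$ with $D_\pm=-1$ in $\bQ_2^\times/\bQ_2^{\times 2}$ when $n_\pm=2$; one then computes the class of $(-1)^{(\deg F_H)/2}F_H(1)F_H(-1)$ in $\bQ_2^\times/\bQ_2^{\times 2}$ from that of $(-1)^{(\deg F)/2}F(1)F(-1)$ (a square when $n_+=n_-=0$, by \eqref{eq:Sqcd}) or from $D_\pm$ (when $n_\pm=2$), together with the class of $\prod_{f\in I_1\setminus H}f^{n_f}$, which lies in $\{1,-3\}$ by Lemma~\ref{lem:nosymmfactor}, and with the contribution of $F_2$, which is a square in $\bQ_2$. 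Keeping track of these $2$-adic contributions — in particular reconciling the $D_\pm$ data of \eqref{eq:D_pm} with the global square conditions on $F$ — is the delicate step, and organizing the whole argument as an induction on the number of connected components of $G(F)$, peeling off $F_H$ at each stage, is likely the cleanest route.
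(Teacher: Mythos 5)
Your framework tracks the paper's quite closely: both hinge on Lemma~\ref{lem:nosymmfactor}, on the structural observation that all $f\in I_1$ with an irreducible $*$-symmetric factor over $\bZ_p$ divisible by $X\mp1$ modulo $p$ lie in a single component, and on the conclusion that if $X\mp1$ is not in that component then $n_\pm\in\{0,2\}$ with $D_\pm=-1$ (when $n_\pm=2$). Your reductions of the degree claim and of the absolute-value parts of \eqref{eq:Sqcd} to valuation parities are sound and in the spirit of the paper.

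However, the proposal has a genuine gap exactly where the real content lies. In the case $H\subseteq I_1$ with a bad factor at $p=2$, you reduce to showing $(-1)^{(\deg F_H)/2}F_H(1)F_H(-1)$ cannot lie in the class of $-1$ in $\bQ_2^\times/\bQ_2^{\times2}$, but you explicitly stop: ``keeping track of these $2$-adic contributions \dots\ is the delicate step,'' offering only a vague proposal to induct on the number of components of $G(F)$. That computation is the crux. The paper carries it out directly: it evaluates
\[
D_+D_- = (-1)^{(n_+-\iota(X-1))/2}(-1)^{(n_--\iota(X+1))/2}\,|F_{12}(1)F_{12}(-1)|
\]
in $\bQ_2^\times/\bQ_2^{\times2}$ using $(-1)^s=(-1)^n$ (from $r\equiv s\bmod 8$), the hypothesis $(-1)^{(\deg K)/2}K(1)K(-1)=-1$ mod squares, and Lemma~\ref{lem:nosymmfactor} applied to the complement $K^c$ to get $(-1)^{(\deg K^c)/2}K^c(1)K^c(-1)\in\{1,-3\}$, arriving at $D_+D_-\in\{1,-3\}$ (when $(n_+,n_-)\in\{(2,0),(0,2)\}$) or $D_+D_-\in\{-1,3\}$ (when $(n_+,n_-)=(2,2)$), which forces $D_+\neq-1$ or $D_-\neq-1$ and hence $X\mp1\in H$ --- contradicting $H\subseteq I_1$.

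Moreover, the suggested induction is unlikely to be the ``cleanest route'': the graph $G(F)$ and the sets $\Pi_{f,X\mp1}$ depend on $D_\pm$, which depend on $F_{12}$ and $\iota$, and after ``peeling off'' $F_H$ the new polynomial's $D_\pm$ and $\Idx_{r',s'}$ would have to be reconstructed; worse, establishing \eqref{eq:Sqcd} for the complementary polynomial $F/F_H$ would require the very conclusion about $F_H$ one is trying to prove. You should replace the appeal to induction with the explicit $2$-adic evaluation of $D_+D_-$ as above.
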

\begin{proof}
We begin with the case $F(1)F(-1)\neq 0$.
Let $H$ be a connected component of $G(F)$. The degree of $H$ is even 
since $H$ has no type $0$ component. 
First, suppose that $|H(\pm1)|$ were not a square. 
Then there would exist a prime $p$ such that
$v_p(H(\pm1)) \equiv 1 \bmod 2$. Furthermore, there would exist $g\in I\setminus H$ 
satisfying $v_p(g(\pm1)) \equiv 1 \bmod 2$ because $v_p(F(\pm1))$ is even. 
Therefore, Lemma \ref{lem:nosymmfactor} shows that $H$ and $g$ have 
irreducible $*$-symmetric factors in $\bZ_p[X]$ which 
are divisible by $X\mp1$ in $\bF_p[X]$. This means that 
there exists $f\in H$ such that $p\in\Pi_{f,g}$. 
Hence the factor $g$ is connected to $H$ in $G(F)$, that is, $g\in H$. 
This is a contradiction, so $|H(1)|$ and $|H(-1)|$ are squares. 

Next, suppose that $(-1)^{(\deg H)/2} H(1)H(-1)$ were not a square. 
Since $|H(1)|$ and $|H(-1)|$ are squares, we would have 
$(-1)^{(\deg H)/2} H(1)H(-1) = -1$ mod squares. 
On the other hand, there exists $g\in I\setminus H$ such that 
$(-1)^{(\deg g)/2} g(1)g(-1) \notin\{1, -3\}$ in $\bQ_2^\times/\bQ_2^{\times2}$
because $(-1)^nF(1)F(-1)$ is a square (in $\bQ$, and thus in $\bQ_2$).
These mean that $H$ and $g$ have an irreducible $*$-symmetric factors in 
$\bZ_2[X]$ which are divisible by $X-1$ in $\bF_2[X]$, by Lemma \ref{lem:nosymmfactor}.
Thus $g$ is connected to $H$ in $G(F)$. 
This is a contradiction, so $(-1)^{(\deg H)/2} H(1)H(-1)$ is a square.
We have now proved the case $F(1)F(-1)\neq 0$. 

Let us proceed to the case $F(1)F(-1)= 0$. 
We denote by $G(F)'$ the graph obtained by removing $X-1$ and $X+1$ from $G(F)$. 
Let $K$ be a connected component of $G(F)'$, and 
$H$ the connected component of $G(F)$ containing $K$.
From the assumption that $n_+\neq 1$ and $n_-\neq 1$, it follows that $\deg H$ is even.  
If $K$ satisfies \eqref{eq:Sqcd} then so does $H$, and we are done.  
Let $|K(\pm1)|$ be not a square. Then $v_p(K(\pm1))\equiv 1 \bmod 2$ for some
prime $p$. Notice that there is no connected component $K'$ of $G(F)'$ 
satisfying $v_p(K'(\pm1))\equiv 1 \bmod 2$, 
since otherwise $K'$ would be connected to $K$. 
If $n_\pm \geq 3$ then we see easily that $X\mp1$ is connected to $K$ in $G(F)$ by 
Lemma \ref{lem:nosymmfactor}. This means that $X\mp1 \in H$, and hence 
$H(\pm1) = 0$ is a square. 
If $n_\pm = 2$ then 
\[v_p(D_\pm) = v_p(F_{12}(\pm1)) \equiv v_p(K(\pm1)) \equiv 1 \mod 2, \]
and in particular, $D_\pm \neq -1$ in $\bQ_p^\times/\bQ_p^{\times 2}$.
Therefore $X\mp1$ is connected to $K$ in $G(F)$, and  $H(\pm1) = 0$ is a square.
To summarize, if $|K(1)|$ or $|K(-1)|$ is not a square then 
$H$ satisfies the condition $\eqref{eq:Sqcd}$. 

The remaining case is where $|K(1)|$ and $|K(-1)|$ are squares but
$(-1)^{(\deg K)/2}\allowbreak K(1)K(-1) = -1$ mod squares.  
If $n_+\geq 3$ or $n_-\geq 3$ then Lemma \ref{lem:nosymmfactor} implies that
$2\in \Pi_{X-1,f}$ or $2\in \Pi_{X+1,f}$ for some $f\in K$, 
and thus $X-1\in H$ or $X+1\in H$. 
Hence $H$ satisfies the condition \eqref{eq:Sqcd}, and we are done. 
Let $(n_+,n_-) = (2,2), (2,0)$ or $(0, 2)$, and 
let $K^c$ be the complement of $K$ in $G(F)'$.
Lemma \ref{lem:nosymmfactor} implies that 
$(-1)^{(\deg K^c)/2}K^c(1)K^c(-1) = 1$ or $-3$ in $\bQ_2^\times/\bQ_2^{\times2}$, 
since otherwise $K^c$ would be connected to $K$. 
Hence 
\[\begin{split}
&(-1)^{(\deg F_{12})/2}F_{12}(1)F_{12}(-1) \\
&\quad = (-1)^{(\deg K)/2}K(1)K(-1)\cdot(-1)^{(\deg K^c)/2}K^c(1)K^c(-1) 
= -1 \text{ or $3$}  
\end{split} \]
in $\bQ_2^\times/\bQ_2^{\times2}$, 
which implies that 
\[\begin{split}
D_+ D_-
&= (-1)^{(n_+ - \iota(X-1))/2} (-1)^{(n_- - \iota(X+1))/2}\, |F_{12}(1)F_{12}(-1)| \\
&= (-1)^s F_{12}(1)F_{12}(-1)\\
&= (-1)^n F_{12}(1)F_{12}(-1)\\
&= (-1)^{(n_+ + n_-)/2} (-1)^{(\deg F_{12})/2} F_{12}(1)F_{12}(-1)\\
&= \begin{cases}
1  \text{ or $-3$} & \text{if $(n_+,n_-) = (2,0)$ or $(0,2)$}\\
-1  \text{ or $3$} & \text{if $(n_+,n_-) = (2,2)$}
\end{cases}
\end{split} \]
in $\bQ_2^\times/\bQ_2^{\times2}$.
In particular, in $\bQ_2^\times/\bQ_2^{\times2}$, 
we have $D_+\neq -1$ if $(n_+,n_-) = (2,0)$ and  
$D_-\neq -1$ if $(n_+,n_-) =\allowbreak (0,2)$, and $D_+\neq -1$ or $D_+\neq -1$
if $(n_+,n_-) = (2,2)$.
These imply that $X-1\in H$ or $X+1\in H$, 
and therefore, the component $H$ satisfies the condition \eqref{eq:Sqcd}.
\end{proof}

For a polynomial $f\in \bZ[X]$ with $f(1)f(-1)\neq 0$, 
let $e(f)\in\{1,-1\}$ denote the sign of $(-1)^{(\deg f)/2} \allowbreak f(1)f(-1)$.
We remark that if $f$ satisfies \eqref{eq:Sqcd} then $e(f) = 1$. 

\begin{lemma}\label{lem:prolong}
Let $F$ be as above. Then $F$ satisfies the condition \eqref{eq:Sgcd} for 
$(r,s) = (n,n)$. 
Moreover, a map $\iota_0$ from $I_0(\bR)$ to $\bZ$ satisfying 
the condition \eqref{eq:idxmap0} in \textup{\S \ref{ss:IPSoverR}} and the equation 
$\iota_0(X-1) + \iota_0(X+1) = e(F_{12})-1$ 
can be prolonged to an index map in $\Idx_{n,n}(F)$. 
Here, if $X\mp1\notin I_0(\bR)$ then we understand that $\iota_0(X\mp 1) = 0$. 
\end{lemma}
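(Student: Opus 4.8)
\emph{Proof plan.} The main tool I would use is a factorization, over the roots, of the quantity whose sign defines $e(\cdot)$. Namely, for any $*$-symmetric $G\in\bR[X]$ of even degree $d$ with $G(1)G(-1)\neq0$, the root multiset of $G$ is stable under $\lambda\mapsto\lambda^{-1}$ and under complex conjugation and avoids $\pm1$, so it splits into pairs $\{\lambda,\lambda^{-1}\}$, and from $(1-\lambda^{2})(1-\lambda^{-2})=-(\lambda-\lambda^{-1})^{2}$ one obtains $(-1)^{d/2}G(1)G(-1)=\prod_{\{\lambda,\lambda^{-1}\}}(\lambda-\lambda^{-1})^{2}$. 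A pair contributes a negative real number precisely when $\lambda$ is non-real with $|\lambda|=1$ (the factor being $-4(\Im\lambda)^{2}$); a real pair contributes a positive number, and the pair coming from a non-real $\lambda$ with $|\lambda|\neq1$ contributes a number whose sign is cancelled by that of its conjugate pair. Hence $(-1)^{d/2}G(1)G(-1)$ has sign $(-1)^{u(G)}$, where $u(G)$ is half the number, with multiplicity, of non-real roots of $G$ on the unit circle; and writing the roots as $2a$ real ones distinct from $\pm1$, $4b$ non-real ones off the circle, and $2u(G)$ non-real ones on the circle, one reads off $d/2-m(G)=u(G)$.

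I would then deduce \eqref{eq:Sgcd} for $(r,s)=(n,n)$. The bound $n\geq m(F)$ is clear since each pair $\{\lambda,\lambda^{-1}\}$ of roots of $F$ contains at most one root of modulus $>1$. If $F(1)F(-1)\neq0$ (so $F_{0}=1$ and $F=F_{12}$), then \eqref{eq:Sqcd} makes $(-1)^{n}F(1)F(-1)$ a square, hence positive, so $u(F)$ is even, and $n-m(F)=u(F)$ yields $n\equiv m(F)\bmod2$; if $F(1)F(-1)=0$ nothing further is required. So \eqref{eq:Sgcd} holds, and $\Idx_{n,n}(F)$ is the set of maps on $I_{0}(\bR)\cup I_{1}(\bR)$ obeying \eqref{eq:idxmap0}--\eqref{eq:idxmapsum}.

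For the prolongation, I would put $N:=\sum_{f\in I_{1}(\bR)}n_{f}$. The non-real unit-circle roots of $F$, with their $F$-multiplicities, are exactly the roots of $\prod_{f\in I_{1}(\bR)}f^{\,n_{f}}$ (a polynomial of degree $2N$), and they coincide with those of $F_{12}$; so applying the formula above to $G=F_{12}$ (which is $+1$-symmetric with $F_{12}(\pm1)\neq0$) gives $u(F_{12})=N$ and $e(F_{12})=(-1)^{N}$. Consequently the target value $T:=-\bigl(\iota_{0}(X-1)+\iota_{0}(X+1)\bigr)=1-e(F_{12})=1-(-1)^{N}$ lies in $\{0,2\}$, with $T\equiv2N\bmod4$ and $|T|\leq2N$. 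On the other hand, by \eqref{eq:idxmap1} a choice of $\iota(f)$ for $f\in I_{1}(\bR)$ is admissible exactly when $\iota(f)\equiv2n_{f}\bmod4$ and $|\iota(f)|\leq2n_{f}$, so the attainable values of $\sum_{f\in I_{1}(\bR)}\iota(f)$ form precisely the progression $-2N,-2N+4,\dots,2N$ (obtained by moving one coordinate at a time from $-2n_{f}$ up to $2n_{f}$ in steps of $4$). I would then pick such an assignment with sum $T$ and let $\iota$ extend $\iota_{0}$ by it; it satisfies \eqref{eq:idxmap0} on $I_{0}(\bR)$ by hypothesis, \eqref{eq:idxmap1} on $I_{1}(\bR)$ by construction, and $\sum_{f\in I_{0}(\bR)\cup I_{1}(\bR)}\iota(f)=\bigl(\iota_{0}(X-1)+\iota_{0}(X+1)\bigr)+T=0=r-s$, so $\iota\in\Idx_{n,n}(F)$ and it prolongs $\iota_{0}$.

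The step I expect to require the most care is the sign computation and the attendant bookkeeping of the count $u$: getting right that $u(F)$ both equals $n-m(F)$ (so its parity is controlled by \eqref{eq:Sqcd}) and, reread for $F_{12}$, equals $N=\sum_{f\in I_{1}(\bR)}n_{f}$ — half the degree of the type-$1$ part of $F$ over $\bR$. After that, the use of \eqref{eq:Sqcd} and the arithmetic-progression matching are routine.
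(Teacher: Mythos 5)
Your proof is correct, and the overall strategy (control parity of the index contributions by counting unit-circle roots of $F_{12}$) is the same as the paper's; the difference is in how the key sign fact is established. The paper passes to the trace polynomial $\phi$ of $F_{12}$ and invokes an intermediate-value count of roots in $(-2,2)$ to get that the number of unit-circle roots is $\equiv 1-e(F_{12})\bmod 4$. You instead factor $(-1)^{d/2}G(1)G(-1)=\prod_{\{\lambda,\lambda^{-1}\}}(\lambda-\lambda^{-1})^2$ directly over reciprocal root pairs and read the sign off from the geometry of the roots. The two routes give the same quantitative conclusion: in your notation $e(F_{12})=(-1)^{N}$ with $N=\sum_{f\in I_1(\bR)}n_f$, and the paper's ``$1-e(F_{12})\bmod 4$'' statement is exactly $2N\equiv 1-e(F_{12})\bmod 4$. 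Your version has a couple of small advantages: the identity $n-m(F)=u(F)$ makes the deduction of $n\equiv m(F)\bmod 2$ from \eqref{eq:Sqcd} transparent, and the arithmetic-progression description of the attainable $\sum_{f\in I_1(\bR)}\iota(f)$ makes the existence of $\iota_1$ explicit (the paper leaves this step as ``implies that we can define a map''). The paper's choice of the trace-polynomial route is consistent with how it treats $F_{12}$ elsewhere (e.g.\ Lemma \ref{lem:nosymmfactor}), but your direct factorization is a perfectly valid and arguably more self-contained alternative.
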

\begin{proof}
The inequality $n\geq m(F)$ is obvious. 
Let $\phi$ be the trace polynomial of $F_{12}$. 
We remark that there is a
$1:2$ correspondence between the roots of $\phi$ 
which are in the interval $(-2,2)$ and those of $F_{12}$ which lie on the unit circle.
Because the sign of $\phi(2)\phi(-2)$ is $e(F_{12})$, 
the number of roots of $F_{12}$ which lie on the unit circle is $1-e(F_{12})$ 
mod $4$. 
Hence, if $F(1)F(-1)\neq 0$ (i.e. $F=F_{12}$) then 
\[m(F) \equiv n - (1-e(F_{12})) \equiv n \mod 2. \]
This means that $F$ satisfies \eqref{eq:Sgcd} for $(n,n)$. 

Moreover, the fact that
the number of roots of $F_{12}$ which lie on the unit circle is $1-e(F_{12})$ mod $4$
implies that we can define a map $\iota_1:I_1(\bR)\to\bZ$ satisfying 
the condition \eqref{eq:idxmap1} in \S \ref{ss:IPSoverR}
and the equation $\sum_{f\in I_1(\bR)}\iota_1(f) = 1-e(F_{12})$. 
If $\iota_0:I_0(\bR)\to I$ is a map satisfying the condition \eqref{eq:idxmap0} 
and the equation $\iota_0(X-1) + \iota_0(X+1) = e(F_{12})-1$ 
then the sum $\iota_0\oplus\iota_1:I_0(\bR)\cup I_1(\bR)\to\bZ$, 
which is an extension of $\iota_0$, 
belongs to $\Idx_{n,n}(F)$ because this map satisfies equation \eqref{eq:idxmapsum} for 
$(r,s) = (n,n)$. 
\end{proof}

Note that the graph $G(F)$ is determined by the polynomial $F$ and values 
$\iota(X-1)$ and $\iota(X+1)$ where $\iota$ is an index map, 
even if $n_+ = 2$ or $n_- = 2$.
We now prove Theorem \ref{th:mainapp2}.
\smallskip
\\
\textit{Proof of Theorem \textup{\ref{th:mainapp2}}.}
It is sufficient to show that we can define an index map in $\Idx_{n,n}(F)$ such that 
the following holds for each connected component $H$ of $G(F)$:
\begin{equation}\label{eq:conn_comp}
\begin{split}
&\text{$H$ is realized as the characteristic polynomial of a semisimple isometry}\\
&\text{of an even unimodular lattice of signature $(d,d)$, where $d := (\deg H)/2$.}  
\end{split}
\tag{$*$}
\end{equation}
We begin with the case where $n_+ \neq 2$ and $n_-\neq 2$.
In this case, the graph $G(F)$ depends on $F$ only, so does the set of connected 
components of $G(F)$. Let $H$ be a connected component of $G(F)$, and
put $d = (\deg H)/2$. 
Proposition \ref{prop:ECC_Sq} implies that $H$ satisfies the condition \eqref{eq:Sqcd}, 
and Lemma \ref{lem:prolong} implies that $H$ satisfies the condition \eqref{eq:Sgcd} 
for $(d,d)$. The obstruction group of $H$ is trivial for any index map since $H$ is 
connected, and then we have \eqref{eq:conn_comp} by 
Theorem \ref{th:main1}. Therefore, if we choose $\iota_H\in \Idx_{d,d}(H)$ for each
connected component $H$, then the sum $\oplus_H\, \iota_H \in \Idx_{n,n}(F)$ 
is the required index map. 

We proceed to the case $n_+ = 2$ or $n_- = 2$. 
We deal with the case $n_+ = 2$ only because the case $n_- = 2$ is similar.
For $\iota_0:I_0(\bR)\to \bZ$ mentioned in Lemma \ref{lem:prolong}, 
let $G(F;\iota_0)$ denote the graph determined from $(F,\iota)$ for an  
extension $\iota\in \Idx_{n,n}(F)$ of $\iota_0$, which is independent of 
the choice of $\iota$. 
If we can define $\iota_0:I_0(\bR)\to \bZ$ mentioned in Lemma \ref{lem:prolong}
so that \eqref{eq:conn_comp} holds for the connected components 
containing $X-1$ and $X+1$ of $G(F;\iota_0)$ respectively, 
then, as above, \eqref{eq:conn_comp} holds for 
the others and we obtain the required index map. 
Therefore, it suffices to define such a map $I_0(\bR)\to \bZ$. 
We remark that the connected component containing $X-1$ and that containing 
$X-1$ may coincide. 

First, let $n_+ = 2$ and $n_- > 2$. 
If $|F_{12}(1)| = 1$ in $\bQ_2^\times/\bQ_2^{\times2}$, 
we set $\iota_0(X-1) = -2$ and $\iota_0(X+1) = e(F_{12})+1$.
Then $D_+ = 1 \neq -1$ in $\bQ_2^\times/\bQ_2^{\times2}$, and in particular 
$2\in \Pi_{X-1,X+1}$. Thus $X-1$ and $X+1$ are in a common connected component 
$H$ of $G(F;\iota_0)$. 
Because any connected component $H'$ other than $H$ satisfies \eqref{eq:Sqcd} 
by Proposition \ref{prop:ECC_Sq} and in particular $e(H') = 1$, 
we have $e(H^\circ) = e(F_{12})$ where $H^\circ = H\setminus\{X-1,X+1\}$. 
Therefore $\iota_0$ can be prolonged to a map in $\Idx_{d,d}(H)$ 
by Lemma \ref{lem:prolong}, where $d = (\deg H)/2$. Then \eqref{eq:conn_comp} 
holds for $H$ since the obstruction group of $H$ is trivial.
If $|F_{12}(1)| \neq 1$ in $\bQ_2^\times/\bQ_2^{\times2}$, 
we set $\iota_0(X-1) = 0$ and $\iota_0(X+1) = e(F_{12})-1$.
Then, as above, the set $\Pi_{X-1, X+1}$ has the prime $2$, and 
we obtain \eqref{eq:conn_comp} for the connected component containing $X-1$ and $X+1$. 
Thus we are done. 

Next, let $n_+ = 2$ and $n_- = 0$. 
In this case we define $\iota_0(X-1) = e(F_{12})-1$.
If $H$ is the connected component containing $X-1$, 
then we get $e(H\setminus\{X-1\}) = e(F_{12})$ as above. Moreover $\iota_0$ can be 
prolonged to a map 
in $\Idx_{d,d}(H)$, and the condition \eqref{eq:conn_comp} holds for $H$ as above. 
Therefore we are done. 

Finally, let $n_+ = 2$ and $n_- = 2$. 
If $|F_{12}(-1)|$ is neither $1$ nor $-1$ in $\bQ_2^\times/\bQ_2^{\times2}$, we set  
\begin{alignat*}{2}
&\text{$\iota_0(X-1) = -2$ and $\iota_0(X+1) = e(F_{12})+1$} \quad
&\text{if $|F_{12}(1)| = 1$ in $\bQ_2^\times/\bQ_2^{\times2}$,}\\
&\text{$\iota_0(X-1) = 0$ and $\iota_0(X+1) = e(F_{12})-1$} 
&\text{if $|F_{12}(1)| \neq 1$ in $\bQ_2^\times/\bQ_2^{\times2}$.}    
\end{alignat*}
Then we can get $D_+\neq -1$ and $D_-\neq -1$ in $\bQ_2^\times/\bQ_2^{\times2}$
and $2\in \Pi_{X-1, X+1}$.  
This implies as above that \eqref{eq:conn_comp} holds for the connected component 
containing $X-1$ and $X+1$. 
We can also finish the proof similarly in the case where $|F_{12}(1)| \neq 1, -1$ 
in $\bQ_2^\times/\bQ_2^{\times2}$.

Suppose that $|F_{12}(1)| = 1$ and $|F_{12}(-1)| = 1$ in $\bQ_2^\times/\bQ_2^{\times2}$. 
We only deal with this case, because one can finish the proof similarly
in the remaining cases, i.e., the cases 
$(|F_{12}(1)|, |F_{12}(-1)|) =\allowbreak (1,-1), (-1,1)$ and $(-1,-1)$ 
in $\bQ_2^\times/\bQ_2^{\times2}$.
If $e(F_{12}) = 1$, we set $\iota_0(X-1) = 2$ and $\iota_0(X+1) = -2$. 
Then, as above, we have $2\in \Pi_{X-1, X+1}$, and \eqref{eq:conn_comp} holds 
for the connected component containing $X-1$ and $X+1$. 
Let $e(F_{12}) = -1$, and let $G(F)'$ denote the graph obtained by removing 
$X-1$ and $X+1$ from $G(F)$. We remark that 
$G(F)$ may depends on the choice of $\iota_0$ but $G(F)'$ is independent of it. 
As discussed in the proof of Proposition \ref{prop:ECC_Sq}, 
if $K$ is a connected component of $G(F)'$ such that $|K(\pm1)|$ is not a square
then $X\mp1$ is connected to $K$ in $G(F)$ independently of the choice of $\iota_0$. 
Let $K_\pm$ denote the union of connected components $K\subset I_1$ of $G(F)'$ 
such that $|K(\pm1)|$ is not a square. 
If $K_+\cap K_-\neq \emptyset$ then there exists a connected component $K$ 
which contains an element of $K_+\cap K_-$. 
In this case, $X-1$ and $X+1$ are connected via $K$
and contained in a common connected component $H$ of $G(F)$
(independently of the choice of $\iota_0$).  
Hence we obtain \eqref{eq:conn_comp} for $H$ as required. 
Assume that $K_+\cap K_- = \emptyset$, and set $K_0 := I_1\setminus (K_+\cup K_-)$.
There are the following four cases:
\begin{itemize}
\item[(a)] $e(K_+)=1, e(K_0)=1$ and $e(K_-)=-1$.
\item[(b)] $e(K_+)=1, e(K_0)=-1$ and $e(K_-)=1$.
\item[(c)] $e(K_+)=-1, e(K_0)=1$ and $e(K_-)=1$.
\item[(d)] $e(K_+)=-1, e(K_0)=-1$ and $e(K_-)=-1$.
\end{itemize}
Notice that if $e(K_0)= -1$, that is, in the cases (b) and (d), 
then it follows from Lemma \ref{lem:nosymmfactor} that each connected component $K$ of 
$K_0$ has an irreducible $*$-symmetric factor in $\bZ_2[X]$ which is divisible by $X-1$ 
in $\bF_2[X]$.  
Moreover, if we set $\iota_0(X+1) = -2$ then $X+1$ and $K$ are connected in 
$G(F;\iota_0)$ since $D_- = 1 \neq -1$ in $\bQ_2^\times/\bQ_2^{\times2}$. 

In the case (a),(b) or (c), we set
\[\iota_0(X-1) = \begin{cases}
  0 & \text{in the case (a) or (b)}\\ 
  -2  & \text{in the case (c)}
\end{cases}
\text{\quad and \quad}
\iota_0(X+1) = \begin{cases}
  -2 & \text{in the case (a) or (b)}\\ 
  0  & \text{in the case (c)}.
\end{cases}\]
If $H_+$ and $H_-$ denote the connected components of $G(F;\iota_0)$ 
containing $X-1$ and $X+1$ respectively ($H_+$ and $H_-$ may coincide), 
then Lemma \ref{lem:prolong} shows that 
$\iota_0$ can be prolonged to index maps in $\Idx_{d_+, d_+}(H_+)$ and in 
$\Idx_{d_-, d_-}(H_-)$ respectively, where $d_\pm = \deg H_\pm$.  
Thorefore, we can obtain \eqref{eq:conn_comp} for $H_+$ and $H_-$, and we are done. 

Let us proceed to the case (d), and let $\phi$ be the trace polynomial of $K_+$. 
We remark that $\phi(2)$ and $\phi(-2)$ have different signs since $e(K_+) = -1$, 
and that 
\begin{align*}
&|\phi(2)| = |K_+(1)| = |F_{12}(1)| = 1 
\quad\text{in  $\bQ_2^\times/\bQ_2^{\times2}$ and} \\
&\text{$|\phi(-2)| = |K_+(-1)|$ is a square (in $\bQ^\times$).}
\end{align*}
These imply that
\[ (-1)^{(\deg K_+)/2}K_+(1)K_+(-1)
= \phi(2)\phi(-2)
= - |\phi(2)\phi(-2)|
= -1 \quad\text{in $\bQ_2^\times/\bQ_2^{\times2}$.}
\]
%
%
Then, Lemma \ref{lem:nosymmfactor} shows that there are connected components 
$K$ of $K_+$ and $K'$ of $K_0$ such that $K$ and $K'$ are connected in $G(F)'$. 
Now, we define $\iota_0:I_0(\bR)\to\bZ$ by 
$\iota_0(X-1) = 0$ and $\iota_0(X+1) = -2$.
Then $X-1, K, K'$ and $X+1$ are connected and contained in a common connected 
component of $G(F;\iota_0)$, and we obtain \eqref{eq:conn_comp} for this component.
This completes the proof. 
\qed\medskip

The above proof is not valid if $n_+ = 1$ or $n_- = 1$, but 
Theorem \ref{th:mainapp2} seems to hold even if $n_+ = 1$ or $n_- = 1$.

\section{Automorphisms of K3 surfaces}\label{sec:Aut_of_K3}
A \textit{K3 surface} is a simply connected  compact complex surface with
a nowhere vanishing holomorphic $2$-form.
We prove Theorem \ref{th:mainK3} in this section. 

\subsection{Lifting}
Let $\cX$ be a K3 surface with a nowhere vanishing holomorphic $2$-form $\omega_\cX$. 
The middle cohomology group $H^2(\cX,\bZ)$ has the intersection form 
$\la\cdot, \cdot \ra$, which makes 
$H^2(\cX,\bZ)$ an even unimodular lattice of signature $(3, 19)$. 
Such a lattice is called a \textit{K3 lattice}, which is uniquely determined up to
isomorphism.
The form $\la\cdot, \cdot \ra$ is extended on $H^2(\cX, \bC)$ as a hermitian form.
The \textit{Hodge structure} is the direct sum decomposition 
\[ H^2(\cX,\bC) = H^{2,0}(\cX)\oplus H^{1,1}(\cX) \oplus H^{0,2}(\cX) \]
where $H^{2,0}(\cX) = \bC\omega_\cX, H^{0,2}(\cX) = \bC\overline{\omega_\cX}$
and $H^{1,1}(\cX) = (H^{2,0}(\cX)\oplus H^{0,2}(\cX))^\perp$.
Here $\overline{\,\cdot\,}$ is the complex conjugate.
The real part $H^{1,1}_\bR(\cX) = \{ x\in H^{1,1}(\cX) \mid \overline{x} = x \}$
of $H^{1,1}(\cX)$ is of signature $(1, 19)$, and thus 
$\cC_\cX := \{ x\in H^{1,1}_\bR(\cX) \mid \la x,x\ra > 0 \}$ has exactly two connected 
components. The one containing a K\"{a}hler class is called the 
\textit{positive cone} and denoted by $\cC_\cX^+$.
The intersection $P_\cX := H^2(\cX,\bZ)\cap H^{1,1}(\cX)$ is called the 
\textit{Picard lattice} of $\cX$.
Set $\Delta_\cX := \{ x\in P_\cX \mid \la x,x \ra = -2 \}$
and $\Delta_\cX^+ := \{ x\in \Delta_\cX \mid \text{$x$ is effective} \}$.
Then the \textit{K\"{a}hler cone} $\cK_\cX$ is written as
\[\cK_\cX = \{ x\in \cC_\cX^+ \mid \la x,r \ra>0 \quad\text{for all $r\in \Delta_\cX^+$} \}, \]
see \cite[Chapter 8, Theorem 5.2]{Hu16}.
Notice that the K\"{a}hler cone $\cK_\cX$ is a Weyl chamber with respect to 
the root system $\Delta_\cX$.

Conversely, we can define a Hodge structure and a K\"{a}hler cone on a K3 lattice 
formally.
Let $(\Lambda, b)$ be a K3 lattice, and set $\Lambda_\bC = \Lambda\otimes \bC$.
A vector $\omega\in \Lambda_\bC$ such that the signature of 
$\bC\omega \oplus \bC\overline\omega \subset \Lambda_\bC$ is $(2, 0)$ 
gives the decomposition
\begin{equation}\label{eq:hodgestr}
\Lambda_\bC = H^{2,0}\oplus H^{1,1}\oplus H^{0,2}
\quad \text{where } H^{2,0} = \bC\omega,  
H^{1,1}=\{ \omega, \overline{\omega} \}^\perp,
H^{0,2} = \bC\overline{\omega}. 
\end{equation}
Such a decomposition or such a vector $\omega$ is called a \textit{Hodge structure} 
on $\Lambda$. 
Fix a Hodge structure $\omega\in \Lambda_\bC$ and one connected component $\cC^+$ of 
$\cC := \{ x\in H^{1,1}_\bR \mid b(x,x) > 0 \}$ in the real part $H^{1,1}_\bR$
of $H^{1,1}$. We define the \textit{Picard lattice} $P$ of $\Lambda$ to be 
$\Lambda \cap H^{1,1}$. Then the set $\Delta := \{ x\in P \mid b(x,x) = -2 \}$ 
is a root system. A \textit{K\"{a}hler cone} of $\Lambda$ is a 
Weyl chamber in $\cC^+$.
If $\cK$ is a K\"{a}hler cone of $\Lambda$ then the pair $(\omega, \cK)$
is called a \textit{K3 structure} of $\Lambda$.

The following theorem is a well-known consequence of the Torelli theorem and
surjectivity of period mapping. See \cite[\S 6]{Mc11} for instance. 
\begin{theorem}\label{th:lift}
Let $\Lambda$ be a K3 lattice equipped with a K3 structure 
$(\omega, \cK)$, and $t$ an isometry on $\Lambda$ preserving the K3 structure.
Then there exists a K3 surface $\cX$, an automorphism $\varphi$ on $\cX$, and a 
lattice isometry $\tau:H^2(\cX,\bZ) \to \Lambda$ such that
$\tau(\omega_\cX) \in \bC\omega$,
$\tau(\cK_\cX) = \cK$, 
and the diagram
\[ \xymatrix{
H^2(\cX,\bZ) \ar[r]^{\varphi^*} \ar[d]^\tau & H^2(\cX,\bZ) \ar[d]^\tau \\
\Lambda \ar[r]^t & \Lambda
} \]
commutes.
\end{theorem}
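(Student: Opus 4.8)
The plan is to run the classical ``surjectivity of the period map plus Torelli theorem'' argument. First I would note that the hypothesis that $\bC\omega\oplus\bC\overline\omega$ has signature $(2,0)$ is exactly the period-domain condition: writing $\omega=u+iv$ with $u,v\in\Lambda\otimes\bR$, it forces $b(\omega,\omega)=0$ and $b(\omega,\overline\omega)=2b(u,u)>0$, so $[\omega]$ lies in the period domain, the set of lines $[\eta]\in\bP(\Lambda\otimes\bC)$ with $b(\eta,\eta)=0$ and $b(\eta,\overline\eta)>0$. By the surjectivity of the period map for K3 surfaces there exist a K3 surface $\cX$ and a marking $\tau_0 : H^2(\cX,\bZ)\to\Lambda$ (a lattice isomorphism) with $\tau_0(\bC\omega_\cX)=\bC\omega$. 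Since $\tau_0$ is defined over $\bZ$, hence commutes with complex conjugation, it carries the Hodge decomposition of $\cX$ to the one cut out by $\omega$; in particular $\tau_0\bigl(H^{1,1}_\bR(\cX)\bigr)=H^{1,1}_\bR$ and $\tau_0(\cC_\cX^+)$ is one of the two components of $\cC$.

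Next I would repair the marking so that it also matches the Kähler cone. If $\tau_0(\cC_\cX^+)\neq\cC^+$, replace $\tau_0$ by $(-\id)\circ\tau_0$: since $-\id\in\rO(\Lambda)$ fixes the line $\bC\omega$ and swaps the two components of $\cC$, the period is unchanged and now $\tau_0$ sends $\cC_\cX^+$ to $\cC^+$. Then $\tau_0(\cK_\cX)$ and $\cK$ are two Weyl chambers for the root system $\Delta=\{r\in\Lambda\cap H^{1,1}:b(r,r)=-2\}$ inside $\cC^+$, so some $w$ in the Weyl group $W(\Delta)$ satisfies $w\bigl(\tau_0(\cK_\cX)\bigr)=\cK$. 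The point is that each generating reflection $s_r$ ($r\in\Delta$) is an isometry of $\Lambda$ — it lies in $\rO(\Lambda)$ because $r$ has square $-2$ — and it fixes $\omega$ because $r\perp\omega$; hence $w\in\rO(\Lambda)$ fixes $\bC\omega$ as well. Replacing $\tau_0$ by $\tau:=w\circ\tau_0$, I obtain a marking with $\tau(\bC\omega_\cX)=\bC\omega$ and $\tau(\cK_\cX)=\cK$. This step — reconciling the two Kähler cones without disturbing the period — is the only place any care is needed, and it works precisely because the reflections generating $W(\Delta)$ are orthogonal transformations of $\Lambda$ fixing the period point.

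Finally I would transport $t$ through $\tau$: set $g:=\tau^{-1}\circ t\circ\tau\in\rO\bigl(H^2(\cX,\bZ)\bigr)$. As $t$ preserves the K3 structure $(\omega,\cK)$, it satisfies $t(\bC\omega)=\bC\omega$ and $t(\cK)=\cK$, so $g(\bC\omega_\cX)=\bC\omega_\cX$ — hence $g$ preserves $H^{2,0}(\cX)$, and therefore the whole Hodge structure — and $g(\cK_\cX)=\cK_\cX$, so $g$ sends a Kähler class to a Kähler class. By the (strong) Torelli theorem for K3 surfaces there is a unique automorphism $\varphi$ of $\cX$ with $\varphi^*=g$; equivalently $\tau\circ\varphi^*=t\circ\tau$, so the square in the statement commutes, and by construction $\tau(\omega_\cX)\in\bC\omega$ and $\tau(\cK_\cX)=\cK$. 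This gives the assertion; all of its substance is packed into the two classical inputs — surjectivity of the period map and the strong Torelli theorem — with the marking adjustment of the middle step supplying the glue.
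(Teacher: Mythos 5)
Your argument is the standard proof the paper itself alludes to (surjectivity of the period map together with the strong Torelli theorem); the paper offers no proof of its own, only a citation to \cite[\S 6]{Mc11}, and your proposal correctly fills in exactly the argument intended. One small inaccuracy at the start: the hypothesis that $\bC\omega\oplus\bC\overline\omega$ has signature $(2,0)$ does not by itself force $b(\omega,\omega)=0$ --- writing $\omega=u+iv$, positive-definiteness of $b$ on the real $2$-plane $\bR u+\bR v$ is strictly weaker than requiring $b(u,v)=0$ and $b(u,u)=b(v,v)$, which is what $b(\omega,\omega)=0$ says. The vanishing $b(\omega,\omega)=0$ should instead be taken as part of the definition of a Hodge structure on $\Lambda$, as it is in McMullen's period domain and as is automatic in the paper's application, where $\omega$ is an eigenvector of $t$ for an eigenvalue $\delta\neq\pm1$ on the unit circle, so $b(\omega,\omega)=\delta^2 b(\omega,\omega)$ forces $b(\omega,\omega)=0$. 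Granting that, the rest of your argument --- correcting the marking first by $-\id$ to match positive cones and then by a Weyl-group element $w\in\rO(\Lambda)$ fixing $\bC\omega$ to match K\"{a}hler cones, and then invoking the strong Torelli theorem for $g=\tau^{-1}\circ t\circ\tau$ --- is exactly right.
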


\subsection{Entropy}
Let $\cX$ be a K3 surface, and $\varphi$ an automorphism on $\cX$. 
It is known that the topological entropy of $\varphi$ is given by
$\log\lambda(\varphi^*)$, where $\lambda(\varphi^*)$ is the spectral 
radius of $\varphi^* : H^2(\cX)\to H^2(\cX)$, that is, the maximum absolute value of 
the eigenvalues of $\varphi^*$.
On the other hand, the characteristic polynomial of $\varphi^*$ is either
\begin{itemize}
\item a product of cyclotomic polynomials, or
\item a product of one Salem polynomial and a product of cyclotomic polynomials.
\end{itemize}
Hence the entropy of $\varphi$ is $0$ or the logarithm of a Salem number, 
see \cite[Section 3]{Mc02}.

A polynomial $F(X)$ of degree $22$ is called a \textit{complemented Salem polynomial}
if $F(X)$ can be expressed as $F(X)=S(X)C(X)$, where $S(X)$ is a Salem polynomial and 
$C(X)$ is a product of cyclotomic polynomials.  
In this case, $S$ is called the Salem factor of $F$. 
The facts mentioned above imply that
if a K3 surface automorphism $\varphi$ has positive entropy, then 
the characteristic polynomial of $\varphi^*$ is a 
complemented Salem polynomial.

\begin{definition}
A Salem number $\lambda$ is \textit{projectively} (resp. \textit{nonprojectively}) 
\textit{realizable} if there exists a projective (resp. nonprojective) K3 
surface and an automorphism on it of topological entropy $\log \lambda$. 
\end{definition}

\begin{notation}
Let $F$ be a complemented Salem polynomial with Salem factor $S$.
For each root $\delta$ of $S$ with $|\delta| = 1$, 
the symbol $\iota_\delta$ denotes the index map in $\Idx_{3,19}(F)$ defined by
\[ \iota_\delta(f) = \begin{cases}
  2 & \text{if $f(X) = X - (\delta + \delta^{-1})X + 1$}\\
  -2n_f & \text{if $f(X) = X - (\zeta + \zeta^{-1})X + 1$ for some 
  $\zeta\in \bT\setminus\{\pm1, \delta^{\pm 1} \}$} \\
  -n_f & \text{if $f(X) = X\mp 1$}, 
\end{cases}
\]
where $n_f$ is the multiplicity of $f$ in $F$ and 
$\bT = \{ \delta\in\bC \mid |\delta| = 1 \}$.
\end{notation}

The nonprojective case is tractable thanks to the following proposition.

\begin{proposition}\label{prop:nonproj_realizable}
Let $\lambda$ be a Salem number with $4\leq \deg\lambda  \leq 22$, and 
$S$ its minimal polynomial.
The following are equivalent:
\begin{enumerate}
\item $\lambda$ is nonprojectively realizable.
\item There exists a conjugate $\delta$ of $\lambda$ with $|\delta| = 1$ and 
a complemented Salem polynomial $F$ with Salem factor $S$ such that
a K3 lattice has a semisimple $(F, \iota_\delta)$-isometry. 
\end{enumerate}
\end{proposition}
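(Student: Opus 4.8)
The plan is to read the equivalence off the lifting theorem (Theorem~\ref{th:lift}) together with the signature bookkeeping that is built into the index map $\iota_\delta$; the direction \textup{(ii)}$\Rightarrow$\textup{(i)} is the one that feeds Theorem~\ref{th:mainK3}, so I would treat it as the main point. Suppose a K3 lattice $\Lambda$ carries a semisimple $(F,\iota_\delta)$-isometry $t$, where $F=SC$ with $C$ a product of cyclotomic polynomials and $\delta$ a conjugate of $\lambda$ on the unit circle. A conjugate of a Salem number lying on the unit circle is never a root of unity, so $\delta$ occurs in $F$ with multiplicity one and the $\delta$-eigenspace $W_\delta\subset\Lambda\otimes\bC$ is a line; fix a generator $\omega$. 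The condition $\iota_\delta(f_\delta)=2$, where $f_\delta(X)=X^2-(\delta+\delta^{-1})X+1$, forces the real plane $V(f_\delta;t)$, hence $\bC\omega\oplus\bC\overline\omega=V(f_\delta;t)\otimes\bC$, to have signature $(2,0)$, so $\omega$ determines a Hodge structure as in \eqref{eq:hodgestr}. I would then identify the Picard lattice: since $W_\delta$ is a line, a short Galois argument gives $\Lambda\cap\omega^\perp=\Lambda_S^\perp$, where $\Lambda_S\subset\Lambda$ is the primitive $t$-stable sublattice on which $t$ has characteristic polynomial $S$. A signature count for $\Lambda_S$ --- contribution $(2,0)$ from $f_\delta$; $(1,1)$ from the type-$2$ (over $\bR$) component $(X-\lambda)(X-\lambda^{-1})$, which has index $0$ by Proposition~\ref{prop:Sgcd}\textup{(ii)}; and $(0,2)$ from each remaining unimodular conjugate of $\lambda$, as dictated by $\iota_\delta$ --- gives $\Lambda_S$ signature $(3,\deg S-3)$, hence $\Lambda_S^\perp$ signature $(0,22-\deg S)$, which is negative definite because $\deg\lambda\le 22$. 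So the Picard lattice $P:=\Lambda\cap\omega^\perp$ is negative definite and its root system $\Delta$ is finite.

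It then remains to choose a Kähler cone preserved by $t$. One checks $t$ preserves the chosen component $\cC^+$: the real $\lambda$-eigenvector lies on one nappe of the light cone of $H^{1,1}_\bR$ and is scaled by $\lambda>0$. If $\deg S=22$ then $P=0$, $\Delta=\emptyset$, and $\cK:=\cC^+$ is $t$-stable; otherwise $t$ permutes the finitely many Weyl chambers of $\Delta$ inside $\cC^+$, and after replacing $t$ by $w\circ t$ for a suitable $w$ in the Weyl group of $\Delta$ --- which acts trivially on $\Lambda_S$, hence fixes $\omega$, and leaves the Salem factor $S$ of the characteristic polynomial unchanged --- we may assume $t\cK=\cK$ for some chamber $\cK$. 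Now $t$ preserves the K3 structure $(\omega,\cK)$, so Theorem~\ref{th:lift} yields a K3 surface $\cX$, an automorphism $\varphi$, and an isometry $H^2(\cX,\bZ)\cong\Lambda$ conjugating $\varphi^*$ to $t$. Then $P_\cX\cong P$ is negative definite, so $\cX$ is nonprojective, and the spectral radius of $\varphi^*$ equals $\lambda$ (it comes from the Salem factor $S$), so $\varphi$ has entropy $\log\lambda$ and $\lambda$ is nonprojectively realizable.

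Conversely, if $\varphi$ is an automorphism of a nonprojective K3 surface $\cX$ with entropy $\log\lambda$, then $\varphi^*$ on the K3 lattice $H^2(\cX,\bZ)$ has characteristic polynomial a complemented Salem polynomial $F$ with Salem factor $S$ (the minimal polynomial of $\lambda$), and $\varphi^*\omega_\cX=\delta\omega_\cX$ with $|\delta|=1$. I would first show $\delta$ is a conjugate of $\lambda$: if instead $\delta$ were a root of unity, then $\omega_\cX$ would be orthogonal to $\Lambda_S$ (the $\varphi^*$-stable sublattice carrying the Salem part, well defined since the roots of $S$ are simple in $F$), so $\Lambda_S\subset P_\cX$; but $\Lambda_S$ contains the hyperbolic plane spanned by the $\lambda$- and $\lambda^{-1}$-eigenvectors, so $P_\cX$ would contain a class of positive self-intersection, contradicting nonprojectivity. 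Reading the signature count of the first paragraph in reverse --- the period plane is the positive-definite piece $V(f_\delta;\varphi^*)$, the pair $\lambda,\lambda^{-1}$ forms a type-$2$ real component of index $0$, and every other eigenspace piece of $H^2(\cX,\bR)$ lies in $H^{1,1}_\bR$ and is negative definite --- then shows the index of $\varphi^*$ equals $\iota_\delta$; together with semisimplicity of $\varphi^*$ this gives \textup{(ii)}.

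I expect the delicate points to be, in decreasing order of subtlety: first, the identification of $\delta$ as a Galois conjugate of $\lambda$ rather than a root of unity, which is exactly where non-projectivity (negative semidefiniteness of $P_\cX$) is used essentially, and dually the verification that the $1$-dimensional $\delta$-eigenspace produces a genuine Hodge structure; second, the signature computation showing $\Lambda_S^\perp$ is negative definite --- this is where $\deg\lambda\le 22$ enters, and it is what explains the precise shape of $\iota_\delta$; and third, arranging a $t$-stable Weyl chamber in the sufficiency direction, handled by composing with a Weyl-group element that leaves the Salem factor untouched. The bookkeeping around semisimplicity of $\varphi^*$ in the necessity direction is a minor point that can be quoted from the K3 literature.
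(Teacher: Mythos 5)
Your proposal is correct and shares the overall architecture of the paper's proof: both directions route through the lifting theorem (Theorem~\ref{th:lift}), the necessity direction derives a contradiction from $\delta$ being a root of unity by forcing a positive-norm class into $P_\cX$, and the sufficiency direction corrects $t$ by a Weyl-group element acting trivially on $\Lambda_S$. The one place you take a genuinely different route is the identification of the Picard lattice: the paper simply invokes \cite[Theorem 7.4]{IT22} to assert $P = \{x\in\Lambda : C(t)x = 0\}$ and then notes it is negative definite, whereas you derive $P = \Lambda\cap\omega^\perp = \Lambda_S^\perp$ from a self-contained Galois argument (applying $\Gal(\overline{\bQ}/\bQ)$ to $b(x_S,\omega)=0$ to get orthogonality against all $\deg S$ eigenvectors, then using nondegeneracy of $b|_{\Lambda_S}$) and then compute the signature $(0,22-\deg S)$ explicitly from $\iota_\delta$. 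Your route has the advantage of making transparent exactly where $\deg\lambda\le 22$ is used, a point the paper leaves buried in the cited theorem; you also fill in the (omitted) verification that $t$ preserves a component of the positive cone via the $\lambda$-eigenray on the light cone. Two small wording slips that are worth tightening: in the necessity direction, ``$\Lambda_S$ contains the hyperbolic plane spanned by the $\lambda$- and $\lambda^{-1}$-eigenvectors'' is literally a statement about $\Lambda_S\otimes\bR$ (those eigenvectors are not integral); the integral conclusion you need is only that $\Lambda_S$ is not negative semidefinite, hence has a vector of positive norm. And when replacing $t$ by $w\circ t$, the cyclotomic cofactor $C$ may change, so the resulting characteristic polynomial is a possibly different complemented Salem polynomial with the same Salem factor $S$ --- you note this implicitly (``leaves the Salem factor $S$ unchanged''), but it is worth saying outright since statement (ii) quantifies over $F$.
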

\begin{proof}
\textup{(i) $\Rightarrow$ (ii).}
Let $\varphi$ be an automorphism on a nonprojective K3 surface $\cX$ with
entropy $\log\lambda$. 
Then $\varphi^*:H^2(\cX, \bZ)\to H^2(\cX, \bZ)$ is a semisimple isometry
on a K3 lattice with characteristic polynomial $F$, where $F$ is a complemented 
Salem polynomial with Salem factor $S$.  
Let $\delta\in \bT$ be the root of $F$ such that 
$\varphi^*\omega_\cX = \delta \omega_\cX$. Set
$\Lambda(S;\varphi^*) := \{ x\in H^2(\cX, \bZ) \mid S(\varphi^*)x = 0 \}$.
If $\delta$ were a root of unity, then 
$\Lambda(S;\varphi^*)$ would be orthogonal to $H^{2,0}(\cX)\oplus H^{0,2}(\cX)$, and 
\[ \Lambda(S;\varphi^*)\subset H^{1,1}(\cX)\cap H^2(\cX, \bZ) = P_\cX  \]
would hold. In particular, the Picard lattice $P_\cX$ contains
an element whose self intersection number is positive. 
This contradicts the nonprojectivity of $\cX$.
Therefore $\delta\in \bT$ must be a root of $S$, and 
this means that $\idx_{\varphi^*} = \iota_\delta$. 

\textup{(ii) $\Rightarrow$ (i).}
Let $\delta$ be a conjugate of $\lambda$ with $|\delta| = 1$, and let 
$F(X) = S(X)C(X)$ be a complemented Salem polynomial with Salem factor $S$. Suppose that
a K3 lattice $\Lambda$ has a semisimple $(F, \iota_\delta)$-isometry $t$. 
Then, an eigenvector $\omega\in \Lambda_\bC$ of $t$ corresponding to $\delta$ 
define the Hodge structure \eqref{eq:hodgestr} preserved by $t$, since
$\iota_\delta(X^2 -(\delta + \delta^{-1})X + 1) = 2$.
In this case, the Picard lattice is written as
$P = \{x\in \Lambda\mid C(t)x = 0 \}$ by \cite[Theorem 7.4]{IT22}, 
and in particular, $P$ is negative definite. 
Let us fix a K\"{a}hler cone $\cK$. 
In general, the isometry $t$ maps $\cK$ to another chamber.
However, the Weyl group, that is, the subgroup of $\rO(P)$ generated by all root 
reflections, has a unique element $w$ such that 
$w(t(\cK)) = \cK$, because the Weyl group acts on the set of Weyl chambers 
simply transitively. Here $w$ is extended to an isometry on $\Lambda$ 
by letting $w$ act on $P^\perp$ as the identity. 
Then, the composition $w\circ t$ preserves the 
K3 structure $(\omega, \cK)$ and has spectral radius $\lambda$. 
Theorem \ref{th:lift} means that there exists a K3 surface $\cX$ and 
an automorphism $\varphi$ on $\cX$ with entropy $\log \lambda$.
Furthermore, the K3 surface $\cX$ is nonprojective because its Picard lattice is 
negative definite.   
\end{proof}

We finally prove Theorem \ref{th:mainK3}. 
\smallskip
\\
\textit{Proof of Theorem \textup{\ref{th:mainK3}}.}
Let $\lambda$ be a Salem number of degree $20$, and $S$ its minimal
polynomial. Fix a conjugate $\delta$ of $\lambda$ with $|\delta| = 1$. 
Then, a K3 lattice has a semisimple $((X-1)(X+1)S(X), \iota_\delta)$-isometry 
by Theorem \ref{th:mainapp}.
This implies that $\lambda$ is nonprojectively realizable by 
Proposition \ref{prop:nonproj_realizable}. \qed
\vspace{5mm}\noindent
\textbf{Acknowledgments}.
This work is supported by JSPS KAKENHI Grant Number JP21J20107. 
The author thanks Eva Bayer-Fluckiger for her kind replies to 
his questions and for invaluable coments on earlier versions of the manuscript. 
He also thanks Katsunori Iwasaki for giving a lot of helpful advice during the 
preparation of this paper. 
In particular, the explanation in \S\ref{ss:OGandOM} became clearer thanks to 
discussions with him.

\end{document}